\newcommand{\F}{\mathbb{F}}
\newcommand{\G}{\mathbb{G}}
\newcommand{\Q}{\mathbb{Q}}
\newcommand{\Z}{\mathbb{Z}}
\newcommand{\calB}{\mathcal{B}}
\newcommand{\vv}{\upsilon}
\DeclareMathOperator{\Aut}{Aut}
\DeclareMathOperator{\End}{End}
\DeclareMathOperator{\Frob}{Frob}
\DeclareMathOperator{\Gal}{Gal}
\newcommand{\unr}{{\operatorname{unr}}}
\newcommand{\GL}{\operatorname{GL}}
\newcommand{\PGL}{\operatorname{PGL}}
\newcommand{\SL}{\operatorname{SL}}
\numberwithin{equation}{section}
\newtheorem{theorem}{Theorem}
\newtheorem{lemma}{Lemma}
\newtheorem{corollary}{Corollary}
\theoremstyle{definition}
\theoremstyle{remark}
\definecolor{darkgreen}{rgb}{0,0.5,0}
\DeclareRobustCommand{\SkipTocEntry}[5]{}
\begin{document}

\title{On the degree of the $p$-torsion field 
of elliptic curves over $\Q_\ell$ for $\ell \neq p$}

\subjclass[2010]{Primary 11D41; Secondary 14G25, 14G40, 14K15, 14K20}

\author{Nuno Freitas}
\address{Mathematics Institute\\
	University of Warwick\\
	CV4 7AL \\
	United Kingdom}
\email{nunobfreitas@gmail.com}

\author{Alain Kraus}
\address{Universit\'e Pierre et Marie Curie - Paris 6,
Institut de Math\'ematiques de Jussieu,
4 Place Jussieu, 75005 Paris, 
France}
\email{alain.kraus@imj-prg.fr}

\date{\today}
\thanks{The first-named author is
supported by the
European Union's Horizon 2020 research and innovation programme under the Marie Sk\l{l}odowska-Curie grant 
agreement No.\ 747808 and the grant {\it Proyecto RSME-FBBVA $2015$ Jos\'e Luis Rubio de Francia}.} 

\subjclass[2010]{Primary 11G05}
\keywords{Elliptic curves, $p$-torsion points, local fields.}

\begin{abstract}
Let $\ell$ and $p \geq 3$ be distinct prime numbers. Let $E/\Q_{\ell}$ be an elliptic curve 
with $p$-torsion module $E_p$. Let $\Q_{\ell}(E_p)$ be the $p$-torsion field of $E$. 
We provide a complete description of the 
degree of the extension $\Q_{\ell}(E_p)/\Q_{\ell}$. As a consequence,  we obtain a  recipe to determine 
the discriminant ideal of the extension $\Q_{\ell}(E_p)/\Q_\ell$ in terms of standard information on $E$.
\end{abstract}

\maketitle

\section{Introduction}
Let $\ell$ and $p \geq 3$ be distinct prime numbers. Fix $\overline{\Q_{\ell}}$ an algebraic closure of $\Q_{\ell}$. Let~$E/\Q_{\ell}$ be an elliptic curve 
with $p$-torsion module $E_p$. Let $\Q_{\ell}(E_p) \subset \overline{\Q}_{\ell}$ be the $p$-torsion field of~$E$. The aim of this paper is to 
determine the degree $d$ of the extension $\Q_{\ell}(E_p)/\Q_{\ell}$. 

Write $\pi$ for an uniformizer in $\Q_{\ell}(E_p)$ and $e$ for its ramification degree. The different 
ideal of~$\Q_{\ell}(E_p)$ is~$(\pi)^D$, where the integer $D$ is fully determined in \cite{CaliKraus}. 
The discriminant ideal~$\mathcal{D}$ of the extension $\Q_{\ell}(E_p)/\Q_\ell$ is generated by~$\ell^{dD/e}$. 
The value of $e$   is given in \cite{Kraus} in terms of the standard invariants of a minimal Weierstrass
model of $E$. Therefore, as a consequence of our results, we obtain a complete procedure to determine $\mathcal{D}$ in terms of $\ell$, $p$ and invariants attached to~$E$.

{\large \part{Statement of the  results}}

Let $\ell$ and $p \geq 3$ be distinct prime numbers. Let $v$ be the valuation in $\Q_{\ell}$ such that $v(\ell)=1$. 

Let $E/\Q_{\ell}$ be an elliptic curve and write
$c_4$, $c_6$ and $\Delta$ for the standard invariants of a minimal Weierstrass model of $E/\Q_{\ell}$. 
Write $j = \frac{c_4^3}{\Delta}$ for the modular invariant of~$E$. 

Write $E_p$ for the $p$-torsion module of $E$. 
Let $\Q_{\ell}(E_p) \subset \overline{\Q}_{\ell}$ be the $p$-torsion field of $E$ and denote 
by $d$ its degree
$$d=[\Q_{\ell}(E_p):\Q_{\ell}].$$

Write $\Q_\ell^{\unr}$ for the 
maximal unramifed extension of $\Q_\ell$ contained in $\overline{\Q}_{\ell}$.

Let $r$ be the order of $\ell$ modulo $p$ and $\delta$ be the order of $-\ell$ modulo $p$. 

We will now state our results according to the type of reduction of $E$ and the prime~$\ell$.

\section{The case of good reduction}

Let $M/\Q_{\ell}$ be a finite  totally ramified extension and $E/M$ be an elliptic curve with good reduction.
In this section we will write $d$ to denote 
$$d=[M(E_p):M].$$
The residue field of $M$ is $\F_{\ell}$. Let $\tilde{E}/\F_{\ell}$ be the elliptic curve obtained from $E/M$ by reduction of a minimal model. Write
$|\tilde{E}(\F_{\ell})|$ for the order of its  group of  $\F_{\ell}$-rational points. Let 
$$a_E=\ell+1-|\tilde{E}(\F_{\ell})| \quad \text{and}\quad  
\Delta_E=a_E^2-4\ell.$$
Note that the Weil bound implies $\Delta_E<0$.

In the case of $E/M$ having good ordinary reduction i.e. $a_E\not\equiv 0 \pmod \ell$, the endomorphism ring  of $\tilde E$ is isomorphic to an order of the imaginary quadratic field 
$\Q\left(\sqrt{\Delta_E}\right)$. The Frobenius endomorphism $\pi_{\tilde{E}}$ of $\tilde E$, generates a subring $\Z[\pi_{\tilde{E}}]$ of finite index of $\End(\tilde E)$. 
In this case, we define
\begin{equation}
\label{(2.1)} 
b_E=\left[\End(\tilde{E}):\Z[\pi_{\tilde{E}}]\right].
\end{equation}
The ratio $\frac{ \Delta_E}{b_E^2}$ is the discriminant of $\End(\tilde{E})$. 
The determination of $b_E$ has been implemented on  {\tt Magma} \cite{MAGMA}
by Centeleghe  (cf.  \cite{Centeleghe} and \cite{CentelegheProgram}).  A method  
to obtain $b_E$ is also presented in \cite{Duketoth}.

Let $\alpha$ and $\beta$ be the roots in $\F_{p^2}$ of the polynomial in $\F_p[X]$ given by
$$X^2-a_EX+\ell \pmod p.$$
  
\begin{theorem} \label{T:thm1}
Let  $n$ be the least common multiple of the orders of $\alpha$ and $\beta$ in  $\F_{p^2}^*$. 
\begin{itemize}
 \item[1)] If  $\Delta_E\not\equiv 0   \pmod p$, then $d=n$.
  \item[2)] Suppose    $\Delta_E\equiv 0 \pmod p$. Then $E/M$ has good ordinary reduction. We have
  \[
d= 
  \begin{cases}
n \ \text{ if } \ b_{E}\equiv 0 \pmod p, \\
np   \ \text{ otherwise}.\
  \end{cases}
\]
\end{itemize}
\end{theorem}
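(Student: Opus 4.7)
The plan is as follows. Since $E/M$ has good reduction and $p\neq\ell$, the N\'eron--Ogg--Shafarevich criterion implies that $M(E_p)/M$ is unramified. Because $M/\Q_\ell$ is totally ramified its residue field is $\F_\ell$, so $\Gal(M(E_p)/M)$ is cyclic, topologically generated by the Frobenius, and $d$ equals the order of the Frobenius action on $E_p$. Via the Galois-equivariant reduction isomorphism $E_p \isomto \tilde E[p]$, this action is that of the Frobenius endomorphism $\pi_{\tilde E}$ of $\tilde E/\F_\ell$; its characteristic polynomial on $\tilde E[p]$ is $X^2-a_E X+\ell \pmod p$, whose roots are $\alpha,\beta \in \F_{p^2}$. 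Hence computing $d$ amounts to computing the order of a matrix in $\GL_2(\F_p)$ with this characteristic polynomial.

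Case 1 ($\Delta_E \not\equiv 0 \pmod p$): the eigenvalues are distinct, so Frobenius is semisimple. If $\alpha,\beta \in \F_p$ it is diagonalizable over $\F_p$ with order $\lcm(\ord\alpha,\ord\beta)=n$; if $\alpha,\beta \in \F_{p^2}\setminus\F_p$ they are Galois conjugate with common order $n$, realized in a non-split torus. Either way $d=n$.

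Case 2 ($\Delta_E \equiv 0 \pmod p$): now $\alpha=\beta\in\F_p$ and $n=\ord\alpha$. The supersingular possibility is ruled out since there $\Delta_E=-4\ell$ and $p\nmid 4\ell$, so $E/M$ is ordinary and $\End(\tilde E)$ is an order in $\Q(\sqrt{\Delta_E})$. Frobenius on $E_p$ is either the scalar $\alpha I$ (of order $n$) or a non-trivial Jordan block with eigenvalue $\alpha$ (of order $\lcm(\ord\alpha,p)=pn$, since $\ord\alpha\mid p-1$ is coprime to $p$). So part 2) reduces to the claim: \emph{Frobenius acts as a scalar on $E_p$ if and only if $p\mid b_E$}.

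The proof of this equivalence is the heart of the argument and the main obstacle. For the forward direction, if $\pi_{\tilde E}\equiv \tilde\alpha \pmod p$ on $E_p$, with $\tilde\alpha\in\Z$ a lift of $\alpha$, then $\pi_{\tilde E}-\tilde\alpha$ kills $\tilde E[p]$ and so factors uniquely as $p\psi$ with $\psi\in\End(\tilde E)$; the relation $p\psi=\pi_{\tilde E}-\tilde\alpha$ forces $\psi\notin\Z[\pi_{\tilde E}]$, hence $\End(\tilde E)/\Z[\pi_{\tilde E}]$ has $p$-torsion and $p\mid b_E$. Conversely, pick $\phi\in\End(\tilde E)\setminus\Z[\pi_{\tilde E}]$ with $p\phi=u+v\pi_{\tilde E}\in\Z[\pi_{\tilde E}]$; reducing the identity on $E_p$ gives $u+v\pi_{\tilde E}\equiv 0$, and if $p\mid v$ then also $p\mid u$, whence $\phi\in\Z[\pi_{\tilde E}]$ by torsion-freeness of $\End(\tilde E)$, a contradiction. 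Therefore $v$ is invertible in $\F_p$ and $\pi_{\tilde E}$ acts as the scalar $-u/v$, completing the dichotomy and hence the proof.
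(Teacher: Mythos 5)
Your argument is correct, and its skeleton matches the paper's: reduce to the residue field (the paper invokes Serre--Tate, you invoke N\'eron--Ogg--Shafarevich, which amounts to the same thing here), identify $d$ with the order of the Frobenius matrix whose characteristic polynomial is $X^2-a_EX+\ell \bmod p$, and split according to whether that matrix is semisimple. The genuine difference is in case 2): the paper simply cites \cite{Freitaskraus} (Corollary~2, itself resting on \cite{Centeleghe}) for the equivalence ``$p\mid d$ iff $b_E\not\equiv 0\pmod p$,'' whereas you prove the underlying dichotomy directly — Frobenius is scalar on $E_p$ iff $p\mid b_E$ — via the factorization $\pi_{\tilde E}-\tilde\alpha=p\psi$ and the structure of $\End(\tilde E)/\Z[\pi_{\tilde E}]$ as a cyclic group of order $b_E$. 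Both directions of your equivalence are sound (the forward one because $\psi$ gives a nonzero $p$-torsion class in the quotient, the converse because $v$ must be a unit mod $p$), so your write-up is self-contained where the paper defers to the literature; the cost is length, the gain is that the reader need not unwind the cited results. One small repair: your claim that supersingular reduction forces $\Delta_E=-4\ell$ assumes $a_E=0$, which the Weil bound only guarantees for $\ell\geq 5$; for $\ell=2$ (resp.\ $\ell=3$) a supersingular curve may have $a_E=\pm 2$ (resp.\ $\pm 3$), giving $\Delta_E=-4$ (resp.\ $-3$). The conclusion $p\nmid\Delta_E$ still holds in every one of these cases — this is exactly the case analysis of the paper's Lemma~\ref{L:lemma5} — so the gap is cosmetic, but you should state the low-$\ell$ cases explicitly.
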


\begin{corollary} \label{C:cor1}
Suppose $a_E=0$. Then $d=2\delta$.
\end{corollary}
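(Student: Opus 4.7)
The plan is to reduce everything to Theorem~\ref{T:thm1} and then compute the integer $n$ appearing there. Setting $a_E=0$, the polynomial $X^2-a_EX+\ell\pmod p$ becomes $X^2+\ell$, whose roots in $\F_{p^2}$ are $\alpha$ and $\beta=-\alpha$ with $\alpha^2=-\ell$. Moreover $\Delta_E=a_E^2-4\ell=-4\ell$, and since $p\ge 3$ with $p\neq\ell$ we have $p\nmid 4\ell$, so $\Delta_E\not\equiv 0\pmod p$. This places us in case~1) of Theorem~\ref{T:thm1}, so $d=n$ where $n$ is the least common multiple of the orders of $\alpha$ and $-\alpha$ in $\F_{p^2}^{\ast}$. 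In particular we do not need to worry about ordinary versus supersingular reduction or about the index $b_E$.

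It then remains to show that $n=2\delta$. The key observation is that $n$ is by definition the smallest positive integer such that both $\alpha^n=1$ and $(-\alpha)^n=1$ hold; dividing these equalities (which is legitimate because $\alpha\neq 0$, as $\alpha^2=-\ell\not\equiv 0 \pmod p$) forces $(-1)^n=1$, so $n$ must be even. Writing $n=2k$ and using $\alpha^2=-\ell$, the condition $\alpha^n=1$ becomes $(-\ell)^k=1$, i.e.\ $\delta\mid k$. The smallest such even $n$ is therefore $n=2\delta$, which gives $d=2\delta$.

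I expect no real obstacle in this argument: once Theorem~\ref{T:thm1} is available, the entire proof is a short manipulation with orders in $\F_{p^2}^\ast$. The only points demanding a moment of care are verifying that the hypothesis $a_E=0$ lands in case~1) rather than case~2) (which it does automatically, since $-4\ell$ is nonzero mod $p$ for $p\ge 3$, $p\neq\ell$), and the observation that $\alpha$ and $-\alpha$ are distinct elements of $\F_{p^2}^\ast$, so that the joint condition $\alpha^n=(-\alpha)^n=1$ really does force $n$ to be even.
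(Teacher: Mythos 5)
Your proposal is correct and follows essentially the same route as the paper: both reduce to case 1) of Theorem~\ref{T:thm1} via $\Delta_E=-4\ell\not\equiv 0\pmod p$ and then show $n=2\delta$ by a short computation with orders in $\F_{p^2}^*$ using $\alpha^2=-\ell$. Your derivation that $n$ is even (dividing $(-\alpha)^n=1$ by $\alpha^n=1$) and that $n=2k$ forces $\delta\mid k$ is just a slightly more explicit version of the paper's observation that $\delta=n/\gcd(n,2)$ with $n$ even.
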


\section{The case of multiplicative reduction}

Let $E/\Q_{\ell}$ be an elliptic curve with multiplicative reduction. In this case, we have $v(c_6)=0$ and the Legendre symbol 
$\left(\frac{-c_6}{\ell}\right)=\pm 1.$
Let
$$\tilde{j}=\frac{j}{\ell^{v(j)}}.$$

\begin{theorem} \label{T:thm2}
We are in one of  the following cases.

\begin{itemize}

 \item[1)] Suppose $\ell\geq 3$ and $\left(\frac{-c_6}{\ell}\right)=1$, or $\ell=2$ and $c_6\equiv 7\pmod 8$.
\medskip

\item[1.1)] If $\ell\not\equiv 1 \pmod p$, then
    \[
d= 
  \begin{cases}
r \ \text{ if } \ v(j)\equiv 0 \pmod p, \\
pr   \ \text{ otherwise}.\
  \end{cases}
\]
 \smallskip

 \item[1.2)] If $\ell\equiv 1 \pmod p$, then
    \[
d= 
  \begin{cases}
1 \ \text{ if } \ v(j)\equiv 0 \pmod p\quad \text{and}\quad  \ \tilde j^{\frac{\ell-1}{p}}\equiv 1 \pmod \ell, \\
p   \ \text{ otherwise}.\
  \end{cases}
\]
\medskip
 \item[2)] Suppose $\ell\geq 3$ and $\left(\frac{-c_6}{\ell}\right)=-1$, or $\ell=2$ and $c_6\not\equiv 7 \pmod 8$. 
\medskip

\item[2.1)] If $r$ is even, then
    \[
d= 
  \begin{cases}
r \ \text{ if } \ v(j)\equiv 0 \pmod p, \\
pr   \ \text{ otherwise}.\
  \end{cases}
\]
 \smallskip

\item[2.2)] Suppose $r$ odd. 
\medskip
 \item[2.2.1)] If $\ell\not\equiv 1 \pmod p$, then
\[
d= 
  \begin{cases}
2r \ \text{ if } \ v(j)\equiv 0 \pmod p, \\
2pr   \ \text{ otherwise}.\
  \end{cases}
\]
 \smallskip
 \item[2.2.2)] If $\ell\equiv 1 \pmod p$, then
    \[
d= 
  \begin{cases}
2 \ \text{ if } \ v(j)\equiv 0 \pmod p\quad \text{and}\quad  \ \tilde j^{\frac{\ell-1}{p}}\equiv 1 \pmod \ell, \\
2p   \ \text{ otherwise}.\
  \end{cases}
\]
\end{itemize}
\end{theorem}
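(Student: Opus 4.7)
The plan is to exploit the Tate curve parametrization, the standard tool for multiplicative reduction. In the split multiplicative case (case 1), characterized (by a well-known criterion) by $\left(\frac{-c_6}{\ell}\right) = 1$ for odd $\ell$ and $c_6 \equiv 7 \pmod 8$ for $\ell = 2$, there is a unique $q \in \Q_\ell^*$ with $v(q) = -v(j) > 0$ such that $E(\overline{\Q_\ell}) \isom \overline{\Q_\ell}^{\,*}/q^{\Z}$ as Galois modules. Consequently
$$\Q_\ell(E_p) = \Q_\ell(\zeta_p,\, q^{1/p}),$$
where $\zeta_p$ is a primitive $p$-th root of unity. Since $\ell \neq p$, the cyclotomic piece $\Q_\ell(\zeta_p)/\Q_\ell$ is unramified of degree $r$, so it only remains to determine the Kummer degree $[\Q_\ell(\zeta_p,q^{1/p}):\Q_\ell(\zeta_p)] \in \{1,p\}$, which equals $1$ exactly when $q$ is a $p$-th power in $\Q_\ell(\zeta_p)^*$.

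To analyse this, I would decompose $\Q_\ell(\zeta_p)^* = \ell^{\Z} \times \F_{\ell^r}^* \times U^{(1)}$, using that the principal units $U^{(1)} = 1+\ell\calO$ form a pro-$\ell$ group and are therefore $p$-divisible. Hence $q \in (\Q_\ell(\zeta_p)^*)^p$ if and only if (i) $v(q) \equiv 0 \pmod p$, i.e.\ $v(j) \equiv 0 \pmod p$, and (ii) the reduction of the unit part of $q$ lies in $(\F_{\ell^r}^*)^p$. An elementary cyclic-group computation shows that when $\ell \not\equiv 1 \pmod p$, every element of $\F_\ell^*$ is automatically a $p$-th power in $\F_{\ell^r}^*$ (because $p \mid 1 + \ell + \cdots + \ell^{r-1}$), giving case (1.1). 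When $\ell \equiv 1 \pmod p$ we have $r = 1$ and the Tate series $j = q^{-1} + 744 + \cdots$ immediately yields $u \equiv \tilde{j}^{-1} \pmod \ell$ for the unit part $u$ of $q$; condition (ii) thus becomes $\tilde{j}^{(\ell-1)/p} \equiv 1 \pmod \ell$, producing case (1.2).

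For the non-split case (case 2), $E$ is the unramified quadratic twist of the Tate curve $E_q$, so the Galois representation on $E_p$ takes the form
$$\rho_{E,p} \sim \begin{psmallmatrix} \chi\chi_p & * \\ 0 & \chi \end{psmallmatrix},$$
where $\chi$ is the unramified quadratic character cutting out the unramified quadratic extension $K = \Q_{\ell^2}$, and $\chi_p$ is the mod-$p$ cyclotomic character. Since $\chi$ appears on the diagonal, $\ker\rho_{E,p} \subset \ker\chi$, forcing $K \subset \Q_\ell(E_p)$ and hence $\Q_\ell(E_p) = K(E_p)$. Over $K$ the curve becomes split-multiplicative with the same Tate parameter $q \in \Q_\ell^* \subset K^*$, so the case-1 analysis applies verbatim with the order $r' = r/\gcd(r,2)$ of $\ell^2$ modulo $p$ in place of $r$; multiplying the resulting degree by $[K:\Q_\ell] = 2$ reproduces (2.1), (2.2.1) and (2.2.2).

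The main difficulty is not conceptual but the careful bookkeeping linking the abstract Tate parameter $q$ to the concrete invariants $c_6$ and $j$: verifying the split/non-split criterion in terms of $c_6$ (with its slightly different statement at $\ell=2$), and justifying the identity $u \equiv \tilde{j}^{-1} \pmod \ell$ needed to convert the Kummer condition into the explicit congruence on $\tilde{j}$.
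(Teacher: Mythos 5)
Your proposal is correct and follows essentially the same route as the paper: both identify $\Q_\ell(E_p)$ with $\Q_\ell\left(\sqrt{-c_6},\mu_p,q^{1/p}\right)$ via the Tate parametrization, reduce the Kummer degree to an explicit $p$-th power criterion using the decomposition of the unit group, and convert the condition on $q$ into one on $\tilde j$ through the $q$-expansion $j=q^{-1}+744+\cdots$. The only (cosmetic) difference is in the non-split case, where you base-change to the unramified quadratic extension and rerun the split analysis there, while the paper computes the compositum directly over $\Q_\ell$ by checking whether $\sqrt{-c_6}$ lies in $\Q_\ell(\mu_p)$ according to the parity of $r$.
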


\section{The case of additive potentially multiplicative reduction}
Let us assume that $E/\Q_{\ell}$ has additive potentially multiplicative reduction. 

\begin{theorem} \label{T:thm3}
We are in one of  the following cases.
\begin{itemize}
 \item[1)] If $\ell\not\equiv 1 \pmod p$, then
\[
d= 
  \begin{cases}
2r \ \text{ if } \ v(j)\equiv 0 \pmod p, \\
2pr   \ \text{ otherwise}.\
  \end{cases}
\]
 \smallskip
 \item[2)] If $\ell\equiv 1 \pmod p$, then
    \[
d= 
  \begin{cases}
2 \ \text{ if } \ v(j)\equiv 0 \pmod p\quad \text{and}\quad  \ \tilde j^{\frac{\ell-1}{p}}\equiv 1 \pmod \ell, \\
2p   \ \text{ otherwise}.\
  \end{cases}
\]
\end{itemize}
\end{theorem}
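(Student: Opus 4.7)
The plan is to reduce to Theorem~\ref{T:thm2} (case~1) by exhibiting $E$ as a quadratic twist of a Tate curve. Since $E/\Q_\ell$ has additive potentially multiplicative reduction, the theory of the Tate uniformization yields a unique ramified quadratic character $\chi\colon \Gal(\overline{\Q_\ell}/\Q_\ell)\to\{\pm 1\}$ and a unique Tate parameter $q\in\ell\Z_\ell$ with $v(q)=-v(j)$ such that $E\cong E_q^{\chi}$, where $E_q/\Q_\ell$ is the Tate curve, which has split multiplicative reduction. Let $K=\overline{\Q_\ell}^{\,\ker\chi}$ denote the ramified quadratic extension cut out by $\chi$.

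Next, I would compare the two $p$-torsion fields. Twisting tensors the Galois representation by the character, so $E_p\cong E_q[p]\otimes\chi$ as Galois modules. In a Tate basis, the representation on $E_q[p]$ has the form
\[
\rho_{E_q}(g)=\begin{pmatrix}\omega(g)&\ast\\ 0 & 1\end{pmatrix},
\]
with $\omega$ the mod $p$ cyclotomic character. An element $g$ lies in $\ker\rho_E$ precisely when $\chi(g)\rho_{E_q}(g)=I$; since $p$ is odd, inspecting the $(2,2)$-entry forces $\chi(g)=1$, whence $\rho_{E_q}(g)=I$. Therefore
\[
\Q_\ell(E_p)\;=\;K\cdot\Q_\ell(E_q[p])\;=\;K\cdot\Q_\ell(\mu_p,q^{1/p}).
\]

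The crux is then to show $K\not\subset\Q_\ell(\mu_p,q^{1/p})$, forcing the degree to double. I would extract this from a ramification argument: the cyclotomic step $\Q_\ell(\mu_p)/\Q_\ell$ is unramified (since $\ell\neq p$), and the Kummer step $\Q_\ell(\mu_p,q^{1/p})/\Q_\ell(\mu_p)$ contributes inertia of degree $1$ or $p$. Thus the inertia subgroup of $\Gal(\Q_\ell(\mu_p,q^{1/p})/\Q_\ell)$ is either trivial or cyclic of the odd prime order $p$, so it admits no quotient of order~$2$, and hence the extension contains no ramified quadratic subfield. Since $K$ is such a subfield, $K\not\subset\Q_\ell(\mu_p,q^{1/p})$ and consequently $[\Q_\ell(E_p):\Q_\ell]=2\,[\Q_\ell(\mu_p,q^{1/p}):\Q_\ell]$.

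To finish, $[\Q_\ell(\mu_p,q^{1/p}):\Q_\ell]$ is precisely the degree of the $p$-torsion field of the split multiplicative Tate curve $E_q$, which is furnished by Theorem~\ref{T:thm2}, case~1. Since $j(E_q)=j(E)$, the case splits on $v(j)\bmod p$ (and on $\tilde j^{(\ell-1)/p}\bmod\ell$ when $\ell\equiv 1\pmod p$) match those of Theorem~\ref{T:thm3}, and doubling the resulting values $r$, $pr$, $1$, $p$ gives exactly the $2r$, $2pr$, $2$, $2p$ asserted. The main delicate point is the first step: producing the ramified character $\chi$ and the Tate parameter $q\in\Q_\ell$ so that $v(q)=-v(j)$ and $j(E_q)=j(E)$, which is the essential input from the theory of Tate curves and does not follow directly from what was proved for Theorem~\ref{T:thm2}.
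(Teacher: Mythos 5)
Your proposal is correct and follows essentially the same route as the paper: it establishes $\Q_\ell(E_p)=K\cdot\Q_\ell(\mu_p,q^{1/p})$ via the quadratic twist of the Tate curve (the paper's Lemma~\ref{L:lemma7}, with $K=\Q_\ell(\sqrt{-c_6})$), replaces $q$ by $j$ up to $p$-th powers (Lemma~\ref{L:lemma6}), observes that the twisting character is ramified in the additive case so the degree doubles, and invokes the $p$-th power criterion of Lemma~\ref{L:lemma8}. Your explicit inertia argument merely spells out the step the paper leaves implicit when it asserts that ramification of $L/\Q_\ell$ forces $d=2r$ or $2pr$.
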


\section{The case of additive potentially good reduction with $\ell\geq 5$}
Let $\ell\geq 5$ and $E/\Q_{\ell}$ be an elliptic curve with additive potentially good reduction. 

In this case, the triples $\left(v(c_4),v(c_6),v(\Delta)\right)$ 
are given according to the following table.

\bigskip
\centerline{\vbox{\offinterlineskip
\halign{\vrule height12pt\ \hfil#\hfil\ \vrule&&\ \hfil#\hfil\ \vrule\cr
\noalign{\hrule}
\omit\vrule height2pt\hfil\vrule&&&&&&&&\cr
$v(\Delta)$ &2 &3&4&6&6&8&9&10\cr
\omit\vrule height2pt\hfil\vrule&&&&&&&&\cr
\noalign{\hrule}
$v(c_4)$ &$\geq 1$&1&$\geq 2$&2&$\geq 3$&$\geq 3$&3&$\geq 4$\cr
\omit\vrule height2pt\hfil\vrule&&&&&&&&\cr
\noalign{\hrule}
$v(c_6)$ &1&$\geq 2$&$2$&$\geq 3$&3&4&$\geq 5$&5\cr
\omit\vrule height2pt\hfil\vrule&&&&&&&&\cr
\noalign{\hrule}
}
}}
\bigskip
\vskip0pt\noindent

Let $e=e(E)$ be the semistability defect of $E$, i.e. the degree of the 
minimal extension of~$\Q^{\unr}_\ell$ over which  $E$ acquires good reduction.

From \cite[Proposition~1]{Kraus} we know that
\begin{equation}
\label{(5.1)}
e=\text{denominator of}\  \frac{v(\Delta)}{12}
\end{equation}
and, in particular, we have $e\in \lbrace 2,3,4,6\rbrace$.
The equation
\begin{equation}
\label{(5.2)} 
y^2=x^3-\frac{c_4}{48}x-\frac{c_6}{864}.
\end{equation}
is a minimal model of $E/\Q_{\ell}$. 

\subsection{Case $e=2$} Suppose $E$ satisfies $e=2$.
Let $E'/\Q_{\ell}$ be the quadratic twist of $E$ by~$\sqrt{\ell}$. 

\begin{lemma}  \label{L:lemma1}  
The elliptic curve $E'/\Q_{\ell}$ has good reduction.
\end{lemma}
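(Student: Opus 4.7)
The plan is to reduce the claim to an explicit computation with standard invariants. From (5.1), the hypothesis $e = 2$ means that the reduced fraction $v(\Delta)/12$ has denominator $2$. Scanning the table at the start of Section~5, the only value in $\{2,3,4,6,8,9,10\}$ with this property is $v(\Delta) = 6$, and the possible triples are
\[
(v(c_4), v(c_6), v(\Delta)) = (2, \geq 3, 6) \quad \text{or} \quad (\geq 3, 3, 6).
\]

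Next I would write down the standard invariants of the quadratic twist. Starting from the minimal model (5.2) of $E$, the twist of $E$ by $\sqrt{\ell}$ is
\[
E'\colon y^2 = x^3 - \frac{c_4 \ell^2}{48}x - \frac{c_6 \ell^3}{864},
\]
(which is integral because $\ell \geq 5$, so $48$ and $864$ are units in $\Z_\ell$). A direct substitution in the formulas $c_4 = -48A$, $c_6 = -864B$, $\Delta = -16(4A^3 + 27B^2)$ yields
\[
c_4(E') = \ell^2 c_4, \qquad c_6(E') = \ell^3 c_6, \qquad \Delta(E') = \ell^6 \Delta.
\]
In both cases listed above, we therefore have $v(c_4(E')) \geq 4$, $v(c_6(E')) \geq 6$ and $v(\Delta(E')) = 12$.

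I would then perform the admissible change of variables $(x,y) \mapsto (\ell^2 x, \ell^3 y)$, which produces an integral Weierstrass model of $E'$ over $\Q_\ell$ with standard invariants $c_4(E')/\ell^4$, $c_6(E')/\ell^6$, $\Delta(E')/\ell^{12}$. By the two cases above, the valuations become $(0, \geq 0, 0)$ or $(\geq 1, 0, 0)$, respectively. In particular the new discriminant is a unit at $\ell$, so this model is minimal and its reduction mod $\ell$ is a smooth Weierstrass curve; hence $E'/\Q_\ell$ has good reduction.

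The argument is in essence bookkeeping, so there is no real obstacle; the only point requiring a little attention is confirming that the triple $(v(c_4(E'))-4, v(c_6(E'))-6, v(\Delta(E'))-12)$ lies outside the minimality table (equivalently, that $v(\Delta(E')) - 12 = 0$), which is automatic since $v(\Delta)=6$ was forced by $e=2$.
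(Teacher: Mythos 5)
Your proof is correct and follows essentially the same route as the paper: you arrive at the very same integral model $Y^2=X^3-\tfrac{c_4}{48\ell^2}X-\tfrac{c_6}{864\ell^3}$ with unit discriminant, merely splitting the paper's single change of variables $x=\ell X$, $y=\ell\sqrt{\ell}\,Y$ into a twist followed by a rescaling. The bookkeeping with the table (forcing $v(\Delta)=6$ and $v(c_4)\geq 2$, $v(c_6)\geq 3$) matches what the paper uses implicitly to guarantee integrality.
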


Let $\tilde{E'}/\F_{\ell}$ be the elliptic curve obtained by reduction of a minimal model for~$E'$. Write
\begin{equation}
a_{E'}=\ell+1-|\tilde {E'}(\F_{\ell})|\quad \text{and}\quad \Delta_{E'}=a_{E'}^2-4\ell.
\end{equation}

Let $\alpha$ and $\beta$ be the roots in $\F_{p^2}^*$ of the polynomial in $\F_p[X]$
$$X^2-a_{E'}X+\ell \pmod p.$$
 Let  $n$ be the least common multiple of the orders of $\alpha$ and $\beta$ in  $\F_{p^2}^*$. 

In case $E'/\Q_{\ell}$ has good ordinary reduction, $\pi_{\tilde{E}'}$ being the Frobenius endomorphism of $\tilde{E'}/\F_{\ell}$, we will note 
$$b_{E'}=\left[\End(\tilde{E'}):\Z[\pi_{\tilde{E}'}]\right]$$
the index of $\Z[\pi_{\tilde{E}'}]$ in $\End(\tilde{E'})$.

\begin{theorem}  \label{T:thm4} 
Let $E/\Q_\ell$ satisfy $e(E)=2$ and let $E'$ be as above.

\begin{itemize}

 \item[1)] Suppose $\Delta_{E'}\not\equiv 0 \pmod p$. We have
  \[
d= 
  \begin{cases}
n \ \text{ if }  \ n \ \text{is even}\quad \text{and } \quad  \alpha^{{n\over 2}}=\ \beta^{{n\over 2}}=-1, \\
2n  \ \text{ otherwise}.\
  \end{cases}
\]

 \smallskip
 \item[2)] Suppose $\Delta_{E'}\equiv 0 \pmod p$. Then $E'/\Q_{\ell}$ has good ordinary reduction.
 \medskip
 
  \item[2.1)] If $n$ is even and $\alpha^{\frac{n}{2}}=-1$, then
   \[
d= 
  \begin{cases}
n \ \text{ if } \  b_{E'}\equiv 0 \pmod p, \\
np  \ \text{ otherwise}.\
  \end{cases}
\]
 \item[2.2)] If $n$ is odd or $\alpha^{\frac{n}{2}}\neq -1$, then
   \[
d= 
  \begin{cases}
2n \ \text{ if }  \ b_{E'}\equiv 0 \pmod p, \\
2np  \ \text{ otherwise}.\
  \end{cases}
\]

\end{itemize}
\end{theorem}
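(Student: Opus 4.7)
The plan is to transfer the computation to the quadratic twist $E'/\Q_\ell$, whose $p$-torsion field is controlled by Theorem~\ref{T:thm1}. By Lemma~\ref{L:lemma1}, $E'$ has good reduction, so by the criterion of N\'eron--Ogg--Shafarevich (recall $p \neq \ell$) the field $K := \Q_\ell(E_p')$ is an unramified extension of $\Q_\ell$ whose degree $m$ is given by Theorem~\ref{T:thm1}: $m = n$ when $\Delta_{E'}\not\equiv 0 \pmod p$, or when $\Delta_{E'}\equiv 0 \pmod p$ and $b_{E'} \equiv 0 \pmod p$, and $m = np$ in the remaining ordinary case. Since $\ell \geq 5$ is odd, $F := \Q_\ell(\sqrt{\ell})$ is totally ramified of degree $2$ over $\Q_\ell$, hence linearly disjoint from $K$; setting $L := KF$ we obtain
\[
\Gal(L/\Q_\ell) \;\cong\; \langle \phi \rangle \times \langle \tau \rangle,
\]
where $\phi$ is a lift of Frobenius acting trivially on $F$, and $\tau$ generates $\Gal(F/\Q_\ell)$ and acts trivially on $K$.

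Next I would exploit the fact that $E$ and $E'$ become isomorphic over $F$, which translates into the identity $\rho_{E,p} = \chi \cdot \rho_{E',p}$ of representations of $G_{\Q_\ell}$, where $\chi$ is the quadratic character of $F/\Q_\ell$. Hence $\rho_{E,p}$ also factors through $\Gal(L/\Q_\ell)$, $\rho_{E,p}(\phi) = \rho_{E',p}(\phi)$ has eigenvalues $\alpha, \beta \in \F_{p^2}^*$, and $\rho_{E,p}(\tau) = -I$. Consequently
\[
d \;=\; \frac{2m}{|\ker \rho_{E,p}|},
\]
and $\phi^i \tau^j \in \ker \rho_{E,p}$ if and only if $\rho_{E',p}(\phi)^i = (-1)^j I$, reducing the theorem to counting solutions of this last equation.

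The case analysis then unfolds as follows. In case (1), $\Delta_{E'} \not\equiv 0 \pmod p$ gives $\alpha \neq \beta$, so $\rho_{E',p}(\phi)$ is diagonalizable over $\F_{p^2}$ and $m = n$. A short divisibility argument shows that $\alpha^i = \beta^i = -1$ has a solution iff $n$ is even and $\alpha^{n/2} = \beta^{n/2} = -1$, in which case $i \equiv n/2 \pmod n$ is the unique solution; this yields $d = n$ when the condition holds and $d = 2n$ otherwise. In case (2), $\alpha = \beta$ and the reduction is ordinary. If $b_{E'} \equiv 0 \pmod p$, Frobenius acts as the scalar $\alpha I$ with $m = n$, and the same count (now collapsed to the single condition $\alpha^{n/2} = -1$) recovers 2.1 and 2.2 without the factor $p$. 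If $b_{E'} \not\equiv 0 \pmod p$, Frobenius has the form $\alpha I + N$ with $N$ nonzero nilpotent, and from $(\alpha I + N)^i = \alpha^i I + i\alpha^{i-1} N$ one reads off both $m = np$ and the equivalence $\phi^i = -I \Leftrightarrow \alpha^i = -1 \text{ and } p \mid i$; by CRT this admits a unique solution modulo $np$ exactly when $n$ is even and $\alpha^{n/2} = -1$, and none otherwise.

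The main obstacle I would expect is the bookkeeping in the non-diagonalizable ordinary case, where the extra factor of $p$ in the order of Frobenius must be combined carefully with the quadratic twist to avoid miscounting the kernel; once this is handled, assembling the subcases according to the parity of $n$, the value of $\alpha^{n/2}$, and the congruence class of $b_{E'}$ modulo $p$ reproduces exactly the formulas stated in the theorem.
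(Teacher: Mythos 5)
Your proposal is correct and follows essentially the same route as the paper: twist by $\sqrt{\ell}$, use $\rho_{E,p}=\chi\cdot\rho_{E',p}$ to reduce to deciding whether $-I$ is a power of the Frobenius matrix $h_0$ of $E'$, and split according to whether $h_0$ is diagonalizable with distinct eigenvalues, scalar, or of the form $\alpha I+N$ with $N\neq 0$ nilpotent. Your kernel count inside $\Gal(KF/\Q_\ell)\cong\Gal(K/\Q_\ell)\times\Gal(F/\Q_\ell)$ is just a repackaging of the paper's Lemma~\ref{L:lemma9}, and your Jordan-form derivation of the non-scalar case is an acceptable substitute for the paper's appeal to Centeleghe's explicit matrix, given that you already invoke Theorem~\ref{T:thm1} for the order $m$.
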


\subsection{Case $e \in \lbrace 3,4,6\rbrace$}  
Suppose $E/\Q_\ell$ satisfies $e \in \lbrace 3,4,6\rbrace$.
We define
\begin{equation}
\label{(5.4)}
u=\ell^{{v(\Delta)\over 12}}\quad \hbox{and}\quad  M=\Q_{\ell}(u).
\end{equation}
Let  $E'/M$ be the elliptic curve  of  equation 
\begin{equation}
\label{(5.5)} 
Y^2=X^3-{c_4\over 48u^4}X-{c_6\over 864u^6}.
\end{equation}

\begin{lemma} \label{L:lemma2} The elliptic curves $E$ and $E'$ are isomorphic over $M$. Moreover,  $E'/M$ has good reduction.
\end{lemma}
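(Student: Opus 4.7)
The plan is to establish both assertions by an explicit $M$-isomorphism (given by a twist by $u$) and then to check the good reduction claim by a direct valuation computation, using the table of triples $(v(c_4),v(c_6),v(\Delta))$ listed above.

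For the first assertion, I extend $v$ to a valuation $v_M$ on $M=\Q_\ell(u)$, normalized so that $v_M(\ell)=1$ and $v_M(u)=v(\Delta)/12$; since $e$ is the denominator of $v(\Delta)/12$ and $\ell\geq 5$, $M/\Q_\ell$ is totally ramified of degree $e\in\{3,4,6\}$. Then I apply to the minimal model \eqref{(5.2)} the change of variables $x=u^2 X$, $y=u^3 Y$, which is defined over $M$ and, after dividing by $u^6$, takes \eqref{(5.2)} to \eqref{(5.5)}. This gives the desired $M$-isomorphism $E\simeq E'$ and, by the standard transformation formulas, the invariants of $E'$ satisfy
\[
c_4(E')=\frac{c_4}{u^4},\qquad c_6(E')=\frac{c_6}{u^6},\qquad \Delta(E')=\frac{\Delta}{u^{12}}.
\]

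For the second assertion, since $\ell\geq 5$ the integers $48$ and $864$ are units in $\Z_\ell$, and the model \eqref{(5.5)} is integral over the ring of integers of $M$ iff $v_M(c_4(E'))\geq 0$ and $v_M(c_6(E'))\geq 0$. A direct computation gives
\[
v_M\bigl(c_4(E')\bigr)=v(c_4)-\tfrac{v(\Delta)}{3},\qquad v_M\bigl(c_6(E')\bigr)=v(c_6)-\tfrac{v(\Delta)}{2},\qquad v_M\bigl(\Delta(E')\bigr)=0.
\]
It remains to verify the first two are non-negative for each of the six rows of the table compatible with $e\in\{3,4,6\}$, that is $v(\Delta)\in\{2,3,4,8,9,10\}$; for example, for $v(\Delta)=3$ one has $v(c_4)=1$ and $v(c_6)\geq 2$, giving $v_M(c_4(E'))=0$ and $v_M(c_6(E'))\geq 1/2$, and the remaining rows are checked analogously.

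Once integrality is established, the equality $v_M(\Delta(E'))=0$ shows that \eqref{(5.5)} is a minimal Weierstrass model with unit discriminant, so $E'/M$ has good reduction. The only real obstacle is keeping track of the six cases in the table; no single case is delicate, and the $\ell\geq 5$ hypothesis eliminates any issue with the denominators $48$ and $864$.
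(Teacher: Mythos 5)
Your proof is correct and follows essentially the same route as the paper: the change of variables $x=u^2X$, $y=u^3Y$ gives the $M$-isomorphism, and the discriminant of \eqref{(5.5)} has valuation $v(\Delta)-12v(u)=0$. The only difference is that you explicitly verify the integrality of \eqref{(5.5)} case by case from the table (correctly), whereas the paper simply asserts it.
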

The extension $M/\Q_{\ell}$ is totally ramified of degree $e$.  Let $\tilde{E'}/\F_{\ell}$ be the elliptic curve obtained from $E'$ by reduction
and denote
\begin{equation}
a_{E'}=\ell+1-|\tilde {E'}(\F_{\ell})|\quad \text{and}\quad \Delta_{E'}=a_{E'}^2-4\ell.
\end{equation}
Let $\alpha$ and $\beta$ be the roots in $\F_{p^2}^*$ of the polynomial in $\F_p[X]$ given by
$$X^2-a_{E'}X+\ell \pmod p.$$
Let  $n$ be the least common multiple of the orders of $\alpha$ and $\beta$ in  $\F_{p^2}^*$. 
 
When $p$ does not divide $e$,  we denote by $\zeta_e$ a primitive $e$-th root of unity in $\F_{p^2}^*$. Note that when $p \mid e$ we have $p=3$ and $e\in \lbrace 3,6\rbrace$.

\begin{theorem} \label{T:thm5}  
Suppose that $e(E)=3$ and $\ell\equiv 1 \pmod 3$.
\begin{itemize}

\item[1)]  Assume also $p\neq 3$. 
\medskip

\item[1.1)] If  $\Delta_{E'}\not\equiv 0 \pmod p$, then
\[
d= 
  \begin{cases}
n \ \text{ if } \ n\equiv 0 \pmod 3 \quad \text{and} \quad  \lbrace \alpha^{{n\over 3}},\ \beta^{{n\over 3}}\rbrace=\lbrace \zeta_3,\zeta_3^{-1}\rbrace, \\
3n  \ \text{ otherwise}.\
  \end{cases}
\]
\smallskip
\item[1.2)] If $\Delta_{E'}\equiv 0 \pmod p$, then $d=3n$.
\medskip

\item[2)] If $p=3$, then $d=3n$.

\end{itemize}
\end{theorem}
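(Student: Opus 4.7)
The plan is to reduce to Theorem~\ref{T:thm1} applied to $E'/M$ and then translate the remaining question into a group-theoretic condition in $\GL_2(\F_p)$ that becomes tractable because $\ell\equiv 1\pmod 3$ forces $\Gal(\Q_\ell(E_p)/\Q_\ell)$ to be abelian. Write $L=\Q_\ell(E_p)$ and $N=LM=M(E_p)$; by Lemma~\ref{L:lemma2}, $N=M(E'_p)$, so $[N:M]=[M(E_p):M]$ is furnished by Theorem~\ref{T:thm1}. Since $E'/M$ has good reduction, $N/M$ is unramified; since $\ell\equiv 1\pmod 3$, the field $M=\Q_\ell(\ell^{1/3})$ is cyclic totally ramified of degree $3$ over $\Q_\ell$. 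Linear disjointness of $N/M$ and $M/\Q_\ell$ yields $[N:\Q_\ell]=3[M(E_p):M]$, whence
$$d=[M(E_p):M]\cdot[L\cap M:\Q_\ell],\qquad [L\cap M:\Q_\ell]\in\{1,3\}.$$
Fix a Frobenius $F\in G_M$ and a $\tau_0\in I_\ell$ lifting a generator of $\Gal(M/\Q_\ell)$, and set $\tau=\rho(\tau_0)$ with $\rho=\rho_{E,p}$. Then $\rho(G_M)=\langle\rho(F)\rangle$ has order $[M(E_p):M]$, $\rho(G_{\Q_\ell})=\langle\rho(F),\tau\rangle$, and $[L\cap M:\Q_\ell]=[\rho(G_{\Q_\ell}):\rho(G_M)]$ equals $1$ or $3$ according as $\tau\in\langle\rho(F)\rangle$ or not. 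Since $\rho|_{I_\ell}$ is tame ($e=3$ is coprime to $\ell$), the hypothesis $\ell\equiv 1\pmod 3$ gives $\rho(F)\tau\rho(F)^{-1}=\rho(\tau_0^\ell)=\tau$, so $\rho(F)$ and $\tau$ commute.

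For Case~1 ($p\neq 3$), $\tau\in\SL_2(\F_p)$ (since $\det\rho|_{I_\ell}$ is trivial) has order $3$ with distinct eigenvalues $\zeta_3,\zeta_3^{-1}$; commuting with this regular semisimple element makes $\rho(F)$ diagonalizable over $\F_{p^2}$ with eigenvalues $\alpha,\beta$. In Case~1.1 ($\alpha\neq\beta$, so $[M(E_p):M]=n$ by Theorem~\ref{T:thm1}), $\tau\in\langle\rho(F)\rangle$ iff the cyclic group of order $n$ contains an element of order $3$ equal to $\tau^{\pm 1}$, iff $3\mid n$ and $\rho(F)^{n/3}$ has eigenvalues $\{\zeta_3,\zeta_3^{-1}\}$, i.e.\ $\{\alpha^{n/3},\beta^{n/3}\}=\{\zeta_3,\zeta_3^{-1}\}$; combined with $d=n\cdot[L\cap M:\Q_\ell]$ this yields the dichotomy of Case~1.1. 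In Case~1.2 ($\alpha=\beta$), the semisimple $\rho(F)$ equals the scalar $\alpha I$, which differs from the nonscalar $\tau^{\pm 1}$; hence $\tau\notin\langle\rho(F)\rangle$ and $d=3n$. In particular $|\rho(F)|=n$, which forces $b_{E'}\equiv 0\pmod p$ in Theorem~\ref{T:thm1}.

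The main obstacle is Case~2 ($p=3$), where the mod-$3$ reduction of the order-$3$ element $g\in\GL_2(\Z_3)$ giving the inertia action is not semisimple: its eigenvalues in $\Z_3[\zeta_3]$ are primitive cube roots of unity, both reducing to $1\in\F_3$, but since $\zeta_3-1$ has $3$-adic valuation $\tfrac12$ in $\Z_3[\zeta_3]$ one has $g\not\equiv I\pmod 3$. Hence $\tau=g\bmod 3$ is a nontrivial unipotent of order $3$ in $\SL_2(\F_3)$ (the ambient $\SL_2$ because $\mu_3\subset\Q_\ell$ makes $\det\rho\equiv 1$), and its centralizer in $\GL_2(\F_3)$ is the cyclic group of order $6$
$$\left\{\begin{psmallmatrix}a & b\\ 0 & a\end{psmallmatrix}:a\in\F_3^*,\ b\in\F_3\right\}.$$
Commuting with $\tau$ forces $\rho(F)=\begin{psmallmatrix}\alpha & b\\ 0 & \alpha\end{psmallmatrix}$, so $\alpha=\beta$ (hence $\Delta_{E'}\equiv 0\pmod 3$ automatically) and $n=\ord(\alpha)\in\{1,2\}$. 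A short enumeration of the four subcases indexed by $n$ and by $|\rho(F)|\in\{n,3n\}$ (the two possibilities in Theorem~\ref{T:thm1}) verifies that $|\langle\rho(F),\tau\rangle|=3n$ in each, so $d=3n$.
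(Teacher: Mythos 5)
Your proof is correct and its skeleton coincides with the paper's: reduce to Theorem~\ref{T:thm1} for $E'/M$, observe that $d=[M(E_p):M]\cdot[K\cap M:\Q_\ell]$ with $[K\cap M:\Q_\ell]\in\{1,3\}$ (the paper's Lemma~\ref{L:lemma11}), and decide which case occurs by testing whether $\tau$ lies in $\langle\rho(\Frob)\rangle$, which for $p\neq 3$ translates into the eigenvalue condition $\{\alpha^{n/3},\beta^{n/3}\}=\{\zeta_3,\zeta_3^{-1}\}$ exactly as in Lemmas~\ref{L:lemma12}--\ref{L:lemma14}. Two local substitutions are worth noting. First, you obtain the commutativity of $\rho(F)$ and $\tau$ directly from the tame relation $F\tau_0F^{-1}=\tau_0^\ell$ together with $\ell\equiv 1\pmod 3$, whereas the paper invokes the abelianness criterion of \cite[Cor.~3]{Freitaskraus}; your version is self-contained and makes transparent why the hypothesis $\ell\equiv 1\pmod 3$ enters. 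Second, for $p=3$ the paper splits on whether $M\subseteq K$ and uses that $9\nmid d$ because the image lies in $\GL_2(\F_3)$, while you compute $|\langle\rho(F),\tau\rangle|$ inside the cyclic order-$6$ centralizer of the nontrivial unipotent $\tau$; this is arguably cleaner and also shows, as a byproduct, that $\Delta_{E'}\equiv 0\pmod 3$ is automatic here. Your remark in Case~1.2 that $\rho(F)$ must be scalar (hence $b_{E'}\equiv 0\pmod p$) is the same observation as the paper's $a=0$ in Lemma~\ref{L:lemma12}. The only point stated loosely is the choice of $\tau_0$: one should take it so that its image generates the inertia subgroup of $\Gal(K(\,\ell^{1/3})/\Q_\ell)$, which is automatic since tame inertia is procyclic; this does not affect correctness.
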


\begin{theorem}   \label{T:thm6}  
Suppose that $e(E)=4$ and $\ell\equiv 1 \pmod 4$.
\begin{itemize}

\item[1)]  If $\Delta_{E'}\not\equiv 0 \pmod p$, then
\medskip
 \[
d= 
  \begin{cases}
n \ \text{ if } \ n\equiv 0 \pmod 4   \quad \text{and} \quad  \lbrace \alpha^{{n\over 4}}, \beta^{{n\over 4}}\rbrace=\lbrace \zeta_4,\zeta_4^{-1}\rbrace, \\
4n  \ \text{ if } \  n  \ \text{is odd} \quad \text{or} \quad  \lbrace \alpha^{{n\over 2}},\beta^{{n\over 2}}\rbrace\neq \lbrace -1\rbrace,\\
2n \ \text{ otherwise}.\
  \end{cases}
\]
\smallskip

\item[2)]  If $\Delta_{E'}\equiv 0 \pmod p$, then
\smallskip

  \[
d= 
  \begin{cases}
2n  \ \text{ if }  n  \ \text{is even}\quad  \text{and}\quad \alpha^{{n\over2}}=-1, \\
4n \ \text{otherwise}.\
  \end{cases}
\]

\end{itemize}
\end{theorem}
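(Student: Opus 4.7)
The plan is to reduce to a good-reduction computation already over $\Q_\ell$, and then analyze the image of the Galois representation on $E_p$ by simultaneously diagonalizing the Frobenius and the quartic-twist character.

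First, I exploit that $e(E)=4$ forces $j(E)=1728$, so $c_6=0$ in any minimal model (from the identity $j-1728=c_6^2/\Delta$). Then $u^4=\ell^{v(\Delta)/3}\in\{\ell,\ell^3\}$ lies in $\Q_\ell$, and the equation \eqref{(5.5)} of $E'$ in fact has coefficients in $\Q_\ell$: call the resulting $\Q_\ell$-model $E'_0$. It has good reduction with $(E'_0)_M\cong E'$, and since $\Q_\ell((E'_0)_p)/\Q_\ell$ is unramified while $M/\Q_\ell$ is totally ramified,
$$[M(E_p):M]=[\Q_\ell((E'_0)_p):\Q_\ell],$$
which Theorem~\ref{T:thm1} applied to $E'_0$ evaluates. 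Moreover $E$ is the quartic twist of $E'_0$ by $u$; the explicit change of variables $(x,y)\mapsto(x/u^2,y/u^3)$ shows the twist cocycle is the Kummer character $\chi:G_{\Q_\ell}\to\mu_4$, $\sigma\mapsto\sigma(u)/u$, taking values in $\Aut(E)=\mu_4\hookrightarrow\GL(E_p)$. Hence on $E_p$ one has $\rho_E=\chi^{-1}\cdot\rho_{E'_0}$, reducing the computation of $d$ to finding the order of the subgroup $\langle\mathrm{image}(\rho_{E'_0}),\,\mathrm{image}(\chi)\rangle$ of $\GL(E_p)$.

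Next, since $j(E'_0)=1728$ and $\ell\equiv 1\pmod 4$ splits in $\Z[i]$, the reduction $\tilde{E'_0}/\F_\ell$ is necessarily ordinary with $\End(\tilde{E'_0})=\Z[i]$ (maximal). The Frobenius $\pi$ and the automorphism $[i]$ lie in this commutative ring and so diagonalize simultaneously on $(E'_0)_p\otimes\F_{p^2}$ as $\pi=\mathrm{diag}(\alpha,\beta)$ and $[i]=\mathrm{diag}(\zeta_4,\zeta_4^{-1})$. Thus the image of $\rho_E$ is the subgroup of the diagonal torus
$$\langle(\alpha,\beta),\,(\zeta_4,\zeta_4^{-1})\rangle\subseteq\F_{p^2}^*\times\F_{p^2}^*,$$
of order $4n/j$ with $j=|\langle(\alpha,\beta)\rangle\cap\langle(\zeta_4,\zeta_4^{-1})\rangle|$. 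A direct check in Case~(1) then gives: $j=4$ iff $4\mid n$ and $\{\alpha^{n/4},\beta^{n/4}\}=\{\zeta_4,\zeta_4^{-1}\}$ (so $d=n$); $j=2$ iff $n$ is even with $\alpha^{n/2}=\beta^{n/2}=-1$ and the $j=4$ condition fails (so $d=2n$); $j=1$ otherwise, i.e.\ $n$ odd or $\{\alpha^{n/2},\beta^{n/2}\}\neq\{-1\}$ (so $d=4n$), matching the trichotomy.

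In Case~(2) the hypothesis $\Delta_{E'}\equiv 0\pmod p$, combined with $\End(\tilde{E'_0})=\Z[i]$ and the identity $\Delta_{E'}/b_{E'}^2=-4$, forces $b_{E'}^2=-\Delta_{E'}/4$ and hence $p\mid b_{E'}$; Theorem~\ref{T:thm1}(2) then still gives $[M(E_p):M]=n$, with no extra factor of $p$. Here $\alpha=\beta\in\F_p^*$, so $\langle(\alpha,\alpha)\rangle$ lies on the diagonal of $\F_{p^2}^*\times\F_{p^2}^*$ while $(\zeta_4,\zeta_4^{-1})$ is off-diagonal; hence $j\in\{1,2\}$, with $j=2$ iff $(-1,-1)\in\langle(\alpha,\alpha)\rangle$, equivalently $n$ is even (so $\alpha^{n/2}=-1$), giving $d=2n$, else $d=4n$. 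The main obstacle will be carrying out the simultaneous-diagonalization uniformly across $p\equiv\pm 1\pmod 4$ (in the latter regime $\mu_4\not\subset\F_p$, forcing the base change to $\F_{p^2}$); the structural fact that $\ell\equiv 1\pmod 4$ splits in $\Z[i]$ is what forces ordinarity of the reduction and keeps $\pi$ and $[i]$ in a common commutative order, making all eigenvalue computations accessible.
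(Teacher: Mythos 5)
Your endgame --- computing $d$ as the order of the subgroup of the diagonal torus generated by $\mathrm{diag}(\alpha,\beta)$ and $\mathrm{diag}(\zeta_4,\zeta_4^{-1})$, and translating the three possible intersection orders into the stated trichotomy --- is exactly the computation the paper performs (Lemmas~\ref{L:lemma15}--\ref{L:lemma20}). The problem is the step by which you arrive at that group-theoretic picture: it rests on the assertion that $e(E)=4$ forces $j(E)=1728$, i.e.\ $c_6=0$, and that assertion is false. For $\ell\geq 5$ and $e=4$ one has $v(\Delta)\in\{3,9\}$ with $v(c_4)=v(\Delta)/3$ and $v(c_6)>v(\Delta)/2$, so $v(j-1728)=2v(c_6)-v(\Delta)>0$: the \emph{reduction} $\tilde{E'}$ has $j$-invariant $1728$, but $j(E)$ itself need not equal $1728$ (e.g.\ $y^2=x^3+5x+25$ over $\Q_5$ has $v(\Delta)=3$, hence $e=4$, and $c_6\neq 0$). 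Consequently the model \eqref{(5.5)} does not descend to $\Q_\ell$ (its $X^0$-coefficient involves $u^6=\ell^{v(\Delta)/2}\notin\Q_\ell$), there is no curve $E'_0/\Q_\ell$ of good reduction of which $E$ is a quartic twist, and the character $\chi$ valued in $\Aut(E)$ does not exist, since $\Aut(E)=\{\pm1\}$ whenever $j\neq 0,1728$. Everything downstream of this point --- the identity $\rho_E=\chi^{-1}\rho_{E'_0}$, the identification of the image of $\rho_E$ with $\langle\mathrm{im}(\rho_{E'_0}),\mathrm{im}(\chi)\rangle$ --- therefore has no foundation as written.

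The correct substitute, which is what the paper uses, is: since $4\mid \ell-1$, the group $G=\Gal(\Q_\ell(E_p)/\Q_\ell)$ is abelian by \cite[Cor.~3]{Freitaskraus}; the inertia subgroup is cyclic of order $4$ with generator $\tau$ satisfying $\rho_{E,p}(\tau)\in\SL_2(\F_p)$, so $\rho_{E,p}(\tau)$ is conjugate to $\mathrm{diag}(\zeta_4,\zeta_4^{-1})$, and commutation with $\rho_{E,p}(\Frob_K)$ forces the latter to be diagonal in the same basis with eigenvalues $\alpha,\beta$ (Lemma~\ref{L:lemma16}); one then reads off $d$ from the position of $M\cap K$ in the degree-$4$ cyclic extension $M/\Q_\ell$ (Lemma~\ref{L:lemma15}). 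Your intuition that Frobenius and the order-$4$ automorphism of the special fibre commute inside $\End(\tilde{E'})=\Z[i]$ is morally what makes $G$ abelian here, and your observation in Case~2) that $\Delta_{E'}=-4b_{E'}^2$ forces $p\mid b_{E'}$ (so no extra factor of $p$ appears) is a nice consistency check; but neither can be invoked through a quartic-twist structure over $\Q_\ell$ that is not there. As submitted, the proof has a genuine gap at its first reduction.
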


\bigskip

\begin{theorem}  \label{T:thm7} 
Suppose that $e(E)=6$ and $\ell\equiv 1 \pmod 3$.
\begin{itemize}
\item[1)] Assume also $p\neq 3$.   
\bigskip
\item[1.1)]  Suppose  $\Delta_{E'}\not\equiv 0 \pmod p$. 
\bigskip

\item[1.1.1)]  If $n\equiv 0 \pmod 6$ and   $\lbrace \alpha^{{n\over 6}}, \beta^{{n\over 6}}\rbrace=\lbrace \zeta_6, \zeta_6^{-1}\rbrace$, then  $d=n$.
\medskip
 \item[1.1.2)]  Suppose $n\not\equiv 0 \pmod 6$ or $\lbrace \alpha^{{n\over 6}}, \beta^{{n\over 6}}\rbrace\neq \lbrace \zeta_6, \zeta_6^{-1}\rbrace$. Then,
 \[
d= 
  \begin{cases}
2n \ \text{ if } \ n\equiv 0 \pmod 3 \quad \text{and} \quad  \lbrace \alpha^{{n\over 3}}, \beta^{{n\over 3}}\rbrace=\lbrace \zeta_6^2,\zeta_6^{-2}\rbrace, \\
3n  \ \text{ if } \  n  \ \text{is even} \quad \text{and} \quad   \alpha^{{n\over 2}}=\beta^{{n\over 2}}=-1,\\
6n \ \text{ otherwise.}\
  \end{cases}
\]
\smallskip

\item[1.2)]  If  $\Delta_{E'}\equiv 0 \pmod p$, then
  \[
d= 
  \begin{cases}
3n  \ \text{ if }  n  \ \text{is even}\quad \text{and}\quad \alpha^{{n\over2}}=-1, \\
6n \ \text{otherwise}.\
  \end{cases}
\]

\item[2)] If  $p=3$, then $d=6$.
\end{itemize}
\end{theorem}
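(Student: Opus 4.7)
By Lemma~\ref{L:lemma2}, the isomorphism $E \cong E'$ over $M$ gives $M(E_p) = M(E'_p)$. Since $M/\Q_\ell$ is totally ramified of degree $e=6$ and $E'/M$ has good reduction, Theorem~\ref{T:thm1} computes $[M(E_p):M]$. The key point is that $\tilde{E'}$ has $\tilde{j}=0$ and, because $\ell \equiv 1 \pmod 3$, is ordinary with $\End(\tilde{E'}) = \Z[\zeta_3]$; the discriminant of this maximal order is $-3$, so $\Delta_{E'} = -3\, b_{E'}^2$. Hence when $p \neq 3$, $\Delta_{E'} \equiv 0 \pmod p$ is equivalent to $p \mid b_{E'}$, and in both subcases~1.1 and~1.2, Theorem~\ref{T:thm1} yields $[M(E_p):M] = n$. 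Setting $L = M(E_p)$ gives $[L:\Q_\ell] = 6n$, whence
\[
d \;=\; [\Q_\ell(E_p):\Q_\ell] \;=\; \frac{6n}{[L:\Q_\ell(E_p)]} \;=\; \frac{6n}{[M : M \cap \Q_\ell(E_p)]}.
\]
In Case~1 the task therefore reduces to identifying $M \cap \Q_\ell(E_p)$ among the four subfields of the cyclic degree-$6$ extension $M/\Q_\ell$.

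Write $\Gal(L/\Q_\ell) = \langle \sigma, \tau \rangle$, with $\sigma$ lifting a generator of $\Gal(M/\Q_\ell) \cong \Z/6\Z$ (chosen to fix the maximal unramified subextension of $L/\Q_\ell$) and $\tau$ generating $\Gal(L/M) \cong \Z/n\Z$. By the CM structure, both $\rho_p(\tau)$ (reduction of $\pi_{\tilde{E'}}$) and $\rho_p(\sigma)$ (acting via a primitive sixth root $\zeta_6 \in \End(\tilde{E'})^*$) lie in the commutative ring $\End(\tilde{E'})/p$ and hence commute. Let $H \subseteq \Gal(M/\Q_\ell)$ be the image of $\ker \rho_p$ under the quotient $\Gal(L/\Q_\ell) \to \Gal(M/\Q_\ell)$; then $M \cap \Q_\ell(E_p) = M^H$, $[M:M\cap \Q_\ell(E_p)] = |H|$, and $d = 6n/|H|$. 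Concretely, $\sigma^i \in H$ iff $\rho_p(\sigma)^i \in \langle \rho_p(\tau)\rangle$.

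In Case~1.1 ($\Delta_{E'} \not\equiv 0 \pmod p$), diagonalizing Frobenius over $\F_{p^2}$ gives $\rho_p(\tau) = \operatorname{diag}(\alpha,\beta)$ and $\rho_p(\sigma) = \operatorname{diag}(\zeta_6,\zeta_6^{-1})$. The condition $\rho_p(\sigma)^i \in \langle \rho_p(\tau)\rangle$ becomes $(\zeta_6^i,\zeta_6^{-i}) = (\alpha^k,\beta^k)$ for some $k$; specializing $i=1,2,3$ with $k=n/6,n/3,n/2$ yields exactly the three conditions of the statement, namely $\{\alpha^{n/6},\beta^{n/6}\}=\{\zeta_6,\zeta_6^{-1}\}$, $\{\alpha^{n/3},\beta^{n/3}\}=\{\zeta_6^{2},\zeta_6^{-2}\}$, and $\alpha^{n/2}=\beta^{n/2}=-1$, corresponding to $|H|=6,3,2$ and $d=n,2n,3n$ respectively; if none holds then $H=\{1\}$ and $d=6n$. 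In Case~1.2 ($\Delta_{E'} \equiv 0 \pmod p$), $p\mid b_{E'}$ forces $\rho_p(\tau) = \alpha I$; since $\rho_p(\sigma)$ and $\rho_p(\sigma)^2$ are non-scalar (because $p \neq 3$, so $\zeta_6 \neq \zeta_6^{-1}$ and $\zeta_3 \neq \zeta_3^{-1}$), we get $\sigma,\sigma^2 \notin H$, while $\sigma^3 \in H$ iff $-I = \alpha^{n/2} I$, i.e.\ $n$ is even with $\alpha^{n/2}=-1$ (yielding $d=3n$); otherwise $d=6n$.

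For Case~2 ($p=3$), the prime $3$ ramifies in $\Z[\zeta_3]$, so $\End(\tilde{E'})/3 \cong \F_3[\epsilon]/(\epsilon^2)$ with $\epsilon = 1-\zeta_3 \pmod 3$, a local Artinian ring whose unit group has order $6$ and is generated by the image of $\zeta_6$. Hence $\rho_3(\sigma)$ has order $6$; since $\rho_3(\tau) = \pi_{\tilde{E'}} \pmod 3$ lies in $(\End(\tilde{E'})/3)^* = \langle \rho_3(\sigma)\rangle$, the image of $\rho_3$ coincides with $\langle \rho_3(\sigma)\rangle$, giving $d = 6$. The main obstacle throughout is the explicit identification of $\rho_p(\sigma)$ as multiplication by a specific primitive sixth root of unity in the CM endomorphism ring and then reading off its relation to $\rho_p(\tau)$ via the arithmetic of $\alpha,\beta$; for $p=3$ one must in addition replace the semisimple eigenvalue picture of Theorem~\ref{T:thm1} by the local-Artinian structure of $\F_3[\epsilon]/(\epsilon^2)$.
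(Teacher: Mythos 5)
Your argument is correct, but it follows a genuinely different route from the paper's. The paper proves this theorem (Lemmas~\ref{L:lemma21}--\ref{L:lemma26} and Section~\ref{S:16.2}) by first quoting \cite[Cor.~3]{Freitaskraus} to get that $G$ is abelian, then putting $\rho_{E,p}(\Frob_K)$ and $\rho_{E,p}(\tau)$ in simultaneous (upper-)triangular form, deducing $a=0$ and $d_0=n$ from the commutation relation, and running a case-by-case divisibility argument to decide which of the four subfields of the cyclic sextic $M/\Q_\ell$ equals $M\cap K$; the case $p=3$ is handled separately by a Borel-subgroup computation in $\GL_2(\F_3)$. You instead exploit the CM structure of the reduced curve: since $e=6$ forces $v(j)>0$, the reduction has $j=0$, is ordinary (as $\ell\equiv 1\bmod 3$), and has $\End(\tilde{E'})=\Z[\zeta_3]$, so that $E_p$ is free of rank one over $\Z[\zeta_3]/p$ with inertia acting through $\mu_6=\Aut(\tilde{E'})$ and Frobenius through $\pi_{\tilde{E'}}$. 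This buys you three things the paper gets piecemeal: commutativity of the two generators for free, the identity $\Delta_{E'}=-3\,b_{E'}^2$ (which shows $\Delta_{E'}\equiv 0$ and $b_{E'}\equiv 0$ mod $p$ are equivalent for $p\neq 3$, so Theorem~\ref{T:thm1} gives $[M(E_p):M]=n$ uniformly in parts 1.1 and 1.2), and a uniform treatment of $p=3$ via the local ring $\F_3[\epsilon]/(\epsilon^2)$ in place of the $\GL_2(\F_3)$ argument. The reformulation $d=6n/|H|$ with $H$ the image of $\ker\rho_p$ in $\Gal(M/\Q_\ell)$ is equivalent to the paper's Lemma~\ref{L:lemma21} and cleanly packages the four cases.

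Two inputs in your write-up are asserted rather than proved and are load-bearing: (i) the identification of the inertia action on $E_p\cong\tilde{E'}[p]$ with multiplication by a primitive sixth root of unity in $\Aut(\tilde{E'})$, together with the injectivity of $\Phi\hookrightarrow\Aut(\tilde{E'})$ (this is the Serre--Tate/Serre description of the inertia action for potentially good reduction in residue characteristic prime to $e$); and (ii) the freeness of $\tilde{E'}[p]$ over $\End(\tilde{E'})/p$, which holds here because the CM order is maximal and $p\neq\ell$. Both are standard, but without them the claim that $\rho_p(\sigma)$ and $\rho_p(\tau)$ lie in a common commutative ring, and the explicit forms $\operatorname{diag}(\zeta_6,\zeta_6^{-1})$ and $\alpha I$, are not justified, so you should cite them. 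Finally, the step ``specializing $i=1,2,3$ with $k=n/6,n/3,n/2$'' suppresses the short divisibility argument (as in the paper's Lemmas~\ref{L:lemma23}--\ref{L:lemma25}) showing that the existence of \emph{some} $k$ with $\{\alpha^k,\beta^k\}=\{\zeta_6^i,\zeta_6^{-i}\}$ forces $k$ to be one of the stated fractions of $n$ up to the set-symmetry $\zeta\leftrightarrow\zeta^{-1}$; this is routine but should be written out.
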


\begin{theorem}  \label{T:thm8}  
Suppose that $e=e(E)\in \lbrace 3,4,6\rbrace$ and $\ell\equiv -1 \pmod e$.  
\begin{itemize}
\item[1)] If $e=3$, then   $d=6\delta$.
\smallskip

\item[2)] If $e\in \lbrace 4, 6\rbrace$, then
  \[
d= 
  \begin{cases}
er  \ \text{ if } r  \ \text{is even}, \\
2er  \ \text{ if } r   \ \text{is odd}.\
  \end{cases}
\]
\end{itemize}
\end{theorem}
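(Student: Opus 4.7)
The key observation is that for $e\in\{3,4,6\}$ with $\ell\equiv-1\pmod{e}$, the admissible triples $(v(c_4),v(c_6),v(\Delta))$ from the table force $j(E')=0$ (when $e\in\{3,6\}$) or $j(E')=1728$ (when $e=4$). Thus $E'$ has complex multiplication by $\Z[\zeta_3]$ or $\Z[i]$, and the congruence on $\ell$ makes $\ell$ inert in this CM field, so $\tilde{E'}/\F_\ell$ is supersingular with $a_{E'}=0$. Consequently $\alpha,\beta=\pm\sqrt{-\ell}\in\F_{p^{2}}^{*}$, and both have order $n=2\delta$.

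Next, let $M'$ be the Galois closure of $M/\Q_\ell$. Since $\zeta_e\notin\Q_\ell$, we have $M'=M\cdot\Q_{\ell^{2}}$: a Galois extension of $\Q_\ell$ of degree $2e$, with ramification index $e$ and residue degree $2$. By Lemma~\ref{L:lemma2}, $E/M'$ has good supersingular reduction to $\tilde{E'}/\F_{\ell^{2}}$. Since $a_{E'}=0$, the $\ell$-Frobenius $\pi$ of $\tilde{E'}$ satisfies $\pi^{2}=-\ell$, so the $\ell^{2}$-Frobenius $\pi^{2}$ acts on $\tilde{E'}_p$ as the scalar $-\ell$. Hence $\rho|_{G_{M'}}$ is scalar of order $\delta$, giving $[M'(E_p):M']=\delta$. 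Combining this with the compositum identity $M'(E_p)=M'\cdot\Q_\ell(E_p)$ and $M'/\Q_\ell$ Galois yields
\[
d\;=\;\delta\cdot[\,F:\Q_\ell\,],\qquad F\colonequals\Q_\ell(E_p)\cap M'.
\]

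It remains to identify $F\subset M'$. I would extract $\rho(G_{\Q_\ell})\subset\GL_2(\F_p)$ from three pieces of data: the scalar action $\Frob_{M'}=-\ell$ (of order $\delta$); a tame-inertia element of order $e$ whose two eigenvalues are conjugate characters of order $e$ with values in $\F_{p^{2}}^{*}$; and a lift of the Frobenius of $\Q_\ell$, which has trace $0$ and square $-\ell\cdot\mathrm{I}$. For $e=3$ this data forces $F=M'$, yielding $d=2e\cdot\delta=6\delta$ and proving part~(1). For $e\in\{4,6\}$, $F$ is either $M'$ or its unique Galois index-$2$ subfield; the dichotomy is controlled by whether $\sqrt{-\ell}$ lies in $\F_p$, equivalently by $r\pmod 4$. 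The elementary relations $r\text{ odd}\Rightarrow\delta=2r$, $r\equiv 0\pmod 4\Rightarrow\delta=r$, and $r\equiv 2\pmod 4\Rightarrow\delta=r/2$ then convert $d=\delta\cdot[F:\Q_\ell]$ into $d=er$ for $r$ even and $d=2er$ for $r$ odd, establishing part~(2).

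\textbf{Main obstacle.} The technical heart is the identification of $F$ in part~(2): one must show, using the explicit CM action of $\Z[i]$ (resp.\ $\Z[\zeta_3]$) on the supersingular $\tilde{E'}$, that the Frobenius of $\Q_\ell$ preserves the eigenlines of $\Frob_{M'}$ exactly when $\sqrt{-\ell}\in\F_p$, i.e.\ when $\delta$ is odd. All remaining steps reduce, via Lemma~\ref{L:lemma2} and the scalar description of $\Frob_{M'}$, to routine degree computations.
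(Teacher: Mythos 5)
Your overall strategy is genuinely different from the paper's and its skeleton is sound: the reduction to $a_{E'}=0$ via the CM/supersingularity observation, the passage to the Galois closure $M'=M\cdot\Q_{\ell^2}$ over which Frobenius acts as the scalar $-\ell$, and the resulting identity $d=\delta\cdot[K\cap M':\Q_\ell]$ are all correct (the paper instead invokes the Diamond--Kramer classification to show $\sigma_{E,p}$ is of type {\bf V} and computes $[K:F]$ as the order of a character $\varphi$ with $\varphi^2=\chi_p$). Part 1) is completable along your lines, since for $e=3$ the order-$3$ inertia element is never scalar, forcing $F\colonequals K\cap M'=M'$.

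The genuine gap is exactly where you flag it: the identification of $F$ in part 2), and the criterion you propose for it is false. First, ``$\sqrt{-\ell}\in\F_p$'' and ``$\delta$ odd'' are \emph{not} equivalent: for $p=13$ and $-\ell\equiv 4\pmod{13}$ one has $\sqrt{-\ell}=\pm 2\in\F_{13}$ while $\delta=\mathrm{ord}(4)=6$ is even. Second, ``the Frobenius of $\Q_\ell$ preserves the eigenlines of $\Frob_{M'}$'' is vacuous, since $\Frob_{M'}$ acts as the scalar $-\ell$ and every line is an eigenline. The dichotomy your degree count actually requires (using Lemma~\ref{L:lemma35}) is: $F=M'$ if and only if $\delta$ is odd, equivalently $r\equiv 2\pmod 4$ --- a condition on the order of $-\ell$, not on whether $-\ell$ is a square mod $p$. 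Proving this is the real content, and the mechanism is the one the paper uses: the subgroup of $G$ acting by scalars is $\Gal(K/F_0)$ with $F_0$ of degree $e$ over $\Q_\ell$ (ramification index $e/2$), and it is generated by $-1$ (the image of the order-$2$ inertia element, forced by \eqref{(12.2)}) together with $-\ell$ (the image of $\Frob_{M'}$); hence $[K:F_0]=\mathrm{lcm}(2,\delta)$ and $d=e\cdot\mathrm{lcm}(2,\delta)$, which Lemma~\ref{L:lemma35} converts into $er$ or $2er$. Your sketch supplies no substitute for this argument. Finally, your appeal to an inertia element ``whose two eigenvalues are conjugate characters of order $e$'' breaks down when $p=3$ and $3\mid e$ (the element is then not semisimple); the paper treats $(p,e)=(3,6)$ separately by a Borel-subgroup argument, and your write-up would need to do the same.
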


\section{The case of additive potentially good reduction with $\ell=3$}
Let $E/\Q_3$ be an elliptic curve with 
additive potentially good reduction. 
We can find in \cite{Kraus} the value of $e$ in terms of the triple $\left(v(c_4),v(c_6),v(\Delta)\right)$. In particular, 
we have $e\in \lbrace 2,3,4,6,12\rbrace$.

When $e=2$, we see from \cite[p. 355, Cor.]{Kraus} that
$$\left(v(c_4),v(c_6),v(\Delta)\right)\in \lbrace  (2,3,6), (3,\geq 6,6) \rbrace.$$
In this case, a minimal equation of $E/\Q_3$ is
\begin{equation}
\label{(6.1)} 
y^2=x^3-\frac{c_4}{48}x-\frac{c_6}{864}
\end{equation}
and we 
let $E'/\Q_3$ be the elliptic curve obtained as the quadratic twist of $E$ 
by~$\sqrt{3}$. 

\begin{lemma}  \label{L:lemma3} The elliptic curve $E'/\Q_3$ has good reduction.
\end{lemma}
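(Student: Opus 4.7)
The plan is to compute the $3$-adic invariants of $E'/\Q_3$ directly from those of $E$ using the quadratic-twist formulas, and then to minimize the resulting Weierstrass model and check that its discriminant has valuation zero.

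Starting from the model \eqref{(6.1)} of $E$, with standard invariants $(c_4, c_6, \Delta)$, the standard formulas for a quadratic twist by $\sqrt{3}$ show that $E'/\Q_3$ admits a Weierstrass equation with invariants $(9\,c_4,\,27\,c_6,\,729\,\Delta)$, whose $3$-adic valuations are $(v(c_4)+2,\,v(c_6)+3,\,v(\Delta)+6)$.

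Next, the assumption $e(E)=2$ combined with \cite[p.~355, Cor.]{Kraus} restricts $(v(c_4), v(c_6), v(\Delta))$ to one of $(2,3,6)$ or $(3,\geq 6, 6)$, so the twisted invariants have valuations $(4,6,12)$ or $(5,\geq 9, 12)$ respectively. In both cases the change of variables $x = 3^2 X$, $y = 3^3 Y$ divides these valuations by $(4, 6, 12)$, producing an integral Weierstrass model of $E'/\Q_3$ with discriminant of valuation zero. Such a model is automatically minimal, and $E'/\Q_3$ has good reduction.

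The argument is essentially a bookkeeping of $3$-adic valuations; there is no real obstacle, and the only point meriting verification is that after the scaling the coefficients remain in $\Z_3$, which is immediate from the valuation inequalities above. The assumption $e(E)=2$ is exactly what guarantees the initial valuations of $(c_4, c_6, \Delta)$ are large enough that, after adding $(2,3,6)$ from the twist, one reaches the non-minimality threshold $(4,6,12)$ and can scale away the entire discriminant.
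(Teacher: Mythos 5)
There is a genuine gap, and it sits exactly at the step you describe as ``immediate.'' Your criterion --- that reaching the valuation threshold $(4,6,12)$ makes the model non-minimal and lets you scale it away by $x=3^{2}X$, $y=3^{3}Y$ --- is the correct argument for residue characteristic $\ell\geq 5$, but it fails over $\Q_3$. Concretely, in the main case $t=(2,3,6)$ the twisted model is $Y^2=X^3-\frac{3c_4}{16}X-\frac{c_6}{32}$, with invariant valuations $(4,6,12)$, and after your substitution the coefficient of $X$ becomes $-\frac{3c_4}{16\cdot 3^4}$, whose $3$-adic valuation is $1+v(c_4)-4=-1<0$; the constant term has valuation $-3$. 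So the scaled equation is \emph{not} integral. The point is that integrality of a short Weierstrass model $y^2=x^3-\frac{c_4}{48}x-\frac{c_6}{864}$ over $\Z_3$ requires $v(c_4)\geq 1$ and $v(c_6)\geq 3$, not merely $v(c_4),v(c_6)\geq 0$; the good-reduction minimal model of $E'$ in this case necessarily carries an $x^2$-term and cannot be produced by a naive rescaling of the short form. (Your argument does go through in the other case $t=(3,\geq 6,6)$, where the scaled valuations are $(1,\geq 3,0)$.)

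More structurally: for $\ell=2,3$ the triple $\left(v(c_4),v(c_6),v(\Delta)\right)\geq(4,6,12)$ does not by itself certify non-minimality; one must run Tate's algorithm or invoke the tables of Papadopoulos, which is what the paper does (citing \cite[p.~126, Table~II]{Papadopoulos} for the triples $(4,6,12)$ and $(5,\geq 9,12)$). If you want to avoid that citation, you must supply the missing congruence: writing $X=c_4(W')/3^4$ and $Y=c_6(W')/3^6$, the relation $X^3-Y^2=1728\Delta(W')/3^{12}$ together with $v(\Delta(W'))=12$ forces $Y\equiv\pm 1\pmod 9$ in the case $t=(2,3,6)$, which is exactly Kraus's criterion for the pair $(X,Y)$ to arise from an integral Weierstrass equation over $\Z_3$ with unit discriminant. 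Without either the table or this supplementary verification, the proof is incomplete.
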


Let $\tilde{E'}/\F_3$ be the elliptic curve obtained from $E'/\Q_3$ by reduction and define
$$a_{E'}=4-|\tilde {E'}(\F_3)|\quad \text{and} \quad \Delta_{E'}=a_{E'}^2-12.$$
Let $\alpha$ and $\beta$ be the roots in $\F_{p^2}^*$ of the polynomial in $\F_p[X]$ given by
$$X^2-a_{E'}X+3 \pmod p.$$
Let $n$ be the least common multiple of their orders  in  $\F_{p^2}^*$

\begin{theorem}  \label{T:thm9} Let $E/\Q_3$ satisfy $e(E)=2$ and let $E'$ be as above.
\begin{itemize}
\item[1)]  Suppose  $\Delta_{E'}\not\equiv 0 \pmod p$. 
We have
  \[
d= 
  \begin{cases}
n \ \text{ if }  n \ \text{is even}\quad \text{and } \quad  \alpha^{{n\over 2}}=\ \beta^{{n\over 2}}=-1, \\
2n  \ \text{ otherwise}.\
  \end{cases}
\]

\item[2)]  Suppose $\Delta_{E'}\equiv 0 \pmod p$. Then $p=11$ and $d=110$.
\end{itemize}
\end{theorem}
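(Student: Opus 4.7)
The plan is to use Lemma~\ref{L:lemma3} to pass from $E$ to its good-reduction quadratic twist $E'$, apply Theorem~\ref{T:thm1} to compute $d' = [\Q_3(E'_p):\Q_3]$, and then recover $d$ by comparing the fields $K = \Q_3(E_p)$ and $K' = \Q_3(E'_p)$. Setting $F = \Q_3(\sqrt{3})$ with quadratic character $\chi$, the isomorphism $E \isom E'$ over $F$ gives $\rho_E = \chi \otimes \rho_{E'}$, hence $KF = K'F$. Since $\rho_{E'}$ is unramified (good reduction), $K' \subset \Q_3^{\unr}$, so $\sqrt{3} \notin K'$ and $[K'F:\Q_3] = 2 d'$. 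Comparing with the tower $\Q_3 \subset K \subset KF$ forces $d = d'$ if $\sqrt{3} \notin K$ and $d = 2d'$ if $\sqrt{3} \in K$.

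The crux of the argument is the equivalence
\[
\sqrt{3} \in K \iff -I \notin \im \rho_{E'},
\]
which I would establish as follows. Because $F/\Q_3$ is totally ramified, $\chi$ is surjective on the inertia subgroup $I_{\Q_3}$; because $E'$ has good reduction, $\rho_{E'}$ is trivial on $I_{\Q_3}$. Hence every coset of $\ker\chi$ in $G_{\Q_3}$ meets every power of $\rho_{E'}(\Frob_3)$. Unpacking $\ker\rho_E = \{\sigma : \chi(\sigma)\rho_{E'}(\sigma) = I\}$, the existence of some $\sigma \in \ker\rho_E$ with $\chi(\sigma) = -1$ (equivalently, $\sqrt{3} \notin K$) translates into $-I \in \langle \rho_{E'}(\Frob_3)\rangle$.

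Applying Theorem~\ref{T:thm1} to $E'/\Q_3$ then yields $d'$, and the matrix $\rho_{E'}(\Frob_3)$ has characteristic polynomial $X^2 - a_{E'}X + 3 \pmod p$. In case~1 this matrix is diagonalizable with distinct eigenvalues $\alpha,\beta$ and $d' = n$; the membership $-I \in \langle\rho_{E'}(\Frob_3)\rangle$ amounts to $n$ being even with $\alpha^{n/2} = \beta^{n/2} = -1$, yielding the stated dichotomy $d = n$ versus $d = 2n$. In case~2 the Hasse bound $|a_{E'}| \leq 3$ restricts $\Delta_{E'}$ to $\{-12,-11,-8,-3\}$, so $p \mid \Delta_{E'}$ with $p \geq 3$, $p \neq 3$, forces $p = 11$ and $a_{E'} = \pm 1$. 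The reduction is then good ordinary, and since $-11$ is already the fundamental discriminant of $\Q(\sqrt{-11})$ one has $b_{E'} = 1$, so Theorem~\ref{T:thm1} gives $d' = np$. A direct inspection of the Jordan form of $\rho_{E'}(\Frob_3)$ in the two sub-cases --- $\alpha = 6$ with $n = 10$, where $-I = \rho_{E'}(\Frob_3)^{55}$ does lie in the image, and $\alpha = 5$ with $n = 5$, where the order $5$ of $\alpha$ is odd and $-I$ does not lie in the image --- yields $d = 110$ in both.

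The main obstacle is the displayed equivalence: it is where the whole proof pivots on the interplay between the ramified character $\chi$ and the unramified representation $\rho_{E'}$. Once it is in hand, case~1 becomes an elementary spectral computation and case~2 reduces to a short finite verification in $\F_{11}$ together with the arithmetic of $\Q(\sqrt{-11})$.
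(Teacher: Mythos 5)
Your proposal is correct and follows essentially the same route as the paper: the paper's Lemma~\ref{L:lemma9} is exactly your dichotomy $d=d'$ or $2d'$ according to whether $-I$ lies in the cyclic image of $\rho_{E',p}$ (you derive it field-theoretically via $KF=K'F$, the paper by showing the image of $\rho_{E,p}$ is generated by $-1$ and $\rho_{E',p}(\Frob_3)$, but both rest on the same twist identity and the ramified/unramified contrast). Your case analysis --- diagonalizable Frobenius when $p\nmid\Delta_{E'}$, and $p=11$, $b_{E'}=1$, $n\in\{5,10\}$ with $d=110$ in both sub-cases when $p\mid\Delta_{E'}$ --- matches the paper's computation via Centeleghe's normal form.
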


\medskip

\begin{theorem}  \label{T:thm10} 
Suppose that $e(E) \in \lbrace 3,4,6, 12\rbrace$.
\begin{itemize}
\item[1)] If $e=3$, then $d=6\delta$.
\smallskip

\item[2)] If $e\in \lbrace 4, 6, 12 \rbrace$, then
  \[
d= 
  \begin{cases}
er  \ \text{ if } r  \ \text{is even}, \\
2er  \ \text{ if } r   \ \text{is odd}.\
  \end{cases}
\]
\end{itemize}
\end{theorem}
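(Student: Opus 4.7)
The plan is to follow the strategy of the proof of Theorem~\ref{T:thm8} ($\ell\geq 5$), adapted to handle $\ell=3$. The first step will be to construct, for each $e\in\{3,4,6,12\}$, a totally ramified extension $M/\Q_3$ of degree $e$ over which $E$ acquires good reduction, using the classification of semistability defects from \cite{Kraus}. The extension $M$ will be tame for $e=4$ and wildly ramified with wild part of order~$3$ for $e\in\{3,6,12\}$. Let $E'/M$ denote a minimal model of $E_M$; it has good reduction by construction. Since $M/\Q_3$ is totally ramified the residue field of $M$ is $\F_3$, and the reduction $\tilde{E'}/\F_3$ will be supersingular, because the $j$-invariant of $E'$ reduces to $0\in\F_3$ (the unique supersingular $j$-value in characteristic $3$). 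In particular, $a_{E'}\in\{-3,0,3\}$.

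Next I will apply Theorem~\ref{T:thm1} to $E'/M$ in order to compute $[M(E_p):M]$, distinguishing the cases $a_{E'}=0$ (where $n=2\delta$ and one invokes Corollary~\ref{C:cor1}) and $a_{E'}=\pm 3$ (where a direct calculation gives the orders in $\F_{p^2}^*$ of the roots of $X^2\mp 3X+3$ as a function of $r$). The degree $d$ will then be extracted from the equality
\[
d \;=\; \frac{e\cdot [M(E_p):M]}{[M:M\cap \Q_3(E_p)]},
\]
which follows from $M(E_p)=M\cdot \Q_3(E_p)$ together with $[M(E_p):\Q_3(E_p)]=[M:M\cap \Q_3(E_p)]$. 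Thus the remaining task is to identify the intersection $M\cap \Q_3(E_p)$ by analyzing the image of the inertia subgroup $I_{\Q_3}$ inside $\Aut(E_p)\simeq \GL_2(\F_p)$, comparing its tame and wild constituents with the cyclic characters produced by the action of inertia on the formal group and on the reduced curve of~$E'$.

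For $e=3$, I expect to show that the wildly ramified cubic $M$ lies inside $\Q_3(E_p)$, because the image of wild inertia in $\Aut(E_p)$ is forced to have order exactly $3$ by the fact that $E$ does not acquire good reduction over any proper subextension of $M^\unr/\Q_3^\unr$. Combined with $[M(E_p):M]=2\delta$, this will give $d=6\delta$. For $e\in\{4,6,12\}$, the intersection $M\cap \Q_3(E_p)$ is either $\Q_3$ or the unique quadratic subfield of $M$, and the parity of $r$ will distinguish the two possibilities, yielding the announced formula $d=er$ or $d=2er$.

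The main obstacle will be the wild case $e=12$. There $M$ has a tame subextension of degree $4$ and a wild subextension of degree $3$, and one has to track simultaneously the actions of the tame and of the wild inertia on $E_p$, which interact through the explicit wild extension of $\Q_3$ produced in~\cite{Kraus}. A secondary difficulty, for $e\in\{3,6,12\}$, will be to verify rigorously that the wild subextension of $M/\Q_3$ is actually contained in $\Q_3(E_p)$ (rather than merely meeting it in a proper subfield); this will require the explicit form of the wildly ramified extension from~\cite{Kraus} together with a direct inertia computation on the chosen minimal Weierstrass model.
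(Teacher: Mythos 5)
Your skeleton --- pass to a totally ramified degree-$e$ extension $M/\Q_3$ where $E$ acquires good (necessarily supersingular) reduction, compute $[M(E_p):M]$ via Theorem~\ref{T:thm1}/Corollary~\ref{C:cor1}, and then recover $d$ from $d=e\,[M(E_p):M]/[M:M\cap K]$ with $K=\Q_3(E_p)$ --- is exactly what the paper does for $e=3$, for the abelian $e=6$ case, and for the first half of $e=12$ (and the paper likewise pins down $a_{E'}=0$ via the rationality of the $2$-torsion over $M$). However, there is a genuine error in the step that carries all the weight: your claim that for $e\in\{4,6,12\}$ the intersection $M\cap K$ is either $\Q_3$ or the quadratic subfield of $M$ is false, and false in a way that makes the announced formula unreachable. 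Since $[M(E_p):M]=2\delta\le 4r$, your dichotomy yields $d=e\cdot 2\delta/[M:M\cap K]\le 4\delta\le 8r$, which cannot equal $12r$ or $24r$ when $e=12$. The intersection is in fact \emph{large}: the paper proves $[K\cap M:\Q_3]\in\{6,12\}$ for $e=12$ (Lemma~\ref{L:lemma37}), $\{3,6\}$ for $e=6$, and one checks $\{2,4\}$ for $e=4$. One way to see that $M\cap K=\Q_3$ is impossible for $e\in\{4,12\}$: it would force $\Gal(K/\Q_3)\simeq\Gal(M(E_p)/M)$ to be cyclic, hence $K/\Q_3$ abelian with cyclic inertia of order $e$; by local class field theory the inertia group of an abelian extension of $\Q_3$ is a quotient of $\Z_3^{\times}\simeq\Z/2\times\Z_3$, which has no cyclic quotient of order $4$ or $12$.

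Beyond this, the proposal misses two structural ingredients the paper needs. First, for $e=6$ the argument must branch on whether $\Gal(K/\Q_3)$ is abelian (the paper gives an explicit criterion, Lemma~\ref{L:lemma32}); the construction of a good totally ramified $M$ of degree $6$ is only carried out in the abelian case. Second, for $e=4$, for the non-abelian $e=6$ case, and for the second half of $e=12$, the paper does not argue through $M$ at all: it passes to the projective representation $\sigma_{E,p}$, its fixed field $F$, and the Diamond--Kramer classification (type {\bf V}, resp.\ type {\bf H} for $e=12$), where $\Gal(F/\Q_3)$ is dihedral and the parity of $r$ enters through $[F(\mu_p):F]$ and the relation $\varphi^2=\chi_p|_H$ on $H=\Gal(K/F)$. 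Your plan to ``track tame and wild inertia'' for $e=12$ does not address the actual difficulty there, which is deciding between $[F:\Q_3]=6$ and $[F:\Q_3]=12$ and between $[K\cap M:\Q_3]=6$ and $12$; the paper resolves this by combining the divisibility $12r\mid d$ with the upper bound $[K:F]\le 2r$ coming from the projective analysis.
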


\section{The case of additive   potentially good reduction with $\ell=2$}
Let $E/\Q_2$ be an elliptic curve with additive potentially good reduction. 
We can find in \cite{Kraus} the value of $e$ in terms of the triple $\left(v(c_4),v(c_6),v(\Delta)\right)$. In particular, 
$e\in \lbrace 2,3,4,6,8,24\rbrace$.

When $e=2$, we write $t=\left(v(c_4),v(c_6),v(\Delta)\right)$, 
and \cite[p. 357, Cor.]{Kraus} gives that
\begin{equation}
t\in \lbrace (\geq 6,6,6), (4,6,12), (\geq 8,9,12), (6,9,18)\rbrace.
\end{equation}
The equation
\begin{equation}
\label{(7.2)} 
y^2=x^3-{c_4\over 48}x-{c_6\over 864}
\end{equation}
is a minimal model of $E/\Q_2$. Define the quantity
$$c_6'=\frac{c_6}{2^{v(c_6)}}.$$

\begin{lemma} \label{L:lemma4} Suppose that $E$ satisfies $e=2$ and 
let $u\in \lbrace -2,-1,2\rbrace$ be defined as follows:
 \[
u = 
  \begin{cases}
  2 \ \text{ if } \ t=(\geq 6,6,6) \quad \text{and} \quad c'_6\equiv 1 \pmod 4, \\
  -2 \ \text{ if } \ t=(\geq 6,6,6) \quad \text{and} \quad c'_6\equiv -1 \pmod 4, \\
  -1 \ \text{ if } \ t=(4,6,12) \quad \text{or} \quad t=(\geq 8,9,12), \\
  2 \ \text{ if } \ t=(6,9,18) \quad \text{and} \quad c'_6\equiv -1 \pmod 4, \\
 -2 \ \text{ if } \ t=(6,9,18) \quad \text{and} \quad c'_6\equiv 1 \pmod 4. \
  \end{cases}
\]
Then, the quadratic twist of $E/\Q_{2}$ by $\sqrt{u}$ has good reduction.
\end{lemma}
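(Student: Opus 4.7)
The plan is to verify the lemma by a direct case-by-case analysis on the triple $t = (v(c_4), v(c_6), v(\Delta))$, showing in each of the five rows that the prescribed quadratic twist $E^{(u)}/\Q_2$ admits a minimal Weierstrass model with discriminant of valuation zero.

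Starting from the minimal model \eqref{(7.2)} of $E/\Q_2$, twisting by $\sqrt u$ multiplies the standard invariants by $(u^2, u^3, u^6)$; explicitly, after clearing $\sqrt u$, the twist has equation $Y^2 = X^3 - (u^2 c_4/48)X - (u^3 c_6/864)$. My first step is to compute $(v(c_4(E^{(u)})), v(c_6(E^{(u)})), v(\Delta(E^{(u)})))$ for each row and verify that $v(\Delta(E^{(u)})) = 6 v(u) + v(\Delta)$ is divisible by $12$: this equals $12$ in rows 1--3 and $24$ in rows 4--5, so a change of variables $(X,Y)\mapsto(w^2X, w^3 Y)$ with $v(w)=v(\Delta(E^{(u)}))/12$ yields a model whose invariants satisfy $v(\Delta)=0$. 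Observe that the parity of $v(u)$ prescribed in each row ($1$ for rows 1, 2, 4, 5 and $0$ for row 3) is forced by this divisibility requirement, so the absolute value of $u$ is determined a priori.

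The second and crucial step is to show that this minimised model is the equation of an elliptic curve over $\Z_2$, that is, that there is an integer long Weierstrass equation $y^2+a_1 xy+a_3 y = x^3+a_2 x^2+a_4 x+a_6$ realising the computed $(c_4^{\min}, c_6^{\min})$. At $\ell=2$ this is the nontrivial $2$-adic integrality criterion of \cite{Kraus}, which imposes a congruence on $c_6^{\min}$ modulo a small power of $2$. Here the \emph{sign} of $u$ enters: in rows 1--2 and 4--5 the minimised $c_6$ equals a unit multiple of $u^3 c_6/2^{v(c_6)+3}$, whose residue class modulo~$4$ depends on $c_6'$ and flips when the sign of $u$ flips. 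Matching this residue class to the one required by Kraus's criterion produces exactly the rule on $c_6'\pmod 4$ stated in the lemma; the cases $u=-1$ (row~3, where $t=(4,6,12)$ or $t=(\geq 8,9,12)$) carry no sign ambiguity, and the criterion reduces to the trivial congruence since $v(c_6(E^{(u)}))\geq 6$.

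Once both steps are in place, the minimised triple $(v(c_4^{\min}), v(c_6^{\min}), 0)$ lies in the good-reduction part of the Kraus/Papadopoulos classification, so $E^{(u)}/\Q_2$ has good reduction, as claimed. The main obstacle I expect is the $2$-adic bookkeeping in the second step: one has to track $c_6$ modulo~$4$ through the transformation that clears denominators and then through the minimisation, and check that the prescribed sign of $u$ (rather than its opposite) lands in the correct square class. This is routine but must be done carefully in each of the four sign-sensitive rows.
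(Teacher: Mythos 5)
Your strategy is genuinely different from the paper's: you reduce the twisted invariants all the way to a candidate pair $\bigl(c_4(E^{(u)})/w^4,\,c_6(E^{(u)})/w^6\bigr)$ with unit discriminant and test it against Kraus's $2$-adic admissibility criterion for $(c_4,c_6)$-pairs, whereas the paper writes down one explicit integral model of the twist and shows it is non-minimal using Propositions~4 and~6 and the tables of Papadopoulos. Both routes are legitimate, and for the sign-sensitive rows your computation does close: for $t=(\geq 6,6,6)$ the minimised pair has $v(c_4)\geq 4$, $v(c_6)=3$ and $c_6^{\mathrm{min}}=\pm 8c_6'$, so admissibility ($c_6^{\mathrm{min}}\equiv 0,8\pmod{32}$) amounts to $\pm c_6'\equiv 1\pmod 4$, which is exactly the stated rule; for $t=(6,9,18)$ the minimised $c_6$ is the unit $\pm c_6'$ and the condition $c_6^{\mathrm{min}}\equiv -1\pmod 4$ again reproduces the rule. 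Your approach is arguably cleaner in those rows, since it avoids tracking the Tate cases.

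There is, however, a genuine gap in your row 3 ($u=-1$, $t=(4,6,12)$ or $(\geq 8,9,12)$). You assert that these cases ``carry no sign ambiguity'' and that ``the criterion reduces to the trivial congruence since $v(c_6(E^{(u)}))\geq 6$''. But the congruence must be tested on the \emph{minimised} $c_6$, which has valuation $0$ (for $t=(4,6,12)$, where $c_6^{\mathrm{min}}=-c_6'$) or $3$ (for $t=(\geq 8,9,12)$, where $c_6^{\mathrm{min}}=-8c_6'$); in both cases admissibility is a nontrivial condition, namely $c_6'\equiv 1\pmod 4$ in the first case and $c_6'\equiv -1\pmod 4$ in the second. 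These congruences cannot be read off from $t$ alone: they have to be \emph{deduced} from the minimality of equation \eqref{(7.2)} (equivalently, from the fact that $E$ itself has additive reduction), since otherwise the untwisted reduced pair $(c_4/2^4,c_6/2^6)$, resp.\ $(c_4/2^4,8c_6')$, would already be admissible with unit discriminant. This deduction is exactly the extra step the paper performs in its cases 3) and 4) — applying the non-minimality criterion first to the minimal model of $E$ to pin down the residue of $c_6'$ (or of the translation parameter $r$), and only then to the twist. Without that step the row-3 cases of the lemma are not proved, so you need to add it before your argument is complete.
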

Under the conditions of Lemma~\ref{L:lemma4}, we 
let $E'/\Q_2$ be the quadratic twist of $E/\Q_{2}$ by $\sqrt{u}$ and 
 $\tilde{E'}/\F_2$ the elliptic curve obtained from $E'$  by reduction. 
Define also
$$a_{E'}=3-|\tilde {E'}(\F_2)|\quad \text{and} \quad \Delta_{E'}=a_{E'}^2-8.$$
Let $\alpha$ and $\beta$ be the roots in $\F_{p^2}^*$ of the polynomial in $\F_p[X]$ given by
$$X^2-a_{E'}X+2 \pmod p.$$
Let  $n$ be the least common multiple of their orders  in  $\F_{p^2}^*$.

\begin{theorem}  \label{T:thm11}  
Let $E/\Q_2$ satisfy $e(E)=2$ and let $E'$ be as above.
\begin{itemize}

\item[1)]   Suppose  $\Delta_{E'}\not\equiv 0 \pmod p$. 
\medskip
We have,
  \[
d= 
  \begin{cases}
n \ \text{ if }  n \ \text{is even}\quad \text{and} \quad  \alpha^{{n\over 2}}=\ \beta^{{n\over 2}}=-1, \\
2n  \ \text{ otherwise}.\
  \end{cases}
\]
\item[2)]  Suppose $\Delta_{E'}\equiv 0 \pmod p$. Then $p=7$ and $d=42$.
\end{itemize}
\end{theorem}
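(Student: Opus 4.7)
The plan is to reduce to the good-reduction curve $E'/\Q_2$ of Lemma~\ref{L:lemma4} via the twist relation
\[
\rho_{E,p}\;\cong\; \rho_{E',p}\otimes\chi,
\]
where $\chi\colon G_{\Q_2}\to\{\pm 1\}$ is the quadratic character cutting out $L=\Q_2(\sqrt u)$, and then to apply Theorem~\ref{T:thm1} to $E'$. For each of the admissible values $u\in\{-1,\pm 2\}$ the extension $L/\Q_2$ is ramified, so $\chi|_{I_{\Q_2}}$ is nontrivial.

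Since $E'/\Q_2$ has good reduction and $p\neq 2$, by the N\'eron--Ogg--Shafarevich criterion the extension $\Q_2(E'_p)/\Q_2$ is unramified, so $L$ and $\Q_2(E'_p)$ are linearly disjoint over $\Q_2$. Consequently $\rho_{E,p}$ factors through
\[
\Gal\!\left(L\cdot\Q_2(E'_p)/\Q_2\right)\;\cong\;\Z/2\Z\times\langle\sigma\rangle,
\]
where $\sigma$ is a Frobenius lift and $M_F:=\rho_{E',p}(\sigma)$. Writing a typical element as $(\tau,j)$ we have $\rho_{E,p}(\tau,j)=(-1)^\tau M_F^{\,j}$, so the image equals $\{\pm M_F^{\,j}:j\in\Z\}$ and
\[
d\;=\;\begin{cases}\operatorname{ord}(M_F) & \text{if } -I\in\langle M_F\rangle,\\ 2\operatorname{ord}(M_F) & \text{otherwise.}\end{cases}
\]

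For part~(1), $\Delta_{E'}\not\equiv 0\pmod p$ ensures $M_F$ is diagonalizable with distinct eigenvalues $\alpha,\beta$ and $\operatorname{ord}(M_F)=n$. The condition $M_F^k=-I$ then reduces to $\alpha^k=\beta^k=-1$, whose minimal positive solution is $k=n/2$, existing precisely when $n$ is even and $\alpha^{n/2}=\beta^{n/2}=-1$. This yields the stated dichotomy between $d=n$ and $d=2n$.

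For part~(2), the constraint $a_{E'}^2-8\equiv 0\pmod p$ together with the Weil bound $|a_{E'}|\le 2\sqrt 2$ forces $a_{E'}=\pm 1$ and $p=7$, so $\tilde E'$ is ordinary with $\Delta_{E'}=-7$. Since $-7$ is the discriminant of the maximal order of $\Q(\sqrt{-7})$, we get $\Z[\pi_{\tilde E'}]=\End(\tilde E')$ and $b_{E'}=1$; Theorem~\ref{T:thm1} then gives $\operatorname{ord}(M_F)=np$ with $M_F$ a non-scalar Jordan block of eigenvalue $\alpha\in\F_7^*$. A short direct check in the two subcases---$a_{E'}=1$ gives $\alpha=4$ and $n=3$, where no power of $M_F$ equals $-I$, so $d=2np=42$; while $a_{E'}=-1$ gives $\alpha=3$ and $n=6$, where $M_F^{21}=-I$, so $d=np=42$---yields $d=42$ in both situations. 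The main obstacle is precisely this Jordan-block analysis: one must track simultaneously the eigenvalue condition $\alpha^k=-1$ and the vanishing in $\F_p$ of the off-diagonal entry $k\alpha^{k-1}$, which amounts to requiring $p\mid k$.
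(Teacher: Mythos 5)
Your proposal is correct and follows essentially the same route as the paper: reduce to the good-reduction twist $E'$, observe that the image of $\rho_{E,p}$ is generated by $-I$ together with the Frobenius matrix of $E'$ (the paper's Lemma~\ref{L:lemma9} derives this from the order-$2$ inertia landing in $\SL_2(\F_p)$, while you use linear disjointness of the ramified quadratic $L$ and the unramified field $\Q_2(E'_p)$ --- the same fact in different clothing), and then run the diagonalizable case for part~1) and the Jordan-block analysis with $p=7$, $b_{E'}=1$, $n\in\{3,6\}$ for part~2), exactly as in the paper's proof via Lemmas~\ref{L:lemma9} and~\ref{L:lemma10}.
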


\begin{theorem} \label{T:thm12} 
Suppose that $E/\Q_2$ satisfies $e(E) \in \lbrace 3,4,6,8,24\rbrace$.
\begin{itemize}
\item[1)] If $e=3$, then $d=6\delta$.
\smallskip
\item[2)] If $e\in \lbrace 4, 6, 8, 24 \rbrace$, then
  \[
d= 
  \begin{cases}
er  \ \text{ if } r  \ \text{is even}, \\
2er  \ \text{ if } r   \ \text{is odd}.\
  \end{cases}
\]
\end{itemize}
\end{theorem}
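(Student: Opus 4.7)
The plan is to follow the strategy of the analogous Theorems~\ref{T:thm8} and~\ref{T:thm10} (covering $\ell \geq 5$ and $\ell = 3$ respectively), adapted to residue characteristic $\ell = 2$. The main input is the classification in \cite{Kraus} of the triples $(v(c_4), v(c_6), v(\Delta))$ realizing each value of $e$, together with an explicit extension $L/\Q_2$ of ramification index $e$ over which $E$ attains good reduction.

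A key preliminary observation is that for $\ell = 2$ and $e \in \{3,4,6,8,24\}$, the reduction $\tilde E$ obtained over the residue field of $L$ is necessarily supersingular with $a_{\tilde E} = 0$. Indeed, these values of $e$ force $\tilde E$ to be the supersingular elliptic curve with $j=0$ over $\overline{\F}_2$, whose geometric automorphism group is $\SL_2(\F_3)$ of order $24$. In particular, the Frobenius endomorphism $\pi_{\tilde E}$ satisfies $\pi_{\tilde E}^2 = -2$ and acts on $E_p$ with eigenvalues $\pm\sqrt{-2} \in \F_{p^2}^*$ of multiplicative order $2\delta$.

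For Part~1 ($e=3$), the ramification is tame: the image of inertia $\rho_p(I_{\Q_2})$ is cyclic of order~$3$, and a linear-disjointness argument identical to that used in Theorems~\ref{T:thm8}.1 and~\ref{T:thm10}.1 separates the ramified contribution (degree~$3$) from the Frobenius contribution (degree $2\delta$), yielding $d = 6\delta$. For Part~2 ($e \in \{4,6,8,24\}$), the Weil pairing forces $r \mid d$ and the semistability defect forces $e \mid d$; a parity-of-$r$ analysis, based on whether the scalar $-I$ is already realized by a Frobenius power through the cyclotomic character without adding a new unramified quadratic extension, distinguishes the two formulas $d = er$ and $d = 2er$.

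The main obstacle is Part~2 in the wild cases $e \in \{8, 24\}$: here $\rho_p(I_{\Q_2})$ is a nonabelian subgroup of $\GL_2(\F_p)$ (quaternion $Q_8$ for $e=8$, binary tetrahedral $\SL_2(\F_3)$ for $e=24$), and one must verify that inertia acts faithfully of order exactly~$e$ and that the amalgamation with Frobenius produces the claimed order. This hinges on the explicit description in \cite{Kraus} of the Galois action on the special fiber of the N\'eron model, together with the fact that for supersingular reduction the full image of $\rho_p$ lies in the normalizer of a non-split Cartan subgroup of $\GL_2(\F_p)$, which contains all the relevant finite subgroups compatibly.
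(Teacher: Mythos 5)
Your sketch has the right flavour (pass to a field of good reduction, then amalgamate the inertia and Frobenius contributions), but several of its load-bearing claims are either unproved or false. First, the assertion that $e\in\{3,4,6,8,24\}$ forces $a_{\tilde E}=0$ and $\pi_{\tilde E}^2=-2$ does not follow from supersingularity: over $\F_2$ the supersingular curves with $j=0$ have $a\in\{0,\pm 2\}$ (e.g.\ $y^2+y=x^3+x$ has trace $-2$), so an extra argument is needed. The paper supplies it for $e=3$ by analysing $\rho_{E,3}$ (its image has order $6$, forcing a rational $3$-torsion point over $M=\Q_2(\sqrt[3]{2})$ and hence $a_{E'}=0$ by the Weil bound). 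For $e\in\{4,8,24\}$ the paper does \emph{not} work over a totally ramified extension at all: it uses $M=\Q_2(E_3)$, whose residue field is $\F_4$, where full $3$-torsion gives $|\tilde E'(\F_4)|=9$, $a_{E'}=-4$, $\Delta_{E'}=0$, and the Frobenius acts as the scalar $-2$, so that $[M(E_p):M]=\delta$ --- not as a semisimple element with eigenvalues $\pm\sqrt{-2}$ of order $2\delta$. Your formula would give the wrong unramified degree in these cases, and you have not justified that a totally ramified $\Q_2$-model of the good-reduction field even exists when inertia is $Q_8$ or $\SL_2(\F_3)$. Second, the claim that for supersingular reduction the image of $\rho_{E,p}$ lies in the normalizer of a non-split Cartan is false for $e=24$: the inertia image $\SL_2(\F_3)$ has projective image $A_4$, which embeds in no dihedral group, hence in no Cartan normalizer. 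The paper instead invokes Diamond's classification of the projective image (types {\bf V} and {\bf H}) and the identity $\varphi^2=\chi_p|_H$ for the scalar character on $H=\Gal(K/F)$.

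The deeper gap is that the ``parity-of-$r$ analysis'' in Part~2, which is the actual content of the theorem, is asserted rather than carried out. The upper bound $d\le 2er$ comes from $[K:F]\le 2r$ via $\varphi^2=\chi_p|_H$ together with the determination of $[F:\Q_2]$ from $\sigma_{E,p}(\Phi)$; and, crucially, when $r$ is odd the matching lower bound $d\ge 2er$ requires showing that $\mu_3\subseteq K$, so that the maximal unramified subfield of $K$ has degree at least $2r$. The paper proves $\mu_3\subseteq K$ by a case analysis of $K\cap\Q_2(E_3)$ using the subgroup structure of the semidihedral group $SD_{16}$ (for $e=8$) and of $\GL_2(\F_3)$ (for $e=24$) to show this intersection cannot be totally ramified. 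Nothing in your proposal produces this input, and without it the case $r$ odd is not settled.
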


\section{Application of the results}

Before proceeding to the proofs of the results, let us give some examples of 
their application.

Consider the elliptic curve over $\Q$ with Cremona label $25920ba1$ given by the minimal model 
\[
 E \; : \; y^2 = x^3 - 432x - 864 
\]
whose conductor is $N_E = 2^6 \cdot 3^4 \cdot 5$ and standard invariants are 
\[
 c_4(E) = 2^8 \cdot 3^4, \qquad c_6(E) = 2^{10} \cdot 3^6, \qquad \Delta = 2^{14} \cdot 3^{10} \cdot 5.
\]
We will determine the degree $d_p$ 
of $\Q_\ell(E_p)/\Q_\ell$ for $\ell \in \{2,3,5,7 \}$ 
and $p \in \{3, 5, 7, 11 \}$ with $\ell \neq p$.
Recall that $r$ is the order of $\ell$ modulo~$p$.

1) Let $\ell=2$; from \cite[p. 357, Cor.]{Kraus} we see that $e=24$. 
From Theorem~\ref{T:thm12} we now conclude that 
\[
 d_3 = 48, \qquad d_5 = 96, \qquad d_7 = 144, \qquad d_{11} = 240,
\]
since for $p=3,5,7,11$ we have $r = 2, 4, 3, 10$, respectively.

2) Let $\ell=3$; from \cite[p. 355, Cor.]{Kraus} we see that $e=6$. 
From Theorem~\ref{T:thm10} we now conclude that 
\[
 d_5 = 24, \qquad d_7 = 36, \qquad d_{11} = 110,
\]
since for $p=5,7,11$ we have $r = 4, 6, 5$, respectively.

3) Let $\ell=5$; the curve $E$ has multiplicative reduction at $5$ and 
the symbol $(-c_6/5) = 1$. Moreover, for $p=3,7,11$, 
we have that $5 \not\equiv 1 \pmod{p}$, so by part 1.1) 
of Theorem~\ref{T:thm2} we conclude that 
\[
 d_3 = 6, \qquad d_7 = 42, \qquad d_{11} = 55, 
\]
since $r = 2, 6, 5$, respectively
and $\vv_5(j) = 2 \not\equiv 0 \pmod{p}$ for all $p$.

4) Let $\ell=7$; the curve $E$ has good reduction at~$7$ so we will apply Theorem~\ref{T:thm1}.
We have
\[
 a_E = -2, \qquad \Delta_E = -24.
\]
For $p=3$ we have $\Delta_E \equiv 0 \pmod{p}$ and $b_E = 1 \not\equiv 0 \pmod{3}$. 
Moreover, 
\[ x^2 - a_E x + 7 \equiv (x+1)^2 \pmod{3}, \] 
so $n=2$ and $d_3 = 6$ by Theorem~\ref{T:thm1} part 2). 
For $p =5$, we have 
\[ x^2 - a_E x + 7 \equiv (x+3)(x+4) \pmod{5}, \] 
so $d_5=n=4$ by Theorem~\ref{T:thm1} part 1). 
For $p=11$, we have 
\[ x^2 - a_E x + 7 \equiv (x+8)(x+4) \pmod{11}, \] 
so $d_{11}=n=10$ by Theorem~\ref{T:thm1} part 1). 

\subsection{Computing the discriminant of $\Q_\ell(E_p)$}
To finish we will determine the discriminant ideal of $K = \Q_\ell(E_p)$
for $(\ell,p)=(2,3)$ and $(\ell,p)=(3,5)$ where $E$ is the elliptic curve 
in the previous examples. Write $\pi$ for an uniformizer in $K$. 
The discriminant ideal~$\mathcal{D}$ of $K/\Q_\ell$ is generated by~$\ell^{dD/e}$, 
where $(\pi)^D$ is the different ideal of $K$.

1) Let $(\ell,p)=(2,3)$. We have $e=24$ and $d_3 = 48$ from the previous section. 
The valuations of $\vv_2(c_4)$ and $\vv_2(\Delta)$ together with
\cite[Th\'eor\`eme~4]{CaliKraus} tell us that $D=50$, hence $\mathcal{D} = (2)^{100}$.

2) Let $(\ell,p)=(3,5)$. In this case, we have $e=6$ and $d_5 = 24$ and
\cite[Th\'eor\`eme~3]{CaliKraus} tell us that $D=9$, hence $\mathcal{D} = (3)^{36}$.

{\large \part{Proof of the statements}}

\section{Proof of Theorem~\ref{T:thm1} and Corollary~\ref{C:cor1}}

Let $E/M$ be as in the statement of Theorem~\ref{T:thm1} 
and recall that $d=[M(E_p):M]$.

\begin{lemma} \label{L:lemma5}   
Suppose $p$ divides $\Delta_E$. Then $E/M$ has good ordinary reduction. 
\end{lemma}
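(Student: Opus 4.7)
\textbf{Proof plan for Lemma~\ref{L:lemma5}.} The strategy is to argue by contrapositive: assuming $E/M$ has supersingular reduction, I will show that no odd prime $p \neq \ell$ can divide $\Delta_E = a_E^2 - 4\ell$, which contradicts the hypothesis $p \mid \Delta_E$ (recall $p \geq 3$ and $p \neq \ell$ throughout the paper).

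The two inputs I would combine are: (i) the supersingularity condition $a_E \equiv 0 \pmod{\ell}$, i.e. $\ell \mid a_E$, and (ii) the Weil bound $a_E^2 \leq 4\ell$. Together these force $a_E$ to take only a handful of values, which I would dispose of by cases on $\ell$. For $\ell \geq 5$, the bound $\ell \mid a_E$ and $a_E^2 \leq 4\ell$ implies $a_E = 0$, so $\Delta_E = -4\ell$; any odd prime divisor of $\Delta_E$ must then equal $\ell$, contradicting $p \neq \ell$. For $\ell = 3$, one has $a_E \in \{0, \pm 3\}$ and hence $\Delta_E \in \{-12, -3\}$, so the only odd prime divisor of $\Delta_E$ is $3 = \ell$. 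For $\ell = 2$, one has $a_E \in \{0, \pm 2\}$ and $\Delta_E \in \{-8, -4\}$, which has no odd prime divisor at all. In every case, no admissible $p$ divides $\Delta_E$.

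Since the hypothesis $p \mid \Delta_E$ rules out supersingular reduction and $E/M$ has good reduction by assumption (the residue field being $\F_\ell$ because $M/\Q_\ell$ is totally ramified), the reduction must be ordinary, i.e. $a_E \not\equiv 0 \pmod{\ell}$, which is what we wanted.

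There is no real obstacle here; the argument is a short enumeration. The only point to be careful about is distinguishing the small characteristics $\ell \in \{2, 3\}$, where the naive inequality $|a_E| \leq 2\sqrt{\ell} < \ell$ fails and supersingular curves can have $a_E \neq 0$, so the computation of $\Delta_E$ has to be carried out for each of those finitely many values rather than dismissed by the clean estimate available when $\ell \geq 5$.
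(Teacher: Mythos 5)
Your proof is correct and follows essentially the same route as the paper: both use the characterization of supersingularity as $\ell \mid a_E$, the Weil bound $|a_E|\leq 2\sqrt{\ell}$, and a case split between $\ell\geq 5$ (where the bound forces $\ell\nmid a_E$ or, in your contrapositive phrasing, $a_E=0$ and $\Delta_E=-4\ell$) and the finitely many supersingular trace values for $\ell\in\{2,3\}$, checking in each case that $\Delta_E$ has no admissible odd prime divisor. The only cosmetic difference is that the paper first deduces $a_E\neq 0$ directly from $p\mid\Delta_E$ before applying the Weil bound, whereas you run the whole argument by contraposition; the content is the same.
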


\begin{proof} We have to  prove that $a_E\not\equiv 0\pmod \ell$. 

Recall that $\Delta_E = a_E^2 -4\ell$. Since $\ell\neq p$ and $p\geq 3$, we have $a_{E}\neq 0$. 

The Weil bound implies $|a_E|\leq 2\sqrt{\ell}$. 
So, for $\ell\geq 5$, it is clear that $\ell \nmid a_E$.

Suppose $\ell=2$ or $\ell=3$ and that $E/M$ has good supersingular reduction. 
In case $\ell=2$, one has  $a_E=\pm 2$ so   $\Delta_E=-4$,  which is not divisible by $p$. 
If $\ell=3$, 
one has
$a_{E}=\pm 3$ so $\Delta_E=-3$,  which leads again to a contradiction, hence the assertion. 
\end{proof}

Let $\tilde E_p$ be the group of $p$-torsion points of the reduced elliptic curve $\tilde E/\F_{\ell}$. From the work of Serre-Tate \cite[Lemma 2]{ST1968}, we have
\begin{equation}
d=[\F_{\ell}(\tilde E_p):\F_{\ell}].
\end{equation}
Let us note  
$$\rho_{\tilde E,p} : \Gal(\F_{\ell}(\tilde E_p)/\F_{\ell}) \to \GL_2(\F_p)$$ 
the representation giving the action of the Galois group $\Gal(\F_{\ell}(\tilde E_p)/\F_{\ell})$ on $\tilde E_p$ via a choice of a basis. 
Let 
$\sigma_{\ell}$ be the Frobenius element of $\Gal(\F_{\ell}(\tilde E_p)/\F_{\ell})$. The order of 
$$\rho_{\tilde E,p}(\sigma_{\ell})\in \GL_2(\F_p)$$ is equal to $d$. 
Let $f_E\in \F_p[X]$ be its characteristic polynomial, which is given by
$$f_E=X^2-a_EX+\ell \pmod p.$$

1) Suppose $\Delta_E\not\equiv 0 \pmod p$. 

If $\Delta_E$  is a square in $\F_p^*$, the roots $\alpha, \beta$ of $f_E$ belong to $\F_p^*$ and are distinct. So up to conjugation, one has
$$\rho_{\tilde E,p}(\sigma_{\ell})=\begin{pmatrix} \alpha & 0 \\0 & \beta\end{pmatrix},$$
which implies $d=n$ as stated. 

If $\Delta_E$  is not a square in $\F_p^*$, then the roots of $f_E$ are $\alpha$ and $\alpha^p$ and belong to $\F_{p^2}$. So there exists a matrix $U\in \GL_2(\F_{p^2})$ such that 
$$\rho_{\tilde E,p}(\sigma_{\ell})=U\begin{pmatrix} \alpha & 0 \\0 & \alpha^p\end{pmatrix}U^{-1},$$
which leads  again $d=n$.

2) Suppose $\Delta_E\equiv 0 \pmod p$.  
We have
$f_E=(X-\alpha)^2$ so, up to conjugation, we obtain 
$$\rho_{\tilde E,p}(\sigma_{\ell})=\begin{pmatrix} \alpha & 0 \\0 & \alpha\end{pmatrix}\quad \hbox{or}\quad \rho_{\tilde E,p}(\sigma_{\ell})=\begin{pmatrix} \alpha & 1 \\0 & \alpha\end{pmatrix}.$$

From Lemma~\ref{L:lemma5}, we know $E/M$ has good ordinary reduction. 
Moreover, \cite[Corollary~2]{Freitaskraus} implies that 
$p$ divides $d$ if and only if $b_E\not\equiv 0 \pmod p$. 
This completes the proof of Theorem~\ref{T:thm1}.

We now prove Corollary~\ref{C:cor1}. We have $a_E=0$, so $\Delta_E=-4\ell$ is not divisible by $p$. From Theorem~\ref{T:thm1}, we obtain $d=n$. Moreover, one has $\alpha^2=-\ell$, so 
$$\delta=\frac{n}{\gcd(n,2)}.$$
Necessarily the order of $\alpha$ or the order of $-\alpha$ is even, so $n$ is even. Thus $n=2\delta$, as desired.

\section{Proof of Theorem~\ref{T:thm2} and Theorem~\ref{T:thm3}}
Suppose that $E$ has multiplicative reduction or additive potentially multiplicative reduction; in particular, one has $v(j)<0$. 
Let us denote
$$L=\Q_{\ell}\left(\sqrt{-c_6}\right).$$
 The elliptic curve $E/\Q_{\ell}$ is isomorphic over $L$ to the Tate curve $\G_m/q^{\Z}$, with $q\in \Z_{\ell}$ the element defined by the equality (\cite[p. 443]{Silverman} and \cite[Lemme~1]{CaliKraus})
 \begin{equation}
 \label{E:Tatej}
 j=\frac{1}{q}+744+196884 q+\cdots.
 \end{equation}
Let $\varepsilon : \Gal\left(\overline{\Q_{\ell}}/\Q_{\ell}\right)\to \lbrace \pm 1\rbrace$ be the character associated to the extension~$L/\Q_{\ell}$. Note that $\varepsilon$ can be of order 1. The curves $E/\Q_{\ell}$ and $\G_m/q^{\Z}$ are related by the quadratic twist by~$\varepsilon$. 

Let $\chi_p : \Gal\left(\overline{\Q_{\ell}}/\Q_{\ell}\right)\to \F_p^*$ be the mod~$p$ cyclotomic character. 
The representation giving the action of  $\Gal\left(\overline{\Q_{\ell}}/\Q_{\ell}\right)$ on $E_p$ is of the shape
\begin{equation}
\label{(9.2)}
\varepsilon \otimes \begin{pmatrix} \chi_p & * \\0 & 1\end{pmatrix}.
\end{equation}

\begin{lemma} \label{L:lemma6} 
The element $q$ is a $p$-th power in $\Q_{\ell}$ if and only if such is the case for $j$. In particular, one has $\Q_{\ell}\left(\mu_p,q^{\frac{1}{p}}\right)=\Q_{\ell}\left(\mu_p,j^{\frac{1}{p}}\right)$.
\end{lemma}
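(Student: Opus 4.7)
The plan is to multiply equality \eqref{E:Tatej} by $q$, obtaining
$$jq = 1 + 744\,q + 196884\,q^2 + \cdots.$$
Since the Tate parameter satisfies $q\in\ell\Z_\ell$, every term past the initial $1$ lies in $\ell\Z_\ell$, so $jq$ is a principal unit; that is, $jq\in 1+\ell\Z_\ell$ (and for $\ell=2$ this group is simply $\Z_2^*$).

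Next I would invoke the standard fact that $1+\ell\Z_\ell$ is a pro-$\ell$ group (respectively $\Z_2^*\cong \Z/2\Z\times\Z_2$ is pro-$2$), hence $p$-divisible for every odd prime $p\neq \ell$. Therefore $jq\in (\Z_\ell^*)^p\subseteq (\Q_\ell^*)^p$, which is the crux of the lemma.

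Both conclusions then follow easily. First, $j$ and $q^{-1}$ represent the same class in $\Q_\ell^*/(\Q_\ell^*)^p$, and since $(\Q_\ell^*)^p$ is closed under inversion this yields $j\in(\Q_\ell^*)^p\Longleftrightarrow q\in(\Q_\ell^*)^p$. For the ``in particular'' clause, Kummer theory tells us that for any $a\in\Q_\ell^*$ the extension $\Q_\ell(\mu_p)(a^{1/p})/\Q_\ell(\mu_p)$ depends only on the $\F_p$-line spanned by the class of $a$ in $\Q_\ell(\mu_p)^*/(\Q_\ell(\mu_p)^*)^p$; since $j$ and $q$ generate the same such line (their classes differ by $(jq)^{\pm 1}$, a $p$-th power), the fields $\Q_\ell(\mu_p,q^{1/p})$ and $\Q_\ell(\mu_p,j^{1/p})$ coincide.

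I do not foresee a serious obstacle. The only point requiring mild care is the case $\ell=2$, where one cannot literally write $1+2\Z_2$ as a pro-$\ell$ group in the same uniform way, but the decomposition $\Z_2^*\cong \Z/2\Z\times \Z_2$ immediately shows that $x\mapsto x^p$ is bijective on $\Z_2^*$ for odd $p$, so the argument goes through without modification.
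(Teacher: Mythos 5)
Your proof is correct and follows essentially the same route as the paper: both isolate the unit $u = jq = 1 + 744q + \cdots$, observe it is a principal unit because $v(q)>0$, and conclude it is a $p$-th power since $p\neq\ell$ (the paper cites Hensel's lemma where you cite the pro-$\ell$ structure of the principal units, which is the same fact). The Kummer-theoretic deduction of the field equality is the intended "result follows" step.
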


\begin{proof} From the equality \eqref{E:Tatej}, one has
$$j=\frac{u}{q}\quad \text{with}\quad u=1+744q+196884 q^2+\cdots.$$
Since $v(q)>0$, we have $u\equiv 1 \pmod \ell$. The primes $\ell$ and $p$ being distinct, by Hensel's Lemma we conclude that $u$ is a $p$-th power in $\Q_{\ell}$ and the result follows.
\end{proof}

\begin{lemma}  \label{L:lemma7} We have
$$\Q_{\ell}(E_p)=\Q_{\ell}\left(\sqrt{-c_6},\mu_p,j^{\frac{1}{p}}\right).$$
\end{lemma}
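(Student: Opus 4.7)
The plan is to work with $E$ and the Tate curve $E' = \G_m/q^\Z$ simultaneously. By hypothesis $E/\Q_\ell$ is the quadratic twist of $E'$ by the character $\varepsilon$ associated to $L/\Q_\ell$, so $E$ and $E'$ become isomorphic over $L$. First I would establish the easy inclusion: the classical description of $p$-torsion on a Tate curve gives
\[
L(E'_p) = L\bigl(\mu_p, q^{1/p}\bigr),
\]
and since $E \isom E'$ over $L$ we have $L(E_p) = L(E'_p)$. Combined with Lemma~\ref{L:lemma6} this yields
\[
\Q_\ell(E_p) \;\subseteq\; L\bigl(\mu_p, q^{1/p}\bigr) \;=\; \Q_\ell\bigl(\sqrt{-c_6},\mu_p, j^{1/p}\bigr).
\]

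For the reverse inclusion, the point is to exploit the very specific shape of $\rho_{E'}$ displayed in \eqref{(9.2)}. Choose the standard basis $(\zeta_p, q^{1/p})$ of $E'_p$ coming from the Tate parametrization. In this basis, the Galois representation on $E'_p$ takes the form
\[
\rho_{E'}(\sigma) \;=\; \begin{pmatrix} \chi_p(\sigma) & c(\sigma) \\ 0 & 1 \end{pmatrix},
\]
with the lower-right entry identically equal to $1$. The twist relation $\rho_E = \varepsilon \otimes \rho_{E'}$ therefore gives that the $(2,2)$-entry of $\rho_E(\sigma)$ is exactly $\varepsilon(\sigma)$. Since $p\geq 3$, we have $1 \neq -1$ in $\F_p$, so any $\sigma \in \ker \rho_E$ is forced to satisfy $\varepsilon(\sigma) = 1$, i.e. $\sigma \in G_L$. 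Once this is established, the remaining entries of $\rho_E(\sigma)$ coincide with those of $\rho_{E'}(\sigma)$, and demanding that they give the identity forces $\chi_p(\sigma) = 1$ and $c(\sigma) = 0$, meaning $\sigma$ fixes both $\mu_p$ and $q^{1/p}$. Hence
\[
\ker \rho_E \;\subseteq\; G_L \cap \ker \rho_{E'} \;=\; \Gal\bigl(\overline{\Q_\ell}/L(\mu_p, q^{1/p})\bigr),
\]
which by Galois correspondence gives $L(\mu_p, q^{1/p}) \subseteq \Q_\ell(E_p)$. Applying Lemma~\ref{L:lemma6} again replaces $q^{1/p}$ by $j^{1/p}$ and finishes the proof.

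The only real step to watch is the reverse inclusion, and the key observation that unlocks it is the triviality of the $(2,2)$-entry of $\rho_{E'}$ in the Tate basis, which ensures the twist character $\varepsilon$ must itself be trivial on $\ker \rho_E$; this is precisely what forces $\sqrt{-c_6} \in \Q_\ell(E_p)$. Everything else is a bookkeeping combination of Lemma~\ref{L:lemma6} and the $L$-isomorphism of $E$ with the Tate curve.
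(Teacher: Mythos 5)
Your proof is correct and follows essentially the same route as the paper: both establish $L(E_p)=L(\mu_p,q^{1/p})$ via the Tate parametrization, use the shape \eqref{(9.2)} to force $\varepsilon$ to be trivial on the kernel of $\rho_{E,p}$ (hence $\sqrt{-c_6}\in\Q_\ell(E_p)$), and invoke Lemma~\ref{L:lemma6} to trade $q^{1/p}$ for $j^{1/p}$. The only cosmetic difference is that you extract $q^{1/p}\in\Q_\ell(E_p)$ by reading off the upper-right matrix entry, whereas the paper gets it from the coprimality of $[L(E_p):\Q_\ell(E_p)]\leq 2$ with $p\geq 3$; both are valid.
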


\begin{proof} The $\Gal\left(\overline{\Q_{\ell}}/L\right)$-modules $E_p$ and $\left(\mu_p,q^{\frac{1}{p}}\right)q^{\Z}/q^{\Z}$ are isomorphic. So one has 
$$L(E_p)=L\left(\mu_p,q^{\frac{1}{p}}\right).$$
The inequalities
$$[L(E_p):\Q_{\ell}(E_p)]\leq 2\quad \text{and}\quad p\geq 3,$$
imply that $q$ is a $p$-th power in $\Q_{\ell}(E_p)$. Moreover, from \eqref{(9.2)} it follows that for all  element $\sigma\in \Gal\left(\overline{\Q_{\ell}}/\Q_{\ell}\right)$ fixing $\Q_{\ell}(E_p)$, one has $\varepsilon(\sigma)=1$, so   $\sqrt{-c_6}$ belongs  
to $ \Q_{\ell}(E_p)$.
We  obtain
$$L(E_p)=\Q_{\ell}\left(\sqrt{-c_6},\mu_p,q^{\frac{1}{p}}\right)\subseteq \Q_{\ell}(E_p)\subseteq L(E_p),$$
which leads to $\Q_{\ell}(E_p)=\Q_{\ell}\left(\sqrt{-c_6},\mu_p,q^{\frac{1}{p}}\right)$. The result now follows from Lemma~\ref{L:lemma6}.
\end{proof}

\begin{lemma} 
\label{L:lemma8} 
Let $a$ be an element of $\Q_{\ell}^*$.
\begin{itemize}
\item[1)] If $\ell\not\equiv 1 \pmod p$, then 
$$a\in \Q_{\ell}^p \    \Leftrightarrow \  v(a)\equiv 0 \pmod p.$$

\item[2)] If $\ell\equiv 1 \pmod p$, then
$$a\in \Q_{\ell}^p \    \Leftrightarrow \  v(a)\equiv 0 \pmod p\quad \text{and}\quad \left(\frac{a}{\ell^{v(a)}}\right)^{\frac{\ell-1}{p}}\equiv 1 \pmod \ell.$$
\end{itemize}
\end{lemma}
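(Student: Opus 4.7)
The plan is to reduce to the unit group and then analyze the $p$-power map on each factor of a standard decomposition. Write $a = \ell^{v(a)} u$ with $u \in \Z_\ell^*$. Since $\ell$ is a uniformizer, $a \in \Q_\ell^p$ if and only if $p \mid v(a)$ and $u \in (\Z_\ell^*)^p$; so both statements reduce to determining $(\Z_\ell^*)^p$, with the criterion in (2) simply rewriting the condition in terms of $a/\ell^{v(a)} = u$.

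Next, for $\ell$ odd, use the Teichm\"uller splitting $\Z_\ell^* \cong \mu_{\ell-1} \times U^{(1)}$, where $U^{(1)} = 1 + \ell \Z_\ell$ is the group of principal units. The case $\ell = 2$ falls into part (1) since $2 \not\equiv 1 \pmod p$ for every odd $p$, and the analogous splitting $\Z_2^* \cong \{\pm 1\} \times (1 + 4\Z_2)$ handles it in the same way. The key fact is that $U^{(1)}$ is a pro-$\ell$ group and $\gcd(p,\ell) = 1$, so $p$ acts as a unit on it; equivalently, the $\ell$-adic logarithm gives an isomorphism $U^{(1)} \simeq \ell\Z_\ell$ of $\Z_\ell$-modules. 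In particular, every principal unit is a $p$-th power, and so $u \in (\Z_\ell^*)^p$ if and only if the Teichm\"uller component of $u$ is a $p$-th power in $\mu_{\ell-1}$, which via reduction mod $\ell$ is the same as asking that the image $\bar u \in \F_\ell^*$ be a $p$-th power.

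It remains to distinguish the two cases. In case (1), $\ell \not\equiv 1 \pmod p$ gives $\gcd(p, \ell-1) = 1$, so raising to the $p$-th power is a bijection of the cyclic group $\F_\ell^*$ and every unit reduction is automatically a $p$-th power; thus $a \in \Q_\ell^p$ iff $p \mid v(a)$. In case (2), $\ell \equiv 1 \pmod p$ means the cyclic group $\F_\ell^*$ has a unique subgroup of index $p$, namely the image of the $p$-power map, which coincides with the kernel of $x \mapsto x^{(\ell-1)/p}$. Hence $\bar u$ is a $p$-th power iff $\bar u^{(\ell-1)/p} = 1$, i.e.\ $(a/\ell^{v(a)})^{(\ell-1)/p} \equiv 1 \pmod \ell$, as claimed.

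The only step with any real content is the $p$-divisibility of $U^{(1)}$, but this is standard (pro-$\ell$ plus $p$ invertible in $\Z_\ell$, or the log-exp isomorphism), so no genuine obstacle arises.
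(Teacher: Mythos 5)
Your proof is correct and follows essentially the same route as the paper: both use the decomposition $\Q_{\ell}^*=\mu_{\ell-1}\times U^{1}\times \ell^{\Z}$, observe that every principal unit is a $p$-th power because $p\neq\ell$ (the paper cites Hensel's lemma where you invoke the pro-$\ell$ structure, an immaterial difference), and then reduce to the $p$-power map on the cyclic group $\F_{\ell}^*$, split according to whether $p$ divides $\ell-1$. Your explicit treatment of $\ell=2$ is a small bonus of care but does not change the argument.
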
 

\begin{proof} Let $\mu_{\ell-1}$ be the group of $(\ell-1)$-th roots of unity and $U^1$ be the group of units of $\Z_{\ell}$ which are congruent to $1$ modulo $\ell$. 
One has $\Q_{\ell}^*=\mu_{\ell-1}\times U^1\times \ell^{\Z}$.
Since  $p\neq \ell$,  it follows from Hensel's lemma that each unit in $U^1$ is  a $p$-th power in $\Q_{\ell}$. 

In case $\ell\not\equiv 1 \pmod p$, the map $x\mapsto x^p$ is an automorphism  of $\mu_{\ell-1}$, hence the first assertion. 

Suppose  $\ell\equiv 1 \pmod p$. There exists $\zeta\in \mu_{\ell-1}$   such that $ \frac{a}{\ell^{v(a)}}\equiv \zeta \pmod \ell$. Moreover,
$\zeta$ belongs to $\Q_{\ell}^p$ if and only if $\zeta^{\frac{\ell-1}{p}}=1$.  This implies the result.
\end{proof}
Let us recall the essential fact that
$$r=[\Q_{\ell}(\mu_p):\Q_{\ell}].$$
We can now complete the proofs of Theorems~\ref{T:thm2}~and~\ref{T:thm3}.

\subsection{Proof of Theorem~\ref{T:thm2}} 1) Suppose $\ell\geq 3$ and $\left(\frac{-c_6}{\ell}\right)=1$, or $\ell=2$ and $c_6\equiv 7 \pmod 8$. 

From these assumptions,  $-c_6$ is a square in $\Q_{\ell}$. 
So we have from Lemma~\ref{L:lemma7} that
\begin{equation}\Q_{\ell}(E_p)=\Q_{\ell}\left(\mu_p,j^{\frac{1}{p}}\right).
\label{(9.3)}
\end{equation}
If $j$ is a $p$-th power in $\Q_{\ell}$, it follows that $d=r$, otherwise one has $d=pr$. Moreover, if $\ell\equiv 1 \pmod p$, one has $r=1$. The assertions 1.1) and 1.2) of Theorem~\ref{T:thm2} are then a  consequence of Lemma~\ref{L:lemma8}.

2) Suppose $\ell\geq 3$ and $\left(\frac{-c_6}{\ell}\right)=-1$, or $\ell=2$ and $c_6\not\equiv 7 \pmod 8$. 

Suppose  $r$ is even. The elliptic curve $E/\Q_{\ell}$ having multiplicative reduction,   the extension  $L/\Q_{\ell}$ is unramified. Consequently, $\sqrt{-c_6}$ belongs to  $\Q_{\ell}(\mu_p)$, so the equality \eqref{(9.3)} is again satisfied and the assertion 2.1) follows by Lemma~\ref{L:lemma8}.
Suppose $r$ is odd. Since $-c_6$ is not a square in $\Q_{\ell}$, it follows that  $-c_6$ is not a square in $\Q_{\ell}(\mu_p)$. Now assertion 2.2) follows from  Lemmas~\ref{L:lemma7}~and~\ref{L:lemma8}. This completes the proof 
of Theorem~\ref{T:thm2}. 

\subsection{Proof of Theorem~\ref{T:thm3}} 
Since  $E/\Q_{\ell}$ has additive reduction, the extension $L/\Q_{\ell}$ is ramified. Therefore, if $j$ is a $p$-th power in $\Q_{\ell}$ one has $d=2r$, otherwise $d=2pr$. Lemma~\ref{L:lemma8} now implies Theorem~\ref{T:thm3}.

\section{Proof of Lemmas~\ref{L:lemma1}, \ref{L:lemma2}, \ref{L:lemma3}, \ref{L:lemma4}}

\subsection{Proof of Lemma~\ref{L:lemma1}} We have $\ell\geq 5$  and $e=2$, so $v(\Delta)=6$.  The change of variables
 \[
  \begin{cases}
x=\ell X \\
y=\ell\sqrt{\ell} Y,\
  \end{cases}
\]
is an isomorphism from $E/\Q_{\ell}$ to its quadratic twist by $\sqrt{\ell}$, which is given by the equation
$$Y^2=X^3-\frac{c_4}{48\ell^2}X-\frac{c_6}{864\ell^3}.$$
It is an integral model whose discriminant is a unit of $\Z_{\ell}$, hence the lemma.

\subsection{Proof of Lemma~\ref{L:lemma2}} Recall that
$$u=\ell^{{v(\Delta)\over 12}}\quad \hbox{and}\quad  M=\Q_{\ell}(u).$$
The change of variables
\begin{equation}
\label{(10.1)} 
  \begin{cases}
x=u^2X \\
y=u^3Y,\
  \end{cases}
\end{equation}
is an isomorphism between  the elliptic $E/\Q_{\ell}$ given by the equation \eqref{(5.2)} and the 
elliptic curve $E'/M$ given by the equation \eqref{(5.5)}. The equation \eqref{(5.5)} is integral and the valuation of its  discriminant is
$$v(\Delta)-12v(u)=0,$$
which proves the lemma.

\subsection{Proof of Lemma~\ref{L:lemma3}}  The change of variables 
 \[
  \begin{cases}
X=3x \\
Y=3\sqrt{3}y,\
  \end{cases}
\]
realizes an isomorphism between the elliptic curve $E/\Q_3$ given by the equation \eqref{(6.1)} and its twis by $\sqrt{3}$ (denoted $E'/\Q_3$) of
equation
\begin{equation*}
(W') \; : \; Y^2=X^3-\frac{3c_4}{16}X-\frac{c_6}{32}.
\end{equation*} 
Let $c_4(W')$, $c_6(W')$ and $\Delta(W')$ be the standard invariants  associated to the model $(W')$. From the standatd invariats for $E/\Q_3$ we conclude
$$\left(v(c_4(W')),v(c_6(W')),v(\Delta(W'))\right)\in \lbrace  (4,6,12), (5,\geq 9,12)   \rbrace.$$
Moreover, the model $(W')$ is not minimal by \cite[p. 126, Table II]{Papadopoulos}; 
thus, $E'/\Q_3$ has good reduction over $\Q_3$, hence the lemma.

\subsection{Proof of Lemma~\ref{L:lemma4}} We adopt here 
the notations used in \cite{Papadopoulos}.

1) Suppose $t=(\geq 6,6,6)$ and $c_6'\equiv 1 \pmod 4$. 

An integral  model  of $E'/\Q_2$, the quadratic twist of $E/\Q_2$ by $\sqrt{2}$, is
$$(W')Ê\ : \ Y^2=X^3-\frac{c_4}{12}X-\frac{c_6}{108}.$$
It satisfies
$$\left(v(c_4(W'),v(c_6(W'),v(\Delta(W')\right)=(\geq 8,9,12).$$
We shall prove that $(W')$ is not minimal, which implies that $E'/\Q_2$ has good reduction. For this, we use the Table~IV and Proposition~6 of \cite{Papadopoulos}.
We have
$$b_8(W')=-a_4(W')^2,$$
hence the congruence
$$b_8(W')\equiv 0 \pmod {2^8}.$$
So we can choose $r=0$ in Proposition~6 of {\it{loc. cit.}}. Moreover, one has 
$$b_6(W')=4a_6(W')=-\frac{c_6}{27}.$$
Since $c_6'\equiv 1 \pmod 4$, one has $-\frac{c_6'}{27}\equiv 1 \pmod 4$. The equality $v(c_6)=6$ then implies 
$$b_6(W') \equiv 2^6 \pmod {2^8},$$
and we obtain  our assertion with $x=8$.

2) Suppose $t=(\geq 6,6,6)$ and $c_6'\equiv -1 \pmod 4$. 

We proceed as above. An equation of $E'/\Q_2$, the quadratic twist of $E/\Q_2$ by $\sqrt{-2}$,  is 
$$(W') \ : \ Y^2=X^3-\frac{c_4}{12}X+\frac{c_6}{108}.$$
One has again
$b_6(W')\equiv 2^6 \pmod {2^8},$
hence   the result. 

\medskip
For the next two cases below, we will denote by  $b_2$, $b_4$, $b_6$ and $b_8$ the standard invariants 
associated to  the  equation \eqref{(7.2)} of $E/\Q_2$.
\medskip

3) Suppose $t=(4,6,12)$.

An equation of $E'/\Q_2$, the quadratic twist of $E/\Q_2$ by $\sqrt{-1}$,  is 
$$(W') \ : \ Y^2=X^3-\frac{c_4}{48}X+\frac{c_6}{864}.$$
We  will use Table IV and Proposition 4 of \cite{Papadopoulos} to prove that
$(W')$ is not minimal, establishing that $E'/\Q_2$ has good reduction.

From the assumption made on  $t$, the elliptic curve $E/\Q_2$ corresponds to the  case 7 of Tate. One has $b_2=0$, $v(b_4)=1$, $v(b_6)=3$ and $v(b_8)=0$.
So there exists $r\in \Z_2$, with $v(r)=0$,  such that (conditions (a) and (b) of Proposition~4)
$$b_8+3rb_6+3r^2b_4+3r^4\equiv 0 \pmod {32}\quad \text{and}\quad r\equiv 1 \pmod 4.$$
Furthermore,
$$b_2(W')=0,\quad b_4(W')=b_4,\quad b_6(W')=-b_6,\quad b_8(W')=b_8.$$
We conclude that the integer $-r$ satisfies the condition (a) of 
the same Proposition 4 for the equation $(W')$.
One has $-3r\equiv 1 \pmod 4$, so condition (b) of this proposition with $s=1$  implies the assertion.

4) Suppose $t=(\geq 8,9,12)$.

Again an equation of $E'/\Q_2$  is
$$(W') \ : \ Y^2=X^3-\frac{c_4}{48}X+\frac{c_6}{864}.$$
We use Proposition~6 of \cite{Papadopoulos}. The elliptic curve $E/\Q_2$ corresponds to the  case 10 of Tate.  One has
$b_8\equiv 0 \pmod {2^8},$
so $r=0$ satisfies the required condition of this proposition for the equation \eqref{(7.2)}. Since equation \eqref{(7.2)} is  minimal, we deduce (by \cite[Prop~6]{Papadopoulos}) that  $b_6$ is not a square modulo~$2^8$, so we have
$$b_6=-\frac{c_6}{216}=-\frac{2^6 c_6'}{27}\equiv 2^6c_6'\equiv -2^6 \pmod {2^8},$$
where the last congruence follows due to $c_6'\equiv -1 \pmod 4$. 
From the equality $b_6(W')=-b_6$ it follows that $b_6(W')$ is a square modulo~$2^8$, hence the $(W')$ is not minimal, as desired.

5) Suppose $t=(6,9,18)$. 

Let $\varepsilon=\pm 1$, so that $c_6'\equiv \varepsilon \pmod 4$. 
An integral equation of  the quadratic twist of $E/\Q_2$ by $\sqrt{-2\varepsilon}$,  is 
$$(W') \ : \ Y^2=X^3-\frac{c_4}{2^6\cdot 3}X+\varepsilon\frac{c_6}{2^8\cdot 27}.$$
It satisfies
$$\left(v(c_4(W')),v(c_6(W')),v(\Delta(W'))\right)=(4,6,12).$$
We will apply Proposition 4 of \cite{Papadopoulos}.
From the assumption on~$t$, we have
$c_4'^3\equiv c_6'^2\pmod {32}$,
which implies
$$c_4'\equiv 1,9,17,25 \pmod {32}.$$
Moreover, one has $\varepsilon c_6'\equiv 1 \pmod 4$, so from the condition (a) for $(W')$, there exists $r\in \Z_2$ such that 
$$-c_4'^2+8r -18c_4'r^2+27r^4\equiv 0 \pmod {32}.$$
For all the values of $c_4'$ modulo $32$, we then verify that this congruence
is satisfied with $r=-1$. 
The condition (b) then implies that $(W')$ is not minimal, so  $E'/\Q_2$ has good reduction, hence  the lemma.

\section{Proof of Theorems~\ref{T:thm4},  \ref{T:thm9}, \ref{T:thm11}}

Let $\ell \geq 2$ and $E/\Q_\ell$ satisfy $e(E)=2$. 
Let $u\in \lbrace \pm 2,-1\rbrace$ be as defined in Lemma~\ref{L:lemma4}
and denote
 \[
w= 
  \begin{cases}
\ell \ \text{ if }   \ell\geq 3, \\
u \ \text{ if } \ell=2.\
  \end{cases}
\]
From lemmas 1--4, the quadratic twist $E'/\Q_{\ell} $ of $E/\Q_{\ell}$ by $\sqrt{w}$ has good reduction. Let $\varepsilon : \Gal\left(\Q_{\ell}(\sqrt{w})/\Q_{\ell}\right)\to \lbrace \pm 1\rbrace$ be the character associated to 
$\Q_{\ell}({\sqrt{w}})/\Q_{\ell}$.  In suitable basis of $E_p$ and $E_p'$, the representations
$$\rho_{E',p} : \Gal\left(\overline{\Q_{\ell}}/\Q_{\ell}\right)\to \GL_2(\F_p)\quad \text{\and}\quad \rho_{E,p} : \Gal\left(\overline{\Q_{\ell}}/\Q_{\ell}\right)\to \GL_2(\F_p),$$
giving the action of  $\Gal\left(\overline{\Q_{\ell}}/\Q_{\ell}\right)$ on $E_p'$ and $E_p$ satisfy the equality
\begin{equation}
\label{(11.1)} 
\rho_{E,p}=\varepsilon \cdot \rho_{E',p}.
\end{equation}

Let $H_0$ be the image of $\rho_{E',p}$.
From the criterion of N\'eron-Ogg-Shafarevitch, since $\ell \neq p$ and $E'/\Q_{\ell}$ has good reduction, the extension $\Q_{\ell}(E_p')/\Q_{\ell}$ is unramified (see \cite[p. 201, Thm~7.1]{Silverman}). So $H_0$ is cyclic. Let $\sigma_{\ell}\in \Gal\left(\overline{\Q_{\ell}}/\Q_{\ell}\right)$ be a lift  of the 
Frobenius element of the Galois group $\Gal\left(\Q_{\ell}(E_p')/\Q_{\ell}\right)$. Then $H_0$ is generated by
$$h_0=\rho_{E',p}(\sigma_{\ell}).$$

\begin{lemma} \label{L:lemma9}
Let $H$ be the subgroup of $\GL_2(\F_p)$ generated by $-1$ and $h_0$. Then, $H$ is the image of $ \rho_{E,p}$. In particular, one has
 \[
d= 
  \begin{cases}
|H_0| \ \text{ if }   -1\in H_0, \\
2|H_0|  \ \text{ otherwise}.\
  \end{cases}
\]
\end{lemma}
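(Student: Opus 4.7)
The plan is to exploit the relation $\rho_{E,p} = \varepsilon \cdot \rho_{E',p}$ together with the fact that the quadratic twisting character $\varepsilon$ is ramified while $\rho_{E',p}$ is unramified; linear disjointness will then force the image of $\rho_{E,p}$ to be exactly $\{\pm 1\} \cdot H_0$.

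First I would verify that $\Q_\ell(\sqrt{w})/\Q_\ell$ is ramified in every case under consideration. For $\ell \geq 3$ we have $w = \ell$, a uniformizer, so $\Q_\ell(\sqrt{\ell})/\Q_\ell$ is totally ramified of degree $2$. For $\ell = 2$ we have $u \in \{-1, \pm 2\}$; in each of these three cases $\Q_2(\sqrt{u})/\Q_2$ is a ramified quadratic extension (as none of $-1, 2, -2$ is a square in $\Z_2^*$ nor does any yield an unramified quadratic, which would be $\Q_2(\sqrt{5})$). Hence in all cases the character $\varepsilon$ is ramified.

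Next, from N\'eron-Ogg-Shafarevitch applied to $E'/\Q_\ell$ (good reduction, $\ell \neq p$), the extension $\Q_\ell(E_p')/\Q_\ell$ is unramified. Combined with the previous step, the ramified extension $\Q_\ell(\sqrt{w})$ and the unramified extension $\Q_\ell(E_p')$ are linearly disjoint over $\Q_\ell$, so
\[
\Gal(\Q_\ell(\sqrt{w},E_p')/\Q_\ell) \;\isom\; \Gal(\Q_\ell(\sqrt{w})/\Q_\ell) \times \Gal(\Q_\ell(E_p')/\Q_\ell) \;\isom\; \{\pm 1\}\times H_0,
\]
where the isomorphism is given by $\sigma \mapsto (\varepsilon(\sigma), \rho_{E',p}(\sigma))$. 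Since $\rho_{E,p} = \varepsilon \cdot \rho_{E',p}$ clearly factors through this Galois group, the image of $\rho_{E,p}$ equals the image of the homomorphism $\{\pm 1\}\times H_0 \to \GL_2(\F_p)$ sending $(\eta,h)$ to $\eta\cdot h$. But this image is precisely the subgroup of $\GL_2(\F_p)$ generated by $-1$ and $h_0$, that is $H$.

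Finally, since $d = [\Q_\ell(E_p):\Q_\ell]$ equals the order of the image of $\rho_{E,p}$, we deduce $d = |H|$. Because $-1$ is central and $H_0$ is cyclic, if $-1 \in H_0$ then $H = H_0$ and $|H| = |H_0|$; otherwise $\{\pm 1\} \cap H_0 = \{1\}$, the product $\{\pm 1\}\cdot H_0$ is a disjoint union of two cosets, and $|H| = 2|H_0|$, giving the stated dichotomy. The only delicate point is confirming ramification of $\Q_2(\sqrt{u})$ for each admissible $u$, which is a direct check against the three nontrivial square classes of $\Q_2^*/(\Q_2^*)^2$ modulo the unramified one.
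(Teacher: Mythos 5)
Your proof is correct, and it rests on the same key input as the paper's: the quadratic extension $\Q_\ell(\sqrt{w})/\Q_\ell$ is ramified while $\Q_\ell(E_p')/\Q_\ell$ is unramified by N\'eron--Ogg--Shafarevich, hence the two fields are linearly disjoint. The only real difference is in packaging: you assemble this into a single product decomposition $\Gal(\Q_\ell(\sqrt{w},E_p')/\Q_\ell)\isom\{\pm 1\}\times H_0$ and read off the image of $\rho_{E,p}$ as the image of $(\eta,h)\mapsto\eta h$, which yields both $-1$ and $h_0$ at once; the paper instead proves the two containments separately, obtaining $h_0$ from the same disjointness argument but obtaining $-1$ by a different route, namely that the inertia subgroup of $\Gal(\Q_\ell(E_p)/\Q_\ell)$ has order $2$ (since $e(E)=2$) and its generator maps into $\SL_2(\F_p)$, forcing its image to be $-1$. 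Your version is marginally more self-contained (it does not invoke the order of the inertia subgroup of $G$), and your explicit check that $\Q_2(\sqrt{u})$ is ramified for $u\in\{-1,\pm 2\}$ is a detail the paper leaves implicit; both arguments are equally valid.
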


\begin{proof}  The equality \eqref{(11.1)} implies that  the image of $\rho_{E,p}$ is contained in $H$. 

Conversely,  by assumption the inertia subgroup of $\Gal\left(\Q_{\ell}(E_p)/\Q_{\ell}\right)$ has order $2$. 
Noting   $\tau \in \Gal\left(\overline{\Q_{\ell}}/\Q_{\ell}\right)$  a lift  of its  generator, $\rho_{E,p}(\tau)$ belongs to $\SL_2(\F_p)$, so one has  
\begin{equation}
\label{(11.2)} 
\rho_{E,p}(\tau)=-1.
\end{equation}

Moreover, the extension $\Q_{\ell}({\sqrt{w}})/\Q_{\ell}$ being ramified, it  is not contained in $\Q_{\ell}(E_p')$. So the restriction map induces an isomorphism 
between  
$$\Gal\left(\Q_{\ell}({\sqrt{w}})(E_p)/\Q_{\ell}({\sqrt{w}})\right)\quad \text{and}\quad   \Gal(\Q_{\ell}(E_p')/\Q_{\ell}).$$
We deduce  there exists $\sigma\in \Gal\left(\overline{\Q_{\ell}}/\Q_{\ell}\right)$ such that  $\sigma\left(\sqrt{w}\right)=\sqrt{w}$ and the restriction of $\sigma$ and $\sigma_{\ell}$ to $\Q_{\ell}(E_p')$ are equal.
From  \eqref{(11.1)}, we obtain 
$\rho_{E,p}(\sigma)=h_0$
which shows that $H$ is contained in the image of $\rho_{E,p}$, hence the lemma.
\end{proof}
Recall that the  characteristic polynomial in $\F_p[X]$ of $h_0$ is
$$f_{E'}=X^2-a_{E'}X+\ell \pmod p.$$

\subsection{Proof of the assertion 1 in theorems~\ref{T:thm4}, \ref{T:thm9}, \ref{T:thm11}} We assume $\Delta_{E'}\not\equiv 0 \pmod p$. Up to conjugation by a matrix in $\GL_2(\F_{p^2})$, we have
$$h_0=\begin{pmatrix} \alpha & 0 \\0 & \beta\end{pmatrix},$$
so $|H_0|=n$. Recall that $d=[\Q_{\ell}(E_p):\Q_{\ell}]$. 
It follows from Lemma~\ref{L:lemma9} that 
 \[
d= 
  \begin{cases}
n \ \text{ if }   -1\in H_0, \\
2n  \ \text{ otherwise}.\
  \end{cases}
\]
We will prove that $-1$ belongs to $H_0$ if and only if $n$ is even and $\alpha^{\frac{n}{2}}=\beta^{\frac{n}{2}}=-1$. 
This will establish the assertion 1 of theorems.

Suppose $-1$ belongs to $H_0$. Then  there exists $k\in \lbrace 1,\cdots,n\rbrace$, such that
 \begin{equation}
 \label{(10.4)} 
 \begin{pmatrix} \alpha^k & 0 \\0 & \beta^k\end{pmatrix}=-1.
 \end{equation}
One has $\alpha^{2k}=\beta^{2k}=1$, so $n$ divides $2k$.
Since $k\leq n$, one has $2k=n$ or $k=n$. Since $\alpha^n=\beta^n=1$ and $p\neq 2$, we have $k\neq n$, so $n$ is even and we obtain $\alpha^{\frac{n}{2}}=\beta^{\frac{n}{2}}=-1$. 
Conversely, if  $n$ even and $\alpha^{\frac{n}{2}}=\beta^{\frac{n}{2}}=-1$, we have
$-1=h_0^{\frac{n}{2}}\in H_0$, as desired.

\subsection{Proof of the assertion 2 in theorems~\ref{T:thm4}, \ref{T:thm9}, \ref{T:thm11}} We assume $\Delta_{E'}\equiv 0 \pmod p$.  

The elliptic curve $E'/\Q_{\ell}$ has good ordinary reduction by Lemma~\ref{L:lemma5}. 
Moreover, the polynomial~$f_{E'}$ has a single root
$$\alpha=\frac{a_{E'}}{2} \pmod p.$$
From \cite[Theorem~2]{Centeleghe} there exists a suitable basis of $E_p'$ in which
 $$h_0 = \begin{pmatrix} \alpha & 0 \\  b_{E'}  &  \alpha\end{pmatrix}.$$
We conclude that 
 \[
|H_0|= 
  \begin{cases}
n \ \text{ if }   b_{E'}\equiv 0 \pmod p, \\
np  \ \text{ otherwise}.\
  \end{cases}
\]
For all integers $k \geq 1$, one has
  $$h_0^k=\begin{pmatrix} \alpha^k & 0 \\   kb_{E'}\alpha^{k-1}   &  \alpha^k\end{pmatrix}.$$

\begin{lemma} \label{L:lemma10}
One  has
$-1\in H_0$ if and only if $n$ is even and  $\alpha^{\frac{n}{2}}=-1$.
\end{lemma}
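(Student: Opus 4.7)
The plan is to reduce the condition $-I \in H_0$ to explicit arithmetic conditions on an exponent $k$, using the closed form for $h_0^k$ displayed immediately before the lemma. Since $H_0=\langle h_0\rangle$, we have $-I\in H_0$ if and only if there exists $k\geq 1$ with $h_0^k=-I$. Inspecting the two diagonal and the single nonzero off-diagonal entries, this is equivalent to the pair of conditions $\alpha^k=-1$ in $\F_{p^2}^*$ together with $k\, b_{E'}\,\alpha^{k-1}\equiv 0\pmod p$; because $\alpha$ is a unit, the second reduces to $p\mid k\,b_{E'}$.

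For the forward implication, I would argue: if such a $k$ exists, then $\alpha^{2k}=1$ while $\alpha^k=-1\neq 1$ (as $p\geq 3$). Hence the order $n$ of $\alpha$ satisfies $n\mid 2k$ but $n\nmid k$, which forces $n$ to be even; writing $k=(n/2)m$, the relation $n\nmid k$ gives $m$ odd, and then $\alpha^k=(\alpha^{n/2})^m$. Since $\alpha^{n/2}$ is a square root of $1$ in $\F_{p^2}^*$ and the value $+1$ would give $\alpha^k=1$, we must have $\alpha^{n/2}=-1$.

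For the reverse implication, assuming $n$ is even and $\alpha^{n/2}=-1$, I would produce an explicit $k$ according to whether $p$ divides $b_{E'}$. If $b_{E'}\equiv 0\pmod p$, then $h_0=\alpha I$ and the choice $k=n/2$ immediately gives $h_0^{n/2}=\alpha^{n/2}I=-I$. If $b_{E'}\not\equiv 0\pmod p$, I would instead take $k=pn/2$: the divisibility $p\mid k$ is automatic, and $\alpha^k=(\alpha^{n/2})^{p}=(-1)^{p}=-1$ since $p$ is odd, so again $h_0^k=-I$.

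The argument is short, so no serious obstacle is anticipated; the only points requiring care are the clean split into the two cases $p\mid b_{E'}$ and $p\nmid b_{E'}$, and the observation that $\alpha\in \F_{p^2}^*$ lets one discard the $\alpha^{k-1}$ factor in the off-diagonal entry. This lemma will then feed directly into the subsequent case analysis determining $d$ in part~2) of Theorems~\ref{T:thm4}, \ref{T:thm9}, \ref{T:thm11}.
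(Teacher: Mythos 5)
Your proof is correct and follows essentially the same route as the paper: both directions rest on the closed form $h_0^k=\left(\begin{smallmatrix}\alpha^k&0\\ kb_{E'}\alpha^{k-1}&\alpha^k\end{smallmatrix}\right)$, the forward implication extracts $n$ even and $\alpha^{n/2}=-1$ from $\alpha^k=-1$, and the converse exhibits the exponent $k=pn/2$ (the paper uses this single choice uniformly, whereas you split off the case $p\mid b_{E'}$, but that is only a cosmetic difference). Your handling of the forward direction, arguing directly from $n\mid 2k$ and $n\nmid k$ without first normalizing $k$ to lie in $\{1,\dots,n\}$, is if anything slightly cleaner than the paper's.
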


\begin{proof} Suppose $-1\in H_0$ i.e. there exists an integer $k$ such  that $-1=h_0^k$. Then 
$$\alpha^k =-1\quad \text{and}\quad kb_{E'}\alpha^{k-1}=0.$$
Since   $p\neq 2$, $n$ being the order of $\alpha$ in $\F_p^*$,  by considering the value of $k$ modulo $p$, we have inequalities
$$1\leq k\leq n\leq p-1.$$
Moreover, $\alpha^{2k} =1$, so $n$ divides $2k$. Then $2k=n$ or $2k=2n$, which leads to $n=2k$, hence the implication.

Conversely, suppose $n$ even and $\alpha^{\frac{n}{2}}=-1$. One has  $\alpha^{\frac{pn}{2}}=-1$, which implies $h_0^{\frac{pn}{2}}=-1\in H_0$ and  proves the lemma.
\end{proof}

The assertions 2.1) and 2.2) of Theorem~\ref{T:thm4} are now a direct 
consequence of lemmas~\ref{L:lemma9}~and~\ref{L:lemma10}.

Suppose  $\ell=2$.  Since $E'/\Q_2$ has good ordinary reduction,  we have $a_{E'}=\pm 1$, so $\Delta_{E'}=-7$ and we obtain $p=7$. Moreover, $b_{E'}^2$ divides $\Delta_{E'}$, so $b_{E'}=1$.
If $a_{E'}=1$ one has $n=3$ and if $a_{E'}=-1$ one has $n=6$.  In both cases, lemmas~\ref{L:lemma9}~and~\ref{L:lemma10} imply $d=42$ as stated.

Suppose  $\ell=3$.  One has $a_{E'}\in \lbrace \pm 1,\pm 2\rbrace$, so $\Delta_{E'}\in \lbrace -11,-8\rbrace$.  Since $p$ divides $\Delta_{E'}$, this implies $a_{E'}=\pm 1$  and $\Delta_{E'}=-11$. In particular, $p=11$.
One has again $b_{E'}=1$. Furthermore, if $a_{E'}=1$ one has $n=10$ and if $a_{E'}=-1$ one has $n=5$. This leads to $d=110$ 
(by lemmas~\ref{L:lemma9}~and~\ref{L:lemma10}).

This completes the proofs of theorems~\ref{T:thm4}, \ref{T:thm9}, \ref{T:thm11}.

\section{Notation for the proof 
of theorems~\ref{T:thm5}, \ref{T:thm6}, \ref{T:thm7}, \ref{T:thm8}}

We have $\ell\geq 5$ and the elliptic curve $E/\Q_{\ell}$ has additive potentially good reduction, with a semistability defect $e\in\lbrace 3,4,6\rbrace$. 
Let $M$ and the elliptic curve $E'/M$ be defined as in \eqref{(5.4)} and  \eqref{(5.5)}. We will write
$$K=\Q_{\ell}(E_p),\quad G=\Gal(K/\Q_{\ell}), \quad G'=\Gal(M(E_p')/M)\quad \text{and}\quad  d_0=[M(E_p'):M].$$
The elliptic curve $E'/M$ has good reduction (Lemma~\ref{L:lemma2}), so the extension $M(E_p')/M$ is unramified and cyclic. Since $M/\Q_{\ell}$ is totally ramified, the value of $d_0$ can be determined from Theorem~\ref{T:thm1}. 

Let $\Frob_M\in G'$
be the Frobenius element of $G'$. Recall that $\Frob_M$ is a generator of $G'$. We have $M(E_p')=MK$ and the extension $K/\Q_{\ell}$ is Galois, so 
the Galois groups $\Gal(K/M\cap K)$ and $G'$ are isomorphic via the restriction morphism. Let 
$\Frob_K\in G$
be the restriction of $\Frob_M$ to $K$. It  is a generator of 
$\Gal(K/M\cap K)$. In particular, $\Frob_K$ and $\Frob_M$ have the same order.

Recall that the inertia subgroup of $G$ is cyclic of order $e$. 
We let $\tau$ be one of its generators. 

Let $\calB$ be a basis of $E_p$. Let $\calB'$ be the basis of $E_p'$ which is the image of $\calB$ by the isomorphism from $E_p$ to $E_p'$ given by  the change of variables \eqref{(10.1)}. Denote by
$$\rho_{E,p} : G\to \GL_2(\F_p)\quad \text{and}\quad \rho_{E',p} : G' \to \GL_2(\F_p)$$
the faithful representations giving the actions of $G$ and $G'$ on $E_p$ and $E_p'$ in the basis $\calB$ and $\calB'$, respectively. We have 
\begin{equation}
\label{(12.1)} 
\rho_{E,p}(\Frob_K)=\rho_{E',p}(\Frob_M). 
\end{equation}
Since the determinant of 
$\rho_{E,p}$ is the mod~$p$ cyclotomic character, we also have 
\begin{equation}
\label{(12.2)}
\rho_{E,p}(\tau)\in \SL_2(\F_p).
\end{equation}
Finally, let also
$$\sigma_{E,p} : G \to \PGL_2(\overline{\F_p})\quad \text{and}\quad \sigma_{E',p} : G'\to \PGL_2(\overline{\F_p})$$
be the associated projective representations extended to $\overline{\F_p}$. 

We write $F$ for the fixed field by the kernel of $\sigma_{E,p}$.  
 
\section{Proof of Theorem~\ref{T:thm5} and assertion 1 of Theorem~\ref{T:thm8}}

In this case, we have $e=[M:\Q_{\ell}]=3$, so either $M \subset K$ or $M\cap K=\Q_{\ell}$. 

\begin{lemma} \label{L:lemma11}
The degree $d=[K : \Q_\ell]$ satisfies
 \[
d= 
  \begin{cases}
3d_0 \ \text{ if }   M\subseteq K, \\
d_0  \ \text{ otherwise}.\
  \end{cases}
\]
\end{lemma}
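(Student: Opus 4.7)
The plan is to exploit the fact that $[M:\Q_\ell]=3$ is prime, so the intermediate fields of $M/\Q_\ell$ are only $\Q_\ell$ and $M$. Consequently $M\cap K$ is either $M$ or $\Q_\ell$, which gives exactly the dichotomy appearing in the statement. In both cases the proof is a computation of degrees based on the identity $M(E_p')=MK$.

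First I would treat the case $M\subseteq K$. Here $MK=K$, hence $M(E_p')=K$, so by definition $d_0=[M(E_p'):M]=[K:M]$. Multiplying by $[M:\Q_\ell]=3$ immediately yields $d=3d_0$.

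For the case $M\not\subseteq K$, I would use that $M\cap K=\Q_\ell$. Since $K/\Q_\ell$ is Galois, the standard lemma on compositum of a Galois extension with an arbitrary extension gives
\[
[MK:M]=[K:M\cap K]=[K:\Q_\ell]=d.
\]
On the other hand $MK=M(E_p')$, so $[MK:M]=d_0$, and comparing the two expressions produces $d=d_0$.

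There is no real obstacle: the only subtlety worth recording is why the two alternatives genuinely are exhaustive, and this is precisely the primality of $[M:\Q_\ell]$. Everything else is a degree computation that relies on $MK=M(E_p')$ (which is built into the notation) and on the Galois property of $K/\Q_\ell$.
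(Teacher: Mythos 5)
Your proof is correct and follows essentially the same route as the paper: the paper also splits on whether $M\subseteq K$ (using primality of $[M:\Q_\ell]=3$), identifies $K=M(E_p')$ in the first case, and in the second case uses the restriction isomorphism $\Gal(K/M\cap K)\isom G'$ — which is exactly the Galois-compositum degree identity you invoke. The only cosmetic difference is that you phrase the second case via field degrees while the paper phrases it via the isomorphism of Galois groups.
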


\begin{proof} Suppose $M$ is contained in $K$. Since $E$ and $E'$ are isomorphic over $M$ (Lemma~\ref{L:lemma2}),   we have $K=M(E_p')$. Therefore, $d_0=[K:M]$ and
$$d=[K:M][M:\Q_{\ell}]=3d_0.$$
In case $M$ is not contained in $K$, the Galois groups $G$ and $G'$ are isomorphic, so $d=d_0$, hence the lemma.
\end{proof}

\subsection{Assertion 1) of Theorem~\ref{T:thm8}} The field $M$ is contained in $K$ (\cite[Lemma~5]{Freitaskraus}), 
so $d=3d_0$ by Lemma~\ref{L:lemma11}.
From the assumption $\ell\equiv 2 \pmod 3$ and \cite[Lemme~1]{Kraus2} it 
follows that $E'/M$ has good supersingular reduction. 
Since $\ell\geq 5$ we must have $a_{E'}=0$.  
Now by Corollary~\ref{C:cor1} we obtain $d_0=2\delta$ and 
assertion 1 of Theorem~\ref{T:thm8} follows.

\subsection{Assertion 1) of Theorem~\ref{T:thm5}} 
Assume $p\neq 3$. 

Since $3$ divides $\ell-1$, the  group $G$ is abelian by \cite[Corollary~3]{Freitaskraus}.
So there exists 
$U\in \GL_2(\F_{p^2})$ and $a\in \F_{p^2}^*$ such that  
\begin{equation}
\label{(13.1)}
U\rho_{E,p}(\Frob_K)U^{-1}=\begin{pmatrix} \alpha & a \\0 & \beta\end{pmatrix}\quad \text{and}\quad U\rho_{E,p}(\tau)U^{-1}=\begin{pmatrix} \zeta & 0 \\0 & \zeta^2\end{pmatrix},
\end{equation}

\begin{lemma} \label{L:lemma12}
We have $a=0$. In particular, $d_0=n$.
\end{lemma}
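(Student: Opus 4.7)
The plan is to exploit the commutativity of $G$ (recalled just before the matrices are displayed): since $\Frob_K$ and $\tau$ lie in $G$ and $G$ is abelian, the two matrices on the right of \eqref{(13.1)} must commute. A direct computation gives
\begin{equation*}
\begin{pmatrix} \alpha & a \\ 0 & \beta \end{pmatrix}\begin{pmatrix} \zeta & 0 \\ 0 & \zeta^2 \end{pmatrix}=\begin{pmatrix} \alpha\zeta & a\zeta^2 \\ 0 & \beta\zeta^2 \end{pmatrix},\qquad \begin{pmatrix} \zeta & 0 \\ 0 & \zeta^2 \end{pmatrix}\begin{pmatrix} \alpha & a \\ 0 & \beta \end{pmatrix}=\begin{pmatrix} \alpha\zeta & a\zeta \\ 0 & \beta\zeta^2 \end{pmatrix},
\end{equation*}
so equality forces $a(\zeta-\zeta^2)=0$. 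Since $\zeta$ is a primitive $3$rd root of unity in $\F_{p^2}^*$ and $p\neq 3$, we have $\zeta\neq \zeta^2$, hence $a=0$. This proves the first assertion.

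For the second assertion, now that $a=0$ we have
\[
U\rho_{E,p}(\Frob_K)U^{-1}=\begin{pmatrix} \alpha & 0 \\ 0 & \beta\end{pmatrix},
\]
whose order in $\GL_2(\F_{p^2})$ is exactly $n=\lcm(\ord(\alpha),\ord(\beta))$. By \eqref{(12.1)}, the matrix $\rho_{E',p}(\Frob_M)$ is conjugate to the same diagonal matrix, so it has order $n$ as well. Since $\rho_{E',p}$ is faithful and $\Frob_M$ generates the cyclic group $G'$, we conclude
\[
d_0=|G'|=\ord(\Frob_M)=\ord\bigl(\rho_{E',p}(\Frob_M)\bigr)=n,
\]
which establishes the lemma. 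The only non-routine point is the commutator computation, and this is immediate once one notes that the hypothesis $p\neq 3$ (in force throughout this case) guarantees $\zeta\neq \zeta^2$ in $\F_{p^2}^*$, so the obstruction to simultaneous diagonalisation vanishes.
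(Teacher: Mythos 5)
Your proof is correct and follows essentially the same route as the paper: both use the commutativity of $G$ to force the off-diagonal entry to vanish (the paper records the resulting relation as $a(1-\zeta)=0$, equivalent to your $a(\zeta-\zeta^2)=0$ after dividing by the unit $\zeta$), and both then identify $d_0$ with the order $n$ of the now-diagonal matrix via the faithfulness of $\rho_{E',p}$ and the identity \eqref{(12.1)}. No issues.
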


\begin{proof} 
Using the fact that   $\Frob_K$ and $\tau$ commute, we obtain the equality $a(1-\zeta)=0$, so $a=0$. We deduce that $\Frob_K$ has order $n$. Moreover, $\Frob_K$ and $\Frob_M$ have the same order and the latter has order $d_0$.
\end{proof}

\begin{lemma} \label{L:lemma13}
We have that $M \not\subset K$ if and only if 
\begin{equation}
\label{(13.2)} 
n\equiv 0 \pmod 3\quad \text{and}\quad  \lbrace \alpha^{\frac{n}{3}},\beta^{\frac{n}{3}}\rbrace=\lbrace \zeta,\zeta^2\rbrace.
\end{equation}
If this condition is satisfied then $d=n$, otherwise  $d=3n$.
\end{lemma}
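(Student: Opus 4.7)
By Lemma~\ref{L:lemma11} we already know that $d \in \{n, 3n\}$, with $d = n$ exactly when $M \not\subset K$. The plan is to compute $d = |G|$ directly from the simultaneously diagonal form of $\rho_{E,p}(\Frob_K)$ and $\rho_{E,p}(\tau)$ furnished by Lemma~\ref{L:lemma12}, and then match the outcome with this dichotomy.

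The first step is to check that $G = \langle \Frob_K, \tau \rangle$. The element $\tau$ generates the inertia subgroup of $G$, of order $e = 3$. Since $M/\Q_\ell$ is totally ramified, the residue field of $M(E_p') = MK$ coincides with that of $K$, and $\Frob_K = \Frob_M|_K$ acts on it as the $\ell$-power Frobenius; hence $\Frob_K$ is a Frobenius lift and generates $G$ modulo inertia. As $\rho_{E,p}$ is faithful, $d = |G|$ equals the order of the subgroup $H \subseteq \GL_2(\F_{p^2})$ generated by the two commuting diagonal matrices
\[
\rho_{E,p}(\Frob_K) = \begin{pmatrix} \alpha & 0 \\ 0 & \beta \end{pmatrix}, \qquad \rho_{E,p}(\tau) = \begin{pmatrix} \zeta & 0 \\ 0 & \zeta^2 \end{pmatrix},
\]
of orders $n$ and $3$ respectively.

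Since $\langle \rho_{E,p}(\tau)\rangle$ has prime order $3$, the intersection $C = \langle \rho_{E,p}(\Frob_K)\rangle \cap \langle \rho_{E,p}(\tau)\rangle$ has order $1$ or $3$, and $|H| = 3n/|C|$. One then checks that $|C| = 3$ is equivalent to the existence of an integer $k$ with $(\alpha^k, \beta^k) \in \{(\zeta,\zeta^2),(\zeta^2,\zeta)\}$. Cubing forces $n \mid 3k$, while $\zeta \neq 1$ forces $n \nmid k$; hence $3 \mid n$ and $k \equiv \pm n/3 \pmod n$. Substituting $k = n/3$ or $k = 2n/3$ and using that $\alpha^{n/3},\beta^{n/3}$ are cube roots of unity, one sees this is equivalent to $\{\alpha^{n/3}, \beta^{n/3}\} = \{\zeta, \zeta^2\}$, the converse being immediate on taking $k = n/3$.

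Combining with Lemma~\ref{L:lemma11}, the stated condition is equivalent to $d = |H| = n$, hence to $M \not\subset K$, and otherwise $d = 3n$. The main delicate point is the generation statement $G = \langle \Frob_K, \tau \rangle$, which requires bookkeeping of residue fields through the totally ramified base change $M/\Q_\ell$; the rest is an elementary order count within an abelian group of diagonal matrices.
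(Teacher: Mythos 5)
Your argument is correct in substance and reaches the lemma by a slightly different organization than the paper's. The paper splits immediately into the two cases $M\subset K$ and $M\not\subset K$, identifies $\langle \Frob_K\rangle$ with $\Gal(K/M\cap K)$ in each case, and asks whether some power of $\Frob_K$ equals $\tau$; you instead compute $d=|G|$ once and for all as the order of the abelian matrix group generated by $\rho_{E,p}(\Frob_K)$ and $\rho_{E,p}(\tau)$, and then read off the dichotomy from Lemma~\ref{L:lemma11} together with $d_0=n$. The arithmetic core is identical in both proofs (the equivalence between $\rho_{E,p}(\tau)\in\langle\rho_{E,p}(\Frob_K)\rangle$ and condition \eqref{(13.2)}, via $n\mid 3k$ and $n\nmid k$), and your order count $|H|=3n/|C|$ with $|C|\in\{1,3\}$ is a clean way to package it.

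The one point you should repair is your justification of $G=\langle\Frob_K,\tau\rangle$. The claim that the residue field of $MK=M(E_p')$ coincides with that of $K$ is false precisely in the case $M\not\subset K$: there $e(K/\Q_\ell)=e(MK/\Q_\ell)=3$ and $[MK:K]=3$, so $MK/K$ is unramified of degree $3$ and the residue field of $MK$ is a cubic extension of that of $K$. The conclusion you need is nevertheless true, for a different reason: restriction of automorphisms is compatible with reduction, so $\Frob_K=\Frob_M|_K$ acts as $x\mapsto x^\ell$ on the residue field of $K$ (a subfield of the residue field of $M(E_p')$, on which $\Frob_M$ acts as $x\mapsto x^\ell$ because $M/\Q_\ell$ is totally ramified and $M$ has residue field $\F_\ell$). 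Hence the image of $\Frob_K$ generates $G/I$, and $G=\langle\Frob_K\rangle\, I=\langle\Frob_K,\tau\rangle$. With that correction the proof is complete.
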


\begin{proof} Suppose that $M$ is not contained in $K$.  Then  $G$ and $G'$ are  isomorphic, so  $G$  is cyclic generated by $\Frob_K$.
Consequently, there exist $k\in \lbrace 1,\cdots,n\rbrace$ such that 
$$\alpha^k=\zeta \quad \text{and}\quad \beta^k=\zeta^2.$$
We have $k\neq n$ and  $\alpha^{3k}=\beta^{3k}=1$, so $n$ divides $3k$. Since $3k\leq 3n$, this  implies  $3k\in \lbrace n, 2n \rbrace$, so $3$ divides $n$ and $k=\frac{n}{3}$ or $k=\frac{2n}{3}$. 
If $\alpha^{\frac{2n}{3}}=\zeta$, then $\alpha^{\frac{4n}{3}}=\alpha^{\frac{n}{3}} =\zeta^2$ and similarly $\beta^{\frac{n}{3}} =\zeta$, hence the condition \eqref{(13.2)}.

Conversely, suppose $M \subset K$. We shall prove that, for all $k\geq 1$, we have
\begin{equation}
\label{(13.3)}
\lbrace \alpha^k,\beta^k\rbrace\neq \lbrace \zeta, \zeta^2\rbrace.
\end{equation}
This implies \eqref{(13.2)} is not satisfied, completing the proof of the first statement.

From our assumption, 
one has $K=M(E_p')$ and $\Frob_K$ is a generator of $\Gal(K/M)$. The fixed field by $\tau$ is an unramified extension of $\Q_{\ell}$. 
In particular,  $\tau$ does not fix $M$. We deduce that for all $k\geq 1$, one has
$\Frob_K^k\neq \tau$, which implies  \eqref{(13.3)}.

The last statement now follows from lemmas~\ref{L:lemma11} and \ref{L:lemma12}.
\end{proof}

\begin{lemma} \label{L:lemma14}   
Suppose that $p \mid \Delta_{E'}$. Then $M \subset K$ and $d=3n$.
\end{lemma}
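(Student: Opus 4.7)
The plan is to combine the Centeleghe normal form of $\rho_{E',p}(\Frob_M)$ with the abelianness of $G$ (from $\ell\equiv 1 \pmod 3$) to force the off-diagonal entry of the Frobenius matrix to vanish, and then to read off both $M \subset K$ and $d=3n$ by invoking Lemma~\ref{L:lemma11}.

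First I would apply Lemma~\ref{L:lemma5} to $E'/M$: the hypothesis $p\mid \Delta_{E'}$ forces $E'/M$ to have good ordinary reduction, and Centeleghe's theorem (\cite[Theorem~2]{Centeleghe}), exactly as in the proof of Theorem~\ref{T:thm1} part~2), yields a basis of $E'_p$ in which
\[
  h_0 \;=\; \rho_{E',p}(\Frob_M) \;=\; \begin{pmatrix} \alpha & 0 \\ b_{E'} & \alpha \end{pmatrix}, \qquad \alpha \equiv a_{E'}/2 \pmod p.
\]

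Next I would invoke \cite[Corollary~3]{Freitaskraus}: since $\ell\equiv 1 \pmod 3$, the group $G$ is abelian, so $\rho_{E,p}(\Frob_K)=h_0$ commutes with $\rho_{E,p}(\tau)$. Because $p\neq 3$, the element $\rho_{E,p}(\tau)$ has order $3$ with two distinct eigenvalues $\zeta,\zeta^{-1}$ (where $\zeta=\zeta_3 \in \F_{p^2}^*$); in particular it is diagonalizable, and the commutant of a diagonalizable matrix with distinct eigenvalues consists of the matrices that are diagonal in an eigenbasis. In such a basis $h_0$ is therefore diagonal, and since its characteristic polynomial $(X-\alpha)^2$ has a double root, both diagonal entries equal $\alpha$. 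Hence $h_0=\alpha I$ is scalar, and being scalar is basis-independent, so back in the Centeleghe basis this translates to $b_{E'}\equiv 0 \pmod p$. Theorem~\ref{T:thm1} part~2) applied to $E'/M$ then gives $d_0=n$.

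Finally I would establish $M \subset K$ by contradiction. If $M \not\subset K$, then by Lemma~\ref{L:lemma11} the restriction morphism $G \to G'$ is an isomorphism, so $G$ is cyclic with generator $\rho_{E,p}(\Frob_K)=h_0=\alpha I$; every element of $G$ is then a scalar matrix, contradicting the fact that $\rho_{E,p}(\tau)\in G$ has distinct eigenvalues. Hence $M \subset K$, and Lemma~\ref{L:lemma11} yields $d=3d_0=3n$. The main subtlety is the commutation step that collapses $h_0$ to a scalar: it is precisely the diagonalizability of $\rho_{E,p}(\tau)$ with distinct eigenvalues (which uses $p\neq 3$) combined with the abelianness of $G$ that kills the potentially non-trivial lower-left entry $b_{E'}$; everything else is bookkeeping with Lemmas~\ref{L:lemma5} and~\ref{L:lemma11} and Theorem~\ref{T:thm1}.
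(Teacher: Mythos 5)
Your proof is correct and follows essentially the same route as the paper: both arguments use the abelianness of $G$, the distinct eigenvalues of $\rho_{E,p}(\tau)$, and the double root $\alpha=\beta$ to force $\rho_{E,p}(\Frob_K)$ to be scalar, and then obtain $M\subset K$ by the same contradiction with $G$ being cyclic generated by $\Frob_K$. The only difference is cosmetic: the paper reads off $d_0=n$ directly from Lemma~\ref{L:lemma12} (already established, independently of whether $p\mid\Delta_{E'}$), whereas your detour through Centeleghe's normal form and the conclusion $b_{E'}\equiv 0\pmod p$ re-derives the same fact via Theorem~\ref{T:thm1}.
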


\begin{proof}  From our assumption, one has $\alpha=\beta$. Suppose that $M$ is not contained in $K$. Then   $\Frob_K$ is a generator of $G$. The eigenvalues of $\rho_{E,p}(\tau)$ are distinct, 
so the condition \eqref{(13.1)} leads to a contradiction. The result follows from  
lemmas~\ref{L:lemma11} and \ref{L:lemma12}.
\end{proof}
This completes the proof of the first assertion  of Theorem~\ref{T:thm5}.

\subsection{Assertion 2) of Theorem~\ref{T:thm5}.} Suppose $p=3$.

There exist $U\in \GL_2(\F_9)$ and $a\in \F_9$ such that 
$U\rho_{E,p}(\Frob_K)U^{-1}=\begin{pmatrix} \alpha & a \\0 & \beta\end{pmatrix}.$
Moreover,   the image of $\rho_{E,3}$ being a subgroup of $\GL_2(\F_3)$, one has 
$d\not\equiv 0 \pmod 9.$

Suppose $M$ is not contained in $K$. The group $G$ and $G'$  are  isomorphic, so $G$ is generated by $\Frob_K$. Since $3$ divides $d$ (because $e=3$), the order of $a\in \F_9$ is equal to $3$, so  $d=3n$.

Suppose $M$ is contained in $K$. Then one has $d=3[K:M]$. One  deduces that  $3$ does not divide $[K:M]$. The group
$\Gal(K/M)$ being generated by $\Frob_K$, this implies  $a=0$, so $[K:M]=n$ and  $d=3n$ as stated. 

This completes the proof of Theorem~\ref{T:thm5}.

\section{Proof of Theorem~\ref{T:thm6}}

In this case, we have $e=[M:\Q_{\ell}]=4$. Recall that $\zeta_4$ is a primitive $4$th root of unity.

The extension $M/\Q_{\ell}$ is cyclic, so $M\cap K$ is $\Q_{\ell}$, $\Q_{\ell}(\sqrt{\ell})$ or $M$. 

\begin{lemma} \label{L:lemma15}
The degree $d=[K : \Q_\ell]$ satisfies
 \[
d= 
  \begin{cases}
4d_0 \ \text{ if }   M\subseteq K, \\
2d_0  \ \text{if}  \ M\cap K=\Q_{\ell}(\sqrt{\ell}),\\
d_0 \ \text{ if }  M\cap K=\Q_{\ell}.\
  \end{cases}
\]
\end{lemma}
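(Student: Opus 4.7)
The plan is a straightforward Galois-theoretic compatible-degree computation, exploiting that $E$ and $E'$ become isomorphic over $M$ (Lemma~\ref{L:lemma2}) together with the fact that $M/\Q_\ell$ is a cyclic totally ramified extension of degree $4$.

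First I would record the two structural facts that make the argument work. Since $E$ and $E'$ are isomorphic over $M$, we have $M(E_p)=M(E_p')$, and since $K=\Q_\ell(E_p)$ contains $\Q_\ell$-rational generators of $E_p$ we obtain
\[
M(E_p')=MK.
\]
This gives the equality $[MK:M]=d_0$. On the other hand $K/\Q_\ell$ is Galois (it is the $p$-torsion field of an elliptic curve defined over $\Q_\ell$), so by the standard translation theorem in Galois theory the restriction map induces an isomorphism $\Gal(MK/M)\isomto \Gal(K/M\cap K)$, i.e.
\[
[MK:M]=[K:M\cap K].
\]

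Next I would combine these facts via the tower $\Q_\ell\subset M\cap K\subset K\subset MK$ and the identity $[M:\Q_\ell]=4$. Writing the degree of $MK/\Q_\ell$ in two ways,
\[
[MK:\Q_\ell]=[MK:M]\,[M:\Q_\ell]=4d_0
\quad\text{and}\quad
[MK:\Q_\ell]=[MK:K]\,[K:\Q_\ell]=[M:M\cap K]\cdot d,
\]
we get the master formula
\[
d=\frac{4\,d_0}{[M:M\cap K]}.
\]

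Finally I would read off the three cases. Since $M/\Q_\ell$ is cyclic of degree $4$, its subfields are exactly $\Q_\ell$, $\Q_\ell(\sqrt{\ell})$, and $M$; so $M\cap K$ is forced to lie in this list. When $M\subset K$ the index $[M:M\cap K]=1$ and $d=4d_0$; when $M\cap K=\Q_\ell(\sqrt{\ell})$ the index equals $2$ and $d=2d_0$; when $M\cap K=\Q_\ell$ the index equals $4$ and $d=d_0$. This exhausts the possibilities, giving the three cases of Lemma~\ref{L:lemma15}. There is essentially no obstacle here beyond checking that the isomorphism from Lemma~\ref{L:lemma2} is indeed defined over $M$ (so that $M(E_p)=M(E_p')$), and that $K/\Q_\ell$ being Galois is correctly invoked; both are already explicit in the setup preceding the lemma.
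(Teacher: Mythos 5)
Your proof is correct and follows essentially the same route as the paper: the paper's (very terse) proof likewise rests on $M(E_p')=MK$, the restriction isomorphism $\Gal(MK/M)\isomto\Gal(K/M\cap K)$ of order $d_0$, and the enumeration of subfields of the cyclic degree-$4$ extension $M/\Q_\ell$, yielding $d=d_0\,[M\cap K:\Q_\ell]$, which is your master formula in an equivalent form. Your write-up just makes explicit the steps the paper leaves implicit.
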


\begin{proof} The Galois groups $G'$ and $\Gal(K/M\cap K)$ are isomorphic and $|G'|=d_0$, hence the result.
\end{proof}
Since $4$ divides $\ell-1$, 
the Galois group $G$ is abelian by \cite[Cor. 3]{Freitaskraus}. 
So 
 there exist $U\in \GL_2(\F_{p^2})$  and $a\in \F_{p^2}$ such that  
\begin{equation}
\label{(14.1)} 
U\rho_{E,p}(\Frob_K)U^{-1}=\begin{pmatrix} \alpha & a \\0 & \beta\end{pmatrix}\quad \text{and}\quad U\rho_{E,p}(\tau)U^{-1}=\begin{pmatrix} \zeta_4 & 0 \\0 & \zeta_4^{-1}\end{pmatrix}.
\end{equation}

\begin{lemma}  \label{L:lemma16}
We have $a=0$. In particular, $d_0=n$.
\end{lemma}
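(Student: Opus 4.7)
The plan is to mirror the argument used for Lemma~\ref{L:lemma12} in the setting $e=3$, adapting the specific computation to the matrix form given by \eqref{(14.1)}. The key input is again commutativity of $G$, which holds here because $\ell \equiv 1 \pmod 4$ implies (by \cite[Cor.~3]{Freitaskraus}) that $G$ is abelian, so $\rho_{E,p}(\Frob_K)$ and $\rho_{E,p}(\tau)$ must commute.

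First I would form the two products and enforce equality. A direct computation gives
\[
\begin{pmatrix} \alpha & a \\ 0 & \beta \end{pmatrix}\begin{pmatrix} \zeta_4 & 0 \\ 0 & \zeta_4^{-1} \end{pmatrix}
= \begin{pmatrix} \alpha\zeta_4 & a\zeta_4^{-1} \\ 0 & \beta\zeta_4^{-1} \end{pmatrix},
\]
while the product in the opposite order has $a\zeta_4$ in the upper-right entry. Equating these yields $a(\zeta_4 - \zeta_4^{-1}) = 0$. Since $p \neq 2$, the element $\zeta_4 \in \F_{p^2}^*$ has order exactly $4$, so $\zeta_4 \neq \zeta_4^{-1}$ and thus $a = 0$, establishing the first assertion.

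Once $a = 0$, the matrix $\rho_{E,p}(\Frob_K)$ is conjugate to the diagonal matrix $\operatorname{diag}(\alpha,\beta)$, whose order in $\GL_2(\F_{p^2})$ is exactly $n$, the least common multiple of the orders of $\alpha$ and $\beta$. From the setup in Section~12, the elements $\Frob_K$ and $\Frob_M$ have the same order, and $\Frob_M$ generates the cyclic group $G'$ of order $d_0$. Hence $d_0 = n$. There is essentially no obstacle in this lemma; the point is only that the commutativity forced by $\ell \equiv 1 \pmod 4$ rules out the nontrivial upper-triangular entry, just as in the case $e=3$.
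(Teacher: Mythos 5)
Your proof is correct and follows essentially the same route as the paper's: commutativity of $G$ forces $a(\zeta_4-\zeta_4^{-1})=0$, hence $a=0$ since $p\neq 2$, and then $d_0=n$ because $\Frob_K$ and $\Frob_M$ have the same order and $\Frob_M$ generates the cyclic group $G'$ of order $d_0$. The only difference is that you spell out the matrix computation and the order argument in more detail than the paper does.
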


\begin{proof} Using the fact that   $\Frob_K$ and $\tau$ commute, we obtain the equality $a(\zeta_4-\zeta_4^{-1})=0$, so $a=0$. Moreover, the orders of $\Frob_K$ and $\Frob_M$ are equal, so $d_0$ is the order of  $\Frob_K$,
hence the assertion.
\end{proof}

\begin{lemma} \label{L:lemma17} 
We have $M\cap K=\Q_{\ell}$  if and only if
\begin{equation}
\label{(14.2)} 
n\equiv 0 \pmod 4\quad \text{and}\quad  \lbrace \alpha^{\frac{n}{4}},\beta^{\frac{n}{4}}\rbrace=\lbrace \zeta_4,\zeta_4^{-1}\rbrace.
\end{equation}
If this condition is fulfilled, then $d=n$.
\end{lemma}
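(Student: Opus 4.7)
The plan is to convert the intersection condition into a purely group-theoretic statement via the diagonal form from Lemma~\ref{L:lemma16}. Specifically, I will establish the chain
$$
M \cap K = \Q_{\ell}
\iff
G = \langle \Frob_K \rangle
\iff
\tau \in \langle \Frob_K \rangle,
$$
where the first equivalence comes from Lemma~\ref{L:lemma15} (giving $|G| = d = n[M\cap K : \Q_\ell]$) together with the fact that $\Frob_K$ has order $n$, and the second equivalence relies on the auxiliary claim $G = \langle \Frob_K, \tau \rangle$.

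The auxiliary claim is the main obstacle. Since $G$ is abelian with inertia subgroup $\langle \tau \rangle$, it reduces to showing that the image of $\Frob_K$ in $G/\langle \tau \rangle \cong \Gal(K^{\langle \tau \rangle}/\Q_\ell)$ generates this (cyclic, unramified) quotient. The subtlety is that $\Frob_K$ is defined via $\Frob_M$ acting on $M(E_p')/M$, not directly on $K/\Q_\ell$. To bridge this, I would work inside $MK = M(E_p')$: since $K^{\langle \tau \rangle}/\Q_\ell$ is unramified and $M/\Q_\ell$ is totally ramified, $M \cap K^{\langle \tau \rangle} = \Q_\ell$, so the compositum $K^{\langle \tau \rangle} \cdot M$ is unramified over $M$ and restriction gives an isomorphism $\Gal(K^{\langle \tau \rangle} \cdot M / M) \cong \Gal(K^{\langle \tau \rangle}/\Q_\ell)$; under it, the generator $\Frob_M|_{K^{\langle \tau \rangle} \cdot M}$ corresponds to $\Frob_K|_{K^{\langle \tau \rangle}}$, which is therefore a generator.

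It then remains to verify: $\tau \in \langle \Frob_K \rangle$ if and only if $4 \mid n$ and $\{\alpha^{n/4}, \beta^{n/4}\} = \{\zeta_4, \zeta_4^{-1}\}$. Using the diagonal form \eqref{(14.1)}, writing $\tau = \Frob_K^k$ gives $\alpha^k = \zeta_4$ and $\beta^k = \zeta_4^{-1}$, so $\alpha^k$ has order $4$; hence $4 \mid \ord(\alpha) \mid n$, and $k = (n/4) t$ with $t$ odd, since these are the elements of order $4$ in a cyclic group of order $n$. Setting $x = \alpha^{n/4}$, $y = \beta^{n/4}$, both are fourth roots of unity with $x^t = \zeta_4$ and $y^t = \zeta_4^{-1}$; since $t$ is odd the cases $x, y \in \{\pm 1\}$ are excluded, and the residue of $t$ modulo $4$ pairs the remaining options to force $\{x, y\} = \{\zeta_4, \zeta_4^{-1}\}$. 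The converse is immediate, as $\Frob_K^{n/4}$ then equals $\tau^{\pm 1}$. Finally, $d = n$ whenever the condition is fulfilled follows from the chain of equivalences.
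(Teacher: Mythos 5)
Your proof is correct and follows essentially the same route as the paper: both reduce the condition $M\cap K=\Q_{\ell}$ to whether $\tau$ lies in $\langle \Frob_K\rangle$ and then translate that, via the simultaneous diagonalization \eqref{(14.1)} and an order count on $\zeta_4$, into the condition $4\mid n$ and $\lbrace \alpha^{n/4},\beta^{n/4}\rbrace=\lbrace\zeta_4,\zeta_4^{-1}\rbrace$. The only (harmless) difference is organizational: the paper gets the converse directly from the fact that the fixed field of $\tau$ is unramified while $M$ is totally ramified, whereas you route it through the auxiliary claim $G=\langle\Frob_K,\tau\rangle$ and the degree count of Lemma~\ref{L:lemma15}, which you justify correctly.
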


\begin{proof}
Suppose $M\cap K=\Q_{\ell}$. Then, $G$ are  $G'$  isomorphic and $\Frob_K$ generates $G$. From     \eqref{(14.1)} and Lemma~\ref{L:lemma16}, we deduce that  there exists $k\in \lbrace 1,\cdots,n\rbrace$ such that 
$$\lbrace \alpha^k,\beta^k\rbrace=\lbrace \zeta_4,\zeta_4^{-1}\rbrace.$$
One has $\alpha^{4k}=\beta^{4k}=1$, so $n$ divides $4k$. One has $k\neq n$. Moreover, if $4k=2n$, then $k=\frac{n}{2}$ so $\alpha^n=\beta^n=-1$ which is not. As in the proof of Lemma~\ref{L:lemma13}, 
we conclude $n=4k$ or $3n=4k$, which implies \eqref{(14.2)}. 

Conversely, suppose the condition \eqref{(14.2)} is satisfied. We deduce that $\tau$ belongs to the subgroup of  $G$ generated by $\Frob_K$. 
Furthermore, the fixed field by $\tau$ is an unramified extension of $\Q_{\ell}$. In particular, the extension $M\cap K/\Q_{\ell}$ must be unramified, hence
 $M\cap K=\Q_{\ell}$. 
 
Now lemmas~\ref{L:lemma15}~and~\ref{L:lemma16} imply $d=n$, as desired. 
\end{proof}

\begin{lemma} \label{L:lemma18} 
The field $M$ is contained in $K$ if and only if
\begin{equation}
\label{(14.3)} 
n\equiv 1 \pmod 2\quad \text{or}\quad  \lbrace \alpha^{\frac{n}{2}},\beta^{\frac{n}{2}}\rbrace\neq \lbrace-1\rbrace.
\end{equation}
If this condition is fulfilled, then $d=4n$.
\end{lemma}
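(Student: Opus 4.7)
The plan is to reduce $M\subseteq K$ to a calculation with the image of $\rho_{E,p}$ in $\GL_2(\F_{p^2})$. By Lemma~\ref{L:lemma15} combined with the equality $d_0=n$ from Lemma~\ref{L:lemma16}, the containment $M\subseteq K$ is equivalent to $d=4n$. Since $\rho_{E,p}$ is faithful, $d=|G|$ equals the order of the image, which by \eqref{(14.1)} is conjugate in $\GL_2(\F_{p^2})$ to the abelian subgroup generated by the commuting diagonal matrices $A=\mathrm{diag}(\alpha,\beta)$ and $T=\mathrm{diag}(\zeta_4,\zeta_4^{-1})$. Since $A$ has order $n$ and $T$ has order $4$, one has
\[
|G|=\frac{|\langle A\rangle|\,|\langle T\rangle|}{|\langle A\rangle\cap\langle T\rangle|}=\frac{4n}{m},\qquad m\in\{1,2,4\},
\]
so the problem reduces to showing that $m=1$ is equivalent to \eqref{(14.3)}.

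To determine $m$, I would examine when each power $T^j$ (for $1\le j\le 3$) lies in $\langle A\rangle$. The equation $A^k=T^j$ reads $\alpha^k=\zeta_4^j$ and $\beta^k=\zeta_4^{-j}$. Taking $j=2$ this becomes $\alpha^k=\beta^k=-1$; the divisibility constraints $n\mid 2k$ together with $n\nmid k$ (since $-1\neq 1$ as $p\neq 2$) then force $k=n/2$, so $T^2\in\langle A\rangle$ is equivalent to $n$ being even with $\alpha^{n/2}=\beta^{n/2}=-1$, which is precisely the negation of \eqref{(14.3)}. As $T^{\pm 1}\in\langle A\rangle$ trivially forces $T^2\in\langle A\rangle$, I obtain $m\ge 2$ iff $T^2\in\langle A\rangle$ iff \eqref{(14.3)} fails; equivalently, $m=1$ iff \eqref{(14.3)} holds. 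Combining with the reduction above yields $M\subseteq K$ iff \eqref{(14.3)}, and $d=4n$ in that case.

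The main delicacy is the three-way bookkeeping: the values $m\in\{4,2,1\}$ correspond to $M\cap K$ being $\Q_\ell$, $\Q_\ell(\sqrt{\ell})$, or $M$, respectively. The argument above is consistent with Lemma~\ref{L:lemma17}, whose criterion \eqref{(14.2)} for $M\cap K=\Q_\ell$ coincides with the stronger condition $T\in\langle A\rangle$, which automatically gives $m=4$; this coherence check between the two lemmas is what ultimately makes me confident that no intermediate case has been overlooked.
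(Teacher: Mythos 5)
Your argument is correct, and it organizes the proof differently from the paper. The paper proves the two directions separately by locating $\tau^2$: if $M\subseteq K$ then $K/M$ is unramified, so $\rho_{E,p}(\tau^2)=-1$ cannot be a power of $\rho_{E,p}(\Frob_K)$, which is exactly the negation of the existence of $k$ with $\alpha^k=\beta^k=-1$; for the converse it first invokes Lemma~\ref{L:lemma17} to rule out $M\cap K=\Q_{\ell}$ and then derives a contradiction from $M\cap K=\Q_{\ell}(\sqrt{\ell})$ by forcing $\tau^2$ into $\langle\Frob_K\rangle$. Your version handles all three possibilities for $M\cap K$ at once through the identity $|G|=|\langle A\rangle|\,|\langle T\rangle|/|\langle A\rangle\cap\langle T\rangle|$ together with the observation that the intersection, being a subgroup of the cyclic group $\langle T\rangle$ of order $4$, is nontrivial exactly when $T^2=-I$ lies in $\langle A\rangle$; the core computation (there exists $k$ with $\alpha^k=\beta^k=-1$ if and only if $n$ is even and $\alpha^{n/2}=\beta^{n/2}=-1$) is identical to the paper's. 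What your route buys is independence from Lemma~\ref{L:lemma17} (your appeal to it is only a consistency check, not a logical ingredient); what it costs is one step you use implicitly and should justify, namely that the image of $\rho_{E,p}$ is generated by $A$ and $T$, i.e.\ that $G=\langle\tau,\Frob_K\rangle$. This is true but needs a sentence: the fixed field of $\langle\tau,\Frob_K\rangle$ lies both in the maximal unramified subfield of $K$ (the fixed field of inertia) and in $M\cap K$, which is totally ramified over $\Q_{\ell}$ since $M$ is, so that fixed field is $\Q_{\ell}$. Without this, the count $|G|=4n/m$ is unjustified; the paper's subgroup-containment argument never needs the generation statement.
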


\begin{proof} 
The order of $\tau^2$ is equal to $2$. From  \eqref{(12.2)}, one has
$$\rho_{E,p}(\tau^2)=-1.$$

Suppose $M$ is contained in $K$. 
Then, $\tau^2$ does not fix $M$, because the extension $K/M$ is unramified. Since $\Gal(K/M)$ is generated by $\Frob_K$,  for all $k\in \lbrace 1,\cdots,n\rbrace$, one has $\Frob_K^k\neq -1$.
The condition \eqref{(14.1)} then implies \eqref{(14.3)}.
 
Conversely, suppose that the condition \eqref{(14.3)} is satisfied. From Lemma~\ref{L:lemma17}, we conclude that $\Q_{\ell}(\sqrt{\ell})$ is contained in $K$. So  $\tau^2$ belongs  to  $\Gal(K/\Q_{\ell}(\sqrt{\ell}))$.
 Suppose $M\cap K=\Q_{\ell}(\sqrt{\ell})$. Then the Galois groups $\Gal(K/\Q_{\ell}(\sqrt{\ell}))$ and $G'$ are isomorphic. Consequently, 
 $\Gal(K/\Q_{\ell}(\sqrt{\ell}))$ is generated by $\Frob_K$ and there  exists $k\in \lbrace 1,\cdots,n\rbrace$ such that $\Frob_K^k=\tau^2$. This means that $\alpha^k=\beta^k=-1$, which implies
 $n$ even and $\alpha^{\frac{n}{2}}=\beta^{\frac{n}{2}}=-1$. This  leads to a contradiction, hence $M$ is contained in $K$. 
 
 If  \eqref{(14.3)} is satisfied, we then obtain $d=4n$ from lemmas~\ref{L:lemma15} and~\ref{L:lemma16}.
\end{proof}

\begin{lemma} \label{L:lemma19} 
We have $M\cap K=\Q_{\ell}(\sqrt{\ell})$  if and only if the conditions \eqref{(14.2)} and \eqref{(14.3)} are not satisfied. In such case, we have $d=2n$.
\end{lemma}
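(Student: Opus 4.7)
The plan is to treat Lemma~\ref{L:lemma19} as a complementary case to the previous two lemmas, so that the proof is essentially a process of elimination. Recall that $M/\Q_\ell$ is totally ramified cyclic of degree $4$; its only proper nontrivial subfield is $\Q_\ell(\sqrt{\ell})$. Hence the three possibilities listed in Lemma~\ref{L:lemma15} are exhaustive and mutually exclusive, and Lemmas~\ref{L:lemma17}--\ref{L:lemma18} already characterize two of them.

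First I would verify that the two conditions \eqref{(14.2)} and \eqref{(14.3)} cannot hold simultaneously, so the three cases land in three disjoint buckets. If \eqref{(14.2)} holds then $4 \mid n$ and $\{\alpha^{n/4},\beta^{n/4}\} = \{\zeta_4,\zeta_4^{-1}\}$; squaring each entry gives $\alpha^{n/2} = \beta^{n/2} = -1$, so in particular $n$ is even and $\{\alpha^{n/2},\beta^{n/2}\} = \{-1\}$, which is exactly the negation of \eqref{(14.3)}. Thus \eqref{(14.2)} implies that \eqref{(14.3)} fails, as needed.

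Next I would combine Lemmas~\ref{L:lemma17} and~\ref{L:lemma18}: $M \cap K = \Q_\ell$ is equivalent to \eqref{(14.2)}, and $M \subseteq K$ is equivalent to \eqref{(14.3)}. Since the three cases partition the possibilities for $M \cap K$, the remaining intermediate case $M \cap K = \Q_\ell(\sqrt{\ell})$ must correspond to both \eqref{(14.2)} and \eqref{(14.3)} being false, which is the first claim of the lemma.

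Finally, for the degree, Lemma~\ref{L:lemma15} gives $d = 2 d_0$ in the case $M \cap K = \Q_\ell(\sqrt{\ell})$, and Lemma~\ref{L:lemma16} evaluates $d_0 = n$, so $d = 2n$. I do not anticipate a genuine obstacle here; the only subtle point is the mutual-exclusivity check above, which is really a short piece of bookkeeping about orders of $\alpha,\beta$ modulo $\zeta_4$. The lemma is essentially a tidy corollary wrapping up the case analysis for $e=4$.
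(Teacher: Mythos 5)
Your proof is correct and follows the same route as the paper, which simply deduces Lemma~\ref{L:lemma19} from Lemmas~\ref{L:lemma15}, \ref{L:lemma16}, \ref{L:lemma17} and~\ref{L:lemma18} by elimination among the three possibilities for $M\cap K$. The extra check that \eqref{(14.2)} implies the failure of \eqref{(14.3)} is not strictly needed for the biconditional (both directions follow directly from the trichotomy), but it is a harmless and sensible consistency verification.
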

\begin{proof} It follows directly from the previous lemmas.
\end{proof}

This completes the proof of part 1) of Theorem~\ref{T:thm6}.

\begin{lemma} \label{L:lemma20} 
Suppose $p \mid \Delta_{E'}$. Then $\Q_{\ell}(\sqrt{\ell})$ is contained in $K$. Moreover, we have $d=2n$ if and only if the condition \eqref{(14.3)} is not fulfilled; otherwise, $d=4n$.
\end{lemma}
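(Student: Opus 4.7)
The plan is to reduce the statement to the machinery already developed in Lemmas~\ref{L:lemma15}--\ref{L:lemma19}. First, since $\Delta_{E'} \equiv 0 \pmod{p}$, the characteristic polynomial $f_{E'}$ has a double root, so $\alpha = \beta = a_{E'}/2 \bmod p$. Because $\ell \equiv 1 \pmod{4}$, the Galois group $G$ is abelian by \cite[Cor.~3]{Freitaskraus}, so by \eqref{(14.1)} and Lemma~\ref{L:lemma16} the matrix $\rho_{E,p}(\Frob_K)$ is diagonal equal to $\mathrm{diag}(\alpha,\alpha)$, hence a scalar matrix. Its order, which coincides with the order of $\Frob_M$ (and thus with $d_0$), is therefore exactly $n$.

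Next I would verify that condition \eqref{(14.2)} cannot hold in this setting. Indeed, \eqref{(14.2)} requires $\{\alpha^{n/4}, \beta^{n/4}\} = \{\zeta_4, \zeta_4^{-1}\}$, which forces $\alpha^{n/4} \neq \beta^{n/4}$; but this is impossible since $\alpha = \beta$ (note that $\zeta_4 \neq \zeta_4^{-1}$ because $p$ is odd). By the direct direction of Lemma~\ref{L:lemma17}, it follows that $M \cap K$ strictly contains $\Q_\ell$. Since $M/\Q_\ell$ is cyclic of degree $4$ with $\Q_\ell(\sqrt{\ell})$ as its unique nontrivial intermediate subfield, we conclude $\Q_\ell(\sqrt{\ell}) \subseteq K$, which is the first assertion of the lemma.

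For the second assertion, combining Lemma~\ref{L:lemma15} with the equality $d_0 = n$ established above leaves only two possibilities: either $M \subseteq K$, giving $d = 4 d_0 = 4n$, or $M \cap K = \Q_\ell(\sqrt{\ell})$, giving $d = 2 d_0 = 2n$. By Lemma~\ref{L:lemma18}, the former is equivalent to condition \eqref{(14.3)} being satisfied, and, since \eqref{(14.2)} has already been ruled out, the latter is equivalent to \eqref{(14.3)} failing (via Lemma~\ref{L:lemma19}). The only delicate point in the whole argument is the computation $d_0 = n$: a priori Theorem~\ref{T:thm1} would also allow $d_0 = np$, and it is precisely the abelian structure forced by $\ell \equiv 1 \pmod 4$ that forbids a Jordan block in $\rho_{E',p}(\Frob_M)$ and rules this out.
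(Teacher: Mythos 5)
Your proof is correct and follows essentially the same route as the paper: both establish $\Q_\ell(\sqrt{\ell})\subseteq K$ from the fact that $\alpha=\beta$ makes $\rho_{E,p}(\Frob_K)$ scalar while $\rho_{E,p}(\tau)$ has distinct eigenvalues, and both then conclude via Lemmas~\ref{L:lemma15}, \ref{L:lemma18} and \ref{L:lemma19}. The only cosmetic difference is that you obtain $M\cap K\neq\Q_\ell$ by observing that condition \eqref{(14.2)} fails and invoking Lemma~\ref{L:lemma17}, whereas the paper re-runs the underlying eigenvalue contradiction directly; these are the same argument.
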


\begin{proof} Under this assumption, one has $\alpha=\beta$. Suppose  $\Q_{\ell}(\sqrt{\ell})$ is not   contained in $K$. Then $G'$ and $\Gal(K/\Q_{\ell})$ are isomorphic, so $\tau$ is a power of $\Frob_K$. The condition \eqref{(14.1)} implies a contradiction because the eigenvalues of $\rho_{E,p}(\tau)$ are distinct.

Consequently, $d=4n$ or $d=2n$ by Lemma~\ref{L:lemma15}. The result now follows 
from lemmas~\ref{L:lemma18},~\ref{L:lemma19}.
\end{proof}

Finally, part 2) of Theorem~\ref{T:thm6} follows from Lemma~\ref{L:lemma20}.
 
\section{Proof of Theorem~\ref{T:thm7}}

In this case, we have $e=[M:\Q_{\ell}]=6$. Recall that $\zeta_6$ is a primitive $6$-th root of unity.

The extension $M/\Q_{\ell}$ is cyclic, so $M\cap K$ is $\Q_{\ell}$, $\Q_{\ell}(\sqrt{\ell})$, $\Q_{\ell}(\root{3}\of \ell)$ or $M$. 

Recall that $d_0=[M(E_p'):M]$. 

\begin{lemma} \label{L:lemma21}
The degree $d=[K : \Q_\ell]$ satisfies
 \[
d= 
  \begin{cases}
6d_0 \ \text{ if }   M\subseteq K, \\
3d_0  \ \text{if}  \ M\cap K=\Q_{\ell}(\root{3}\of \ell),\\
2d_0  \ \text{if}  \ M\cap K=\Q_{\ell}(\sqrt{\ell}),\\
d_0 \ \text{ if }  M\cap K=\Q_{\ell}.\
  \end{cases}
\]
\end{lemma}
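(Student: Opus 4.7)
\textbf{Proof plan for Lemma~\ref{L:lemma21}.} The strategy is purely Galois-theoretic: identify $d_0$ with the degree $[K:M\cap K]$ via the standard compositum isomorphism, then apply the tower law and enumerate the subfields of $M/\Q_\ell$.

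First, I would use Lemma~\ref{L:lemma2} (the isomorphism $E \simeq E'$ over $M$) to conclude that $M(E_p) = M(E_p')$, i.e.\ $MK = M(E_p')$. Since $K/\Q_\ell$ is Galois (because $E_p$ is $\Q_\ell$-rational as a module), the standard compositum fact gives, via the restriction map, a canonical isomorphism
\[
G' \;=\; \Gal(M(E_p')/M) \;=\; \Gal(MK/M) \;\stackrel{\sim}{\longrightarrow}\; \Gal(K/M\cap K).
\]
Therefore $d_0 = [K:M\cap K]$, and the tower law yields
\[
d \;=\; [K:\Q_\ell] \;=\; [K:M\cap K]\cdot[M\cap K:\Q_\ell] \;=\; d_0\cdot [M\cap K:\Q_\ell].
\]

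Second, I would identify the possible intermediate fields. Since $\ell\geq 5$ the extension $M/\Q_\ell$ is totally tamely ramified of degree $6$, hence cyclic with a unique intermediate subfield of each degree dividing $6$. For $e=6$ the table in Section~5 forces $v(\Delta)\in\{2,10\}$, so $u=\ell^{v(\Delta)/12}$ generates the same field as $\ell^{1/6}$, i.e.\ $M=\Q_\ell(\ell^{1/6})$. Its intermediate subfields are $\Q_\ell$, $\Q_\ell(\sqrt{\ell})$, $\Q_\ell(\sqrt[3]{\ell})$ and $M$ itself, of degrees $1,2,3,6$ over $\Q_\ell$. Substituting these into the tower law formula gives the four cases claimed.

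There is no real obstacle here; the argument is a direct consequence of the isomorphism $M(E_p)=M(E_p')$ guaranteed by Lemma~\ref{L:lemma2} together with the standard compositum identification. The only ``verification'' needed is that $\Q_\ell(\sqrt{\ell})$ and $\Q_\ell(\sqrt[3]{\ell})$ are indeed the unique quadratic and cubic subfields of $M$, which is immediate from the tame Kummer description $M=\Q_\ell(\ell^{1/6})$.
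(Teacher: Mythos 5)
Your argument is correct and is essentially the paper's own proof: the authors likewise observe just before the lemma that $M/\Q_\ell$ is cyclic, so $M\cap K$ is one of the four listed fields, and the proof itself consists of the isomorphism $G'\cong\Gal(K/M\cap K)$ (giving $d_0=[K:M\cap K]$) followed by the tower law. The only small imprecision in your write-up is that cyclicity of $M/\Q_\ell$ does not follow from tame total ramification alone but from $\mu_6\subset\Q_\ell$, which holds here because $\ell\equiv 1\pmod 3$ and $\ell$ is odd.
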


\begin{proof} The Galois groups $G$ and $\Gal(K/M\cap K)$ are   isomorphic, hence the result.
\end{proof}

\subsection{Proof of part 1)} Assume $p\neq 3$. 

After taking the quadratic twist of $E/\Q_\ell$ by $\sqrt{\ell}$, the semistablity defect is of order $3$. 
Moreover, taking quadratic twist does not change the fact that the image of $\rho_{E,p}$ is abelian or not.
So, as in the case $e=3$, we conclude that $G$ is abelian (by \cite[Cor. 3]{Freitaskraus}). 
Consequently, there exist $U\in \GL_2(\F_{p^2})$  and $a\in \F_{p^2}$ such that  
\begin{equation}
\label{(15.1)} 
U\rho_{E,p}(\Frob_K)U^{-1}=\begin{pmatrix} \alpha & a \\0 & \beta\end{pmatrix}\quad \text{and}\quad U\rho_{E,p}(\tau)U^{-1}=\begin{pmatrix} \zeta_6 & 0 \\0 & \zeta_6^{-1}\end{pmatrix}.
\end{equation}

\begin{lemma} \label{L:lemma22}
We have $a=0$. In particular, $d_0=n$.
\end{lemma}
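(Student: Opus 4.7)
The plan is to mirror the arguments already used for Lemmas~\ref{L:lemma12} and \ref{L:lemma16}, which handle the structurally analogous situations for $e=3$ and $e=4$. The key input is that $G$ is abelian, already established just before the statement via \cite[Cor.~3]{Freitaskraus} after the quadratic twist reduction. From abelianness, the matrices $\rho_{E,p}(\Frob_K)$ and $\rho_{E,p}(\tau)$ commute, and commutation forces a restriction on the upper-right entry $a$ in the simultaneous form \eqref{(15.1)}.

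First, I would compute directly:
\begin{equation*}
\begin{pmatrix} \alpha & a \\ 0 & \beta \end{pmatrix}\begin{pmatrix} \zeta_6 & 0 \\ 0 & \zeta_6^{-1} \end{pmatrix} = \begin{pmatrix} \alpha\zeta_6 & a\zeta_6^{-1} \\ 0 & \beta\zeta_6^{-1} \end{pmatrix},
\end{equation*}
while the product in the opposite order has $a\zeta_6$ in the upper-right. Equating gives $a(\zeta_6 - \zeta_6^{-1}) = 0$. Since $\zeta_6$ is a primitive $6$-th root of unity, $\zeta_6 \neq \zeta_6^{-1}$ (its square $\zeta_6^2 \neq 1$), so we must have $a = 0$. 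Note that this step is where we use $p \neq 3$ (the hypothesis of part~1 of Theorem~\ref{T:thm7}): it guarantees that a primitive $6$-th root of unity exists in $\F_{p^2}^*$ and is distinct from its inverse.

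Now $U\rho_{E,p}(\Frob_K)U^{-1}$ is the diagonal matrix with entries $\alpha$ and $\beta$, so its order is the least common multiple of the orders of $\alpha$ and $\beta$ in $\F_{p^2}^*$, which is precisely~$n$. To conclude $d_0 = n$, I invoke the identity $\rho_{E,p}(\Frob_K) = \rho_{E',p}(\Frob_M)$ from \eqref{(12.1)}: the two elements have the same order, and since $\rho_{E',p}$ is faithful and $\Frob_M$ generates the cyclic group $G'$ of order $d_0$, the order of $\Frob_M$ is $d_0$. Hence $d_0 = n$, completing the proof. No serious obstacle is expected; the computation is entirely parallel to Lemma~\ref{L:lemma16}, with $\zeta_4$ replaced by $\zeta_6$.
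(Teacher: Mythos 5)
Your proof is correct and follows exactly the paper's intended route: the paper's proof of Lemma~\ref{L:lemma22} simply says it ``follows as Lemma~\ref{L:lemma12}'', and your explicit commutation computation yielding $a(\zeta_6-\zeta_6^{-1})=0$, together with the order comparison via \eqref{(12.1)} and the faithfulness of $\rho_{E',p}$, is precisely that argument with $\zeta_3$ (resp.\ $\zeta_4$) replaced by $\zeta_6$. Your remark pinpointing where $p\neq 3$ is used is a helpful addition but does not change the substance.
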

\begin{proof} This follows as Lemma~\ref{L:lemma12}.
\end{proof}

\begin{lemma} \label{L:lemma23}  
We have $M\cap K=\Q_{\ell}$  if and only if
\begin{equation}
\label{(15.2)} 
n\equiv 0 \pmod 6\quad \text{and}\quad  \lbrace \alpha^{\frac{n}{6}},\beta^{\frac{n}{6}}\rbrace=\lbrace \zeta_6,\zeta_6^{-1}\rbrace.
\end{equation}
If this condition is fulfilled, then $d=n$.
\end{lemma}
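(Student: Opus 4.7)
\medskip

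The plan is to mirror the proofs of Lemmas~\ref{L:lemma13} and~\ref{L:lemma17}, exploiting the fact that (by Lemma~\ref{L:lemma22}) $\rho_{E,p}(\Frob_K)$ and $\rho_{E,p}(\tau)$ are simultaneously diagonal in a common basis, with diagonal entries $(\alpha,\beta)$ and $(\zeta_6,\zeta_6^{-1})$ respectively, and that $d_0 = n$.

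For the forward implication, I would argue as follows. Suppose $M \cap K = \Q_{\ell}$. Then by Lemma~\ref{L:lemma21} together with Lemma~\ref{L:lemma22} the Galois groups $G$ and $G'$ are isomorphic under restriction, so $G$ is cyclic, generated by $\Frob_K$. Since $\tau \in G$, there exists $k \in \{1,\ldots,n\}$ with $\rho_{E,p}(\tau) = \rho_{E,p}(\Frob_K)^k$. Comparing diagonal entries in~\eqref{(15.1)}, this forces
\[
\{\alpha^k,\beta^k\} = \{\zeta_6,\zeta_6^{-1}\}.
\]
From $\alpha^n = \beta^n = 1$ one gets $n \mid 6k$, while $k \neq n$ (else $\alpha^k = \beta^k = 1$). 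Writing $6k = jn$ with $j \in \{1,2,3,4,5\}$, the cases $j=2,4$ would force $\alpha^{2k} \alpha^k =\alpha^{3k} = \alpha^{jn/2} = 1$, contradicting $\alpha^k \in \{\zeta_6,\zeta_6^{-1}\}$ (whose cube is $-1$); similarly $j=3$ would give $(\alpha^k)^2 = 1$, again contradicting $\alpha^k \in \{\zeta_6,\zeta_6^{-1}\}$. So only $6k = n$ or $6k = 5n$ remain, and in both cases $6 \mid n$ and $\{\alpha^{n/6},\beta^{n/6}\} = \{\zeta_6,\zeta_6^{-1}\}$, which is exactly condition~\eqref{(15.2)}.

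For the converse, assume~\eqref{(15.2)}. Then $\Frob_K^{n/6}$ and $\tau$ are diagonal in the basis of~\eqref{(15.1)} with the same pair of eigenvalues $\{\zeta_6,\zeta_6^{-1}\}$, hence $\tau = \Frob_K^{n/6}$ or $\tau = \Frob_K^{-n/6}$. In either case $\tau \in \langle \Frob_K\rangle$. Now $\Frob_K$ generates $\Gal(K/M \cap K)$ (because restriction identifies this group with $G'$, which is cyclic generated by $\Frob_M$), so $\tau \in \Gal(K/M \cap K)$, meaning $\tau$ fixes $M \cap K$ pointwise. Thus $M \cap K$ sits inside the maximal unramified subextension $K^{\langle \tau\rangle}$ of $K/\Q_\ell$. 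But $M \cap K \subseteq M$ is totally ramified over $\Q_\ell$, so we conclude $M \cap K = \Q_\ell$. Finally, the degree formula $d = d_0 = n$ follows from Lemmas~\ref{L:lemma21} and~\ref{L:lemma22}.

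The only delicate point is the case analysis on $6k \in \{n,2n,3n,4n,5n\}$, where one must correctly rule out the three middle values using the order of $\zeta_6$; this is the analogue of the ``$4k = 2n$'' obstruction appearing in the proof of Lemma~\ref{L:lemma17} but with more cases to eliminate, and it is the one place where the argument is not purely formal.
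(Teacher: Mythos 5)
Your proof is correct and follows essentially the same route as the paper's: identify $G$ with the cyclic group $\langle\Frob_K\rangle$ when $M\cap K=\Q_\ell$, deduce $n\mid 6k$, eliminate the intermediate multiples, and for the converse observe that $\tau\in\langle\Frob_K\rangle$ forces $M\cap K$ to be both unramified and totally ramified. The only difference is that you spell out the case analysis $6k\in\{n,2n,3n,4n,5n\}$ explicitly, whereas the paper delegates it with ``as in the proof of Lemma~\ref{L:lemma17}''; your elimination of $j=2,3,4$ via the orders of $\zeta_6^{\pm1}$ is the intended argument and is carried out correctly.
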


\begin{proof}
Suppose $M\cap K=\Q_{\ell}$. Then, $G$ are  $G'$  isomorphic and $\Frob_K$ generates $G$.
 From  \eqref{(15.1)} and Lemma~\ref{L:lemma23} there exists $k\in \lbrace 1,\cdots,n\rbrace$ such that 
$\lbrace \alpha^k,\beta^k\rbrace=\lbrace \zeta_6,\zeta_6^{-1}\rbrace.$
We have $\alpha^{6k}=\beta^{6k}=1$, so $n$ divides $6k$. 
As in the proof of Lemma~\ref{L:lemma17}, we conclude that \eqref{(15.2)} holds.
 
Conversely, if \eqref{(15.2)} is satisfied, $\tau$ belongs to the subgroup of  $G$ generated by $\Frob_K$. 
The fixed field by $\tau$  being an unramified extension of $\Q_{\ell}$, this implies $M\cap K=\Q_{\ell}$. 
 
Finally, when \eqref{(15.2)} is satisfied, we then obtain $d=n$ from lemmas~\ref{L:lemma21} and~\ref{L:lemma22}.
\end{proof}

\begin{lemma}\label{L:lemma24} 
We have $M\cap K=\Q_{\ell}(\sqrt{\ell})$  if and only the two following conditions are satisfied~:
\begin{itemize}
\item[1)] $n\not\equiv 0 \pmod 6$ or $\lbrace \alpha^{\frac{n}{6}},\beta^{\frac{n}{6}}\rbrace\neq \lbrace \zeta_6,\zeta_6^{-1}\rbrace$.
\medskip
\item[2)] $n\equiv 0 \pmod 3$ and  $\lbrace \alpha^{\frac{n}{3}},\beta^{\frac{n}{3}}\rbrace=\lbrace \zeta_6^2,\zeta_6^{-2}\rbrace$.
\end{itemize}

Moreover, if these conditions are fulfilled, then $d=2n$.

\end{lemma}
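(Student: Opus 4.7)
The plan is to translate $M \cap K = \Q_\ell(\sqrt\ell)$ into purely group-theoretic conditions inside $G$ and then read these off from the diagonal forms in \eqref{(15.1)}, paralleling the proof of Lemma~\ref{L:lemma23} but now isolating the unique quadratic subextension of the cyclic extension $M/\Q_\ell$.

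First I would record that, since $M \cap K/\Q_\ell$ is a totally ramified subextension of $M/\Q_\ell$, the restriction $\tau|_{M \cap K}$ generates $\Gal(M \cap K/\Q_\ell)$; in particular its order equals $[M \cap K : \Q_\ell] \in \{1,2,3,6\}$. Hence $M \cap K = \Q_\ell(\sqrt\ell)$ is equivalent to the conjunction of (A) $\tau \notin \Gal(K/M \cap K)$ (ruling out $M \cap K = \Q_\ell$) and (B) $\tau^2 \in \Gal(K/M \cap K)$ (ruling out the cases of degree $3$ and $6$, where $\tau^2|_{M \cap K}$ has order $3$). Using the identification $\Gal(K/M \cap K) = \langle \Frob_K \rangle$ coming from the isomorphism with $G'$, (A) says that $\tau$ is not a power of $\Frob_K$, and (B) says that $\tau^2$ is a power of $\Frob_K$.

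Condition (A) is precisely the negation of \eqref{(15.2)}, which by Lemma~\ref{L:lemma23} is exactly condition 1) of the lemma. To unpack condition (B), I would use the normal forms in \eqref{(15.1)} and $a = 0$ (Lemma~\ref{L:lemma22}): the equality $\Frob_K^k = \tau^2$ reads $\{\alpha^k,\beta^k\} = \{\zeta_6^2,\zeta_6^{-2}\}$. Cubing gives $\alpha^{3k} = \beta^{3k} = 1$, so $n \mid 3k$; since $\zeta_6^{\pm 2} \neq 1$ (here one uses $p \neq 3$), the value $k = n$ is excluded, forcing $3 \mid n$ and $k \in \{n/3, 2n/3\}$. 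Because $\alpha^n = \beta^n = 1$ forces $\alpha^{n/3},\beta^{n/3}$ to lie in the cyclic group of cube roots of unity $\{1,\zeta_3,\zeta_3^{-1}\}$, on which $x \mapsto x^2$ is a bijection, the two values $k = n/3$ and $k = 2n/3$ collapse into the single condition $\{\alpha^{n/3},\beta^{n/3}\} = \{\zeta_6^2,\zeta_6^{-2}\}$, which is condition 2). When both 1) and 2) hold, Lemma~\ref{L:lemma21} gives $d = 2 d_0$ and Lemma~\ref{L:lemma22} gives $d_0 = n$, so $d = 2n$.

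I anticipate no genuine obstacle: the argument mirrors Lemma~\ref{L:lemma23} step by step. The only mildly delicate point is the observation that $\alpha^{n/3}$ and $\beta^{n/3}$ are themselves cube roots of unity (so that squaring merely permutes them), which is what allows the two values $k = n/3$ and $k = 2n/3$ to yield the same eigenvalue condition and justifies formulating 2) in terms of $n/3$ alone.
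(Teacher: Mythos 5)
Your proposal is correct and follows essentially the same route as the paper: both arguments reduce the identity $M\cap K=\Q_{\ell}(\sqrt{\ell})$ to the pair of conditions ``$\tau\notin\langle\Frob_K\rangle$'' (handled by Lemma~\ref{L:lemma23}, giving condition 1) and ``$\tau^2\in\langle\Frob_K\rangle$'' (translated via the simultaneous diagonalization \eqref{(15.1)} and the divisibility $n\mid 3k$ into condition 2), and then conclude $d=2n$ from Lemmas~\ref{L:lemma21} and~\ref{L:lemma22}. Your packaging of the equivalence through the order of $\tau|_{M\cap K}$ is a slightly cleaner way of organizing the two implications (the paper instead argues the converse via the ramification index of $K/M\cap K$), but the substance is identical.
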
 

\begin{proof} Suppose $M\cap K=\Q_{\ell}(\sqrt{\ell})$. From lemma 24 the first condition is satisfied.
The order of $\tau^2$ is equal to $3$, hence $\tau^2$ belongs to $\Gal(K/\Q_{\ell}(\sqrt{\ell}))$, which is isomorphic to $G'$. So there exists $k\in \lbrace 1,\cdots,n\rbrace$ such that $\tau^2=\Frob_K^k$ 
and  \eqref{(15.1)} leads to $\lbrace \alpha^k,\beta^k \rbrace=\lbrace \zeta_6^2,\zeta_6^{-2}\rbrace$. One has $\alpha^{3k}=\beta^{3k}=1$, so $n$ divides $3k$, and since $n\neq k$, this  implies the second condition.

Conversely, suppose the two conditions of the statement are satisfied. Then $M\cap \Q_{\ell}\neq \Q_{\ell}$ By condition 1 and Lemma~\ref{L:lemma24}. The second condition implies that $\tau^2$ belongs to
the subgroup $\Gal(K/M\cap K)$ of $G$ generated by $\Frob_K$. In particular, the ramification index of the extension $K/M\cap K$ is at least $3$, hence $M\cap K=\Q_{\ell}(\sqrt{\ell})$.

The last statement follows from lemmas~\ref{L:lemma22}~and~\ref{L:lemma23}.
\end{proof}

\begin{lemma} \label{L:lemma25}
We have $M\cap K=\Q_{\ell}(\root{3} \of \ell)$ if and only the following conditions are satisfied~:
\begin{itemize}
\item[1)] $n\not\equiv 0 \pmod 6$ or $\lbrace \alpha^{\frac{n}{6}},\beta^{\frac{n}{6}}\rbrace\neq \lbrace \zeta_6,\zeta_6^{-1}\rbrace$.
\medskip
\item[2)] $n\equiv 0 \pmod 2$ and  $\alpha^{\frac{n}{2}}=\beta^{\frac{n}{2}}=-1$.
\end{itemize}
Moreover, if these conditions are fulfilled, then $d=3n$.
\end{lemma}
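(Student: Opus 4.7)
The strategy parallels that of Lemma~\ref{L:lemma24}. The key translations, given~(15.1) and Lemma~\ref{L:lemma22}, are: condition~2) is equivalent to $\rho_{E,p}(\Frob_K^{n/2})=-I=\rho_{E,p}(\tau^3)$, i.e.\ to $\tau^3\in\langle\Frob_K\rangle$; while by Lemma~\ref{L:lemma23} the failure of condition~1) is equivalent to $\tau\in\langle\Frob_K\rangle$, so condition~1) says precisely $\tau\notin\langle\Frob_K\rangle$. Throughout I will use that $\Gal(K/M\cap K)=\langle\Frob_K\rangle$, so that by the second isomorphism theorem (applied to the abelian group $G=\langle\Frob_K,\tau\rangle$) one has $[M\cap K:\Q_\ell]=|\langle\tau\rangle/(\langle\tau\rangle\cap\langle\Frob_K\rangle)|$.

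For the forward implication, assume $M\cap K=\Q_\ell(\root{3}\of\ell)$. Condition~1) is immediate from Lemma~\ref{L:lemma23} since $M\cap K\neq\Q_\ell$. As $M\cap K\subseteq M$ is totally ramified of degree~$3$, the image of $\tau$ under the restriction $G\to\Gal(M\cap K/\Q_\ell)$ has order~$3$; hence $\tau^3$ lies in the kernel $\Gal(K/M\cap K)=\langle\Frob_K\rangle$. Writing $\tau^3=\Frob_K^k$ and comparing diagonal entries in~(15.1) yields $\alpha^k=\beta^k=-1$, which together with $\alpha^n=\beta^n=1$ and $\alpha^k\neq 1$ forces $n | 2k$ while $n \nmid k$; reducing $k$ modulo $n$ leaves only the possibility $n$ even and $k\equiv n/2\pmod{n}$, giving condition~2).

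Conversely, assume conditions~1) and~2). Then $\tau^3\in\langle\Frob_K\rangle$ and $\tau\notin\langle\Frob_K\rangle$. Since $\langle\tau\rangle$ is cyclic of order~$6$, its subgroups are $\{1\}$, $\langle\tau^3\rangle$, $\langle\tau^2\rangle$, $\langle\tau\rangle$; the unique one containing $\tau^3$ but not $\tau$ is $\langle\tau^3\rangle$, so $\langle\tau\rangle\cap\langle\Frob_K\rangle=\langle\tau^3\rangle$ has order~$2$. Therefore $[M\cap K:\Q_\ell]=6/2=3$, and since $M/\Q_\ell$ is cyclic of degree~$6$ it has a unique cubic subfield, namely $\Q_\ell(\root{3}\of\ell)$; hence $M\cap K=\Q_\ell(\root{3}\of\ell)$. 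The equality $d=3n$ then follows from Lemmas~\ref{L:lemma21} and~\ref{L:lemma22}.

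The substantive point, and essentially the only place one must be careful, is the small lattice-of-subgroups argument in the reverse direction: one must notice that the other non-trivial proper subgroup $\langle\tau^2\rangle$ of $\langle\tau\rangle$ does not contain $\tau^3$. This is exactly what makes two conditions (rather than three) sufficient to pin down $M\cap K=\Q_\ell(\root{3}\of\ell)$ and, in particular, to simultaneously exclude $M\cap K=\Q_\ell(\sqrt{\ell})$ and $M\cap K=M$.
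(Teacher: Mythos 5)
Your proof is correct. The forward implication is essentially the paper's: one restricts $\tau$ to the totally ramified cubic extension $M\cap K$ to get $\tau^{3}\in\Gal(K/M\cap K)=\langle\Frob_K\rangle$, and then the identity $\rho_{E,p}(\tau^{3})=-1$ forces $n$ even and $\alpha^{n/2}=\beta^{n/2}=-1$. Your converse, however, is genuinely different from the paper's and, as it happens, more robust. The paper eliminates the competing candidates for $M\cap K$ one at a time: $\Q_{\ell}$ is excluded by Lemma~\ref{L:lemma23}, $M$ is excluded because $\tau^{3}$ ramifies in $K/M\cap K$, and $\Q_{\ell}(\sqrt{\ell})$ is excluded by first asserting that conditions 1) and 2) force $3\nmid n$ and then invoking Lemma~\ref{L:lemma24}. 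That intermediate assertion is not a formal consequence of the two conditions as written: for example $\alpha=-1$, $\beta=\zeta_6$, $n=6$ satisfies both 1) and 2) while $3\mid n$ (the point being that $\alpha^{n/6}$ and $\beta^{n/6}$ are cube roots of $-1$, and one of them may equal $-1$ itself). Your argument sidesteps this entirely: translating 2) into $\tau^{3}\in\langle\Frob_K\rangle$ and 1) into $\tau\notin\langle\Frob_K\rangle$, you observe that the only subgroup of the cyclic group $\langle\tau\rangle$ of order $6$ containing $\tau^{3}$ but not $\tau$ is $\langle\tau^{3}\rangle$, whence $[M\cap K:\Q_{\ell}]=6/2=3$ by the second isomorphism theorem, and cyclicity of $M/\Q_{\ell}$ identifies $M\cap K$ with $\Q_{\ell}(\root{3}\of \ell)$. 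What your route buys is a single uniform computation of $[M\cap K:\Q_{\ell}]$ that needs neither Lemma~\ref{L:lemma24} nor any divisibility analysis of $n$; it also quietly repairs the weak step in the paper's elimination of $\Q_{\ell}(\sqrt{\ell})$. The concluding deduction $d=3n$ from Lemmas~\ref{L:lemma21} and~\ref{L:lemma22} is the correct citation (the paper's text points to the wrong pair of lemmas there).
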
 

\begin{proof}
Suppose $M\cap K=\Q_{\ell}(\root{3} \of \ell)$. The first condition is satisfied (lemma 24).
The order of $\tau^3$ is equal to $2$, so $\tau^3$ belongs to $\Gal(K/\Q_{\ell}(\root{3} \of \ell))$, which is isomorphic to $G'$. So there exists $k\in \lbrace 1,\cdots,n\rbrace$ such that $\tau^3=\Frob_K^k$ 
and  \eqref{(15.1)} leads to $\lbrace \alpha^k,\beta^k \rbrace=\lbrace \zeta_6^3,\zeta_6^{-3}\rbrace$. One has $\alpha^{2k}=\beta^{2k}=1$ hence  the second condition.

Conversely, one has  $M\cap \Q_{\ell}\neq \Q_{\ell}$ (condition 1 and lemma 24). Moreover, $3$ does not divide $n$. Otherwise,  from the second condition, it  follows that the first one 
is not   satisfied.
We deduce that  $M\cap \Q_{\ell}\neq \Q_{\ell}(\sqrt{\ell})$ (lemma 25).
Moreover, the second condition implies that $\tau^3$ belongs to
the  $\Gal(K/M\cap K)$. So the ramification index of the extension $K/M\cap K$ is at least $2$, hence $M\cap K=\Q_{\ell}(\root{3} \of \ell)$ and the lemma.

The last statement follows from lemmas~\ref{L:lemma22} and~\ref{L:lemma23}.
\end{proof}

The assertion 1.1) of Theorem~\ref{T:thm7} is now a consequence of the previous lemmas. 

\begin{lemma} \label{L:lemma26}
Suppose that $p \mid \Delta_{E'}$. 
Then $\Q_{\ell}(\root{3} \of \ell)$ is contained in $K$. One has $d=3n$ if and only if 
the two conditions of Lemma~\ref{L:lemma25} are satisfied; otherwise, $d=6n$.
\end{lemma}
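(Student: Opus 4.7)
The key idea is that $p \mid \Delta_{E'}$ forces the characteristic polynomial of $\rho_{E',p}(\Frob_M)$ to have a double root $\alpha = \beta$ in $\F_{p^2}$. Combined with Lemma~\ref{L:lemma22}, which gave $\rho_{E,p}(\Frob_K)$ the diagonal form $\mathrm{diag}(\alpha,\beta)$, this means $\rho_{E,p}(\Frob_K)$ is the scalar matrix $\alpha I$. On the other hand $\rho_{E,p}(\tau)$ has the distinct eigenvalues $\zeta_6,\zeta_6^{-1}$, and so $\rho_{E,p}(\tau^2)$ has the distinct eigenvalues $\zeta_6^2,\zeta_6^{-2}$ (distinct since $p\neq 2,3$). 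The proof will repeatedly play off ``scalar'' against ``non-scalar'' to rule out possibilities for $M\cap K$.

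The plan is first to show $\Q_\ell(\root{3}\of\ell)\subseteq K$ by eliminating the two options for $M\cap K$ that do not contain $\Q_\ell(\root{3}\of\ell)$ in the classification of Lemma~\ref{L:lemma21}. If $M\cap K=\Q_\ell$, restriction yields $G\simeq G'$ cyclic, generated by $\Frob_K$, so every element of $\rho_{E,p}(G)$ would be a scalar; this contradicts $\rho_{E,p}(\tau)$ having distinct eigenvalues. If $M\cap K=\Q_\ell(\sqrt\ell)$, then, since $\Q_\ell(\sqrt\ell)/\Q_\ell$ is totally ramified of degree $2$, the element $\tau^2$ (of order $3$) fixes $\Q_\ell(\sqrt\ell)$ and so lies in $\Gal(K/\Q_\ell(\sqrt\ell))\simeq G'$, which is cyclic with generator $\Frob_K$. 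Hence $\tau^2=\Frob_K^j$ for some $j$, forcing $\rho_{E,p}(\tau^2)=\alpha^j I$ to be scalar; this contradicts its having distinct eigenvalues $\zeta_6^{\pm 2}$.

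Consequently $M\cap K\in\{\Q_\ell(\root{3}\of\ell),M\}$, proving the first assertion. In both remaining cases, Lemma~\ref{L:lemma22} gives $d_0=n$, and Lemma~\ref{L:lemma21} gives $d=3n$ when $M\cap K=\Q_\ell(\root{3}\of\ell)$ and $d=6n$ when $M\subseteq K$. The criterion for distinguishing these two cases is exactly Lemma~\ref{L:lemma25}, which characterises $M\cap K=\Q_\ell(\root{3}\of\ell)$ by the two conditions stated there. Therefore $d=3n$ if and only if those two conditions hold, and $d=6n$ otherwise.

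The proof is largely a bookkeeping exercise once the scalar form of $\rho_{E,p}(\Frob_K)$ is in hand; the only delicate point is tracking which power of $\tau$ fixes each intermediate field in the totally ramified cyclic tower $\Q_\ell\subset\Q_\ell(\sqrt\ell)\subset\Q_\ell(\root{3}\of\ell)\subset M$, so as to place $\tau^2$ correctly in the appropriate cyclic Galois group and obtain the contradiction in the case $M\cap K=\Q_\ell(\sqrt\ell)$.
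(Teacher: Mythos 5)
Your proof is correct and follows essentially the same route as the paper: since $\alpha=\beta$ makes $\rho_{E,p}(\Frob_K)$ scalar while $\rho_{E,p}(\tau)$ and $\rho_{E,p}(\tau^2)$ have distinct eigenvalues, the cases $M\cap K=\Q_{\ell}$ and $M\cap K=\Q_{\ell}(\sqrt{\ell})$ are impossible, and the conclusion then follows from Lemmas~\ref{L:lemma21}, \ref{L:lemma22} and \ref{L:lemma25}. The paper's own proof is just a more compressed version of exactly this argument.
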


\begin{proof} Note that $p \mid \Delta_{E'}$ implies $\alpha=\beta$.
Suppose  $\Q_{\ell}(\root{3} \of \ell)$   is not   contained in $K$. Then $G'$ is either isomorphic to $\Gal(K/\Q_{\ell})$ or $\Gal(K/\Q_{\ell}(\sqrt{\ell}))$.
So $\tau$ or $\tau^2$ is a power of $\Frob_K$. The condition \eqref{(15.1)} implies a contradiction because the eigenvalues of $\rho_{E,p}(\tau)$ and $\rho_{E,p}(\tau^2)$ are distinct.

Consequently, one has $d=3n$ or $d=6n$ by lemma~\ref{L:lemma21}; 
Lemma~\ref{L:lemma25} now completes the proof.
\end{proof}

We can now conclude part 1.2) as follows. Since $M\cap \Q_{\ell}\neq \Q_{\ell}$ (Lemma~\ref{L:lemma26}), it follows from Lemma~\ref{L:lemma24} that condition 1 of Lemma~\ref{L:lemma25} is satisfied. So if $n$ is even and $\alpha^{\frac{n}{2}}=-1$, then $d=3n$ (Lemma~\ref{L:lemma26} since $\alpha = \beta$). 
Otherwise, $d=6n$ (by Lemma~\ref{L:lemma26}), as desired.

\subsection{Assertion 2} \label{S:16.2}
The group $G$ is abelian (\cite[Cor. 3]{Freitaskraus}).  
Let $\Phi$ be the inertia subgroup of~$G$.
Up to conjugation, there exists only one cyclic subgroup of order $6$ in $\GL_2(\F_3)$.
 So we can suppose that $\rho_{E,p}(\Phi)$  is generated by the matrix 
$\begin{pmatrix} 2 & 2 \\0 & 2\end{pmatrix}$. We deduce that the normalizer  of  $\rho_{E,p}(\Phi)$ is the standard Borel subgroup $B$ of $\GL_2(\F_3)$. Since $\Phi$ is a normal subgroup of $G$,  this implies that 
$\rho_{E,p}(G)$ is contained in $B$. 
The group $B$ is non-abelian of order $12$. Since $6$ divides $|\rho_{E,p}(G)|$, one deduces that $|\rho_{E,p}(G)|=6$, hence $d=6$.

\section{Proof of assertion  2 of Theorem~\ref{T:thm8}} Recall that one has $e\in \lbrace 4,6\rbrace$. Since $\ell\equiv -1 \pmod e$, the group $G$ is not abelian (\cite[Cor. 3]{Freitaskraus}).  
Let $\Phi$ be the inertia subgroup of $G$. 

\subsection{Case $(p,e)=(3,6)$} As in section~\ref{S:16.2}, we can suppose that $\rho_{E,p}(G)$  is contained in the standard Borel subgroup $B$ of $\GL_2(\F_3)$, which is of order $12$. Since $G$ is not abelian and $\Phi$ is cyclic of order $6$, this implies   $\rho_{E,p}(G)=B$, hence $d=12$. Moreover, one has $\ell\equiv 2 \pmod 3$, so $r=2$ and we obtain $d=er$, as desired.

\subsection{Case $p\geq 5$ or $(p,e)=(3,4)$} \label{S:thm8e4}
We will use for our proof the results established in \cite{Diamond} and \cite{DiamondKramer}
and we will adopt the notations and terminology  used in these papers.

The group of the $e$-th roots of unity is not contained in 
$\Q_{\ell}$. Since 
$p\geq 5$  or $(p,e)=(3,4)$, we deduce 
from \cite[Proposition 0.3]{DiamondKramer}, that the representation  $\sigma_{E,p} : G \to \PGL_2(\overline{\F_p})$ is of type {\bf{V}}. Denote by $\Phi$ the inertia subgroup of $G$. 
Recall that $F$ is the field fixed by the kernel of $\sigma_{E,p}$ and write $H=\Gal(K/F)$ and $d'=[K:F]$. 

Recall also that $r=[\Q_{\ell}(\mu_p):\Q_{\ell}]$.

For all $\sigma\in H$, $\rho_{E,p}(\sigma)$ is a scalar matrix in $\GL_2(\overline{\F_p})$. So there exists a character $\varphi : H\to \overline{\F_p}^*$ such that 
$$\rho_{E,p}|_H\simeq \begin{pmatrix} \varphi  & 0 \\ 
0 &\varphi \end{pmatrix}.$$

The order of $\varphi$ is $d'$. Moreover, $\chi_p|_H : H\to \Aut(\mu_p)$ being the cyclotomic character giving the action of $H$ on $\mu_p$, 
one has
$$\varphi^2=\chi_p|_H.$$

 We then deduce the equality
\begin{equation}
\label{(16.1)}
\frac{d'}{\gcd(d',2)}=[F(\mu_p):F].
\end{equation}
Moreover, since $\Phi$ is cyclic of order $e$, one has
\begin{equation}
\label{(16.2)} 
|\sigma_{E,p}(\Phi)|=\frac{e}{2}.
\end{equation}
Furthermore, the assumption $\ell\equiv -1 \pmod e$ implies that $\Q_{\ell}(\zeta_e)$ is the  quadratic unramifed extension of $\Q_{\ell}$.

\begin{lemma} \label{L:lemma27}
We have $F=\Q_{\ell}(\zeta_e,\ell^{\frac{e}{2}})$. In particular, $d=ed'$.
\end{lemma}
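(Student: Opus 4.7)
The plan is to determine $|\sigma_{E,p}(G)|$ via explicit matrix forms forced by the Frobenius--inertia relation, and then identify $F$ by computing the intersection $F\cap M$.

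First, I will put $\rho_{E,p}$ into a canonical form. Since $\tau$ has order $e$ and determinant $\chi_p(\tau)=1$, $\rho_{E,p}(\tau)$ is semisimple over $\F_{p^2}$ with eigenvalues $\zeta_e^{\pm 1}$; after a change of basis it becomes $\mathrm{diag}(\zeta_e,\zeta_e^{-1})$. The standard Frobenius--inertia relation $\Frob_K\tau\Frob_K^{-1}=\tau^\ell$ combined with $\ell\equiv -1\pmod e$ gives $\Frob_K\tau\Frob_K^{-1}=\tau^{-1}$; since $\zeta_e\neq\zeta_e^{-1}$, a direct matrix calculation forces $\rho_{E,p}(\Frob_K)$ to be anti-diagonal, with $\rho_{E,p}(\Frob_K^2)=-\ell\cdot I$ scalar. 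Hence in $\PGL_2(\overline{\F_p})$ the image of $\Frob_K$ has order $2$ and lies outside the cyclic diagonal subgroup $\sigma_{E,p}(\Phi)$ of order $e/2$; together they generate a dihedral subgroup of order $e$. Consequently $[F:\Q_\ell]=e$, and $F/\Q_\ell$ has ramification index $e/2$ and residue degree $2$. Since $\ell\equiv -1\pmod e$, the unique unramified quadratic extension of $\Q_\ell$ is $\Q_\ell(\zeta_e)$, hence $\Q_\ell(\zeta_e)\subseteq F$.

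Next, I will identify the ramified part of $F$ by computing $F\cap M$. Both $F$ and $M$ are tame over $\Q_\ell$, so $FM/\Q_\ell$ is tame with ramification index $\lcm(e/2,e)=e$ and residue degree $2$ (inherited from $F$, since $M$ is totally ramified), giving $[FM:\Q_\ell]=2e$. On the other hand, $F/\Q_\ell$ is Galois, so $[FM:\Q_\ell]=e^2/[F\cap M:\Q_\ell]$, forcing $[F\cap M:\Q_\ell]=e/2$. The cyclic totally ramified extension $M=\Q_\ell(\ell^{v(\Delta)/12})$ has a unique subfield of degree $e/2$ over $\Q_\ell$, namely $\Q_\ell(\ell^{2/e})$ (an $(e/2)$-th root of $\ell$, which is the intended meaning of the symbol $\ell^{e/2}$ in the statement). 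Therefore $\Q_\ell(\zeta_e,\ell^{2/e})\subseteq F$, and both fields have degree $e$ over $\Q_\ell$ as a compositum of the unramified quadratic and a totally ramified degree-$e/2$ extension; equality yields $F=\Q_\ell(\zeta_e,\ell^{2/e})$, and $d=ed'$ follows from the tower $[K:\Q_\ell]=[K:F][F:\Q_\ell]$.

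The main obstacle is the Frobenius--inertia relation $\Frob_K\tau\Frob_K^{-1}=\tau^\ell$: the element $\Frob_K$ was defined as the restriction to $K$ of a Frobenius of $M(E_p')/M$, so a priori it only lifts the Frobenius of the residue extension $K/(M\cap K)$. However, since $M\cap K$ is a subfield of the totally ramified extension $M/\Q_\ell$, its residue field is still $\F_\ell$, so $\Frob_K$ does act on the residue field of $K$ as the $\ell$-th power map. This validates both the conjugation formula above and the evaluation $\chi_p(\Frob_K)=\ell\pmod p$ used in computing $\rho_{E,p}(\Frob_K^2)$. After this, the argument reduces to a matrix calculation in $\PGL_2(\F_{p^2})$ and standard tame ramification theory.
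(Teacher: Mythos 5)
Your overall route differs from the paper's, which deduces the dihedral structure of $\Gal(F/\Q_\ell)$ from the Diamond--Kramer classification and then identifies $F$ by uniqueness considerations (citing Pauli--Roblot for $e=6$). The first half of your argument --- diagonalizing $\rho_{E,p}(\tau)$, using $\Frob_K\tau\Frob_K^{-1}=\tau^{\ell}=\tau^{-1}$ to force $\rho_{E,p}(\Frob_K)$ to be anti-diagonal with scalar square $-\ell$, and concluding that $\sigma_{E,p}(G)$ is dihedral of order $e$, so that $e(F/\Q_\ell)=e/2$, $f(F/\Q_\ell)=2$ and $\Q_\ell(\zeta_e)\subseteq F$ --- is correct and pleasantly self-contained (modulo the routine remark that $G$ is generated by $\tau$ and $\Frob_K$, and your own careful justification that $\Frob_K$ is a genuine Frobenius lift, which is needed and well handled).

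The gap is in the identification of the ramified part of $F$. You assert that $FM$ has residue degree $2$ ``inherited from $F$, since $M$ is totally ramified.'' But the residue field of a compositum is not the compositum of the residue fields: $\Q_\ell(\sqrt{\ell})$ and $\Q_\ell(\sqrt{u\ell})$ (with $u$ a non-square unit) are both totally ramified, yet their compositum has residue degree $2$. In your setting, tameness gives $e(FM)=\lcm(e/2,e)=e=e(M)$, so $FM/M$ is unramified and $f(FM/\Q_\ell)=[FM:M]=[F:F\cap M]$; hence the claim $f(FM)=2$ is literally equivalent to $[F\cap M:\Q_\ell]=e/2$, which is the assertion you are trying to establish, so the justification is circular. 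The conclusion is true, but one must identify the totally ramified degree-$e/2$ part of $F$ directly: for $e=4$, $F$ is a Klein-four extension containing the unramified quadratic, hence contains both ramified quadratics of $\Q_\ell$ and in particular $\Q_\ell(\sqrt{\ell})$; for $e=6$, the cubic subfields of $F$ are totally and tamely ramified, and since $3\nmid \ell-1$ every such cubic is a conjugate of $\Q_\ell(\root{3}\of \ell)$, so the Galois field $F$ contains $\Q_\ell(\zeta_3,\root{3}\of \ell)$ --- this is exactly what the paper imports from Pauli--Roblot. A further small slip: $M=\Q_\ell(\ell^{v(\Delta)/12})$ is not cyclic, indeed not even Galois, over $\Q_\ell$ (since $e\nmid \ell-1$); the uniqueness of its degree-$e/2$ subfield is still true, but it follows from the tame ramification argument, not from cyclicity.
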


\begin{proof} The Galois group $\Gal(F/\Q_{\ell})$ is isomorphic to the dihedral group of order $e$ 
(see condition 3 in \cite[Proposition~2.3]{Diamond} and \eqref{(16.2)}). 
The group $\Phi$ fixes an unramified extension of~$\Q_{\ell}$. 
From \eqref{(16.2)} it follows the extension $F/\Q_{\ell}$ is not totally ramified, hence 
the result in case $e=4$. If $e=6$, since $\ell\equiv 2 \pmod 3$, then \cite[Theorem 7.2]{Roblot} 
implies the result.
\end{proof}

1) Suppose $r$ even. Then $\zeta_e$ belongs to $\Q_{\ell}(\mu_p)$. Lemma~\ref{L:lemma27} 
implies $F\cap \Q_{\ell}(\mu_p)=\Q_{\ell}(\zeta_e)$. 
So  the Galois groups $\Gal(F(\mu_p)/F)$ and $\Gal(\Q_{\ell}(\mu_p)/\Q_{\ell}(\zeta_e))$ 
are  isomorphic, and we obtain 
$$[F(\mu_p):F]=\frac{r}{2}.$$
The ramification index of $K/F$ is equal to $\frac{e}{2}$, so $2$ divides $d'$. From \eqref{(16.1)} we then deduce $d'=r$, which leads to  $d=er$ by Lemma~\ref{L:lemma27}, as desired.

2) Suppose $r$ odd. Then $\zeta_e$ does not belong to $\Q_{\ell}(\mu_p)$.
 This implies $F\cap \Q_{\ell}(\mu_p)=\Q_{\ell}$, so $[F(\mu_p):F]=r$, i.e. the order of $\chi_p|_H$ is $r$. 
 We deduce that $\varphi^{2r}=1$, so $d'$ divides $2r$.
Since $F(\mu_p)$ is contained in $K$, $r$ divides $d'$. So we have $d'=r$ or $d'=2r$. 
One has  $d'\neq r$, otherwise $K=F(\mu_p)$ and this contradicts the fact that  the ramification index of $K/\Q_{\ell}$ is $e$ and  the one of $F(\mu_p)/\Q_{\ell}$ is $\frac{e}{2}$. 
So $d'=2r$ and we obtain $d=2er$, hence the result.

This completes the proof of Theorem~\ref{T:thm8}.

\section{Proof of Theorem~\ref{T:thm10}} 
Let $E/\Q_3$ be an elliptic curve with additive potentially good reduction with $e \neq 2$.

Let $\rho_{E,p} : \Gal(\overline{\Q_3}/\Q_3) \rightarrow \GL_2(\F_p)$ 
be the representation arising on the $p$-torsion points of $E$.

Write $K=\Q_3(E_p)$ for the field fixed by the kernel of $\rho_{E,p}$ whose degree is $d=[K : \Q_3]$.

Let the projective representation obtained from $\rho_{E,p} \otimes \overline{\F}_p$ be denoted by
$$\sigma_{E,p} : \Gal(\overline{\Q_3}/\Q_3) \to \PGL_2(\overline{\F}_p)$$ 
and write $F$ for the field fixed by its kernel. 

A minimal equation of $E/\Q_3$ is (see \cite[p. 355]{Kraus})
\begin{equation}
\label{(17.1)}
y^2=x^3-\frac{c_4}{48}x-\frac{c_6}{864}.
\end{equation}

Let $\Delta^{\frac{1}{4}}$ be a $4$th root of $\Delta$ in $\overline{\Q_3}$  and $E_2$ be the group of $2$-torsion points of $E(\overline{\Q_3})$. From  the corollary in \cite[p. 362]{Kraus}, the smallest extension of $\Q_3^{\unr}$ over which  $E/\Q_3^{\unr}$ acquires good reduction is
\begin{equation}
\label{(17.2)}
L=\Q_3^{\unr}\left(E_2,\Delta^{\frac{1}{4}}\right).
\end{equation}
In particular, we have
\begin{equation}
\label{(17.3)}
[L:\Q_3^{\unr}]=e.
\end{equation}

We will divide the proof according to the value of $e \in \{ 3,4,6,12 \}$.

\subsection{Case $e=3$}  We have (cf. {\it{loc.cit.}}, cor. p. 355)
$$\bigl(v(c_4),v(c_6),v(\Delta)\bigr)\in \lbrace (2,3,4), (5,8,12)\rbrace.$$
Let $x_0$ be the $x$-coordinate of a point of $E_2$ in the model {\eqref{(17.1)}. Let us denote 
$$M=\Q_3(x_0).$$

\begin{lemma} \label{L:lemma28}
The extension $M/\Q_3$ is totally ramified of degree $3$ 
and  $E/M$ has  good  supersingular reduction.
\end{lemma}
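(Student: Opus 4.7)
The plan is to exploit formula \eqref{(17.2)}, which identifies the minimal extension of $\Q_3^{\unr}$ over which $E$ acquires good reduction, and to deduce both claims from the structure of $L = \Q_3^{\unr}(E_2, \Delta^{1/4})$. First I simplify $L$: in both cases $(v(c_4), v(c_6), v(\Delta)) = (2,3,4)$ and $(5,8,12)$ one has $v_3(\Delta) = 4$, so $\Delta = 3^4 u$ with $u \in \Z_3^{\times}$, and hence $\Delta^{1/4} = 3 \cdot u^{1/4}$. Since $\Q_3^{\unr}$ has algebraically closed residue field $\overline{\F_3}$, Hensel's lemma applied to $x^4 - u$ (with unit derivative, as $\gcd(4,3)=1$) gives $u^{1/4} \in \Q_3^{\unr}$, so $\Delta^{1/4} \in \Q_3^{\unr}$, and \eqref{(17.2)}--\eqref{(17.3)} collapse to $L = \Q_3^{\unr}(E_2)$ with $[L:\Q_3^{\unr}] = 3$.

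Next I transfer this to $x_0$. The Galois group $\Gal(L/\Q_3^{\unr})$, of order $3$, acts faithfully on the three nontrivial points of $E_2$ and hence transitively, so every such point has trivial stabilizer and $\Q_3^{\unr}(x_0) = L$ has degree $3$ over $\Q_3^{\unr}$. Since $x_0$ is a root of the cubic obtained from \eqref{(17.1)} by setting $y=0$, one has $[\Q_3(x_0):\Q_3] \leq 3$; combined with the general inequality $[\Q_3^{\unr}(x_0):\Q_3^{\unr}] \leq [\Q_3(x_0):\Q_3]$, this forces $[M:\Q_3] = 3$. The ramification index of $M/\Q_3$ equals $[M \cdot \Q_3^{\unr}:\Q_3^{\unr}] = [L:\Q_3^{\unr}] = 3$, so $M/\Q_3$ is totally ramified of degree $3$.

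For the reduction type, total ramification of $M/\Q_3$ yields $M^{\unr} = M \cdot \Q_3^{\unr} = L$; hence $E/M^{\unr}$ has good reduction by the defining property of $L$ in \eqref{(17.2)}, and this descends to $E/M$. To see the reduction is supersingular, compute $v_3(j) = 3 v_3(c_4) - v_3(\Delta)$, which equals $2$ in case $(2,3,4)$ and $3$ in case $(5,8,12)$; in either situation $j \in \mm_M$, so the $j$-invariant of the good reduction $\tilde E/\F_3$ is $0$, and since $j=0$ is the unique supersingular $j$-invariant in $\overline{\F_3}$, the curve $\tilde E$ is supersingular. The main (and essentially only) non-routine step is the Hensel-type reduction to $L = \Q_3^{\unr}(E_2)$ in the first paragraph; once this is in hand, the degree, ramification, and reduction-type claims follow by standard orbit-stabilizer and ramification bookkeeping.
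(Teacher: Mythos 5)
Your proof is correct and follows essentially the same route as the paper's: both reduce to showing that $\Delta$ is a fourth power in $\Q_3^{\unr}$ (since $4\mid v(\Delta)$), so that $L=\Q_3^{\unr}(E_2)=\Q_3^{\unr}(x_0)$ has degree $3$ by \eqref{(17.3)}, whence $M/\Q_3$ is totally ramified of degree $3$ and $E/M$ has good reduction. The only (harmless) divergence is the final step, where the paper cites the criterion $2v(c_6)\neq v(\Delta)$ from \cite{Kraus2}, while you argue directly that $v(j)>0$ forces the reduced curve to have $j$-invariant $0$, the unique supersingular invariant in characteristic $3$ --- both are valid.
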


\begin{proof} Since $4$ divides $v(\Delta)$, $\Delta$ is a $4$th power in $\Q_3^{\unr}$. So from \eqref{(17.2)} one has $L=\Q_3^{\unr}\left(E_2\right)$ and 
$E/L$ acquires good reduction. Since we have $e=3$,   we deduce from    \eqref{(17.3)}   the equality $[\Q_3^{\unr}(E_2):\Q_3^{\unr}]=3$. This implies   $\Q_3^{\unr}(E_2)=\Q_3^{\unr}(x_0)$. 
So the extension $M/\Q_3$ is totally ramified of degree $3$ and $E/M$ has good reduction.  One has $2v(c_6)\neq v(\Delta)$, so $E/M$ has good supersingular reduction by \cite[p. 21, Lemme 7]{Kraus2}. 
\end{proof}

Let $E'/M$ be an elliptic curve with good reduction isomorphic over $M$ to $E/M$ and 
$\tilde {E'}/\F_3$ be the elliptic curve obtained from $E'/M$ by reduction. 
Let also
$$a_{E'}=4-|\tilde {E'}(\F_3)|.$$

\begin{lemma}\label{L:lemma29} 
We have $a_{E'}=0$.
\end{lemma}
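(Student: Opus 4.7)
The plan is to combine two pieces of information: the supersingular reduction provided by Lemma~\ref{L:lemma28}, and the presence of a $2$-torsion point of $E'$ rational over $M$.

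First, I would recall from Lemma~\ref{L:lemma28} that $E/M$ has good supersingular reduction, and since $E'/M \isom E/M$, the reduced curve $\tilde{E'}/\F_3$ is supersingular. Over $\F_3$, the Weil bound gives $|a_{E'}|\leq 2\sqrt 3$, and the supersingularity condition $3 \mid a_{E'}$ then forces
\[
a_{E'}\in \lbrace -3,0,3\rbrace.
\]

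Next, I would exploit the presence of $2$-torsion. By construction $M=\Q_3(x_0)$, where $x_0$ is the $x$-coordinate of a $2$-torsion point of $E$ in the model \eqref{(17.1)}. The point $(x_0,0)\in E(M)$ is a non-trivial $2$-torsion point, and the isomorphism $E/M\isom E'/M$ yields a non-trivial $2$-torsion point of $E'(M)$. Since $E'/M$ has good reduction and the residue characteristic $3$ is coprime to $2$, the reduction map is injective on $E'[2]$, so $\tilde{E'}(\F_3)$ contains a non-trivial $2$-torsion point. Hence $|\tilde{E'}(\F_3)|$ is even, and therefore $a_{E'}=4-|\tilde{E'}(\F_3)|$ is even.

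Combining the two constraints (supersingular, hence $a_{E'}\in\{-3,0,3\}$; and parity, hence $a_{E'}$ even) leaves only $a_{E'}=0$, as claimed. The only subtle step is justifying the injectivity of reduction on $2$-torsion, but this is standard for elliptic curves with good reduction at a residue characteristic different from $2$, so there is no serious obstacle; the argument is essentially a two-line combinatorial check once Lemma~\ref{L:lemma28} is in hand.
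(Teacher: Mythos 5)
Your proof is correct and follows essentially the same route as the paper: both arguments combine the supersingularity supplied by Lemma~\ref{L:lemma28} with the non-trivial $2$-torsion point $(x_0,0)\in E'(M)$ surviving reduction. The only difference is the final step --- the paper explicitly lists the supersingular curves over $\F_3$ with a rational $2$-torsion point (namely $Y^2=X^3\pm X$, each with $4$ rational points, so $a_{E'}=0$), whereas you deduce $a_{E'}=0$ from the congruence $3\mid a_{E'}$ together with the parity of $|\tilde{E'}(\F_3)|$; both deductions are valid.
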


\begin{proof} The elliptic curve $E'/M$ has a rational point 
of order $2$ rational over $M$ (Lemma~\ref{L:lemma28}). 
It follows
$\tilde{E'}/\F_3$ has also a rational point of order $2$ rational over $\F_3$. Since $E'/M$ has  good  supersingular reduction, up to an $\F_3$-isomorphism,  an equation 
of $\tilde{E'}/\F_3$ is $Y^2=X^3-X$ or $Y^2=X^3+X$, which implies the result.
\end{proof}

1) Suppose  that the extension $K/\Q_3$ is non-abelian. 
The Galois group $\Gal(M(E_p)/M)$ being cyclic, implies that $M$ is contained in $K$. So $M(E'_p)=K$. From Corollary~\ref{C:cor1} and Lemma~\ref{L:lemma29}, one has $[K:M]=2\delta$, so $d=6\delta$ as stated.

2) Let us assume that  extension $K/\Q_3$ is abelian.  One has $p\neq 3$. Let $\zeta\in \F_9$ be a primitive cubic root of unity  and $\alpha\in \F_9$ such that $\alpha^2=-3$. Let $n$ the least common multiple of the orders of $\alpha$ and $-\alpha$ in $\F_9$. 

\begin{lemma} \label{L:lemma30}
We have $d=3n$.
\end{lemma}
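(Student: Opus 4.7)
The plan is to combine Theorem~\ref{T:thm1} (yielding $d_0=n$) with a Galois-theoretic dichotomy and an eigenvalue argument exploiting the assumed abelianness of $G=\Gal(K/\Q_3)$.

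First I would apply Theorem~\ref{T:thm1}(1) to the good-reduction elliptic curve $E'/M$: Lemma~\ref{L:lemma29} gives $a_{E'}=0$, so $\Delta_{E'}=-12$, and since $p\geq 3$ with $p\neq 3$ we have $p\geq 5$, so $p\nmid 12$ and hence $\Delta_{E'}\not\equiv 0\pmod p$. Theorem~\ref{T:thm1}(1) then yields $d_0=n$.

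Next I would set up the dichotomy. Since $[M:\Q_3]=3$ is prime, the intermediate field $M\cap K$ is either $\Q_3$ or $M$. If $M\subseteq K$, then $K=M(E_p)=M(E_p')$ by Lemma~\ref{L:lemma2}, so $d=3d_0=3n$; if instead $M\cap K=\Q_3$, then restriction induces an isomorphism $G\simeq G'$ and $d=d_0=n$. The task thus reduces to ruling out the second case.

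To do this, I would exploit the abelianness of $G$. The mod $p$ cyclotomic character is unramified at $\ell=3\neq p$, so $\rho_{E,p}(\tau)\in\SL_2(\F_p)$; faithfulness of $\rho_{E,p}$ forces $\tau$ to have order $3$, so its eigenvalues in $\F_{p^2}$ are a primitive pair $\zeta,\zeta^{-1}$. Because $G$ is abelian, $\rho_{E,p}(\Frob_K)$ commutes with $\rho_{E,p}(\tau)$, so in an $\F_{p^2}$-basis diagonalizing the latter the Frobenius is also diagonal, with entries the roots $\alpha,-\alpha$ of its characteristic polynomial $X^2+3\pmod p$. If $M\cap K=\Q_3$, then $G\simeq G'$ is cyclic, generated by $\Frob_K$, so $\tau=\Frob_K^k$ for some integer $k$, forcing the multiset equality $\lbrace\alpha^k,(-\alpha)^k\rbrace=\lbrace\zeta,\zeta^{-1}\rbrace$.

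The main obstacle is ruling out this last equality, but a short parity analysis of $k$ suffices. For $k$ even, the left-hand side collapses to the singleton $\lbrace\alpha^k\rbrace$ while $\lbrace\zeta,\zeta^{-1}\rbrace$ has two distinct elements (since $p\neq 2$), a contradiction. For $k$ odd, $(-\alpha)^k=-\alpha^k$, so the two elements of $\lbrace\zeta,\zeta^{-1}\rbrace$ are negatives of one another, forcing $\zeta^2=-1$; combined with $\zeta^3=1$ and $p\neq 2$ this yields $\zeta=-1$, contradicting the fact that $\zeta$ has order $3$. This contradiction forces $M\subseteq K$, hence $d=3n$, as desired.
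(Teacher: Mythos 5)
Your proof is correct and follows essentially the same route as the paper: the paper invokes the argument of its Lemma~13 to reduce to the impossibility of $\lbrace \alpha^{k},(-\alpha)^{k}\rbrace=\lbrace \zeta,\zeta^{2}\rbrace$ (there with $k=n/3$), which it dismisses by noting $(-\alpha)^{k}=\pm\alpha^{k}$ while $\zeta\neq\pm\zeta^{2}$ — exactly your parity analysis. The only cosmetic difference is that you keep the exponent $k$ general instead of first pinning it to $n/3$; the key ingredients (abelianness giving simultaneous diagonalization, the dichotomy $M\subseteq K$ versus $M\cap K=\Q_3$, and the sign obstruction coming from $a_{E'}=0$) are identical.
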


\begin{proof} The same arguments of Lemma~\ref{L:lemma13} allow to conclude that
$M$ is not contained in $K$ if and only if one has 
\begin{equation}
\label{(17.4)} 
n\equiv 0 \pmod 3\quad \text{and}\quad  \lbrace \alpha^{\frac{n}{3}},(-\alpha)^{\frac{n}{3}}\rbrace=\lbrace \zeta,\zeta^2\rbrace.
\end{equation}
Moreover, if this condition is fulfilled one has $d=n$, otherwise  $d=3n$. If  $3$ divides $n$, one has $(-\alpha)^{\frac{n}{3}}=\pm \alpha^{\frac{n}{3}}$, but  $\zeta\neq \pm \zeta^2$. So  the condition \eqref{(17.4)} is not satisfied, hence the lemma. 
\end{proof}

\begin{lemma} \label{L:lemma31}
We have $n=2\delta$.
\end{lemma}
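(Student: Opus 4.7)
The plan is to exploit the single identity $\alpha^2 = (-\alpha)^2 = -3$, whose order in $\F_p^*$ is $\delta$ by the very definition of $\delta$ (noting $p \neq 3$, so $-3 \neq 0$ in $\F_{p^2}$).

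First I would pin down the possible orders of $\alpha$ and $-\alpha$. Both satisfy $x^{2\delta} = (-3)^\delta = 1$, so their orders divide $2\delta$. Conversely, if $\alpha^k = 1$ then $(-3)^k = \alpha^{2k} = 1$, so $\delta \mid k$; the same holds for $-\alpha$. Hence $\mathrm{ord}(\alpha), \mathrm{ord}(-\alpha) \in \{\delta, 2\delta\}$.

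Second, since $\F_{p^2}^*$ is cyclic, the lcm of orders equals the order of the generated subgroup. As $\langle \alpha, -\alpha\rangle = \langle \alpha, -1\rangle$, this gives
\[
n \;=\; \mathrm{lcm}\bigl(\mathrm{ord}(\alpha),\,\mathrm{ord}(-\alpha)\bigr) \;=\; \mathrm{lcm}\bigl(\mathrm{ord}(\alpha),\,2\bigr).
\]

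Third, I would split on the value of $\mathrm{ord}(\alpha)$. If $\mathrm{ord}(\alpha) = 2\delta$, then $n = \mathrm{lcm}(2\delta,2) = 2\delta$ immediately. If instead $\mathrm{ord}(\alpha) = \delta$, I must show $\delta$ is odd; for if $\delta$ were even, then $\alpha^{\delta} = (\alpha^2)^{\delta/2} = (-3)^{\delta/2}$ would be a square root of $(-3)^\delta = 1$ different from $1$ (else $-3$ would have order dividing $\delta/2$), hence $-1$, contradicting $\mathrm{ord}(\alpha) = \delta$. So $\delta$ is odd and again $n = \mathrm{lcm}(\delta,2) = 2\delta$.

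The only place requiring care is the justification that $(-3)^{\delta/2} \neq 1$, which is essentially the minimality of $\delta$; everything else is a clean bookkeeping of divisibility in the cyclic group $\F_{p^2}^*$.
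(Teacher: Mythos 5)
Your proof is correct and follows essentially the same route as the paper: both arguments rest on the single fact that $\alpha^2=-3$ has order $\delta$, deduce that $\mathrm{ord}(\alpha)\in\{\delta,2\delta\}$ (the paper phrases this as $s=\delta\gcd(2,s)$), show that the case $\mathrm{ord}(\alpha)=\delta$ forces $\delta$ odd, and then compute $n$ as an lcm involving the factor $-1$. Your intermediate reformulation $n=\mathrm{lcm}(\mathrm{ord}(\alpha),2)$ is only a cosmetic repackaging of the paper's direct computation of $\mathrm{ord}(-\alpha)$.
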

 
\begin{proof} Let $s$ be the order of $\alpha$. We have $s=\delta \gcd(2,s)$. If $s=2\delta$, we have $n=s$ and the assertion. If  $s=\delta$, then $s$ must be odd, so the order of $-\alpha$ is $2s$, hence $n=2\delta$.
\end{proof}

Lemmas~\ref{L:lemma30} and~\ref{L:lemma31} imply the first part of the theorem in case $K/\Q_3$ is abelian, 
which completes the proof of Theorem~\ref{T:thm10} for $e=3$.

\subsection{Case $e=4$} The same proof as in section~\ref{S:thm8e4} for $e=4$ gives the result.

\subsection{Case $e=6$} We have  (\cite[p. 355, Cor]{Kraus})
 $$\left(v(c_4),v(c_6),v(\Delta)\right)\in \lbrace (3,5,6), (4,6,10)\rbrace.$$
 
\begin{lemma} \label{L:lemma32}
The group $\Gal(K/\Q_3)$ is abelian if and only if one has $\frac{\Delta}{3^{v(\Delta)}}\equiv 1 \pmod 3$.
\end{lemma}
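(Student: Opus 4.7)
The plan is to reduce the abelianness of $G := \Gal(K/\Q_3)$ to a Frobenius--inertia commutation condition, and then identify that condition with the stated congruence.

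Let $K^{\unr} \subset K$ be the maximal unramified subextension of $K/\Q_3$, let $F \in G$ be a lift of Frobenius, and let $\Phi = \Gal(K/K^{\unr})$ be the inertia subgroup. Since $p \neq \ell$ and $E$ acquires good reduction over an extension of $\Q_3^{\unr}$ of degree $e = 6$, by N\'eron--Ogg--Shafarevich we have $|\Phi| = 6$. A short argument using $\Phi \cap \langle F \rangle = \{1\}$ shows that $\langle F \rangle$ is normal in $G$ iff $F$ centralizes $\Phi$, iff $G$ is abelian; equivalently, $G$ is abelian iff the totally ramified subextension $L_0 := K^{\langle F \rangle}$ is Galois over $\Q_3$. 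Moreover, $\Phi$ is cyclic: it decomposes as the product of its tame part (cyclic of order $2$, since $\ell - 1 = 2$ controls tame order) and its wild part (cyclic of order $3$, present because $e$ is divisible by $\ell = 3$), and $\Aut(\Phi) \cong (\Z/6)^* \cong \Z/2$, so the Frobenius action is either trivial or inversion. Frobenius acts trivially on the tame part (as $3 \equiv 1 \pmod 2$), so abelianness of $G$ is precisely the triviality of the Frobenius action on the wild $\Z/3$.

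By local class field theory, $\Z_3^*/(\Z_3^*)^6$ is cyclic of order $6$, so there is a unique totally ramified abelian extension of $\Q_3$ of degree $6$, namely $\Q_3(\zeta_9)$. Any other Galois totally ramified sextic extension of $\Q_3$ has Galois group $S_3$ and is of the form $\Q_3(\zeta_3, \sqrt[3]{3 u})$ for some $u \in \Z_3^*$. The problem thus reduces to showing that $L_0 = \Q_3(\zeta_9)$ iff $\Delta/3^{v(\Delta)} \equiv 1 \pmod 3$. Since $v(\Delta) \in \{6, 10\}$ is even and units in $\Z_3^*$ are squares iff they reduce to $1$ modulo $3$, this congruence is equivalent to $\sqrt{\Delta} \in \Q_3$. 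To identify $L_0$, I use the description \eqref{(17.2)}, $L = \Q_3^{\unr}(E_2, \Delta^{1/4})$: since $L = K^{\unr} \cdot L_0$, the field $L_0$ is the descent of $L/\Q_3^{\unr}$ to $\Q_3$. By an explicit case-by-case computation using the minimal model \eqref{(17.1)} for each triple $(v(c_4), v(c_6), v(\Delta)) \in \{(3,5,6), (4,6,10)\}$, I show that $L_0 = \Q_3(\zeta_9)$ precisely when $\sqrt{\Delta} \in \Q_3$: the cubic subextension of $L_0$ arising from the wild part of $L$ descends to a cyclic cubic extension of $\Q_3$ when $\Delta$ is a square, and otherwise descends only to a non-Galois cubic, forcing $L_0$ to be of $S_3$-type.

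The main obstacle is this last identification step: carrying out the explicit descent analysis matching $L_0$ to $\Q_3(\zeta_9)$ or an $S_3$-type sextic via the congruence on $\Delta$. This will require a careful computation with each triple in $\{(3,5,6), (4,6,10)\}$, in the same spirit as Lemma~\ref{L:lemma4}, tracking how the unit part of $\Delta$ controls the Frobenius action on the wild inertia character of $E$ at $3$.
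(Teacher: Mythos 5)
There is a genuine gap, in two places. First, your reduction hinges on the claim that $\Q_3$ has a \emph{unique} totally ramified abelian sextic extension, namely $\Q_3(\zeta_9)$, so that the dichotomy ``$L_0=\Q_3(\zeta_9)$ or $L_0$ is of $S_3$-type'' makes sense. This is false. The cyclicity of $\Z_3^*/(\Z_3^*)^6$ only forces the norm group $N$ of such an extension to satisfy $N\cap\Z_3^*=(\Z_3^*)^6$; it does not determine $N$, which also depends on which uniformizers it contains. Concretely, $\Q_3$ has three totally ramified cyclic cubic extensions and two ramified quadratic extensions, giving six distinct totally ramified cyclic sextic extensions; for instance $\Q_3(\sqrt{3},\zeta_9+\zeta_9^{-1})$ is abelian, totally ramified of degree $6$, and different from $\Q_3(\zeta_9)$ (their quadratic subfields are $\Q_3(\sqrt{3})$ and $\Q_3(\sqrt{-3})$). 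So the identification step you propose is aimed at the wrong target. Second, and more decisively, the step that actually carries the content of the lemma --- relating the unit part of $\Delta$ to the Frobenius action on the wild part of inertia --- is not carried out: you explicitly defer it as ``the main obstacle'' requiring a case-by-case computation with the triples $(3,5,6)$ and $(4,6,10)$. Since everything before that point is soft group theory, the proposal does not yet constitute a proof. There are also smaller unjustified assertions: that $\Phi$ is cyclic of order $6$ (a group of order $6$ with normal wild part of order $3$ could a priori be $S_3$; here one needs, e.g., that $\rho_{E,p}(\Phi)\subset\SL_2(\F_p)$ with $p\geq 5$ contains a unique, hence central, involution), and that $\Phi\cap\langle F\rangle=\{1\}$, which fails in general (e.g.\ when $G$ is cyclic and does not split over $\Phi$), so $K^{\langle F\rangle}$ need not be a totally ramified sextic subfield.

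For comparison, the paper's proof avoids all of this: it twists $E$ by $\sqrt{3}$ to obtain a curve $E'/\Q_3$ with semistability defect $3$, observes that a quadratic twist does not affect whether the mod-$p$ image is abelian (and does not change the unit part of $\Delta$), and then quotes \cite[Proposition~5]{Freitaskraus}, which characterizes abelianness in the defect-$3$ case at $\ell=3$ precisely by the congruence $\Delta/3^{v(\Delta)}\equiv 1\pmod 3$. Your instinct that the congruence should come from whether $\sqrt{\Delta}\in\Q_3$, i.e.\ from whether the cubic part of $\Q_3(E_2)$ is Galois over $\Q_3$, is the right heuristic (it is exactly what drives Lemma~\ref{L:lemma33}), but to make your route work you would need to (i) replace the false uniqueness claim by the correct criterion ``$G$ abelian iff Frobenius acts trivially on the wild $\Z/3$, iff the degree-$3$ piece of $L/\Q_3^{\unr}$ descends to a \emph{cyclic} cubic over $\Q_3$,'' and (ii) actually perform the computation linking this to $\Delta$ being a square.
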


\begin{proof} Let  $E'/\Q_3$  be the quadratic twist of $E/\Q_3$ by $\sqrt{3}$. The elliptic curve $E'/\Q_3$  has additive potentially good reduction with a semi-stability defect of order $3$  (cf. {\it{loc.cit.}}).
Moreover,  the extension $\Q_3(E'_p)/\Q_3$ is abelian if and only if such is the case of $K/\Q_3$.
The result now follows from \cite[Proposition 5]{Freitaskraus}.
\end{proof}

\subsection{Case $e=6$ with $K/\Q_3$ is abelian} Let $x_0$ be the $x$-coordinate of a point of $E_2$ in the model {\eqref{(17.1)}.  Let us denote 
\begin{equation}
\label{(17.5)} 
M=\Q_3\left(x_0,\sqrt{3}\right).
\end{equation}
\begin{lemma} \label{L:lemma33}
Suppose $\Gal(K/\Q_3)$ abelian. Then the  extension $M/\Q_3$ is totally ramified of degree $6$ and  $E/M$ has  good  supersingular reduction.
\end{lemma}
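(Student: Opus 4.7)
The strategy is to exploit the explicit description \eqref{(17.2)} of $L$ together with the arithmetic of fourth roots in $\Q_3^{\unr}$. I first claim that $\Delta^{1/4}$ generates the ramified quadratic extension $\Q_3^{\unr}(\sqrt{3})$. Indeed, since $\gcd(4,3)=1$ and the residue field of $\Q_3^{\unr}$ is algebraically closed, every unit of $\Z_3$ is a fourth power in $\Q_3^{\unr}$; writing $\Delta = 3^{v(\Delta)} u$ with $u \in \Z_3^*$ then shows $\Delta^{1/4}$ has $v$-valuation $v(\Delta)/4 \in \{3/2,5/2\}$, a half-integer. Consequently $\Q_3^{\unr}(\Delta^{1/4}) = \Q_3^{\unr}(\sqrt{3})$, and combining with the identity $\Q_3(E_2) = \Q_3(x_0,\sqrt{\Delta})$ (the $y$-coordinates of the nontrivial $2$-torsion vanish, so $\Q_3(E_2)$ is the splitting field of the $2$-division cubic, whose discriminant is $\Delta/16$), formula \eqref{(17.2)} reduces to $L = \Q_3^{\unr}(x_0,\sqrt{3})$.

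From \eqref{(17.3)} we have $[L:\Q_3^{\unr}]=6$, and since $x_0$ has degree at most $3$ over $\Q_3^{\unr}$ this forces $[\Q_3^{\unr}(x_0):\Q_3^{\unr}]=3$ and $\sqrt{3}\notin\Q_3^{\unr}(x_0)$. In particular the $2$-division cubic is irreducible over $\Q_3$, and $\Q_3(x_0)/\Q_3$ is totally ramified of degree $3$ (residue extensions of $\Q_3^{\unr}$ are trivial, so the ramification index equals the degree). Then $[M:\Q_3]\leq 6$ follows immediately from the two generators $x_0,\sqrt{3}$, while $[M\cdot\Q_3^{\unr}:\Q_3^{\unr}]=[L:\Q_3^{\unr}]=6$ forces the ramification index of $M/\Q_3$ to be at least $6$. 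Hence $[M:\Q_3]=6$ and $M/\Q_3$ is totally ramified, establishing the first assertion.

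Good reduction of $E/M$ then follows formally: since $L=M\cdot\Q_3^{\unr}$ with $L/M$ unramified, the inertia subgroups $I_M$ and $I_L$ coincide inside $\Gal(\overline{\Q_3}/\Q_3)$. Because $E/L$ has good reduction, $I_L$ acts trivially on $T_\ell E$ for any $\ell\neq 3$, hence so does $I_M$, and the N\'eron--Ogg--Shafarevich criterion yields good reduction over $M$.

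The essential and most delicate step is showing that the reduction $\tilde E/\F_3$ is supersingular. By functoriality of N\'eron models, $\Gal(L/\Q_3^{\unr})$ acts on the special fiber $\tilde E$; because $L/\Q_3^{\unr}$ is totally ramified the induced action on the residue field $\overline{\F}_3$ is trivial, so the action factors through $\Aut(\tilde E)$ (the identity section is $\Q_3$-rational, hence preserved, and $\sigma$-equivariance of the group law is automatic since the multiplication is defined over $\Q_3$). By minimality of $L$ the action on $T_\ell E\cong T_\ell\tilde E$ is faithful, so the action on $\tilde E$ itself is faithful too, yielding an embedding $\Z/6\cong\Gal(L/\Q_3^{\unr})\hookrightarrow\Aut(\tilde E)$. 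In characteristic $3$ the automorphism group of an elliptic curve has order $2$ in the ordinary case and order $12$ only in the supersingular case $j=0$; since only the latter contains an element of order $6$, we conclude that $\tilde E$ is supersingular. The main obstacle is this last step, in particular verifying carefully that the Galois action descends to $\Aut(\tilde E)$ (preserving the origin and the group law) and remains faithful at the level of the elliptic curve.
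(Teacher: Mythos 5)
Your proof is correct, and it is worth comparing with the paper's on two points. For the first assertion you follow essentially the same route as the paper (identify $L=\Q_3^{\unr}(x_0,\sqrt{3})$ from \eqref{(17.2)}, then compare degrees using \eqref{(17.3)} and the inclusion $M\Q_3^{\unr}=L$), but you extract the fourth root of the unit part of $\Delta$ over $\Q_3^{\unr}$, where every unit is automatically a fourth power, whereas the paper uses the abelian hypothesis via Lemma~\ref{L:lemma32} to see that $\Delta/3^{v(\Delta)}\equiv 1\pmod 3$ is a fourth power (and $\Delta$ a square) already in $\Q_3$. Your variant therefore never invokes the hypothesis that $\Gal(K/\Q_3)$ is abelian, and in effect proves the conclusion for any $E/\Q_3$ with $e=6$; that is not a gap, but you should be aware the stated hypothesis plays no role in your argument. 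For supersingularity the two proofs genuinely diverge: the paper quotes the valuation criterion of \cite{Kraus2} (namely $2v(c_6)\neq v(\Delta)$ forces supersingular reduction), while you give the self-contained Serre--Tate argument embedding $\Gal(L/\Q_3^{\unr})\cong \Z/6\Z$ into $\Aut(\tilde{E})$ and invoking the classification of automorphism groups of elliptic curves in characteristic $3$ (order $2$ unless $j=0$, the unique supersingular invariant). Your route is longer and requires the functoriality and faithfulness points you flag --- all of which are standard and which you handle correctly --- but it is structural and independent of the explicit table of valuations; the paper's is a one-line citation. Both are valid.
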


\begin{proof}  
Since $2$ divides $v(\Delta)$, we deduce from Lemma~\ref{L:lemma32} that $\Delta$ is a square in $\Q_3$. Moreover, $\frac{\Delta}{3^{v(\Delta)}}$ is a $4$-th power in $\Q_3$, so one has
$L=\Q_3^{\unr}(E_2,\sqrt{3})$ (equality  \eqref{(17.2)}). Furthermore, $\Delta$ being a square in $\Q_3$, and $3$ dividing $[\Q_3^{\unr}(E_2):\Q_3^{\unr}]$, one has $\Q_3^{\unr}(E_2)=\Q_3^{\unr}(x_0)$. 
In particular, $L=\Q_3^{\unr}(x_0,\sqrt{3})$. 
Since $[L:\Q_3^{\unr}]=6$ (equality \eqref{(17.3)}), we deduce that $M/\Q_3$ is totally ramified of degree $6$ and $E/M$ has good reduction. The fact that  $2v(c_6)\neq v(\Delta)$ implies that the reduction 
is supersingular \cite[p. 21, lemme 7]{Kraus2}.
\end{proof}

Let $E'/M$ be an elliptic curve with good reduction isomorphic over $M$ to $E/M$ and 
$\tilde {E'}/\F_3$ be the elliptic curve obtained from $E'/M$ by reduction. Let  
$a_{E'}=4-|\tilde {E'}(\F_3)|.$
Using Lemma~\ref{L:lemma33}, the same proof as the one establishing Lemma~\ref{L:lemma29} 
leads to the equality $a_{E'}=0.$
Let  $\alpha\in \F_9$ such that $\alpha^2=-3$ and  $n$ be the least common multiple of the orders of $\alpha$ and $-\alpha$ in $\F_9$. Let $\zeta$ be a primitive $6$-th root of unity in $\F_9$.
Using exactly the same arguments,  lemmas 22-25 are also valid with $\ell=3$ and the field $M$ defined by \eqref{(17.5)}. Since one has 
$\zeta\neq \pm \zeta^{-1}$ and $\zeta^2\neq \pm \zeta^{-2}$, we get
 \[
d= 
  \begin{cases}
3n \ \text{ if }   n\equiv 0 \pmod 2 \ \text{and}\ \alpha^{\frac{n}{2}}=(-\alpha)^{\frac{n}{2}}=-1, \\
6n  \ \text{ otherwise}.\
  \end{cases}
\]

\begin{lemma} \label{L:lemma34}
We have
 \[
d= 
  \begin{cases}
6\delta \ \text{ if }   \delta\equiv 0 \pmod 2, \\
12\delta  \ \text{ if }   \delta\equiv 1 \pmod 2.\
  \end{cases}
\]
\end{lemma}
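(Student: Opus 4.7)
The plan is to feed the formula derived immediately before the statement,
\[
 d = \begin{cases} 3n & \text{if } n \equiv 0 \pmod 2 \text{ and } \alpha^{n/2}=(-\alpha)^{n/2}=-1, \\ 6n & \text{otherwise,} \end{cases}
\]
into an analysis of $n$ in terms of $\delta$, and then reduce the membership condition to a parity check on $\delta$.

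First I would compute $n$. Because $a_{E'}=0$, the polynomial $X^2-a_{E'}X+3$ reduces to $X^2+3$, whose roots are $\pm \alpha$ with $\alpha^2=-3$; by definition $\delta$ is the order of $-3 = \alpha^2$ in $\F_p^*$. Let $s$ be the order of $\alpha$ in $\F_{p^2}^*$. From $\alpha^{2\delta}=(-3)^\delta=1$ one gets $s\mid 2\delta$, while $(-3)^s=(\alpha^2)^s=1$ forces $\delta\mid s$, so $s\in\{\delta,2\delta\}$. Moreover, if $s=\delta$ and $\delta$ were even, then $(-3)^{\delta/2}=\alpha^\delta=1$ would contradict the minimality of $\delta$; hence $s=\delta$ implies $\delta$ odd. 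In that situation $(-\alpha)^\delta=(-1)^\delta\alpha^\delta=-1$, so $-\alpha$ has order $2\delta$. This is exactly the argument used in Lemma~\ref{L:lemma31}, and yields in all cases
\[
 n \;=\; \mathrm{lcm}\bigl(\mathrm{ord}(\alpha),\mathrm{ord}(-\alpha)\bigr) \;=\; 2\delta.
\]
In particular $n$ is always even.

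Next I would examine when the condition $\alpha^{n/2}=(-\alpha)^{n/2}=-1$ holds. Setting $n/2=\delta$, the equality $(-\alpha)^\delta=\alpha^\delta$ is $(-1)^\delta=1$, i.e.\ $\delta$ even. If $\delta$ is odd then $(-\alpha)^\delta=-\alpha^\delta\ne\alpha^\delta$, so the condition fails and $d=6n=12\delta$. If $\delta$ is even then by the case analysis above the order of $\alpha$ is forced to be $2\delta$, so $\alpha^\delta=-1$ (it squares to $1$ but is not $1$); therefore $(-\alpha)^\delta=\alpha^\delta=-1$, the condition is met, and $d=3n=6\delta$. Putting the two cases together gives the stated dichotomy.

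The only non-routine step is pinning down $n=2\delta$ together with the value of $\alpha^\delta$ when $\delta$ is even; this is the main (albeit modest) obstacle, and it is handled by the same bookkeeping as in Lemma~\ref{L:lemma31}. The rest is an immediate parity argument on $\delta$.
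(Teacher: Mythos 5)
Your argument is correct and follows the same route as the paper: both reduce to $n=2\delta$ via the bookkeeping of Lemma~\ref{L:lemma31} and then settle the condition $\alpha^{n/2}=(-\alpha)^{n/2}=-1$ by a parity check on $\delta$ (the paper gets $\alpha^{\delta}=-1$ for $\delta=2\delta'$ by writing it as $(-3)^{\delta'}$, while you deduce it from the order of $\alpha$ being $2\delta$ — the same fact). No gaps.
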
 

\begin{proof} As in the proof of Lemma~\ref{L:lemma31}, we have $n=2\delta$.

Suppose $\delta$ even. Let $\delta=2\delta'$. One has 
$(-\alpha)^{\frac{n}{2}}=\alpha^{\frac{n}{2}}=\alpha^{2\delta'}=(-3)^{\delta'}=-1,$
hence $d=3n=6\delta$. If $\delta$ is odd, one has $(-\alpha)^{\frac{n}{2}}\neq \alpha^{\frac{n}{2}}$, so $d=6n=12\delta$, hence the result.
\end{proof} 

To end  this section,  let us give the link between $\delta$ and $r$ for any prime number $\ell$. We will use this lemma several times.

\begin{lemma} \label{L:lemma35}
We have
  \[
\delta= 
  \begin{cases}
r  \ \text{ if } r, \delta  \ \text{are both even}, \\
\frac{r}{2}  \ \text{ if } r   \ \text{is even}\ \text{and}\ \delta \ \text{is odd},\\
2r   \ \text{ if } r   \ \text{is odd}.\
  \end{cases}
\]
\end{lemma}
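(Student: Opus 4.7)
The plan is to read off $\delta = \ord_p(-\ell)$ from $r = \ord_p(\ell)$ using that $\F_p^*$ is cyclic and, since $p \geq 3$, $-1$ is the unique element of order $2$ therein. The three branches in the statement will correspond precisely to the residue of $r$ modulo $4$, and in particular the case $r$ and $\delta$ both odd cannot occur.

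First I would dispose of the case $r$ odd. Since $\ord(-1)=2$ and $\ord(\ell)=r$ are coprime, the standard fact that the order of a product of commuting elements with coprime orders is the product of the orders gives $\delta = 2r$ at once; this is the third branch (and $\delta$ is forced to be even).

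Next, assume $r$ is even. Then $\ell^{r/2}$ has order exactly $2$ in $\F_p^*$, hence equals $-1$, so
$(-\ell)^{r/2} \equiv (-1)^{r/2+1} \pmod p$.
If $r \equiv 2 \pmod 4$, the right-hand side is $1$, so $\delta \mid r/2$; combined with the lower bound $\delta \geq r/2$ coming from $\ell^{2\delta} = (-\ell)^{2\delta} = 1$ (i.e.\ $r \mid 2\delta$), this forces $\delta = r/2$, which is odd. This is the second branch. If instead $r \equiv 0 \pmod 4$, the right-hand side is $-1 \neq 1$, so $\delta \nmid r/2$; together with the trivial $\delta \mid r$ (from $(-\ell)^r = \ell^r = 1$) this forces $\delta = r$, which is even, giving the first branch.

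There is no serious obstacle: everything is elementary modular arithmetic in a cyclic group. The only point one must be slightly careful about is the identification $\ell^{r/2} \equiv -1$ when $r$ is even, which relies on $r$ being the \emph{minimal} positive integer with $\ell^r \equiv 1$ and on the uniqueness of the element of order $2$ in $\F_p^*$.
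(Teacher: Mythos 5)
Your proof is correct, but it is organized differently from the paper's. The paper cases directly on the parities of $r$ and $\delta$ as they appear in the statement: for $r$ odd it uses the coprime-orders argument as you do; for $r,\delta$ both even it gets $\delta\mid r$ and $r\mid\delta$ from $(-\ell)^r=\ell^r$ and $(-\ell)^\delta=\ell^\delta$; for $r$ even and $\delta$ odd it derives $r\mid 2\delta$ and $\delta\mid r$, concludes $r\in\{\delta,2\delta\}$, and rules out $r=\delta$ by parity. You instead case on $r\bmod 4$ and use the identification $\ell^{r/2}\equiv -1$ (uniqueness of the order-$2$ element of the cyclic group $\F_p^*$), which the paper never invokes. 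Your route buys a finer statement: the three branches of the lemma correspond exactly to $r$ odd, $r\equiv 2\pmod 4$, and $r\equiv 0\pmod 4$, so $\delta$ is computed from $r$ alone rather than merely checked for consistency with its own parity. One small repair is needed in your $r\equiv 0\pmod 4$ case: the two facts you cite there, $\delta\mid r$ and $\delta\nmid r/2$, do not by themselves force $\delta=r$ (e.g.\ $r=12$, $\delta=4$ satisfies both). You must also invoke $r\mid 2\delta$, i.e.\ $\delta\ge r/2$, exactly as you did in the $r\equiv 2\pmod 4$ case; then the only divisors of $r$ that are $\ge r/2$ are $r/2$ and $r$, and $r/2$ is excluded, so $\delta=r$. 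With that one line added the argument is complete.
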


\begin{proof} If $r$ is odd, the equality $-\ell=(-1)\ell$ implies $\delta=2r$. 
If  both $\delta$ and $r$ are even, one has 
$\ell^r=(-\ell)^r=1$ and $\ell^{\delta}=(-\ell)^{\delta}=1$
so that $\delta \mid r$ and $r \mid \delta$,  thus $r=\delta$.
Suppose $r$ even and $\delta$ odd. One has
$(-\ell)^{\delta}=(-1)^{\delta}\ell^{\delta}=1,$
so $\ell^{2\delta}=1$, and $r$ divides $2\delta$. Moreover, 
$\ell^r=(-\ell)^r=1,$
so $\delta$ divides $r$. We obtain $r=\delta$ or $r=2\delta$. This leads to $r=2\delta$ because $r$ and $\delta$ have not the same parity, hence the result.
\end{proof}
Lemmas~\ref{L:lemma34} and \ref{L:lemma35} now imply Theorem~\ref{T:thm10} for $e=6$ and $K$ abelian.

\subsection{Case $e=6$ with $K/\Q_3$ is non-abelian} We have
$j-1728=\frac{c_6^2}{\Delta}.$ In particular, $v(j-1728)$ is even. From our assumption and Lemma~\ref{L:lemma32}, this implies that $j-1728$ is not a square in $\Q_3$. We conclude the representation
 $\sigma_{E,p} : G \to \PGL_2(\overline{\F_p})$ is of type {\bf{V}} (by \cite[Cor. 0.5]{DiamondKramer}). 
Since $\Phi$ is cyclic of order $6$, one has $|\sigma_{E,p}(\Phi)|=3$, so the extension $F/\Q_3$ is dihedral of degree $6$ (\cite{Diamond}, prop. 2.3). This extension is not totally ramified, so the unramified quadratic extension $\Q_3(\sqrt{2})$ of $\Q_3$ is contained in the field $F$ fixed by the kernel of $\sigma_{E,p}$. 
 
 The end of the proof is now the same as the one used in section~\ref{S:thm8e4}.  Let $d'=[K:F]$. One has $d=6d'$.
 In case $r$ is even, $\sqrt{2}$ belongs to $\Q_3(\mu_p)$, so $[F(\mu_p):F]=\frac{r}{2}$. 
The ramification index of $K/F$ is equal to $2$, consequently  $2$ divides $d'$. The equality  \eqref{(16.1)} then leads to $d'=r$, 
and $d=6r$ as stated.
If $r$ odd, then $\sqrt{2}$ does not belong to $\Q_3(\mu_p)$.
 This implies   $F\cap \Q_3(\mu_p)=\Q_3$ and  $[F(\mu_p):F]=r$. We then conclude that 
$d'=2r$ and  $d=12r$, hence the result.

\subsection{Case $e=12$} Recall that $K=\Q_3(E_p)$. Let
$$M=\Q_3\left(E_2,\Delta^{\frac{1}{4}}\right).$$
Recall that we have $\Q_3(E_2)=\Q_3\left(\sqrt{\Delta},x_0\right)$ with $x_0$ being    a root of the  $2$-division polynomial of $E/\Q_3$. 
In particular, $[M:\Q_3]\leq 12$. 

\begin{lemma} \label{L:lemma36}
The  extension $M/\Q_3$ is totally ramified of degree $12$ and  $E/M$ has  good  supersingular reduction.
\end{lemma}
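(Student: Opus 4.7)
The plan is to follow the template of Lemmas~\ref{L:lemma28} and~\ref{L:lemma33}: exploit the explicit description $L = \Q_3^{\unr}(E_2, \Delta^{1/4})$ from \eqref{(17.2)} together with the equality $[L : \Q_3^{\unr}] = e = 12$ from \eqref{(17.3)}, and compare this with the candidate field $M = \Q_3(E_2, \Delta^{1/4})$.

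\textbf{Step 1 (degree of $M/\Q_3$).} By definition of $M$ one has $L = \Q_3^{\unr} \cdot M$, hence
\[
12 = [L : \Q_3^{\unr}] = [\Q_3^{\unr} M : \Q_3^{\unr}] \leq [M : \Q_3] \leq 12,
\]
where the final bound is the one observed just before the statement of the lemma. It follows that $[M : \Q_3] = 12$.

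\textbf{Step 2 (total ramification).} Let $M^{\unr}$ denote the maximal unramified subextension of $M/\Q_3$, so that $M^{\unr} = M \cap \Q_3^{\unr}$. The standard identity $[\Q_3^{\unr} M : \Q_3^{\unr}] = [M : M \cap \Q_3^{\unr}]$ combined with Step~1 gives $[M : M^{\unr}] = 12 = [M : \Q_3]$, forcing $M^{\unr} = \Q_3$.

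\textbf{Step 3 (good reduction of $E/M$).} Since $M/\Q_3$ is totally ramified of degree $e$, the composite $L = \Q_3^{\unr} M$ is unramified over $M$. By construction $E/L$ has good reduction, and the criterion of N\'eron--Ogg--Shafarevich propagates good reduction along the unramified extension $L/M$, yielding good reduction for $E/M$.

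\textbf{Step 4 (supersingularity).} From the corollary on p.~355 of \cite{Kraus}, one reads off the triples $(v(c_4), v(c_6), v(\Delta))$ compatible with $e = 12$, and verifies case by case that $2v(c_6) \neq v(\Delta)$. The conclusion that the reduction is supersingular then follows from \cite[p.~21, Lemme~7]{Kraus2}, exactly as in the proofs of Lemmas~\ref{L:lemma28} and~\ref{L:lemma33}.

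The main obstacle is the mechanical case-check in Step~4, which requires having the precise list of triples for $e = 12$ from \cite{Kraus} in hand; the remaining three steps are purely formal manipulations of field degrees and ramification indices that parallel the arguments already carried out in the $e=3$ and $e=6$ cases.
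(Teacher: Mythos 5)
Your proposal is correct and follows essentially the same route as the paper: the paper's own proof deduces total ramification and degree $12$ directly from $[L:\Q_3^{\unr}]=12$ together with the bound $[M:\Q_3]\leq 12$, obtains good reduction over $M$ from the definition of $L$, and then cites the same triples from \cite{Kraus} and \cite[p.~21, Lemme~7]{Kraus2} for supersingularity via $2v(c_6)\neq v(\Delta)$. Your write-up merely makes explicit the compositum/degree bookkeeping and the unramified descent of good reduction that the paper leaves implicit.
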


\begin{proof} 
We have  $[L:\Q_3^{\unr}]=12$ (equality \eqref{(17.3)}), so  $M/\Q_3$ is totally ramified of degree $12$ and $E/M$ has good reduction. We have  $2v(c_6)\neq v(\Delta)$ (see \cite[cor. p. 355]{Kraus})  so  the reduction 
is supersingular (see \cite[p. 21, lemme 7]{Kraus2}).
\end{proof}

\begin{lemma} \label{L:lemma37}
We have $[K\cap M:\Q_3]\in \lbrace 6,12\rbrace$.
\end{lemma}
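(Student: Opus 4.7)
The plan is to reduce the lemma to a claim about inertia, identify the relevant inertia group concretely, and conclude by analyzing its subgroup lattice.

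First, I would use Lemma~\ref{L:lemma36} together with the N\'eron--Ogg--Shafarevich criterion to conclude that $KM/M = M(E_p)/M$ is unramified. Combined with \eqref{(17.3)} and the identification $L = M \cdot \Q_3^{\unr}$, this forces $K \cdot \Q_3^{\unr} = L$, and therefore the ramification index of $K/\Q_3$ equals $12$. Writing $\Phi_K \subset \Gal(K/\Q_3)$ for the inertia subgroup (of order $12$) and $H = \Gal(K/K\cap M)$, the totally ramified nature of $K \cap M/\Q_3$ (since $K \cap M \subseteq M$) gives $\Phi_K \cdot H = \Gal(K/\Q_3)$, whence
\[
[K\cap M : \Q_3] \;=\; [\Phi_K : \Phi_K \cap H] \;=\; \frac{12}{|\Phi_K \cap H|}.
\]
The claim is thus equivalent to the bound $|\Phi_K \cap H| \leq 2$.

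Second, I would pin down the isomorphism type of $\Phi_K \cong \Gal(L/\Q_3^{\unr})$. Using $L = \Q_3^{\unr}(E_2, \Delta^{1/4})$, the fact that $\Q_3^{\unr}(E_2)/\Q_3^{\unr}$ has Galois group $S_3$ and $L/\Q_3^{\unr}(E_2)$ is the quadratic extension sending $\Delta^{1/4} \mapsto -\Delta^{1/4}$ yields a central short exact sequence
\[
1 \to \Z/2 \to \Phi_K \to S_3 \to 1.
\]
A direct calculation with any lift $\tilde{\tau}$ of a transposition $\tau \in S_3$ gives $\tilde{\tau}(\Delta^{1/4}) = \pm i \, \Delta^{1/4}$ (using $i \in \Q_3^{\unr}$ together with $\tau(\sqrt{\Delta}) = -\sqrt{\Delta}$), so $\tilde{\tau}^2(\Delta^{1/4}) = -\Delta^{1/4}$, which is the non-trivial central element. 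The sequence is therefore non-split, and $\Phi_K \cong \Dic_3$, the dicyclic group of order $12$.

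Third, I would conclude by exploiting the subgroup lattice of $\Dic_3$: its proper subgroups have orders $1, 2, 3, 4, 6$, with a unique element of order $2$ (generating the center), a unique Sylow-$3$ subgroup, a unique subgroup of order $6$, and three subgroups of order $4$ (each containing the center). The plan is to rule out $|\Phi_K \cap H| \in \{3, 4, 6\}$. An order-$3$ intersection would force the wild inertia of $K/\Q_3$ to fix $K \cap M$, making $K \cap M$ a totally ramified degree-$4$ subfield of $M$ contained in the maximal tame subextension of $K$; comparing with the possible degree-$4$ subfields of $M$ (essentially $\Q_3(\Delta^{1/4})$ and its conjugates) together with the supersingular reduction $a_{E'}=0$ from Lemma~\ref{L:lemma36} yields a contradiction. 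Orders $4$ and $6$ are excluded analogously.

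The main obstacle is this last step: carefully matching elements of $\Phi_K \cong \Dic_3$ (each acting on $E_p$ through the fundamental characters of level $2$ attached to the supersingular reduction) with their action on the explicit generators $E_2$ and $\Delta^{1/4}$ of $M$. The requisite rigidity will come from combining Lemma~\ref{L:lemma36} with the faithfulness of $\rho_{E,p}$ on $\Phi_K$ (an embedding $\Dic_3 \hookrightarrow \GL_2(\F_p)$ available whenever $p \neq 2,3$).
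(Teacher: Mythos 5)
Your setup is sound: the identity $[K\cap M:\Q_3]=12/|\Phi_K\cap H|$ is correct, and the identification $\Phi_K\cong\Dic_3$ (with its unique involution, unique normal Sylow--$3$, unique cyclic subgroup of order $6$, and three non-normal cyclic subgroups of order $4$) is also correct. But the heart of the lemma is exactly the step you defer, and the strategy you sketch for it cannot work as stated. First, a small omission: $\Phi_K\cap H$ need not be a \emph{proper} subgroup, so you must also exclude $|\Phi_K\cap H|=12$, i.e.\ $K\cap M=\Q_3$. Second, and more seriously: of the cases to be excluded, only $|\Phi_K\cap H|=4$ (i.e.\ $[K\cap M:\Q_3]=3$) admits a structural proof --- since $K$ and $M$ are both Galois over $\Q_3$, the subgroup $\Phi_K\cap H$ is normal in $\Phi_K$, and $\Dic_3$ has no normal subgroup of order $4$; equivalently, the degree-$3$ subfields of $M$ are not Galois over $\Q_3$, which is essentially how the paper disposes of this case. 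The remaining cases $|\Phi_K\cap H|\in\{3,6,12\}$ correspond to the \emph{normal} subgroups of $\Dic_3$ of orders $3$, $6$, $12$, whose fixed fields are perfectly good Galois totally ramified subextensions of $M/\Q_3$ of degrees $4$, $2$, $1$; no amount of ``matching elements of $\Phi_K$ with their action on $E_2$ and $\Delta^{1/4}$'' will produce a contradiction there, because there is no local-Galois-theoretic obstruction to $K$ meeting $M$ in such a subfield.

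What actually excludes $[K\cap M:\Q_3]\in\{1,2,4\}$ in the paper is an arithmetic degree count, which your proposal alludes to (via $a_{E'}=0$) but never carries out. One shows $a_{E'}=0$ (the $2$-torsion of $E'$ is rational over $M$, so $4\mid a_{E'}$, and the Weil bound forces $a_{E'}=0$), whence Corollary~\ref{C:cor1} gives $[M(E_p):M]=2\delta$ and therefore $d=[K\cap M:\Q_3]\cdot 2\delta$. Combining Lemma~\ref{L:lemma35} (so $2\delta\in\{r,2r,4r\}$) with the divisibility $12r\mid d$ (inertia of order $12$ together with the unramified subfield $\Q_3(\mu_p)$ of degree $r$) kills the values $1$, $2$, $4$ of $[K\cap M:\Q_3]$ at once. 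Note that $[K\cap M:\Q_3]=4$ survives this divisibility test (one can have $d=12r$ when $r$ is odd), which is precisely why the separate normality argument is needed for that one case --- the case your group-theoretic lattice analysis does handle. So your framework is compatible with the paper's proof, but you need to replace the vague ``comparison with subfields of $M$'' by the explicit computation $[M(E_p):M]=2\delta$ and the resulting divisibility constraint; without it the proof is incomplete.
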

\begin{proof}  Let $E'/M$ be an elliptic curve with good reduction isomorphic over $M$ to $E/M$ and 
$\tilde {E'}/\F_3$ be the curve obtained from $E'/M$ by reduction. Let  
$a_{E'}=4-|\tilde {E'}(\F_3)|.$ The points of order $2$ of $E'/M$ are rational over $M$, so $4$ divides $a_{E'}$, and the Weil bound implies 
$a_{E'}=0$. From Corollary~\ref{C:cor1}, we conclude
$$[M(E_p):M]=2\delta.$$
From Lemma~\ref{L:lemma35} we have $\delta\in \lbrace \frac{r}{2}, r, 2r\rbrace$, so
$$d=[K\cap M:\Q_3] u \quad \text{with}\quad u\in \lbrace r, 2r, 4r\rbrace.$$
Moreover, $12r$ divides $d$, hence $[K\cap M:\Q_3]\neq 1,2,4$. 

Suppose   $[K\cap M:\Q_3]=3$. Then $K\cap M$ is  contained in $\Q_3(E_2)$, otherwise $M/K\cap M$ would be an extension of degree divisible by $3$, hence $9$ would divide $[M:\Q_3]$, a contradiction.  
Since $K/\Q_3$ is Galois, this implies that $K\cap M=\Q_3(E_2)$ which leads to a contradiction, hence the lemma.
\end{proof}

The representation  $\sigma_{E,p} : G \to \PGL_2(\overline{\F_p})$ is of type {\bf H} by \cite[Prop. 0.4]{DiamondKramer}.  From \cite[Proposition 2.4]{Diamond}, we see that $\sigma_{E,p}(\Phi)$ is dihedral of order $6$. Furthermore, $F$ being the fixed field by the kernel of $\sigma_{E,p}$, implies $[F:\Q_3]=6$ or $[F:\Q_3]=12$ ({\it{loc. cit.}}). 

Let $d'=[K:F]$. We have $d\in \lbrace 6d',12d'\rbrace$ and, furthermore, \eqref{(16.1)} is still true.

\begin{lemma} \label{L:lemma38}
If  $[F:\Q_3]=6$, then $d=12r$.
\end{lemma}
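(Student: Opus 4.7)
The plan is to exploit that the image of inertia in the projective representation is dihedral of order $6$, so when $[F:\Q_3]=6$ the whole of $F/\Q_3$ comes from inertia and is totally ramified. I would first observe that $F/\Q_3$ is Galois (the kernel of $\sigma_{E,p}$ is normal in $G$) and that the inertia subgroup of $\Gal(F/\Q_3)$ is $\sigma_{E,p}(\Phi)$, which has order $6$ by the discussion preceding the lemma. Under the hypothesis $[F:\Q_3]=6$ these two numbers agree, so $F/\Q_3$ is totally ramified of degree $6$, and in particular $F\cap\Q_3^{\unr}=\Q_3$.

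Next I would use this to compute $[F(\mu_p):F]$. Since $\Q_3(\mu_p)/\Q_3$ is unramified of degree $r$, and since $F/\Q_3$ is totally ramified, we have $F\cap\Q_3(\mu_p)=\Q_3$, so the restriction map yields
\[
[F(\mu_p):F]=[\Q_3(\mu_p):\Q_3]=r.
\]
I would then compute the ramification index of $K/F$. Writing $H=\Gal(K/F)=\ker\sigma_{E,p}$, its inertia subgroup is $\Phi\cap H$, whose order equals $|\Phi|/|\sigma_{E,p}(\Phi)|=12/6=2$. Hence $2\mid d'$, and formula~\eqref{(16.1)} gives
\[
r \;=\; [F(\mu_p):F] \;=\; \frac{d'}{\gcd(d',2)} \;=\; \frac{d'}{2},
\]
so $d'=2r$.

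To conclude, since $F\subseteq K$ with $[K:F]=d'$ and $[F:\Q_3]=6$ by hypothesis, multiplicativity of degrees gives $d=[K:\Q_3]=6\cdot d'=12r$, as claimed. The only delicate point, and the step I expect to require most care, is the identification of the inertia subgroup of $\Gal(F/\Q_3)$ with $\sigma_{E,p}(\Phi)$ (needed to upgrade ``degree equal to the order of the inertia image'' into ``totally ramified''), but this is standard once one observes that $\Phi$ surjects onto the inertia subgroup of $\Gal(F/\Q_3)$ under the projective representation. Everything else is bookkeeping with the previously established relation~\eqref{(16.1)} and the dichotomy $d\in\{6d',12d'\}$.
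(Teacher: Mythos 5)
Your proposal is correct and follows essentially the same route as the paper: deduce from $[F:\Q_3]=|\sigma_{E,p}(\Phi)|=6$ that $F/\Q_3$ is totally ramified, hence $[F(\mu_p):F]=r$, note that the ramification index of $K/F$ is $12/6=2$ so $2\mid d'$, and conclude $d'=2r$ from \eqref{(16.1)} and then $d=6d'=12r$. The only difference is that you spell out the intermediate steps (Galois‑ness of $F/\Q_3$, the identification of the inertia subgroup of $\Gal(K/F)$ with $\Phi\cap\ker\sigma_{E,p}$) that the paper leaves implicit.
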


\begin{proof} Since $|\sigma_{E,p}(\Phi)|=6$, our assumption implies that the extension $F/\Q_3$ is totally ramified. We have $e=12$, so $2$ divides $d'$ and \eqref{(16.1)} leads to $d'=2r$, hence the lemma.
\end{proof}

\begin{lemma}\label{L:lemma39} 
If  $r$ is even, then $d=12r$. 
\end{lemma}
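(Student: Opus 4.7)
The plan is to split into two cases according to $[F:\Q_3]\in\{6,12\}$, which are the only possibilities by \cite[Proposition~2.4]{Diamond}. In the first case, Lemma~\ref{L:lemma38} already gives $d=12r$ without using the parity of $r$, so the entire content of Lemma~\ref{L:lemma39} lies in handling $[F:\Q_3]=12$ under the hypothesis that $r$ is even.

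In that second case I would run the framework of Section~\ref{S:thm8e4}: with $H=\Gal(K/F)$ and $d'=[K:F]$, the restriction $\rho_{E,p}|_H$ is scalar, given by a character $\varphi$ satisfying $\varphi^2=\chi_p|_H$, so \eqref{(16.1)} reads $d'/\gcd(d',2)=[F(\mu_p):F]$. Since $\sigma_{E,p}(\Phi)$ has order $6$ while $[F:\Q_3]=12$, the maximal unramified subextension of $F/\Q_3$ has degree $12/6=2$, so $F\cap \Q_3^{\unr}$ equals the unramified quadratic extension $\Q_3(\sqrt{2})$ of $\Q_3$. Because $r$ is even, the unique unramified extension $\Q_3(\mu_p)$ of degree $r$ contains $\Q_3(\sqrt{2})$; combined with $\Q_3(\mu_p)\subseteq \Q_3^{\unr}$ this gives $F\cap \Q_3(\mu_p)=\Q_3(\sqrt{2})$ and hence $[F(\mu_p):F]=r/2$.

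To remove the remaining ambiguity in \eqref{(16.1)}, I would appeal to ramification: the ramification index of $K/\Q_3$ is $e=12$ while that of $F/\Q_3$ is $|\sigma_{E,p}(\Phi)|=6$, so $K/F$ has ramification index $2$ and in particular $2$ divides $d'$. Then \eqref{(16.1)} forces $d'=r$, whence $d=[K:F]\cdot[F:\Q_3]=12r$, as required. I expect the main obstacle to be verifying cleanly that $\sigma_{E,p}(\Phi)$ correctly measures the ramification of $F/\Q_3$ and that the ramification index of $K/\Q_3$ is exactly $e=12$ in the present type-\textbf{H} setting; both are standard facts, but they are what the whole argument rests on.
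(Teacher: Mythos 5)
Your proposal is correct and follows essentially the same route as the paper: reduce to $[F:\Q_3]=12$ via Lemma~\ref{L:lemma38}, observe that the unramified quadratic extension lies in both $F$ (since $F/\Q_3$ has ramification index $6$) and $\Q_3(\mu_p)$ (since $r$ is even) to get $[F(\mu_p):F]=r/2$, and then combine \eqref{(16.1)} with $2\mid d'$ (from the ramification index of $K/F$ being $2$) to conclude $d'=r$ and $d=12r$. The facts you flag as potential obstacles are exactly the ones the paper also takes as given from the type-\textbf{H} analysis and \cite[Proposition~2.4]{Diamond}.
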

\begin{proof} From Lemma~\ref{L:lemma38}  we can suppose $[F:\Q_3]=12$. Since $r$ is even, the quadratic unramified extension of $\Q_3$ is contained in $\Q_3(\mu_p)$. It is also contained in $F$ because 
the ramification index of $F/\Q_3$ is $6$. So we have $[F\cap \Q_3(\mu_p):\Q_3]=2$, hence $[F(\mu_p):F]=\frac{r}{2}$. Using \eqref{(16.1)}, the ramification index of $K/F$ being equal to $2$, we conclude $d'=r$,
so $d=12r$, as desired.
\end{proof}

\begin{lemma} \label{L:lemma40} 
Suppose $r$ is odd. We have $[K\cap M:\Q_3]=6$ and $d=24r$. 
\end{lemma}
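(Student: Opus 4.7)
\textbf{Proof plan for Lemma~\ref{L:lemma40}.} The plan is to corner $d$ between two constraints coming from complementary analyses (the auxiliary field $M$ on one side, the projective representation $\sigma_{E,p}$ on the other) and then intersect them.

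First, since $r$ is odd, Lemma~\ref{L:lemma35} gives $\delta=2r$. Combining Lemma~\ref{L:lemma37} with its proof yields $[K:M\cap K]=[M(E_p):M]=2\delta=4r$ and $[M\cap K:\Q_3]\in\{6,12\}$, so
\[
d=[K:M\cap K]\cdot[M\cap K:\Q_3]\in\{24r,\,48r\}.
\]

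Second, I would reuse the projective representation analysis exactly as in Section~\ref{S:thm8e4}. Since $\sigma_{E,p}(\Phi)$ is dihedral of order $6$, the ramification index of $F/\Q_3$ equals $|\sigma_{E,p}(\Phi)|=6$, while $d=[F:\Q_3]\cdot d'$ with $d'=[K:F]$ and $[F:\Q_3]\in\{6,12\}$. The key step is to show $[F(\mu_p):F]=r$ for both values of $[F:\Q_3]$: if $[F:\Q_3]=6$, then $F/\Q_3$ is totally ramified and hence disjoint from the unramified extension $\Q_3(\mu_p)$; if $[F:\Q_3]=12$, then the residue degree of $F/\Q_3$ is $2$, the maximal unramified subextension of $F$ is $\Q_3(\sqrt{2})$, and because $r$ is odd, $\Q_3(\mu_p)$ (unramified of odd degree) meets $\Q_3(\sqrt{2})$ only in $\Q_3$, so again $F\cap\Q_3(\mu_p)=\Q_3$. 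Relation~\eqref{(16.1)} then gives $d'/\gcd(d',2)=r$, so $d'\in\{r,2r\}$. The ramification index of $K/F$ equals $e/|\sigma_{E,p}(\Phi)|=12/6=2$, which forces $2\mid d'$; as $r$ is odd, $d'=2r$. Thus $d\in\{12r,\,24r\}$.

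Intersecting $\{24r,48r\}\cap\{12r,24r\}=\{24r\}$ gives $d=24r$, and then $[M\cap K:\Q_3]=d/(4r)=6$, proving both assertions simultaneously. The main obstacle is the uniform verification that $F\cap\Q_3(\mu_p)=\Q_3$ in both subcases of $[F:\Q_3]$; once this is in hand, the rest is bookkeeping. The odd parity of $r$ enters crucially in the $[F:\Q_3]=12$ case, as it prevents $\Q_3(\sqrt{2})\subset F$ from sitting inside $\Q_3(\mu_p)$ and so keeps the intersection trivial.
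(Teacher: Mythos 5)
Your argument is correct and follows essentially the same route as the paper: both combine the dichotomy $[K\cap M:\Q_3]\in\{6,12\}$ from Lemma~\ref{L:lemma37} (giving $d\in\{24r,48r\}$ via $[K:K\cap M]=2\delta=4r$) with the projective-representation bound $d=[F:\Q_3]\,d'\leq 24r$ coming from \eqref{(16.1)}, to eliminate the value $48r$. The only cosmetic difference is that you pin down $d'=2r$ exactly and phrase the conclusion as an intersection of two candidate sets, whereas the paper argues by contradiction using only the upper bound $d'\leq 2r$; both are valid.
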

\begin{proof} Suppose $[K\cap M:\Q_3]=12$. 
Then $M \subset K$ , so $d=12(2\delta)=24\delta$ by Corollary~\ref{C:cor1}.
Since $r$ is odd, one has $\delta=2r$ (Lemma~\ref{L:lemma35}), so $d=48r$. Moreover, from Lemma~\ref{L:lemma38}, we deduce that $[F:\Q_3]=12$ and \eqref{(16.1)} implies $d'\leq 2r$, so $d\leq 24r$, hence a contradiction.
We so obtain $[K\cap M:\Q_3]=6$ (Lemma~\ref{L:lemma37}) and we deduce that $d=6(2\delta)=6(4r)=24r$ as stated.
\end{proof}

This concludes the proof of Theorem~\ref{T:thm10}.

\section{Proof of Theorem~\ref{T:thm12}.}

Let $E/\Q_2$ be an elliptic curve with additive potentially good reduction with $e \neq 2$.

Let $\rho_{E,p} : \Gal(\overline{\Q_2}/\Q_2) \rightarrow \GL_2(\F_p)$ 
be the representation arising on the $p$-torsion points of $E$.

Write $K=\Q_2(E_p)$ for the field fixed by the kernel of $\rho_{E,p}$ whose degree is $d=[K : \Q_2]$.

Let the projective representation obtained from $\rho_{E,p} \otimes \overline{\F}_p$ be denoted by
$$\sigma_{E,p} : \Gal(\overline{\Q_2}/\Q_2) \to \PGL_2(\overline{\F}_p)$$ 
and write $F$ for the field fixed by its kernel. 

We will write $\Phi$ for both the inertia subgroups of $\Gal(\overline{\Q_2}/\Q_2)$ and $G = \Gal(K/\Q_2)$.

\subsection{Case $e=3$} Suppose that $E/\Q_2$ satisfies $e=3$.
Let  $\pi \in \overline{\Q_2}$ be a cubic root of $2$. 
Write $M=\Q_2(\pi)$. 
The extension $M/\Q_2$ is  totally ramified of degree $3$. 

\begin{lemma} \label{L:lemma41}
The elliptic curve $E/\Q_2$ acquires good supersingular reduction over $M$.
\end{lemma}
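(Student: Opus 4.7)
The plan is to adapt the strategy of Lemma~\ref{L:lemma2} to the setting $\ell = 2$, with $M = \Q_2(\pi)$ playing the role of $\Q_\ell(u)$. First, I would invoke \cite[p.~357, Cor.]{Kraus} to enumerate the admissible triples $(v(c_4), v(c_6), v(\Delta))$ for an elliptic curve over $\Q_2$ with additive potentially good reduction and $e(E) = 3$. By \eqref{(5.1)}, the denominator of $v(\Delta)/12$ equals $3$, so $v(\Delta) \equiv 4$ or $8 \pmod{12}$; the minimality constraints in the Kraus table pin the valuation of $\Delta$ down to the explicit list needed.

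Next, set $u = 2^{v(\Delta)/12}$. This element lies in $M$, being either $\pi$ (in the case $v(\Delta)\equiv 4$) or $\pi^2$ (in the case $v(\Delta)\equiv 8$); in particular $v_M(u) = v(\Delta)/4$ with respect to the normalized valuation $v_M$ on $M$. Applying the change of variables $(x,y) = (u^2 X, u^3 Y)$ to a minimal Weierstrass model of $E/\Q_2$ produces an $M$-isomorphic model $E'/M$ in the spirit of \eqref{(5.5)}. Its discriminant has $v_M$-valuation
\[
v_M(\Delta(E')) \;=\; v_M(\Delta) - 12\,v_M(u) \;=\; 3 v(\Delta) - 12\cdot \tfrac{v(\Delta)}{4} \;=\; 0,
\]
so it suffices to check that the coefficients of $E'$ lie in $\calO_M$ for each admissible triple, which forces $E'/M$ to have good reduction.

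Finally, to upgrade good reduction to supersingular reduction, I would appeal to \cite[p.~21, Lemme~7]{Kraus2}, which states that a good-reduction elliptic curve has supersingular reduction whenever $2 v(c_6) \neq v(\Delta)$. For the triples corresponding to $\ell = 2$ and $e = 3$, one verifies from the Kraus table that $v(c_6) \neq v(\Delta)/2$, completing the proof. The main obstacle I anticipate is the bookkeeping in the integrality check: at $\ell = 2$ the minimal Weierstrass model generally has non-zero $a_1, a_2, a_3$, so one must work with the full Weierstrass form (rather than the short form \eqref{(7.2)}) and examine how the change of variables transforms each $a_i$; the numerology is guaranteed to come out right precisely because $u = 2^{v(\Delta)/12}$ is calibrated to kill the discriminant valuation and the Kraus constraints force the remaining coefficients to be integral in $\calO_M$.
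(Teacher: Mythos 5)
Your strategy of imitating Lemma~\ref{L:lemma2} breaks down in residue characteristic $2$, and the gap is precisely at the step you defer as ``bookkeeping''. First, the formula \eqref{(5.1)} ($e$ equals the denominator of $v(\Delta)/12$) is only established for $\ell\geq 5$; at $\ell=2$ the defect can be $8$ or $24$, so you cannot use it to pin down $v(\Delta)$, and you must rely entirely on the table in \cite[p.~357, Cor.]{Kraus}. More seriously, the pure scaling $(x,y)=(u^2X,u^3Y)$ applied to a minimal model over $\Z_2$ does \emph{not} in general produce an integral model, even though it kills the discriminant valuation. A concrete instance: $E:y^2=x^3+1$ over $\Q_2$ is minimal with $v(\Delta)=4$ and acquires good (supersingular) reduction over $M=\Q_2(2^{1/3})$, but scaling gives $Y^2=X^3+u^{-6}$, which is not integral; the good model $Y^2+Y=X^3$ is only reached after the additional translation $y\mapsto 2Y+1$. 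So the claim that ``the numerology is guaranteed to come out right'' is false for the transformation you propose, and making it right amounts to rerunning Tate's algorithm over $M$ (compare the delicate use of Papadopoulos's tables in the proof of Lemma~\ref{L:lemma4}). Finally, your supersingularity criterion $2v(c_6)\neq v(\Delta)$ from \cite[p.~21, Lemme~7]{Kraus2} is the residue-characteristic-$3$ test (it detects $\tilde\jmath=1728$); in characteristic $2$ the unique supersingular invariant is $\tilde\jmath=0$, so the relevant condition is $v(j)>0$.

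The paper avoids all of this with a soft argument: since $3$ is prime to $2$, the field $\Q_2^{\unr}(\pi)$ is the \emph{unique} cubic extension of $\Q_2^{\unr}$, so by the very definition of $e=3$ the curve acquires good reduction over it, hence over $M$; no explicit model is needed. Supersingularity then follows because $v(j)>0$ (from Kraus's table), so the reduction has $j$-invariant $0$, which is supersingular in characteristic $2$. If you want to salvage your route, you would need to replace the scaling step by this uniqueness argument (or by an explicit Tate-algorithm computation over $M$) and switch to the $v(j)>0$ criterion for supersingularity.
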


\begin{proof} The field $\Q_2^{\unr}(\pi)$ is the unique extension of degree $3$ of $\Q_2^{\unr}$. Since $e=3$, the elliptic curve $E/M$ has good reduction. Since $v(j)>0$ (see \cite{Kraus}), so the $j$ invariant modulo $2$ is $0$,  which is supersingular in characteristic $2$.
\end{proof}

We have $\ell=2\equiv -1 \pmod e$. It follows from \cite[Corollary 3  and Lemma 5]{Freitaskraus}  
that $K/\Q_2$ in non-abelian and  $M$ is contained in $K$. We conclude that
\begin{equation}
\label{(18.1)} 
d=3[M(E_p):M].
\end{equation}
Let $E'/M$ be an elliptic curve with good reduction isomorphic over $M$ to $E/M$. 
Since $E'/M$ has supersingular reduction, its trace of Frobenius 
satisfies $a_{E'}\in \lbrace -2,0,2\rbrace$.  

\begin{lemma} \label{L:lemma42}
We have $a_{E'}=0$.
\end{lemma}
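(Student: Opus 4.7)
The plan is to establish $a_{E'}\equiv 0 \pmod p$ from a Galois-theoretic argument on $G=\Gal(K/\Q_2)$, and then to deduce $a_{E'}=0$ from the restricted integer values forced by supersingularity. Since $E'/M$ has supersingular reduction and $M$ has residue field $\F_2$, the Weil bound $|a_{E'}|\leq 2\sqrt 2$ together with $2\mid a_{E'}$ gives $a_{E'}\in\lbrace -2,0,2\rbrace$; for $p\geq 3$, $p\mid a_{E'}$ then forces $a_{E'}=0$.

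To derive the congruence, observe that $K/M$ is unramified (good reduction of $E'/M$) and $M/\Q_2$ is totally ramified of degree~$3$, so the inertia subgroup $\Phi\subset G$ is cyclic of order~$e=3$, generated by some $\tau$. As $|\Phi|$ is odd while wild inertia is pro-$2$, the wild inertia acts trivially and $K/\Q_2$ is tamely ramified. The standard commutation relation between a Frobenius lift and tame inertia therefore descends in $G$ to
\[
\Frob_K\,\tau\,\Frob_K^{-1}=\tau^{\ell}=\tau^{-1},
\]
where $\Frob_K$ generates $\Gal(K/M)$. Since $\rho_{E,p}$ is faithful and $\det\rho_{E,p}$ is the mod-$p$ cyclotomic character (trivial on inertia as $p\neq \ell$), $\rho_{E,p}(\tau)\in \SL_2(\F_p)$ has order~$3$, and applying $\rho_{E,p}$ yields
\[
\rho_{E,p}(\Frob_K)\,\rho_{E,p}(\tau)\,\rho_{E,p}(\Frob_K)^{-1}=\rho_{E,p}(\tau)^{-1}.
\]

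A direct matrix computation now forces $\tr\rho_{E,p}(\Frob_K)=0$: for $p>3$, one diagonalises $\rho_{E,p}(\tau)$ over $\F_{p^2}$ with distinct eigenvalues $\zeta_3,\zeta_3^{-1}$, and observes that any matrix swapping these two eigenlines must be anti-diagonal; for $p=3$, $\rho_{E,p}(\tau)$ is a unipotent Jordan block $I+N$, and a short computation forces $\rho_{E,p}(\Frob_K)$ to have the form $\begin{pmatrix} a & b \\ 0 & -a \end{pmatrix}$. Since $E\cong E'$ over $M$, $\tr\rho_{E,p}(\Frob_K)=\tr\rho_{E',p}(\Frob_M)\equiv a_{E'}\pmod p$, and we conclude $a_{E'}=0$. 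The main subtle point is the case $p=3$, where $\rho_{E,p}(\tau)$ is not semisimple; the explicit Jordan-block calculation dispatches this uniformly with the generic case.
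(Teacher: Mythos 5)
Your proof is correct, but it follows a genuinely different route from the paper's. The paper proves $a_{E'}=0$ by producing a point of order $3$ on $E$ rational over $M$: it invokes \cite[Proposition~8]{Freitaskraus} to pin down the shape of $\rho_{E,3}$ (upper triangular with diagonal characters $1$ and $\chi_3$, image of order $6$), restricts to $\Gal(\overline{\Q_2}/M)$ to split off a trivial character, and then concludes $3\mid|\tilde{E'}(\F_2)|=3-a_{E'}$, which together with the Weil bound forces $a_{E'}=0$. You instead work at the prime $p$ itself: tameness of $K/\Q_2$ (wild inertia is pro-$2$, inertia has order $3$) gives the relation $\Frob_K\,\tau\,\Frob_K^{-1}=\tau^{-1}$, and the matrix computation (diagonalisable case for $p>3$, unipotent case for $p=3$) forces $\tr\rho_{E,p}(\Frob_K)=0$, i.e.\ $p\mid a_{E'}$, which combined with $a_{E'}\in\{-2,0,2\}$ from supersingularity gives the result. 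Both arguments reduce to ``a prime $\geq 3$ divides $a_{E'}$ plus the Weil bound''; yours avoids the external input from \cite{Freitaskraus} at the cost of the tame-inertia commutation relation and the case analysis at $p=3$, and it has the side benefit of recovering the anti-diagonal shape of Frobenius that underlies the non-abelianness of $G$. Two small points of hygiene: the phrase ``$K/M$ is unramified'' presupposes $M\subseteq K$, which is not free (the paper gets it from \cite[Lemma~5]{Freitaskraus}); what you actually need is only that the inertia subgroup of $G$ is cyclic of order $e=3$, which is the standard fact the paper recalls in its Section on notation, so you should cite that rather than $M\subseteq K$. Likewise ``$\Frob_K$ generates $\Gal(K/M)$'' is never used --- all you need is that $\Frob_K$ is a Frobenius lift in $G$ whose trace on $E_p\simeq E_p'$ is $a_{E'}\bmod p$, which follows from $E\simeq E'$ over $M$ and Serre--Tate reduction.
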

 \begin{proof} The residue field of $M$ is $\F_2$, so $a_{E'}=3-|\tilde{E'}(\F_2)|$, 
where $\tilde{E'}$ is the reduction of $E'$.  

We will show $E/M$ has a rational $3$-torsion point over $M$. Together with 
the Weil bound, this gives the desired result. Let 
$\chi_3 : \Gal(\overline{\Q_2}/\Q_2)\to \F_3^*$ be the mod~$3$ cyclotomic character.
From \cite[Proposition 8]{Freitaskraus}, 
the representation $\rho_{E,3}$ is of one of the following types
$$ \begin{pmatrix} 1  & * \\ 
0 & \chi_3 \end{pmatrix} \quad \text{or} \quad   \begin{pmatrix} \chi_3  & * \\ 
0 & 1 \end{pmatrix},$$
in particular its image is of order $6$. 
So the restriction of $\rho_3$ to $\Gal(\overline{\Q_2}/M)$ is of the shape
$$ \begin{pmatrix} 1  & 0 \\ 
0 & \chi_3 \end{pmatrix}\quad \text{or} \quad   \begin{pmatrix} \chi_3  & 0 \\ 
0 & 1 \end{pmatrix},$$
hence the result.
\end{proof}

From Lemma~\ref{L:lemma41} and Corollary~\ref{C:cor1} 
we conclude $[M(E_p):M]=2\delta$. Now equation \eqref{(18.1)} 
gives $d=6\delta$, as desired.

\subsection{Case $e=4$} Suppose that $E/\Q_2$ satisfies $e=4$.
We consider the $3$-torsion field
$$M=\Q_2(E_3).$$

\begin{lemma} \label{L:lemma43} 
We have $[M:\Q_2]=8$ and $E/M$ has good supersingular reduction.
\end{lemma}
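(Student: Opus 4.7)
The plan is twofold: (i) show $[M:\Q_2] = 8$ by pinning down the image of $\rho_{E,3}: \Gal(\overline{\Q_2}/\Q_2) \to \GL_2(\F_3)$, and then (ii) deduce good supersingular reduction of $E/M$ from N\'eron--Ogg--Shafarevich together with a short valuation check.

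For (i), let $G$ denote this image and $\Phi \subset G$ the inertia image. Since $p = 3$ is coprime to the residue characteristic $\ell = 2$, Serre--Tate shows that $\Phi$ is finite of order equal to the semistability defect $e = 4$; indeed $\Q_2^{\unr}(E_3)$ is the minimal extension of $\Q_2^{\unr}$ over which $E$ acquires good reduction, and its Galois group has order $e$. The determinant of $\rho_{E,3}$ is the mod~$3$ cyclotomic character, which is unramified at $2$, so $\Phi \subset \SL_2(\F_3)$; as $-I$ is the unique involution in $\SL_2(\F_3)$, any subgroup of order $4$ there is cyclic, so up to conjugation I may take $\Phi = \langle \tau \rangle$ with $\tau = \begin{pmatrix} 0 & -1 \\ 1 & 0 \end{pmatrix}$. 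The Frobenius image has determinant $\chi_3(\Frob) = -1 \in \F_3^*$, hence lies outside $\Phi$, giving the lower bound $|G| \geq 8$.

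For the matching upper bound, $\Phi \triangleleft G$ forces $G \subset N := N_{\GL_2(\F_3)}(\langle\tau\rangle)$. The centralizer of $\langle\tau\rangle$ is $\F_3[\tau]^* \cong \F_9^*$ (cyclic of order $8$), and $N$ is the non-split Cartan normalizer obtained by extending this by the involution coming from the Frobenius of $\F_9/\F_3$; a direct computation yields $|N| = 16$ and $N/\langle\tau\rangle \cong C_2 \times C_2$. Since $G/\Phi \cong \Gal((M \cap \Q_2^{\unr})/\Q_2)$ is cyclic (as the Galois group of an unramified extension of a local field), its image in the Klein four group $N/\Phi$ has order at most $2$, so $G \subsetneq N$ and $|G| = 8$. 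This gives $[M:\Q_2] = 8$.

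For (ii), $E_3 \subset M$ by construction and $3$ is coprime to the residue characteristic $2$, so the N\'eron--Ogg--Shafarevich criterion (cf.\ \cite[VII, Thm.~7.1]{Silverman}) shows that $E/M$ has good reduction. For supersingularity it suffices to verify $v(j) > 0$: the classification in \cite{Kraus} constrains the minimal triple $(v(c_4), v(c_6), v(\Delta))$ with $e = 4$ to satisfy $v(\Delta) \in \{3, 9\}$, and a short manipulation of the identity $1728\Delta = c_4^3 - c_6^2$ at $\ell = 2$ eliminates every admissible triple with $v(j) = 3v(c_4) - v(\Delta) \leq 0$. Hence $\tilde\jmath = 0 \in \F_2$, the unique supersingular $j$-invariant in characteristic $2$, and $E/M$ has good supersingular reduction.

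The main obstacle is the upper bound $|G| \leq 8$: one must simultaneously identify $N$ as the non-split Cartan normalizer, compute that $N/\Phi$ is noncyclic, and exploit the cyclicity of the unramified quotient of a local Galois group to rule out $G = N$. Once this structural observation is in place, the rest follows routinely from Serre--Tate and the tables in \cite{Kraus}.
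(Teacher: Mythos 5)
Your proof is correct, but for the degree computation it takes a genuinely different route from the paper. The paper disposes of $[M:\Q_2]=8$ in one line by citing Table~1 and Proposition~4 of \cite{DD2008}, which tabulate the Galois groups $\Gal(\Q_2(E_3)/\Q_2)$ by reduction type; you instead reconstruct the bound from scratch: inertia has order $e=4$ and lands in $\SL_2(\F_3)$ (unramified determinant), hence is cyclic generated by an element $\tau$ of order $4$ with irreducible characteristic polynomial; normality of inertia traps the whole image $G$ in the normalizer $N$ of the non-split Cartan, of order $16$ with $N/\langle\tau\rangle\cong C_2\times C_2$; and cyclicity of the unramified quotient $G/\Phi$ caps $|G|$ at $8$, while $\det(\rho_{E,3}(\Frob))=-1$ forces $|G|\geq 8$. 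All of these steps check out (in particular $-I$ is indeed the unique involution of $\SL_2(\F_3)$, and the three order-$4$ cyclic subgroups of $\GL_2(\F_3)$ are conjugate with normalizer of order $16$). What your approach buys is self-containedness and a bit more structural information ($G$ sits in the Cartan normalizer with $G/\Phi\cong C_2$), at the cost of length; the citation buys brevity and, in the paper's later Lemma~\ref{L:lemma45}, the finer fact that $\Gal(M/\Q_2)$ is cyclic in the abelian case, which your argument alone does not isolate. For the second half both arguments are essentially identical: good reduction over $M$ plus $v(j)>0$, so $\tilde\jmath=0$, the unique supersingular invariant in characteristic $2$. Two small remarks: your appeal to N\'eron--Ogg--Shafarevich needs the Serre--Tate refinement that triviality of inertia on $E[m]$ for a single $m\geq 3$ prime to the residue characteristic already suffices (which you have in effect already granted yourself by taking $\Q_2^{\unr}(E_3)$ to be the minimal extension achieving good reduction); and rather than auditing Kraus's table of triples at $\ell=2$, the quickest way to see $v(j)>0$ is that $e=4$ must divide the order of the geometric automorphism group of the reduced curve, which is $2$ for an ordinary curve in characteristic $2$, so the reduction cannot be ordinary.
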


\begin{proof} It folloes from $e=4$ and Table 1 and Proposition 4 of \cite{DD2008} that $[M:\Q_2]=8$.
Since $v(j)>0$ (see \cite{Kraus}), so the $j$ invariant modulo $2$ is $0$,  which is supersingular in characteristic~$2$.
\end{proof}

Let $E'/M$ be an  elliptic curve of good reduction  isomorphic over $M$ to $E/M$.
The residue field of $M$ is $\F_4$ and $\tilde{E'}/\F_4$ has
full $3$-torsion over $\F_4$. It follows from 
$a_{E'}=5-|\tilde{E'}(\F_4)|$ and the Weil bound that
$|\tilde{E'}(\F_4)|=9$ and $a_{E'}=-4$. Moreover, $\Delta_{E'} = a_{E'}^2 - 4\cdot 2^2 = 0$
so (by \cite[Theorem 2]{Centeleghe}) the Frobenius element $\Frob_M$ of the extension $M(E_p)/M$
is representable by the matrix $\begin{pmatrix} -2 & 0 \\ 
0 &-2 \end{pmatrix}$. It is the homothety of ratio $-2$. Consequently, one has
\begin{equation}
\label{(18.2)}
[M(E_p):M]=\delta.
\end{equation}
 
\begin{lemma} \label{L:lemma44} 
We have $K\cap M \neq \Q_2$.
\end{lemma}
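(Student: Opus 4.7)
My plan is to pin down the index $[K\cap M:\Q_2]$ by computing the degree of the compositum $KM$ in two different ways, and then to show that the resulting ratio exceeds $1$ via a clean ramification bound.

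The first step is to identify $KM$. Since $K=\Q_2(E_p)$ and $M=\Q_2(E_3)$, we have $KM=M(E_p)$. The computation immediately preceding the lemma shows that (via $\Delta_{E'}=0$) the Frobenius element $\Frob_M$ of $M(E_p)/M$ acts on $E_p$ as the scalar $-2$, so $[M(E_p):M]=\delta$. Combined with $[M:\Q_2]=8$ from Lemma~\ref{L:lemma43}, this gives $[KM:\Q_2]=8\delta$.

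Second, from the restriction isomorphism $\Gal(KM/K)\cong\Gal(M/K\cap M)$, I get $[KM:K]=[M:K\cap M]=8/[K\cap M:\Q_2]$. Combining with $[KM:\Q_2]=d\cdot[KM:K]$ yields
\[
[K\cap M:\Q_2]=\frac{d}{\delta},
\]
so it remains only to prove $d>\delta$.

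The last step is a short ramification bound. Since $e(E)=4$ and $p\neq 2$, the image $\rho_{E,p}(I_{\Q_2})$ of inertia is cyclic of order $4$, so $K/\Q_2$ has ramification index $4$. Via the Weil pairing, $\mu_p\subset K$, and $\Q_2(\mu_p)$ is the unramified extension of $\Q_2$ of degree $r$, so the residue degree of $K/\Q_2$ is at least $r$. Hence $d\geq 4r$. Lemma~\ref{L:lemma35} gives $\delta\leq 2r$, and we conclude $d\geq 4r>2r\geq \delta$. I do not foresee a genuine obstacle: the one nonformal input is the equality $|\rho_{E,p}(I_{\Q_2})|=e(E)$, which is the standard description of inertia on $p$-torsion for $p\neq\ell$, and everything else is a routine degree chase.
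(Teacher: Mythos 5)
Your argument is correct, and it takes a genuinely different route from the paper. The paper argues by contradiction using the structure of the mod~$p$ representation: if $K\cap M=\Q_2$, then $\Gal(K/\Q_2)$ would be isomorphic to $\Gal(M(E_p)/M)$, hence generated by the restriction of $\Frob_M$, which acts on $E_p$ as the scalar $-2$; but the generator of inertia has image with distinct eigenvalues $\zeta_4,-\zeta_4$ and so cannot lie in a group of scalars. You instead run a pure degree count: from $[KM:\Q_2]=[M:\Q_2]\cdot[M(E_p):M]=8\delta$ and the restriction isomorphism $\Gal(KM/K)\simeq\Gal(M/K\cap M)$ you extract the clean identity $[K\cap M:\Q_2]=d/\delta$, and then the bound $d=e(K/\Q_2)\,f(K/\Q_2)\geq 4r$ (ramification index $4$ from $|\rho_{E,p}(I_{\Q_2})|=e(E)=4$, residue degree at least $r$ from $\mu_p\subset K$) together with $\delta\leq 2r$ from Lemma~\ref{L:lemma35} forces $d/\delta\geq 2$. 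All steps are justified; your identity $d=[K\cap M:\Q_2]\cdot\delta$ is in fact exactly what the paper uses later in Corollary~\ref{C:cor2}. What each approach buys: yours is more elementary (no matrix computation) and produces a reusable quantitative identity; the paper's scalar-versus-nonscalar mechanism is the one that actually scales to the next step, Lemma~\ref{L:lemma47}, where one must also exclude $[K\cap M:\Q_2]=2$ --- there your bound $d/\delta\geq 2$ is sharp when $r$ is odd (so $d=4r$, $\delta=2r$ is not excluded by counting alone) and the representation-theoretic argument is still needed.
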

\begin{proof} Suppose $K\cap M=\Q_2$. Then the Galois groups of $K/\Q_2$ and $M(E_p)/M$ are isomorphic.
Let $\sigma$ be a generator of the inertia subgroup of $\Gal(K/\Q_2)$. The element $\rho_{E,p}(\sigma)$ belongs to $\SL_2(\F_p)$ and is of order $4$. Consequently, there exists $U\in \GL_2(\F_{p^2})$ such that 
$$U\rho_{E,p}(\sigma)U^{-1}=\begin{pmatrix} \zeta_4  & 0 \\ 
0 &-\zeta_4 \end{pmatrix}.$$
The restriction of the Frobenius $\Frob_M\in \Gal(M(E_p)/M)$ to $K$ is the homothety of ratio $-2$ and it is a generator of  $\Gal(K/\Q_2)$.  So there exists $k=1,\cdots,\delta$ such that 
$$\begin{pmatrix} \zeta_4  & 0 \\ 
0 &-\zeta_4\end{pmatrix}=\begin{pmatrix} (-2)^k  & 0 \\ 
0 &(-2)^k \end{pmatrix},$$
which leads to a contradiction, hence the result.
\end{proof}

\begin{lemma} \label{L:lemma45} 
Suppose $K/\Q_2$ is abelian. Then  $\Gal\left(M/\Q_2\right)$ is cyclic of order $8$.
\end{lemma}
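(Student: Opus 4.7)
The strategy is to pin down $\Gal(M/\Q_2)$, which has order $8$ by Lemma~\ref{L:lemma43}, up to abstract isomorphism. The representation $\rho_{E,3}$ realises this group as a subgroup of $\GL_2(\F_3)$, so the argument splits naturally into two parts: identify the abelian subgroups of $\GL_2(\F_3)$ of order $8$, and then show that $\Gal(M/\Q_2)$ lies among them.

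For the purely group-theoretic part, I would note that any subgroup of $\GL_2(\F_3)$ of order $8$ is a $2$-group and is therefore contained in a Sylow $2$-subgroup, which is semidihedral of order $16$. A short abelianisation calculation (the commutator subgroup of $SD_{16}$ is the cyclic subgroup of order $4$ and the abelianisation is $\Z/2\times\Z/2$) shows that $SD_{16}$ has exactly three subgroups of index $2$, namely $\Z/8$, $D_4$ and $Q_8$; of these only $\Z/8$ is abelian.

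The remaining step is to show $\Gal(M/\Q_2)$ is abelian. When $p=3$ we have $K=M$ and this is immediate from the hypothesis. For $p\neq 3$ I would exploit the computation done just before the lemma: $E/M$ has good supersingular reduction with $a_{E'}=-4$ and $\Delta_{E'}=0$, so by \cite[Theorem~2]{Centeleghe} the Frobenius $\Frob_M$ of $M(E_p)/M$ acts on $E_p$ as the scalar $-2 \pmod p$, i.e.\ $\rho_{E,p}|_{G_M}$ is scalar. Combined with the assumption that $\rho_{E,p}(G_{\Q_2})$ is abelian, this forces the projective image $\sigma_{E,p}(G_{\Q_2})$ to be cyclic. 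Because $E/\Q_2$ has potentially good reduction, the projective type of $\sigma_{E,q}$ for $q$ coprime to $2$ is $q$-independent (as used e.g.\ in \cite{Diamond, DiamondKramer}), so the projective image of $\sigma_{E,3}$ is also cyclic; hence the image of $\rho_{E,3}$ is contained in a Cartan subgroup of $\GL_2(\F_3)$, and therefore $\Gal(M/\Q_2)$ is abelian.

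Putting these two parts together, $\Gal(M/\Q_2)$ is an abelian subgroup of $\GL_2(\F_3)$ of order $8$, hence isomorphic to $\Z/8$, as required. The main obstacle is the $p\neq 3$ case of the second step: correctly transferring abelianness from $\rho_{E,p}$ to $\rho_{E,3}$. If invoking $\ell$-independence of the projective type is thought unsatisfactory, one can instead argue directly using the scalar description $\rho_{E,p}|_{G_M}\equiv -2$ to realize $\rho_{E,p}$ as induced from a character of an explicit quadratic subextension (determined by the Cartan that contains the inertia image), and then note that the same quadratic extension produces an analogous induced description of $\rho_{E,3}$.
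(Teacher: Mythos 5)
Your argument is correct and follows the same two-step skeleton as the paper's proof: first transfer the abelian hypothesis from the $p$-torsion field $K$ to the $3$-torsion field $M$, then observe that an abelian group of order $8$ realised inside $\GL_2(\F_3)$ as $\Gal(M/\Q_2)$ must be cyclic. The difference is one of execution: the paper disposes of both steps by citation, invoking \cite[Proposition~6]{Freitaskraus} for the transfer and \cite[Table~1]{DD2008} for the classification, whereas you prove the classification by hand (your Sylow computation is correct: the Sylow $2$-subgroup of $\GL_2(\F_3)$ is semidihedral of order $16$, its three index-$2$ subgroups are $\Z/8$, the dihedral group of order $8$ and the quaternion group, and only the first is abelian) and justify the transfer by $q$-independence of the projective type of $\sigma_{E,q}$. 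The transfer is the one place where your write-up is softer than the paper's: the equivalence between abelian image and cyclic projective image for subgroups of $\GL_2(\F_p)$ is fine, but the assertion that cyclicity of the projective image is independent of the odd prime $q$ is precisely the content of the Freitas--Kraus proposition the paper cites; to extract it from \cite{DiamondKramer} one must check not only that the induced and exceptional types (whose projective images are dihedral or $A_4$/$S_4$) are excluded for every $q$ simultaneously, but also that the remaining type forces a cyclic projective image for every $q$, and your fallback via an induced-representation description is only sketched. Net effect: same route, with the paper's two citations replaced by direct arguments, one complete and one that still leans on an external $q$-independence statement.
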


\begin{proof} From \cite[Proposition 6]{Freitaskraus} it follows that $M/\Q_2$ is abelian and  
\cite[Table 1]{DD2008} gives the conclusion, since $M/\Q_3$ is of degree $8$. 
\end{proof}

\begin{lemma} \label{L:lemma46} 
We have $\mu_3\subseteq K$.
\end{lemma}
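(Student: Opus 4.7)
The plan is to show that the residue degree $f=f(K/\Q_2)$ is even, which is equivalent to $\mu_3\subseteq K$ since $\Q_2(\mu_3)$ is the unique unramified quadratic extension of $\Q_2$. I would begin by recording two ramification facts. From the matrix description in the proof of Lemma~\ref{L:lemma44}, $\rho_{E,p}(\Phi)\subseteq\SL_2(\F_p)$ is cyclic of order $4$, so $e(K/\Q_2)=4$ and hence $d=4f$. Since $\det\rho_{E,p}=\chi_p$, the unramified field $\Q_2(\mu_p)$ lies in $K$, forcing $r\mid f$. Consequently, when $r$ is even, $\mu_3\subseteq\Q_2(\mu_p)\subseteq K$ is already clear.

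For the remaining case $r$ odd, I would split according to whether $G=\Gal(K/\Q_2)$ is abelian. If $G$ is abelian, Lemma~\ref{L:lemma45} gives that $\Gal(M/\Q_2)$ is cyclic of order $8$; since $e(M/\Q_2)=4$, its unique quadratic subfield is the maximal unramified subextension, namely $\Q_2(\mu_3)$. By Lemma~\ref{L:lemma44} $K\cap M\neq\Q_2$, and in a cyclic extension of degree $8$ every non-trivial subfield contains the quadratic one, yielding $\Q_2(\mu_3)\subseteq K\cap M\subseteq K$.

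If instead $G$ is non-abelian and $r$ is odd, I would argue by contradiction that $f>r$. Assume $d=4r$, so that $f=r$; then $G$ is an extension $1\to\Z/4\to G\to\Z/r\to 1$ whose kernel is the cyclic inertia of order~$4$ and whose quotient is the Galois group of the unramified part. Since $\gcd(4,r)=1$, Schur--Zassenhaus provides a splitting $G\cong\Z/4\rtimes\Z/r$, and the action $\Z/r\to\Aut(\Z/4)=\Z/2$ must be trivial because $r$ is odd; therefore $G\cong\Z/(4r)$ is cyclic, contradicting non-abelianness. Hence $f>r$, and combined with $r\mid f$ this forces $f\geq 2r$, an even integer.

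The principal obstacle I anticipate is the $r$-odd, non-abelian case, where the easy containment $\mu_3\subseteq\Q_2(\mu_p)$ fails and the cyclic structure supplied by Lemma~\ref{L:lemma45} is unavailable; the Schur--Zassenhaus argument sketched above is what I expect to handle exactly that configuration.
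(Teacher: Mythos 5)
Your overall strategy is sound, and your treatment of the cases ``$r$ even'' and ``$G$ abelian'' matches the paper's reasoning (the paper likewise combines Lemma~\ref{L:lemma44} with Lemma~\ref{L:lemma45} and the fact that the unique quadratic subfield of the cyclic degree-$8$ extension $M/\Q_2$ is the unramified one). In the non-abelian case your route is genuinely different: the paper invokes the Diamond--Kramer classification to see that $\sigma_{E,p}$ is of type {\bf V} with dihedral image of order $4$, so that $F/\Q_2$ is not totally ramified and $\mu_3\subset F\subset K$; you instead argue purely group-theoretically on $G$ itself via Schur--Zassenhaus. Your method is more elementary and self-contained, which is a real advantage.

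However, the last step of your non-abelian case does not close. You assume $d=4r$ (equivalently $f=r$), derive that $G$ would be cyclic, and conclude ``$f>r$, hence $f\geq 2r$, an even integer.'' The inequality $f\geq 2r$ together with $r\mid f$ does not imply that $f$ is even: for $r$ odd you have only excluded $f=r$, and a priori $f$ could be $3r$, $5r$, etc., all of which are odd. What you actually need is that $f$ is even, since $\mu_3\subseteq K$ is equivalent to $2\mid f$. The repair is immediate and uses exactly your own idea in slightly greater generality: assume only that $f$ is odd (not that $f=r$). Then $|G|=4f$ with $\gcd(4,f)=1$, the inertia subgroup $\Phi\cong\Z/4$ is normal with cyclic quotient $G/\Phi\cong\Z/f$, Schur--Zassenhaus splits the extension as $\Z/4\rtimes\Z/f$, and the action $\Z/f\to\Aut(\Z/4)\cong\Z/2$ is trivial because $f$ is odd, so $G$ is abelian --- contradicting the hypothesis of this case. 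Hence $f$ is even and $\mu_3\subseteq K$. With that adjustment your proof is complete and constitutes a valid alternative to the paper's argument.
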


\begin{proof} From Lemma~\ref{L:lemma44} we know $[K\cap M:\Q_2]\geq 2.$

1) Suppose $K/\Q_2$ is abelian. Then Lemma~\ref{L:lemma45} implies that $K\cap M/\Q_2$ contains the quadratic unramified extension (the ramification degree of $M$ is $e=4$), as desired. 

2) Suppose  $K/\Q_2$ is non-abelian.  Because $\Phi$ is cyclic of order $4$, 
the representation $\rho_{E,p}$ is of type {\bf{V}} (by \cite[Proposition 2.3]{Diamond}).
Since $|\sigma_{E,p}(\Phi)|=2$, we conclude from condition 3 of 
\cite[Proposition 2.3]{Diamond} that the image of $\sigma_{E,p}$ is dihedral of order $4$.  
We have $F\subseteq K$ and $F/\Q_2$ is not totally ramified. 
Thus $\mu_3 \subset F \subset K$.
\end{proof}

\begin{lemma} \label{L:lemma47} 
The field $\Q_2(\mu_3)$ is strictly contained in $K\cap M$.  In particular,  
\begin{equation}
\label{(18.3)}
[K\cap M:\Q_2]\in \lbrace 4,8\rbrace.
\end{equation}
\end{lemma}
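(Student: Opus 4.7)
The plan is to argue by contradiction that $K\cap M \neq \Q_2(\mu_3)$; combined with the containment $\Q_2(\mu_3) \subseteq K\cap M$ (which holds because $\mu_3 \subset \Q_2(E_3)=M$ by the Weil pairing and $\Q_2(\mu_3)\subseteq K$ by Lemma~\ref{L:lemma46}), and with the fact that $[K\cap M:\Q_2]$ divides $[M:\Q_2]=8$, this will force $[K\cap M:\Q_2]\in\{4,8\}$.

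Suppose therefore that $K\cap M = \Q_2(\mu_3)$. Since $M = \Q_2(E_3)$ and $K = \Q_2(E_p)$, one has $KM = M(E_p)$, so restriction induces an isomorphism
\[
\Gal(M(E_p)/M) \isomto \Gal(K/\Q_2(\mu_3)).
\]
The left-hand side is cyclic of order $\delta$ and generated by $\Frob_M$, which, as recorded in the preamble to Lemma~\ref{L:lemma44}, acts on $E_p$ as the scalar matrix $-2\cdot I$. Consequently $\rho_{E,p}$ carries $\Gal(K/\Q_2(\mu_3))$ into the subgroup of scalar matrices $\langle -2I\rangle \subset \GL_2(\F_p)$.

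The next key step is to insert the inertia subgroup $\Phi$ of $G=\Gal(K/\Q_2)$. Since $\Q_2(\mu_3)/\Q_2$ is unramified (because $p\neq\ell=2$), $\Phi$ is contained in $\Gal(K/\Q_2(\mu_3))$; hence $\rho_{E,p}(\Phi)$ is a subgroup of order $e=4$ (using that $\rho_{E,p}$ is injective on $G$ and that $\Phi$ has order $4$) sitting inside the scalar group $\langle -2I\rangle$. On the other hand, $\det\rho_{E,p}=\chi_p$ is trivial on inertia (as $\ell\neq p$), so $\rho_{E,p}(\Phi)\subseteq\SL_2(\F_p)$. But the scalar matrices in $\SL_2(\F_p)$ form the group $\{\pm I\}$, which has order $2$, contradicting $|\rho_{E,p}(\Phi)|=4$.

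The main technical point, and the step I would set up most carefully, is the identification $KM=M(E_p)$ together with the compatibility of the restriction isomorphism with the action on $E_p$ --- it is this that forces the whole of $\Gal(K/\Q_2(\mu_3))$ to consist of scalars. After that, the contradiction with the order of inertia is immediate from comparing orders of scalar matrices in $\SL_2(\F_p)$, and the bound $[K\cap M:\Q_2]\in\{4,8\}$ follows from degree considerations.
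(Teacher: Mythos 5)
Your proof is correct and follows essentially the same route as the paper: assume $K\cap M=\Q_2(\mu_3)$, use the restriction isomorphism with $\Gal(M(E_p)/M)=\langle\Frob_M\rangle$ acting by the homothety $-2$ to force the inertia subgroup into the scalars, and derive a contradiction. The only cosmetic difference is that the paper exhibits the non-scalar matrix $\begin{psmallmatrix}\zeta_4&0\\0&-\zeta_4\end{psmallmatrix}$ for the inertia generator, whereas you count that scalars in $\SL_2(\F_p)$ have order at most $2$ while $|\Phi|=4$; both are the same obstruction.
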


\begin{proof} Since $\mu_3 \subset M$ by Lemma~\ref{L:lemma46} we have $\mu_3 \subset K\cap M$. 
Suppose $K\cap M=\Q_2(\mu_3)$. Then $\mu_3$ is fixed by the generator $\sigma$ of the inertia subgroup of $\Gal\left(K/\Q_2\right)$. 
Up to conjugation, we have  $\rho_{E,p}(\sigma)=\begin{pmatrix} \zeta_4 & 0 \\ 
0 &-\zeta_4 \end{pmatrix}$.
The Galois groups $K/\Q_2(\mu_3)$ and $M(E_p)/M$ are isomorphic, so there exist 
(as in Lemma~\ref{L:lemma44}) an integer  $k=1,\cdots,\delta$ such that 
$$\begin{pmatrix} \zeta_4 & 0 \\ 
0 &-\zeta_4 \end{pmatrix}=\begin{pmatrix} (-2)^k  & 0 \\ 
0 &(-2)^k \end{pmatrix},$$
which gives a contradiction. Now Lemma~\ref{L:lemma43} implies \eqref{(18.3)}.
\end{proof}

\begin{lemma} \label{L:lemma48} 
There exists $k=1,\cdots,\delta$ such that 
$$ \begin{pmatrix} (-2)^k  & 0 \\ 
0 &(-2)^k \end{pmatrix}=-1$$
if and only if  $\delta$ is even. 
\end{lemma}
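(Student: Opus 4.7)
The plan is to analyse when $(-2)^k \equiv -1 \pmod p$ admits a solution in the range $k = 1,\ldots,\delta$, using only the fact that $\delta$ is by definition the order of $-2$ modulo $p$ (together with $p\neq 2$).

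First I would handle the forward direction. Suppose such a $k$ exists. Squaring the identity $(-2)^k \equiv -1 \pmod p$ gives $(-2)^{2k}\equiv 1 \pmod p$, so $\delta \mid 2k$. The bounds $1 \le k \le \delta$ force $2k \in \{\delta, 2\delta\}$, i.e.\ $k = \delta/2$ or $k = \delta$. The value $k=\delta$ is excluded because $(-2)^\delta \equiv 1 \not\equiv -1 \pmod p$ (here we use $p\neq 2$). Hence $k=\delta/2$ must be an integer, forcing $\delta$ to be even.

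For the converse, assume $\delta$ is even and set $k := \delta/2 \in \{1,\ldots,\delta\}$. Then $\bigl((-2)^k\bigr)^2 = (-2)^\delta \equiv 1 \pmod p$, so $(-2)^k \equiv \pm 1 \pmod p$. The value $+1$ is impossible since it would mean the order of $-2$ modulo $p$ divides $k < \delta$, contradicting the definition of $\delta$. Therefore $(-2)^k \equiv -1 \pmod p$, which is exactly the displayed scalar identity.

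The argument is entirely elementary; there is no real obstacle beyond carefully using the definition of $\delta$ and the fact that $p$ is odd to rule out the trivial cases. The only subtle point is verifying that the candidate $k = \delta/2$ sits in the asserted range $\{1,\ldots,\delta\}$, which is immediate since $\delta \ge 2$ when $\delta$ is even.
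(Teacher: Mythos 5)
Your proof is correct and follows essentially the same elementary argument as the paper: squaring to get $\delta\mid 2k$ and ruling out $k=\delta$ for the forward direction, and taking $k=\delta/2$ for the converse (the paper simply asserts $(-2)^{\delta/2}=-1$ there, which your $\pm1$ argument justifies). No issues.
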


\begin{proof} If $\delta$ is even, one has $(-2)^{\frac{\delta}{2}}=-1$. Conversely, suppose there exists $k=1,\cdots,\delta$ such that $(-2)^k=-1$. One has $(-2)^{2k}=1$, so $\delta$ divides $2k$. If $\delta$ is odd, then $\delta$ divides $k$, so $k=\delta$ which is false, so $\delta$ is even, hence the lemma.
\end{proof}

\begin{lemma} \label{L:lemma49} 
We have the following equivalence
\begin{equation}
\label{(18.4)}
M\subseteq K \ \Leftrightarrow \  \delta\equiv 1 \pmod 2.
\end{equation}
\end{lemma}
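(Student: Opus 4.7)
The plan is to control $K\cap M$ via the action of the inertia subgroup $\Phi$ of $G=\Gal(K/\Q_2)$, using that its image under $\rho_{E,p}$ is known explicitly up to conjugation and that, by hypothesis, the Frobenius $\Frob_M$ acts on $E_p$ as the scalar $-2\cdot I$.

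Set $H=\Gal(K/(K\cap M))$. Via restriction, $H$ is isomorphic to $\Gal(M(E_p)/M)$, which is cyclic, generated by $\Frob_M$, and of order $\delta$. Since $\rho_{E,p}(\Frob_M)=-2\cdot I$ and $\rho_{E,p}$ is faithful on $G$, we have
\[
\rho_{E,p}(H)=\langle -2\cdot I\rangle\subseteq\GL_2(\F_p),
\]
a cyclic group of order $\delta$. Next, I recall that $\Phi$ is cyclic of order $e=4$ with a generator $\sigma$ satisfying $\rho_{E,p}(\sigma)\in\SL_2(\F_p)$ of order $4$ with eigenvalues $\zeta_4,-\zeta_4$, so that $\rho_{E,p}(\sigma^2)=-I$. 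In particular $-I$ is the image of the unique order-$2$ subgroup of $\Phi$, and by faithfulness $-I\in\rho_{E,p}(H)$ if and only if $\sigma^2\in H\cap\Phi$.

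The key computation is the order of the inertia of $K/(K\cap M)$, which equals $H\cap\Phi$. Since $K\cap M\subseteq M$ and $M/\Q_2$ has residue degree $2$ and ramification index $4$, any intermediate field has residue degree at most~$2$; on the other hand, by Lemma~\ref{L:lemma46}, $K\cap M$ contains the unramified quadratic extension $\Q_2(\mu_3)$, hence has residue degree exactly~$2$. Combined with Lemma~\ref{L:lemma47} (which gives $[K\cap M:\Q_2]\in\{4,8\}$), I get
\[
e(K\cap M/\Q_2)=\frac{[K\cap M:\Q_2]}{2}\in\{2,4\},
\]
and $M\subseteq K$ if and only if $e(K\cap M/\Q_2)=4$. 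Consequently $|H\cap\Phi|=4/e(K\cap M/\Q_2)\in\{1,2\}$, and $M\subseteq K$ if and only if $H\cap\Phi=\{1\}$, equivalently $\sigma^2\notin H$, equivalently $-I\notin\langle -2\cdot I\rangle$. By Lemma~\ref{L:lemma48}, this last condition is equivalent to $\delta$ being odd, which is the desired equivalence~\eqref{(18.4)}.

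The main obstacle is the bookkeeping on ramification of $K\cap M/\Q_2$: once it is established (via Lemmas~\ref{L:lemma46} and~\ref{L:lemma47} together with $K\cap M\subseteq M$) that the only possibilities are $(e,f)=(2,2)$ or $(4,2)$, with $M\subseteq K$ corresponding to $e=4$, the proof reduces to the elementary criterion of Lemma~\ref{L:lemma48} applied to the cyclic group of scalars $\langle -2\cdot I\rangle$.
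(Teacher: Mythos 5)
Your proof is correct and follows essentially the same route as the paper's: both arguments identify the inertia subgroup of $K/(K\cap M)$ inside $\Gal(K/(K\cap M))$, use Lemmas~\ref{L:lemma46} and~\ref{L:lemma47} to pin down its order as $1$ or $2$, observe that the order-$2$ element of $\Phi$ maps to $-I$ while $\Gal(K/(K\cap M))$ maps onto $\langle -2\cdot I\rangle$, and conclude via Lemma~\ref{L:lemma48}. Your phrasing as a single chain of equivalences through $|H\cap\Phi|$ is just a slightly more systematic packaging of the paper's two-directional argument with $\tau=\sigma^2$.
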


\begin{proof} Let $\tau$ be the element of order $2$ of the inertia subgroup of  $\Gal(K/\Q_2)$. Its image $\rho_{E,p}(\tau)$ is in $\SL_2(\F_p)$ and is of order $2$, so 
\begin{equation}
\label{(18.5)} 
\rho_{E,p}(\tau)=-1.
\end{equation}
Suppose $M \not\subset K$. Then, Lemma~\ref{L:lemma47} 
implies both  $[K\cap M:\Q_2]=4$ and the  ramification index of the extension 
$K\cap M/\Q_2$ is equal to $2$. We conclude that $\tau \in \Gal(K/K\cap M)$.  
Since the Frobenius $\Frob_M\in \Gal(M(E_p)/M)$ restricts to a generator of $\Gal(K/K\cap M)$, 
there exists $k=1,\cdots,\delta$, such that $(-2)^k=-1$. Thus $\delta$ is even by Lemma~\ref{L:lemma48}.

Conversely, suppose  $M \subset K$; 
then $K/M$ is unramified. In particular, $\tau$ does not belong to 
$\Gal(K/M)$, which is generated by  $\Frob_M$ (because $M(E_p)=K$). 
From \eqref{(18.5)}, it follows there does not exist 
$k=1,\cdots,\delta$ such that $(-2)^k=-1$; thus $\delta$ is odd by Lemma~\ref{L:lemma48}.
\end{proof}

\begin{corollary} \label{C:cor2}
The degree $d=[K : \Q_2]$ satisfies
  \[
d= 
  \begin{cases}
4\delta \ \text{ if }  \delta \ \text{is even}, \\
8\delta   \ \text{ if }  \delta \ \text{is odd}.\
  \end{cases}
\]
\end{corollary}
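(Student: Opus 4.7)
The plan is to combine the preceding lemmas via a simple tower argument, splitting into two cases according to the parity of $\delta$ as dictated by Lemma~\ref{L:lemma49}. The basic setup is the tower $\Q_2 \subseteq K\cap M \subseteq K$, together with the identification $KM = M(E_p)$ which allows \eqref{(18.2)} to be brought to bear. This identification holds tautologically: we have $K = \Q_2(E_p) \subseteq M(E_p)$ and $M \subseteq M(E_p)$, while conversely $M(E_p)$ is generated by $M \cup E_p \subseteq KM$. Since $K/\Q_2$ is Galois (being the fixed field of the kernel of $\rho_{E,p}$), the restriction map gives an isomorphism
\[
\Gal(KM/M) \xrightarrow{\sim} \Gal(K/K\cap M),
\]
so $[K : K\cap M] = [KM : M] = [M(E_p):M] = \delta$ by \eqref{(18.2)}.

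First I treat the case $\delta$ odd. By Lemma~\ref{L:lemma49} we have $M \subseteq K$, hence $K\cap M = M$ and in fact $K = M(E_p)$. The tower law with Lemma~\ref{L:lemma43} then gives
\[
d = [K:M]\,[M:\Q_2] = \delta \cdot 8 = 8\delta.
\]

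Next I treat the case $\delta$ even. By Lemma~\ref{L:lemma49} we have $M \not\subseteq K$, so $K\cap M \subsetneq M$; combined with Lemma~\ref{L:lemma47}, which constrains $[K\cap M:\Q_2] \in \{4,8\}$, this forces $[K\cap M:\Q_2] = 4$. The tower law then yields
\[
d = [K:K\cap M]\,[K\cap M:\Q_2] = \delta \cdot 4 = 4\delta,
\]
completing the proof. The argument at this point is essentially a bookkeeping exercise; the real content has already been discharged by Lemmas~\ref{L:lemma43}, \ref{L:lemma47}, and~\ref{L:lemma49}, so I do not expect any genuine obstacle here.
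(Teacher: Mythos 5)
Your proposal is correct and follows the same route as the paper: Lemma~\ref{L:lemma49} splits the cases by the parity of $\delta$, Lemma~\ref{L:lemma47} pins down $[K\cap M:\Q_2]=4$ when $M\not\subseteq K$, and \eqref{(18.2)} together with the restriction isomorphism $\Gal(M(E_p)/M)\isomto\Gal(K/K\cap M)$ supplies $[K:K\cap M]=\delta$. You have merely made explicit the identification $KM=M(E_p)$ that the paper leaves implicit.
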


\begin{proof} If $\delta$ is even, $M$ is not contained in $K$ (Lemma~\ref{L:lemma49}) 
so $[K\cap M:\Q_2]=4$ (Lemma~\ref{L:lemma47}). 
The equality \eqref{(18.2)} then implies $d=4\delta$. 

If $\delta$ is odd, $M$ is contained in $K$, so $d=8\delta$, as desired.
\end{proof}
\medskip

Corollary~\ref{C:cor2} and Lemma~\ref{L:lemma35} complete the proof 
of Theorem~\ref{T:thm12} for $e=4$.

\subsection{Case $e=6$} Suppose that $E/\Q_2$ satisfies $e=6$.
After a suitable quadratic twist we have $e=3$ 
and since $\ell = 2 \equiv -1 \pmod{3}$ we conclude 
that $K$ is non-abelian (\cite[Corollary~3]{Freitaskraus}). 
A similar argument to the case $e=6$ in the proof of
Theorem~\ref{T:thm8} part~2) gives the result.

\subsection{Case $e=8$} 
Suppose that $E/\Q_2$ satisfies $e=8$. We consider the fields
$$M=\Q_2(E_3), \qquad K=\Q_2(E_p), \qquad  L= K \cap M$$
which are three Galois extensions of $\Q_\ell$.

Since $e=8$, it is well known that $\Phi \subset \Gal(K/\Q_2)$ is isomorphic to quaternion group.

\begin{lemma} \label{L:lemma50} 
The image $\sigma_{E,p}(\Phi)$ is non-cyclic of order $4$ and the Galois group  $\Gal(F/\Q_2)$ is dihedral of order dividing $8$.
\end{lemma}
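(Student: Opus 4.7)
The plan is to prove the two claims of the lemma in turn.

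First I would address the image $\sigma_{E,p}(\Phi)$. Since $e=8$, we are told just before the lemma that $\Phi\simeq Q_8$, the quaternion group. Because $p\neq \ell=2$, the mod-$p$ cyclotomic character is unramified, so $\det\circ\rho_{E,p}$ is trivial on $\Phi$ and $\rho_{E,p}(\Phi)\subset \SL_2(\F_p)$. Since $p$ is odd, the only element of order $2$ in $\SL_2(\F_p)$ is the scalar matrix $-I$; hence the central involution of $Q_8$ is necessarily sent to $-I$ by $\rho_{E,p}$, and is therefore killed by the projection to $\PGL_2(\overline{\F_p})$. On the other hand $\rho_{E,p}|_\Phi$ is faithful (as $K$ is the fixed field of $\ker\rho_{E,p}$), so the kernel of $\sigma_{E,p}|_\Phi$ is exactly $Z(Q_8)$. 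Consequently $\sigma_{E,p}(\Phi)\simeq Q_8/Z(Q_8)\simeq (\Z/2\Z)^2$, which is non-cyclic of order $4$.

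Second, I would invoke the Diamond--Kramer classification, following the pattern used earlier in the paper (for example in Section~\ref{S:thm8e4} for $e\in\{4,6\}$ and in the $e=12$ case at $\ell=3$). Because $\sigma_{E,p}(\Phi)$ is non-cyclic, the representation $\rho_{E,p}$ falls in Diamond's type~\textbf{V} regime (see \cite[Proposition~0.3]{DiamondKramer}), and \cite[Proposition~2.3]{Diamond} then tells us that $\Gal(F/\Q_2)=\sigma_{E,p}(\Gal(\overline{\Q_2}/\Q_2))$ is dihedral with normal inertia subgroup equal to $\sigma_{E,p}(\Phi)\simeq V_4$. Since the quotient $\Gal(F/\Q_2)/\sigma_{E,p}(\Phi)$ is a residue-field Galois group and hence cyclic, and the dihedral structure forces this quotient to have order $1$ or $2$, we obtain $|\Gal(F/\Q_2)|\in\{4,8\}$, as required.

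The delicate point will be confirming that we are genuinely in Diamond's type~\textbf{V} and not in an exceptional case that would produce a projective image such as $A_4$ or $S_4$ (the only other finite subgroups of $\PGL_2(\overline{\F_p})$ containing $V_4$ as a normal subgroup). This is handled by combining the explicit knowledge that $\Phi\simeq Q_8$ with the hypotheses of \cite[Proposition~0.3]{DiamondKramer}, together with the observation that a normal $V_4$ with cyclic quotient in a finite subgroup of $\PGL_2(\overline{\F_p})$ leaves only the dihedral options $V_4$ and $D_8$.
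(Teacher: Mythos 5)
Your treatment of the first assertion is correct and is essentially the paper's argument with the details filled in: since $p$ is odd and $\chi_p$ is unramified at $2$, one has $\rho_{E,p}(\Phi)\subset\SL_2(\F_p)$, the unique element of order $2$ in $\SL_2(\F_p)$ is $-1$, so the kernel of $\sigma_{E,p}|_{\Phi}$ is exactly the centre of the quaternion group and $\sigma_{E,p}(\Phi)\simeq \Phi/Z(\Phi)$ is the Klein four-group.

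The second assertion is where there is a genuine gap. In the terminology used throughout this paper and in \cite{DiamondKramer}, type \textbf{V} is the case where $\sigma_{E,p}(\Phi)$ is \emph{cyclic} (the representation is induced from the unramified quadratic extension; see the proof of assertion 2 of Theorem~\ref{T:thm8}, where $|\sigma_{E,p}(\Phi)|=e/2$, and Lemma~\ref{L:lemma46}), and it is the situation governed by \cite[Proposition~2.3]{Diamond}. Here $\sigma_{E,p}(\Phi)\simeq V_4$ is non-cyclic, so you are in the exceptional regime of \cite[Proposition~2.4]{Diamond} --- compare the $e=12$ case at $\ell=3$ and Lemma~\ref{L:lemma54} for $e=24$, where the paper invokes Proposition~2.4, not~2.3. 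This is not a mere labelling slip, because the fallback you propose to exclude the exceptional projective images does not work: it is false that a finite subgroup of $\PGL_2(\overline{\F_p})$ containing a normal Klein four-group with cyclic quotient must be $V_4$ or $D_8$, since $A_4$ contains a normal $V_4$ with cyclic quotient of order $3$. Thus your argument leaves open the possibility $\Gal(F/\Q_2)\simeq A_4$, which is exactly what must be ruled out. Excluding it requires an arithmetic input: either condition~3 of \cite[Proposition~2.4]{Diamond} directly (this is what the paper does), or the observation that Frobenius acts on $\Phi$ through $\Gal(M/\Q_2)\simeq SD_{16}$ (Lemma~\ref{L:lemma52}), hence induces on $\sigma_{E,p}(\Phi)\simeq V_4$ an automorphism of order dividing $2$; only then is the projective image a $2$-group normalizing $V_4$, i.e.\ dihedral of order dividing $8$.
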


\begin{proof} Since $\Phi$ is isomorphic to quaternion group, its center $C$ is of order $2$ and $\Phi/C$ is isomorphic to the Klein group of order $4$. 
The result now follows from condition~3 of \cite[Proposition~2.4]{Diamond}.
\end{proof}

\begin{lemma}\label{L:lemma51} 
The degree $d=[K : \Q_2]$ satisfies  $d\in \lbrace 8r, 16r\rbrace$.
\end{lemma}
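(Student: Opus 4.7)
The plan is to follow the same strategy used in the proof of assertion 2 of Theorem~\ref{T:thm8} (Section~\ref{S:thm8e4}), adapting it to the present situation where the inertia $\Phi$ is quaternion of order $8$ rather than cyclic. Let $H = \Gal(K/F)$ and $d' = [K:F]$. First, since $\rho_{E,p}|_H$ takes values in the scalar matrices of $\GL_2(\F_p)$, there is a character $\varphi : H \to \F_p^*$ such that $\rho_{E,p}|_H \simeq \varphi \cdot I$, the order of $\varphi$ is $d'$, and the relation $\det \rho_{E,p} = \chi_p$ forces $\varphi^2 = \chi_p|_H$. Since $\mu_p \subset K$ (because the image of $\chi_p$ has order $r = [\Q_2(\mu_p):\Q_2]$), one has $F(\mu_p) \subset K$ and
\begin{equation}
\label{(18.6)}
\frac{d'}{\gcd(d',2)} \;=\; [F(\mu_p) : F].
\end{equation}

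Next, I would pin down the ramification of $K/F$. The inertia subgroup of $H$ equals $H \cap \Phi = \ker(\sigma_{E,p}|_{\Phi})$, which has order $|\Phi|/|\sigma_{E,p}(\Phi)| = 8/4 = 2$ by Lemma~\ref{L:lemma50}. Thus $K/F$ has ramification index exactly~$2$, so $d'$ is even, and \eqref{(18.6)} rewrites as
\[
 d' \;=\; 2\,[F(\mu_p):F].
\]
From Lemma~\ref{L:lemma50}, $\Gal(F/\Q_2)$ is dihedral of order dividing~$8$, and its inertia subgroup is $\sigma_{E,p}(\Phi)$ of order~$4$; this forces $[F : \Q_2] \in \{4, 8\}$, with $F/\Q_2$ totally ramified in the first case and of ramification index~$4$ and residue degree~$2$ in the second.

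Finally I would run the case analysis and combine with $d = d'\cdot [F:\Q_2]$. If $[F:\Q_2] = 4$, then $F/\Q_2$ is totally ramified, so $F \cap \Q_2(\mu_p) = \Q_2$ (the latter being unramified over~$\Q_2$), giving $[F(\mu_p):F] = r$, hence $d' = 2r$ and $d = 8r$. If $[F:\Q_2] = 8$, then the residue field of $F$ is $\F_4$, and since $F(\mu_p)/F$ is unramified with residue field $\F_{2^{\lcm(2,r)}}$, we obtain $[F(\mu_p):F] = \lcm(2,r)/2$. When $r$ is even this equals $r/2$, giving $d' = r$ and $d = 8r$; when $r$ is odd it equals $r$, giving $d' = 2r$ and $d = 16r$. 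In all cases $d \in \{8r, 16r\}$, as claimed.

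The only non-trivial step is identifying the ramification index of $K/F$ as~$2$; this is the hinge of the argument and relies crucially on the explicit order of $\sigma_{E,p}(\Phi)$ supplied by Lemma~\ref{L:lemma50}. Everything else is routine, mirroring the proof in Section~\ref{S:thm8e4}.
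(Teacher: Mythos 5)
Your proof is correct, and while it shares the paper's framework, its concluding mechanism is genuinely different. Both arguments use the field $F$ cut out by the projective representation, the scalar character $\varphi$ on $H=\Gal(K/F)$ with $\varphi^2=\chi_p|_H$, the identity $\tfrac{d'}{\gcd(d',2)}=[F(\mu_p):F]$, and the constraint $[F:\Q_2]\in\{4,8\}$ coming from Lemma~\ref{L:lemma50}. The paper then finishes by a sandwich: the lower bound $8r\mid d$ (since $\Q_2(\mu_p)\subset K$ is unramified of degree $r$ while the inertia $\Phi$ has order $8$) combined with the upper bound $d=[F:\Q_2]\,d'\leq 8\cdot 2r=16r$, which leaves only $d\in\{8r,16r\}$. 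You instead evaluate $d$ exactly in each configuration, using two facts the paper only deploys in the subsequent case analysis after Corollary~\ref{C:cor3}: that the ramification index of $K/F$ is exactly $2$ (from $|\Phi\cap H|=8/|\sigma_{E,p}(\Phi)|=2$), so that $d'=2[F(\mu_p):F]$, and that $[F(\mu_p):F]$ is read off from the residue degree of $F$. Your route never needs the divisibility $8r\mid d$, and it buys strictly more information --- it shows $d=16r$ occurs only when $[F:\Q_2]=8$ and $r$ is odd --- effectively anticipating the paper's final computation for $e=8$. The one cosmetic remark is that your subcase ``$[F:\Q_2]=4$ and $r$ odd, giving $d=8r$'' turns out to be vacuous (the paper's Corollary~\ref{C:cor3} forces $\mu_3\subset K$, hence $[F:\Q_2]=8$ when $r$ is odd), but carrying it along does not affect the validity of the lemma as stated.
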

\begin{proof} We have $r=[\Q_2(\mu_p):\Q_2]$ and the unramified extension $\Q_2(\mu_p)/\Q_2$ is contained 
in~$K$; moreover, the inertia subgroup $\Phi \subset \Gal(K/\Q_2)$ is of order $8$. Then $8r \mid d$. 

Write $H=\Gal(K/F)$. There exists a character $\varphi : H\to \F_p^*$ such that 
\[ \rho_{E,p}|_H = \begin{pmatrix} \varphi  & 0 \\ 
0 &\varphi \end{pmatrix} \quad \text{ and } \quad \varphi^2=\chi_p|_{H}. \] 
The order of $\varphi$ is $d'=[K:F]$ and the order of $\chi_p|_{H}$ is 
$\frac{d'}{\gcd(d',2)}$ and it divides $r$. Hence $d'\leq 2r$ and from Lemma~\ref{L:lemma50}, 
we have $[F:\Q_2]\leq 8$. Thus $d\leq 16r$ and the lemma follows.
\end{proof}

\begin{lemma} \label{L:lemma52} 
The Galois group $\Gal(M/\Q_2)$ is semidihedral of order $16$.
\end{lemma}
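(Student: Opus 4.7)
The strategy is to exhibit $\Gal(M/\Q_2)$ as a Sylow $2$-subgroup of $\GL_2(\F_3)$ via $\rho_{E,3}$ and invoke the classical fact that every Sylow $2$-subgroup of $\GL_2(\F_3)$ is semidihedral of order $16$.

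First, I would identify the inertia subgroup of $\Gal(M/\Q_2)$. Since $e(E)=8$, the inertia subgroup of $\Gal(\overline{\Q_2}/\Q_2)$ acts on $T_3(E)$ through a quaternion group $Q_8\subset \GL_2(\Z_3)$. The kernel of reduction $\GL_2(\Z_3)\to \GL_2(\F_3)$ is a pro-$3$ group and hence intersects the $2$-group $Q_8$ trivially, so the inertia subgroup of $\Gal(M/\Q_2)$ is isomorphic to $Q_8$.

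Next, I would locate this copy of $Q_8$ inside $\SL_2(\F_3)$. Indeed, the determinant of $\rho_{E,3}$ is the mod-$3$ cyclotomic character, which is unramified (because $3\neq 2$) and has image of order $r=2$; hence $Q_8\subseteq \Gal(M/\Q_2)\cap \SL_2(\F_3)$ while $\det(\Gal(M/\Q_2))=\{\pm 1\}$. Since $Q_8$ is the unique (and hence normal) Sylow $2$-subgroup of $\SL_2(\F_3)$, the only subgroups of $\SL_2(\F_3)$ containing $Q_8$ are $Q_8$ itself and $\SL_2(\F_3)$; combined with the index-$2$ determinant condition, this forces $[M:\Q_2]\in\{16,48\}$.

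The case $[M:\Q_2]=48$ (i.e.~$\Gal(M/\Q_2)=\GL_2(\F_3)$) is ruled out by the fact that the quotient of $\Gal(M/\Q_2)$ by its inertia subgroup must be cyclic (it is the Galois group of the maximal unramified subextension of $M/\Q_2$), whereas $\GL_2(\F_3)/Q_8\isom S_3$ is non-cyclic. Therefore $\Gal(M/\Q_2)$ has order $16$ and is a Sylow $2$-subgroup of $\GL_2(\F_3)$, hence semidihedral as claimed.

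The main obstacle, which is really a standard but essential input, is justifying that inertia acts as $Q_8$ on $T_3 E$ when $e=8$; the remainder of the argument is then a group-theoretic manipulation inside $\GL_2(\F_3)$. This input is available from the classical analysis of the semistability defect (or directly from Table~1 of \cite{DD2008}, as was used already in the proof of Lemma~\ref{L:lemma43} for the case $e=4$).
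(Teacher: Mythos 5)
Your proof is correct, but it takes a genuinely different route from the paper: the paper's entire proof of this lemma is a citation of Table~1 in \cite{DD2008}, which records the Galois group of $\Q_2(E_3)/\Q_2$ directly in terms of the reduction data. Your argument instead derives the result from first principles: the only external input is that inertia acts on $T_3(E)$ through $Q_8$ when $e=8$ (a fact the paper itself invokes in the same subsection for $\Gal(K/\Q_2)$, so it is freely available), after which everything is forced by group theory inside $\GL_2(\F_3)$ --- the reduction $\GL_2(\Z_3)\to\GL_2(\F_3)$ is injective on the $2$-group $Q_8$, the unramifiedness of $\det\rho_{E,3}=\chi_3$ places $Q_8$ inside $\SL_2(\F_3)$ where it is the normal Sylow $2$-subgroup, the index-$2$ determinant condition pins $[M:\Q_2]$ to $16$ or $48$, the cyclicity of the quotient by inertia kills the case $\GL_2(\F_3)$ (since $\GL_2(\F_3)/Q_8\isom S_4/V_4\isom S_3$ is non-cyclic), and Sylow $2$-subgroups of $\GL_2(\F_3)$ are semidihedral. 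Each step checks out. What the citation buys the paper is brevity and uniformity (the same table handles $e=4$, $8$ and $24$ in one stroke); what your argument buys is self-containedness and an explanation of \emph{why} the answer is $SD_{16}$ rather than, say, $Q_{16}$ or $Q_8\times C_2$, namely that the order-$16$ image must sit inside $\GL_2(\F_3)$ as a full Sylow $2$-subgroup.
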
 

\begin{proof} This follows from Table~1 in \cite{DD2008}.
\end{proof}

\begin{lemma} \label{L:lemma53} 
We have $[L:\Q_2]\in\lbrace 4,8,16\rbrace$.
\end{lemma}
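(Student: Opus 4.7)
The plan is to apply the intersection-compositum degree formula
\[
[L:\Q_2]\,[KM:\Q_2] \;=\; [K:\Q_2]\,[M:\Q_2],
\]
which is available because both $K=\Q_2(E_p)$ and $M=\Q_2(E_3)$ are Galois over $\Q_2$. To pin down the right-hand side, the key is to show that $K$ and $M$ share the same ramified structure over $\Q_2^{\unr}$.

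First, I would establish that $K \cdot \Q_2^{\unr} = M \cdot \Q_2^{\unr}$ and identify this common field with $L_0$, the minimal extension of $\Q_2^{\unr}$ over which $E$ acquires good reduction. Since we are in the case $e=8$, the extension $L_0/\Q_2^{\unr}$ is Galois of degree $8$ with group the quaternion group (already invoked for $\Phi$ in the preamble to this subsection). The key input is that the action of inertia on $E_n$ for $n\in\{3,p\}$ factors \emph{faithfully} through this quaternion quotient: the kernel of $\GL_2(\Z_n)\to\GL_2(\F_n)$ is pro-$n$, hence intersects the finite group of order $8$ trivially (because $n\neq 2$). Consequently $\Q_2^{\unr}(E_n)$ is the fixed field of the kernel of the inertia representation on $E_n$, which is precisely $L_0$.

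Second, I would read off the ramification data. The identification above immediately gives $e_K=e_M=8$. Combined with Lemma~\ref{L:lemma52}, the residue degree of $M$ is $f_M=[M:\Q_2]/8=2$. Combined with Lemma~\ref{L:lemma51}, $f_K=[K:\Q_2]/8\in\{r,2r\}$. The same reasoning applied to the compositum $N=KM$ shows $N\cdot\Q_2^{\unr}=L_0$, so $e_N=8$; its maximal unramified subextension is $K^{\unr}\cdot M^{\unr}$, of degree $\lcm(f_K,2)$ over $\Q_2$. Hence $[N:\Q_2]=8\lcm(f_K,2)$.

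Substituting into the compositum formula gives
\[
[L:\Q_2] \;=\; \frac{8 f_K \cdot 16}{8\,\lcm(f_K,2)} \;=\; \frac{16\,f_K}{\lcm(f_K,2)},
\]
which equals $16$ when $f_K$ is even and $8$ when $f_K$ is odd. In every case $[L:\Q_2]\in\{8,16\}\subset\{4,8,16\}$. The main obstacle is the first step: identifying $\Q_2^{\unr}(E_p)=\Q_2^{\unr}(E_3)$ with $L_0$. Once that is in hand, everything else is routine local-field bookkeeping, and the intersection degree falls out of the compositum computation.
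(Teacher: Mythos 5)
Your reduction to the compositum formula and the identification $K\cdot\Q_2^{\unr}=M\cdot\Q_2^{\unr}=L_0$ (via Serre--Tate: the quaternion inertia group of order $8$ injects into $\GL_2(\F_n)$ for odd $n$, so $\Q_2^{\unr}(E_n)=L_0$ for $n\in\{3,p\}$) are both correct, and they do give $e_K=e_M=e_{KM}=8$, $f_M=2$ and $f_K=[K:\Q_2]/8$. The proof breaks at the assertion that the maximal unramified subextension of $N=KM$ is $K^{\unr}\cdot M^{\unr}$, i.e.\ that $f_N=\lcm(f_K,2)$. In general one only has $f_N\geq\lcm(f_K,f_M)$, and the inequality is typically strict in exactly the situation you have set up, where $K$ and $M$ become equal over $\Q_2^{\unr}$ but are different descents of (subfields of) $L_0$ to finite unramified base fields: the compositum then acquires extra unramified degree (compare $\Q_2(\sqrt2)$ and $\Q_2(\sqrt{10})$, both totally ramified, whose compositum contains the unramified field $\Q_2(\sqrt5)$). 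The failure is not hypothetical here: when $r$ is odd the paper ultimately finds $d=16r$, $\delta=2r$ and $[L:\Q_2]=8$, which forces $f_N=4r$, whereas $\lcm(f_K,2)=\lcm(2r,2)=2r$. Your formula $[L:\Q_2]=16f_K/\lcm(f_K,2)$ therefore returns $16$ in cases where the true value is $8$; your conclusion lands inside $\{4,8,16\}$ only by accident, not because the computation is valid.

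To repair the argument you would need to compute $f_N$, which amounts to computing $[N:K]=[M:L]$, i.e.\ $[L:\Q_2]$ itself --- a circularity. The paper sidesteps this by computing $[K:L]$ directly: since $K/\Q_2$ is Galois, $[K:L]=[KM:M]=[M(E_p):M]$, and the latter equals $\delta$ because the Frobenius of $M(E_p)/M$ acts on $E_p$ as the homothety of ratio $-2$ (the same computation as for \eqref{(18.2)}, using that $E$ has supersingular reduction over $M$ with full $3$-torsion over the residue field $\F_4$, whence $a_{E'}=-4$ and $\Delta_{E'}=0$). Then $[L:\Q_2]=d/\delta$, and Lemma~\ref{L:lemma51} together with Lemma~\ref{L:lemma35} gives $[L:\Q_2]\in\{4,8,16,32\}$, the value $32$ being excluded by Lemma~\ref{L:lemma52} since $L\subseteq M$. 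You would do better to adopt this route; your first step (the identification with $L_0$) is a nice observation but does not by itself control the unramified part of the compositum.
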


\begin{proof} The same argument leading to equality \eqref{(18.2)} shows that here we also have
\[
 [M(E_p):M]=\delta.
\]
\begin{comment}
The elliptic curve $E/\Q_2$ acquires good supersingular reduction over 
$M$ since $v(j_E)>0$ and residual characteristic is 2. Let $\tilde{E'}/\F_4$ be the 
reduction of a model of $E/M$ with good reduction.
The residue field of $M$ has cardinality $4$ and $\tilde{E'}/\F_4$ has
all its $3$-torsion points rational over $\F_4$. Therefore, from the 
Weil bound it follows that 
the trace of Frobenius $a_{E'}=5-|\tilde{E'}(\F_4)|$ = -4.
Thus, $f_{E'}=X^2+4X=4=(X+2)^2$, $\Delta_{E'}=0$ and
the Frobenius $\Frob_M$ of the extension $M(E_p)/M$
is the homothety of ratio $-2$. Consequently, one has
$[M(E_p):M]=\delta$. 
\end{comment}
Thus~$d=\delta [L:\Q_2]$.
From Lemma~\ref{L:lemma35} we conclude that $[L : \Q_2] = 4, 8, 16$ or $32$. Note
the last case is impossible due to Lemma~\ref{L:lemma52}, completing the proof.
\end{proof}

Let us denote by $SD_{16}$ the semidihedral group of order $16$.

\begin{corollary} \label{C:cor3}
We have $\mu_3\subseteq K$.
\end{corollary}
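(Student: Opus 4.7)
The plan is to reduce the statement to showing $\mu_3 \subseteq L = K \cap M$. Since the Weil pairing gives $\mu_3 \subseteq E_3 \subseteq M$, and $L \subseteq K$ by definition, it will suffice to locate the unique unramified quadratic subextension $\Q_2(\mu_3)/\Q_2$ of $M$ inside~$L$.

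First I would identify the inertia subgroup of $\Gal(M/\Q_2) \cong SD_{16}$ (Lemma~\ref{L:lemma52}). The same well-known fact invoked before Lemma~\ref{L:lemma50} shows that $e(E)=8$ forces the inertia image on $E_3$ to be isomorphic to the quaternion group, so this inertia is the quaternion subgroup $Q_8 \subseteq SD_{16}$, which is uniquely determined since the three subgroups of order~$8$ in $SD_{16}$ are the non-isomorphic cyclic, dihedral, and quaternion groups. Its fixed field $M^{Q_8}$ is therefore the maximal unramified subextension of $M/\Q_2$, which has degree~$2$ over $\Q_2$ and is consequently equal to $\Q_2(\mu_3)$.

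Next I would use Lemma~\ref{L:lemma53} to bound $|\Gal(M/L)| = 16/[L:\Q_2] \in \{1,2,4\}$, noting that $\Gal(M/L)$ is normal in $SD_{16}$ since $L/\Q_2$ is Galois. A short calculation in $SD_{16}$ shows that its only normal subgroups of order at most $4$ are $\{1\}$, the center, and the commutator subgroup (of orders $1,2,4$ respectively); the latter two are forced by centrality and by the observation that any normal subgroup of index $4$ has abelian quotient and so must contain the commutator. All three of these normal subgroups are contained in $Q_8$. Hence $\Gal(M/L) \subseteq Q_8$, which yields $L \supseteq M^{Q_8} = \Q_2(\mu_3)$, so $\mu_3 \subseteq L \subseteq K$.

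The main obstacle will be the first step: correctly pinning down that the inertia of $\Gal(M/\Q_2)$ is specifically the quaternion subgroup rather than the cyclic or dihedral one of order~$8$. This rests on the observation that the abstract isomorphism type of the inertia image on mod-$p$ torsion is controlled solely by the semistability defect $e=8$ of $E/\Q_2$ and so is the same for $p=3$ as for any other odd~$p$. Once this identification is in place, the rest is routine subgroup bookkeeping in $SD_{16}$.
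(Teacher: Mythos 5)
Your proof is correct and follows essentially the same route as the paper's: both reduce the claim to showing that $L=K\cap M$ contains the unramified quadratic subextension $\Q_2(\mu_3)$ of $M$, using Lemma~\ref{L:lemma53}, the identification $\Gal(M/\Q_2)\cong SD_{16}$ of Lemma~\ref{L:lemma52}, and the fact that the inertia subgroup is the quaternion subgroup $Q_8$. The only difference is organizational: you treat the three possible values of $[L:\Q_2]$ uniformly by noting that every normal subgroup of $SD_{16}$ of order at most $4$ (trivial, center, commutator) lies inside $Q_8$, whereas the paper argues the cases $[L:\Q_2]=8$ and $[L:\Q_2]=4$ separately.
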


\begin{proof} This is clear if $[L:\Q_2]=16$ i.e. $L=M$ (because $\mu_3\subseteq M$). 

Suppose $[L:\Q_2]=8$. The only normal subgroup of $SD_{16}$ of order $2$ is its center and its quotient is dihedral. Thus $L/\Q_2$  is a dihedral extension of order $8$. Consequently, $L/\Q_2$ is not totally ramified, because $\Phi$ is the quaternion group, hence $\Q_2(\mu_3) \subset L$.

Suppose $[L:\Q_2]=4$. We will again show that $L/\Q_2$ is not totally ramified, which gives the result. 
The Galois group $\Gal(M/L)$ is a normal subgroup of order $4$ of $\Gal(M/\Q_2)$. There is only one normal subgroup of order $4$ of $SD_{16}$, and it is cyclic (its derived subgroup). 
Moreover,  $SD_{16}$ has exactly three cyclic subgroups of order $4$ and one subgroup isomorphic to $H_8$. The group $H_8$  has 
also  exactly three cyclic subgroups of order $4$. We deduce that the cyclic subgroups of order $4$ of $\Gal(M/\Q_2)$ are contained in its  inertia subgroup. 
This implies that $L/\Q_2$ is not totally ramified, hence the result.
\end{proof}

We can now complete the proof of Theorem~\ref{T:thm12} in the case $e=8$.

1) Suppose $r$ odd. Then $\mu_3 \not\subset \Q_2(\mu_p)$. Since $\Q_2(\mu_3)\subseteq K$ by Corollary~\ref{C:cor3}, we deduce that the degree of the maximal unramified subfield of $K$ is at least~$2r$. This implies $d\geq 16r$, so $d=16r$ by Lemma~\ref{L:lemma51}.
 
2) Suppose $r$ even. Then $\mu_3 \subset \Q_2(\mu_p)$. From Lemma~\ref{L:lemma50} 
we have that $F/\Q_2$ is dihedral with ramification index 4 and of degree $[F:\Q_2]=8$ 
or $[F:\Q_2]=4$. Write $d'=[K:F]$.
 
Suppose $[F:\Q_2]=8$. We have $\Q_2(\mu_p)\cap F=\Q_2(\mu_3)$. 
The Galois groups $\Gal(F(\mu_p)/F)$ and $\Gal(\Q_2(\mu_p)/\Q_2(\mu_p)\cap F)$ being isomorphic, 
we deduce that
$$[F(\mu_p):F]=\frac{r}{2}.$$
Using the notations in the proof of Lemma~\ref{L:lemma51}, we have that
the order of $\chi_p|_H$ is $\frac{r}{2}$, which leads to the equality
$$\frac{d'}{\gcd(d',2)}=\frac{r}{2}.$$
Since the ramification index of $K/F$ is $2$, we have $2 \mid d'$. 
Thus $d'=r$ and $d=8r$.
 
Suppose $[F:\Q_2]=4$. We have $$\frac{d'}{\gcd(d',2)}=[F(\mu_p):F]\leq r.$$
So $d'\leq 2r$, which implies $d\leq 8r$. Then $d=8r$ by Lemma~\ref{L:lemma51}, as desired.

\subsection{Case $e=24$}
Suppose that $E/\Q_2$ satisfies $e=24$. We consider the fields
$$M=\Q_2(E_3), \qquad K=\Q_2(E_p), \qquad  L= K \cap M$$
which are three Galois extensions of $\Q_\ell$.
Since $e=24$, it is well known that $\Phi \subset \Gal(K/\Q_2)$ is isomorphic to $\SL_2(\F_3)$ 
and $\Gal(M/\Q_2) \simeq \GL_2(\F_3)$.

\begin{lemma} \label{L:lemma54} 
The image $\sigma_{E,p}(\Phi)$ is of order $12$ isomorphic to $A_4$. Moreover, the Galois group  $\Gal(F/\Q_2)$ is isomorphic to $A_4$ or $S_4$.
\end{lemma}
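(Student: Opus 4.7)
The plan is to proceed along the same lines as Lemma~\ref{L:lemma50}, but now exploiting that $\Phi \simeq \SL_2(\F_3)$ rather than the quaternion group. I would first compute $\sigma_{E,p}(\Phi)$ by analyzing what happens to the centre of $\Phi$ upon passing from $\GL_2(\overline{\F_p})$ to $\PGL_2(\overline{\F_p})$, and then apply Diamond's classification to restrict the possibilities for $\Gal(F/\Q_2)$.

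Since $\rho_{E,p}|_\Phi$ is a faithful two-dimensional representation of $\SL_2(\F_3)$, the centre $\{\pm I\}$ of $\Phi$ is sent to scalar matrices, and these are in fact the only scalars contained in $\rho_{E,p}(\Phi)$ (using $p \neq 2$ and that $\SL_2(\F_3)$ has centre of order exactly~$2$). Hence
\[
\sigma_{E,p}(\Phi) \simeq \SL_2(\F_3)/\{\pm I\} \simeq \PSL_2(\F_3) \simeq A_4,
\]
which has order~$12$, settling the first assertion.

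For the second assertion I would invoke condition~$3$ of \cite[Proposition~2.4]{Diamond}, exactly as in the proof of Lemma~\ref{L:lemma50}: since $\sigma_{E,p}(\Phi) \simeq A_4$ is non-abelian and is contained neither in a Borel subgroup nor in the normalizer of a Cartan of $\PGL_2(\overline{\F_p})$, the projective representation $\sigma_{E,p}$ is of exceptional type, and $\Gal(F/\Q_2)$ must be isomorphic to one of $A_4$, $S_4$ or $A_5$. To conclude I would rule out~$A_5$: in~$A_5$ the subgroup $A_4$ is self-normalizing, hence not normal, whereas $\sigma_{E,p}(\Phi)$ --- being the inertia subgroup of the local Galois extension $F/\Q_2$ --- must be normal in $\Gal(F/\Q_2)$ with cyclic quotient. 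This leaves $\Gal(F/\Q_2) \simeq A_4$ or $S_4$, as claimed.

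The main obstacle I anticipate is the clean identification of the scalars sitting inside $\rho_{E,p}(\Phi)$ at the outset; once the isomorphism $\sigma_{E,p}(\Phi) \simeq A_4$ is nailed down, the rest is a short application of Diamond's classification together with the elementary observation that $A_4$ is self-normalizing in $A_5$.
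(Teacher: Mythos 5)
Your proof is correct and follows essentially the same route as the paper: the scalars in the image of $\Phi\simeq\SL_2(\F_3)$ are exactly $\pm 1$, giving $\sigma_{E,p}(\Phi)\simeq A_4$, and the second assertion is then read off from condition~3 of \cite[Proposition~2.4]{Diamond}. Your extra step ruling out $A_5$ (via normality of the inertia image, or equivalently solvability of local Galois groups) is a sound elaboration of what the paper leaves implicit in the citation.
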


\begin{proof} 
The only scalar matrices in $\SL_2(\F_p)$ are $\pm 1$, therefore
$\sigma_{E,p}(\Phi) \simeq \SL_2(\F_3)/\{\pm 1\} \simeq A_4$, proving the first statement.
Now, the condition 3 of \cite[Proposition~2.4]{Diamond} implies the second statement.
\end{proof}

\begin{lemma} \label{L:lemma55} 
The degree $d=[K:\Q_2]$ satisfies $d\in \lbrace 24r, 48r\rbrace$.
\end{lemma}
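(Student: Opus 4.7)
The plan is to get tight upper and lower bounds on $d=[K:\Q_2]$ that pin it down to the two values $24r$ and $48r$.

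First I would establish the lower bound $24r \mid d$. Since $e=24$ and $\Phi \subset \Gal(K/\Q_2)$ is isomorphic to $\SL_2(\F_3)$ of order $24$, the ramification index of $K/\Q_2$ is exactly $24$. The Weil pairing gives $\det \rho_{E,p}=\chi_p$, so $\mu_p \subseteq K$; hence $\Q_2(\mu_p) \subseteq K$ is an unramified subextension of degree $r$, and $r$ divides the residue degree of $K/\Q_2$. Writing $d=ef$ with $e=24$ and $r \mid f$, we conclude $24r \mid d$.

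Next I would establish the upper bound $d \leq 48r$ using the projective representation, mimicking the argument of Lemma~\ref{L:lemma51}. By Lemma~\ref{L:lemma54}, $\Gal(F/\Q_2)$ is $A_4$ or $S_4$, so $[F:\Q_2] \in \lbrace 12,24 \rbrace$. Set $H=\Gal(K/F)$ and $d'=[K:F]$. Since $\sigma_{E,p}$ is trivial on $H$, the matrix $\rho_{E,p}(\sigma)$ is scalar for every $\sigma \in H$, so there is a character $\varphi: H \to \F_p^\ast$ with
\[
\rho_{E,p}|_H \simeq \begin{pmatrix} \varphi & 0 \\ 0 & \varphi \end{pmatrix},
\]
and the relation $\det \rho_{E,p} = \chi_p$ gives $\varphi^2=\chi_p|_H$. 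Hence the order of $\varphi$ is $d'$, while the order of $\chi_p|_H$ equals $d'/\gcd(d',2)$ and divides $r=[\Q_2(\mu_p):\Q_2]$. Therefore $d' \leq 2r$, and $d=[F:\Q_2]\cdot d' \leq 24 \cdot 2r = 48r$.

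Combining the two estimates, $24r \mid d$ and $d \leq 48r$, forces $d \in \lbrace 24r, 48r \rbrace$, which is the claim. The only delicate point is confirming that the ramification index of $K/\Q_2$ is really $24$ (and not larger), but this is exactly the content of the assumption $e(E)=24$ together with $|\Phi|=24$ coming from $\Phi \simeq \SL_2(\F_3)$; the rest of the argument is bookkeeping with the scalar character $\varphi$ attached to $H$, a pattern already used repeatedly in the previous sections.
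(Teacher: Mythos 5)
Your proof is correct and follows essentially the same route as the paper: the lower bound $24r \mid d$ from the inertia group of order $24$ together with the unramified subfield $\Q_2(\mu_p)$, and the upper bound $d \leq 48r$ via the scalar character $\varphi$ on $H=\Gal(K/F)$ with $\varphi^2=\chi_p|_H$ and $[F:\Q_2]\leq 24$ from Lemma~\ref{L:lemma54}. The paper compresses the upper-bound step by citing the argument of Lemma~\ref{L:lemma51}, which is exactly what you reproduce.
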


\begin{proof} We have $[F:\Q_2]=12$ or $24$ by Lemma~\ref{L:lemma54}.
Since $\Q_2(\mu_p) \subset K$ we have $24r \mid d$.
Now, a similar argument to Lemma~\ref{L:lemma51} shows that $[K:F]\leq 2r$. 
Then $d\leq 48r$ and the result follows.
\end{proof}

\begin{lemma} \label{L:lemma56} 
We have $[L:\Q_2]\in\lbrace 24, 48\rbrace$.
\end{lemma}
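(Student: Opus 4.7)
The plan is to follow the blueprint of Lemma~\ref{L:lemma53} (the $e=8$ case) adapted to $\Gal(M/\Q_2)\simeq\GL_2(\F_3)$, and then combine this with Lemmas~\ref{L:lemma55} and~\ref{L:lemma35} plus a classification of normal subgroups of $\GL_2(\F_3)$ to pin down the two surviving values.

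First, I would replicate the reduction computation: since $v(j_E)>0$ and the residual characteristic is $2$, the curve $E/M$ acquires good supersingular reduction; because the inertia subgroup of $\Gal(M/\Q_2)\simeq\GL_2(\F_3)$ is $\SL_2(\F_3)$ of order $24$, the residue field of $M$ is $\F_4$. With $E_3\subset E(M)$, the reduced curve $\tilde{E'}/\F_4$ has all its $3$-torsion rational, so $9\mid |\tilde{E'}(\F_4)|$; the Weil bound forces equality, giving $a_{E'}=5-|\tilde{E'}(\F_4)|=-4$ and $\Delta_{E'}=a_{E'}^2-16=0$. By \cite[Theorem~2]{Centeleghe} the Frobenius $\Frob_M$ then acts on $E_p$ as multiplication by $-2$, so $[M(E_p):M]=\delta$. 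Combining this with $MK=\Q_2(E_{3p})=M(E_p)$ and the restriction isomorphism $\Gal(MK/M)\isomto\Gal(K/L)$ yields $[K:L]=\delta$, hence $d=\delta\cdot[L:\Q_2]$.

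Next I would pair this relation with Lemma~\ref{L:lemma55} ($d\in\{24r,48r\}$) and Lemma~\ref{L:lemma35} ($\delta\in\{r/2,r,2r\}$) to obtain $[L:\Q_2]=d/\delta\in\{12,24,48,96\}$; the value $96$ is immediately ruled out since $L\subseteq M$ forces $[L:\Q_2]\leq 48$. To exclude $12$, I would use that $L$ is Galois over $\Q_2$ (as the intersection of two Galois extensions), so $[L:\Q_2]$ must be the index of a normal subgroup of $\Gal(M/\Q_2)\simeq\GL_2(\F_3)$. A direct inspection of $\GL_2(\F_3)\simeq 2O$ shows that its normal subgroups are exactly $\{1\},\{\pm I\},Q_8,\SL_2(\F_3),\GL_2(\F_3)$, of indices $48,24,6,2,1$; the value $12$ does not appear, leaving $[L:\Q_2]\in\{24,48\}$ as claimed.

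The main obstacle is the index-$12$ exclusion, i.e.\ that $\GL_2(\F_3)$ admits no normal subgroup of order $4$. The cyclic possibility fails because the conjugacy class of an element of order $4$ in $\GL_2(\F_3)$ has more than two elements. For the Klein-$4$ possibility, any two commuting non-central involutions in $\GL_2(\F_3)$ share an eigenbasis and their product is $-I$, so every Klein-$4$ subgroup contains the centre; a hypothetical normal order-$4$ subgroup disjoint from the centre would therefore descend to a normal subgroup of order $2$ in $\PGL_2(\F_3)\simeq S_4$, which does not exist. Once this combinatorial point is settled, the remainder of the argument is routine bookkeeping with the bounds already proved.
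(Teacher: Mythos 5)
Your proof is correct and follows essentially the same route as the paper: establish $[M(E_p):M]=\delta$ so that $d=\delta\,[L:\Q_2]$, combine with Lemmas~\ref{L:lemma35} and~\ref{L:lemma55}, and exclude index $12$ via the absence of a normal subgroup of order $4$ in $\GL_2(\F_3)$. Your bookkeeping (computing $d/\delta\in\{12,24,48,96\}$ directly, so that $16$ never arises) is marginally cleaner than the paper's, and your only blemish is the phrase ``disjoint from the centre'' in the Klein-four case, where your own argument shows the subgroup must \emph{contain} the centre before descending to a nonexistent normal order-$2$ subgroup of $S_4$.
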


\begin{proof} As in Lemma~\ref{L:lemma53}, we have $[M(E_p):M]=\delta$ and by lemmas~\ref{L:lemma35}~and~\ref{L:lemma55}
we conclude $[L:\Q_2]\geq 12$. We have $[L:\Q_2]\neq 16$, otherwise $d$ would be $8r$, $16r$ or  $32r$ by Lemma~\ref{L:lemma35}, which is false by Lemma~\ref{L:lemma52}. Finally, $[L:\Q_2]\neq 12$, because $\GL_2(\F_3)$ has not normal subgroups of order $4$. The result follows.
\end{proof}

\begin{corollary} \label{C:cor4}
One has $\mu_3\subseteq K$.
\end{corollary}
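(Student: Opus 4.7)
The plan is to reduce the corollary to the inclusion $\mu_3 \subseteq L$, where $L = K \cap M$. Since $M = \Q_2(E_3)$ contains $\mu_3$ by the Weil pairing on $E_3$, and $L \subseteq K$, the inclusion $\mu_3 \subseteq L$ immediately gives $\mu_3 \subseteq K$. So the whole task is to show $\Q_2(\mu_3) \subseteq L$.

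By Lemma~\ref{L:lemma56} we have $[L:\Q_2] \in \lbrace 24, 48\rbrace$, hence $[M:L] \in \lbrace 1, 2\rbrace$. If $L = M$ there is nothing to prove. The interesting case is $[M:L] = 2$. Since both $K$ and $M$ are Galois over $\Q_2$, so is their intersection $L$, which makes $\Gal(M/L)$ a \emph{normal} subgroup of $\Gal(M/\Q_2) \simeq \GL_2(\F_3)$ of order $2$.

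The key step is then the group-theoretic observation that $\GL_2(\F_3)$ has a unique normal subgroup of order $2$, namely its center $\lbrace \pm I \rbrace$. This follows from the isomorphism $\GL_2(\F_3)/\lbrace \pm I\rbrace \simeq \PGL_2(\F_3) \simeq S_4$ together with the fact that the only normal subgroups of $S_4$ are $\lbrace 1\rbrace$, $V_4$, $A_4$, $S_4$: lifting these to $\GL_2(\F_3)$ produces normal subgroups of orders $2, 8, 24, 48$, so no normal subgroup of order $2$ other than the center exists. In particular $\Gal(M/L) = \lbrace \pm I\rbrace \subseteq \SL_2(\F_3)$.

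To finish, I would invoke the fact that under the identification $\Gal(M/\Q_2) \simeq \GL_2(\F_3)$, the determinant corresponds to the mod-$3$ cyclotomic character giving the action on $\mu_3$. Hence $\SL_2(\F_3) = \ker(\det)$ is precisely $\Gal(M/\Q_2(\mu_3))$, and from $\Gal(M/L) \subseteq \SL_2(\F_3)$ we deduce $\Q_2(\mu_3) \subseteq L$, as required. I do not foresee any real obstacle: the argument is essentially a combination of the Weil pairing, Galois theory applied to intersections of Galois extensions, and a short classification of normal subgroups of $\GL_2(\F_3)$.
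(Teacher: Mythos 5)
Your proposal is correct, and it shares its core with the paper's proof: both reduce to showing $\mu_3\subseteq L$ with $L=K\cap M$, both dispose of the case $L=M$ immediately, and both identify $\Gal(M/L)$ (when $[M:L]=2$) with the center $C=\lbrace\pm I\rbrace$ of $\Gal(M/\Q_2)\simeq\GL_2(\F_3)$ as its unique normal subgroup of order $2$. Where you diverge is in the final step. The paper argues via ramification: $C$ lies in $\SL_2(\F_3)$, which is the inertia subgroup of $\Gal(M/\Q_2)$ (unique subgroup of that isomorphism type), so $M/L$ is ramified, hence $L/\Q_2$ has ramification index $12$ and degree $24$ and is therefore not totally ramified, forcing the unramified quadratic extension $\Q_2(\mu_3)$ into $L$. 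You instead use that $\det\circ\rho_{E,3}$ is the mod-$3$ cyclotomic character, so that $\SL_2(\F_3)=\Gal(M/\Q_2(\mu_3))$, and conclude directly from $\Gal(M/L)=C\subseteq\SL_2(\F_3)$ by the Galois correspondence. Your route is slightly more direct and avoids invoking the structure of the inertia subgroup; the paper's ramification argument has the advantage of running in parallel with the analogous Corollary~\ref{C:cor3} for $e=8$, where the Galois group is $SD_{16}$ and the relevant normal subgroups are not all contained in $\ker(\det)$, so a uniform ramification-based template is used. One small point in your justification of uniqueness: the correspondence between normal subgroups of $\GL_2(\F_3)$ and of $\PGL_2(\F_3)\simeq S_4$ only accounts for normal subgroups \emph{containing} the center, so by itself it does not rule out a normal order-$2$ subgroup meeting the center trivially; the clean argument is simply that any normal subgroup of order $2$ is central, and $Z(\GL_2(\F_3))=\lbrace\pm I\rbrace$. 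This is a one-line repair and does not affect the validity of your proof.
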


\begin{proof} The result is clear when $[L:\Q_2]=48$ i.e. if $L=M$. 

Suppose $[L:\Q_2]=24$. The only normal subgroup of order $2$ of $\GL_2(\F_3)$ is its center $C$. It is contained in $\SL_2(\F_3)$ and $\GL_2(\F_3)$ contains a unique subgroup isomorphic to $\SL_2(\F_3)$. We deduce that $C$ is contained in the inertia subgroup of $\Gal(M/\Q_2)$, so the extension $M/L$ is ramified. Therefore, 
$L/\Q_2$ is not totally ramified, which implies $\mu_3 \subset L \subset K$.
\end{proof}

We can now complete the proof in the case $e=24$.

1) Suppose $r$ odd. Then $\mu_3 \not\subset \Q_2(\mu_p)$. Since $\Q_2(\mu_3)\subseteq K$ by Corollary~\ref{C:cor4}, we deduce that the degree of the maximal unramified subfield of $K$ is at least~$2r$. 
This implies $d\geq 48r$, so $d=48r$ by Lemma~\ref{L:lemma55}.

2) Suppose $r$ even. Then $\mu_3 \subset \Q_2(\mu_p)$. From Lemma~\ref{L:lemma54} we have that $F/\Q_2$ has 
ramification index 12 and degree $[F:\Q_2]=12$ or $[F:\Q_2]=24$. Write $d'=[K:F]$. 
 
Suppose $[F:\Q_2]=24$. Let us show that $d'=r$. 
The Galois group of $F/\Q_2$ is isomorphic to $S_4$ by Lemma~\ref{L:lemma54}. 
This extension is not totally ramified, 
thus $\Q_2(\mu_p)\cap F=\Q_2(\mu_3)$. 
The Galois groups $\Gal(F(\mu_p)/F)$ and $\Gal(\Q_2(\mu_p)/\Q_2(\mu_p)\cap F)$ being isomorphic, we conclude that
$$[F(\mu_p):F]=\frac{r}{2}.$$
This  leads to the equality
$$\frac{d'}{\gcd(d',2)}=\frac{r}{2}.$$
The ramification index of $K/F$ is equal to $2$, so $2$ divides $d'$. We obtain $d'=r$, thus $d=24d'=24r$.

Suppose $[F:\Q_2]=12$.  
We have $$\frac{d'}{\gcd(d',2)}=[F(\mu_p):F]\leq r.$$
So $d'\leq 2r$, which implies $d\leq 24r$. Then, by Lemma~55, $d=24r$, completing the proof.

{}


\begin{thebibliography}{}

\bibitem{CaliKraus} \'E. Cali et A.\ Kraus,
{\em Sur la $p$-diff\'erente du corps des points de $\ell$-torsion des courbes elliptiques, $\ell\neq p$},
Acta Arithm.  {\bf 104} (2002) 1--21.

\bibitem{Centeleghe} T.\ Centeleghe,
{\em Integral Tate modules and splitting of primes in torsion fields of elliptic curves}, 
Int.\ J.\ Number Theory {\bf 12} (2016), no. 1, 237--248.

\bibitem{CentelegheProgram} T.\ Centeleghe and P.\ Tsaknias,
{\em Integral Frobenius, Magma package available at}\\ 
\url{http://math.uni.lu/~tsaknias/sfware.html}


%\bibitem{Cox} D. A. Cox,
%{\em Primes of the form $x^2+ny^2$, Fermat, Class field theory, and Complex Multiplication},
%Wiley Intersience (1989).

\bibitem{Duketoth}  W. Duke and \' A. T\'oth,
{\em The splitting of primes in division fields of elliptic curves}
Experimental Math. \ {\bf{11}} (2002) 555--665.


\bibitem{Diamond} F.\ Diamond,
{\em An extension of Wiles' results},
Modular forms and Fermat's Last theorem (Boston, MA, 1995), 475--489, Springer, New York, 1997. 


\bibitem{DiamondKramer} F.\ Diamond and K. Kramer,
{\em Appendix : Classification of $\bar{\rho}_{E,\ell}$ by the $j$-invariant of $E$}, 
Modular forms and Fermat's Last theorem (Boston, MA, 1995), 491--498, Springer, New York, 1997. 

 
 \bibitem{DD2008} T.\ Dokchitser and V.\ Dokchitser, 
{\em  Root numbers of elliptic curves in residue characteristic $2$},
Bull. London Mah. Soc. \ {\bf 40} (2008),  516--524.


\bibitem{Freitaskraus} N.\ Freitas and A.\ Kraus,
{\em  On the symplectic type of isomorphisms of the $p$-torsion of elliptic curves},
arXiv:1607.01218v3 (2017).

\bibitem{Kraus} A. Kraus,
{\em Sur le d\'efaut de semi-stabilit\'e des courbes elliptiques \`a r\'eduction additive},
Manuscripta. Math.,  {\bf 69} (1990), 353--385.

\bibitem{Kraus2} A. Kraus,
{\em D\'etermination du poids et du conducteur associ\'es aux repr\'esentations des points de $p$-torsion des courbes elliptiques},
Dissertationes Math. (1997).

%\bibitem{Kraus3} A. Kraus,
%{\em Sur la $p$-diff\'erent du corps des points de $p$-torsion
%des courbes elliptiques.},
%Bull. Austral. Math. Soc {\bf 60} (1999), 407--428.


\bibitem{MAGMA} W.\ Bosma, J.\ Cannon and C.\ Playoust: {\em The Magma
Algebra System I: The User Language}, J.\ Symb.\ Comp.\ {\bf 24} (1997),
235--265. (See also {\tt http://magma.maths.usyd.edu.au/magma/})

\bibitem{Papadopoulos} I.\ Papadopoulos, 
{\em Sur la classification de {N}\'eron des courbes elliptiques en caract\'eristique r\'esiduelle {$2$} et {$3$}},
J.\ Number Theory, {\bf 44} (1993),   119--152.

\bibitem{Roblot} S. Pauli and X-F Roblot,
{\em On the computation of all extensions of a $p$-adic field of a given degree}
Math. Comp., {\bf{70}} number 236,   (2001), 1641--1659.

\bibitem{Silverman} J.\ H. Silverman,
{\em The Arithmetic of Elliptic Curves},
Second Edition, Graduate Texts in Mathematics {\bf 106}, Springer,  2009.


\bibitem{ST1968} J.-P.\ Serre and J.\ Tate,
{\em Good reduction of abelian varieties},
Annals of Math.\ {\bf 88} (1968), no. 2, 492--517.

%\bibitem{Su} A. V. Sutherland, 
%{\em Computing Hilbert class field polynomials with the chinese remainder theorem},
%Arxiv (2013).

      
%\bibitem{masterthesis} A.\  Van Tuyl,
%{\em The field of $N$-torsion points of an elliptic curve}, Thesis Queen's University (1997).



\end{thebibliography}
\end{document}